\documentclass[article]{amsart}
 \usepackage{amssymb,hyperref}
\usepackage{amsbsy}
\usepackage{amsmath}
\usepackage{amstext}
\usepackage{amsthm}   
\usepackage{amscd}
\usepackage{graphicx}
\usepackage{amssymb}
\usepackage{hyperref}\hypersetup{linktocpage, hidelinks}
\usepackage{mathtools}
\usepackage{tikz}
\usepackage{tikz-cd}
\usepackage[all]{xy}
\usepackage{caption} 
\captionsetup[table]{skip=10pt}

\usetikzlibrary{arrows}

\newcommand{\tcb}{\textcolor{blue}}




\makeatletter
\newcommand*{\rom}[1]{\expandafter\@slowromancap\romannumeral #1@}
\makeatother

\theoremstyle{plain}
\newtheorem{thm}{Theorem}[section]

\newtheorem{lem}[thm]{Lemma}
\newtheorem{prop}[thm]{Proposition}

\theoremstyle{definition}
\newtheorem{defn}[thm]{Definition}

\theoremstyle{remark}
\newtheorem{rem}[thm]{Remark}
\newtheorem{expl}[thm]{Example}

\numberwithin{equation}{section}

\newcommand\numberthis{\addtocounter{equation}{1}\tag{\theequation}}

\newcommand\GL{{\mathrm {GL}}}

\newcommand{\ba}{{\boldsymbol \alpha}}
\newcommand{\bb}{{\boldsymbol \beta}}

\def\Q{\mathbb{Q}}
\def\R{\mathbb{R}}
\def\Z{\mathbb{Z}}
\def\N{\mathbb{N}}
\def\C{\mathbb{C}}

\newcommand{\Eb}{{\mathbb E}}

\newcommand{\Nc}{{\mathcal N}}

\newcommand\Pbb{{\mathbb P}}
\newcommand\Vb{{\mathbb V}}

\newcommand{\Ac}{{\mathcal A}}

\newcommand{\Hc}{{\mathcal H}}
\newcommand{\Lc}{{\mathcal L}}

\newcommand{\Oc}{{\mathcal O}}

\newcommand\Pc{{{\mathcal P}}}

\newcommand{\al}{{\alpha}}

\newcommand{\ra}{\rightarrow}

\newcommand{\Leb}{\mathrm{Leb}}

\newcommand{\Lt}{{\widetilde \Lc}}

\newcommand{\twobytwo}[4]{\left[ \begin{matrix} #1 & #2 \\ #3 & #4
\end{matrix} \right]}

\newcommand{\hide}[1]{{}}

\newcommand{\twobytwotiny}[4]{{\left[\begin{smallmatrix} #1 & #2 \\ #3 & #4
\end{smallmatrix}\right]}}
\newcommand{\rdot}{\!\cdot\!}

\theoremstyle{plain}
\newtheorem{thmL}{Theorem}
\newtheorem{propL}[thmL]{Proposition}


\begin{document}

\title{Euclidean algorithms are Gaussian over imaginary quadratic fields}

\author{Dohyeong Kim}
\author{Jungwon Lee}
\author{Seonhee Lim}

\address{Department of Mathematical Sciences and Research Institute of Mathematics, Seoul National University, Gwanakro~1, Gwanak-ku, Seoul 08826, Republic of Korea}
\email{dohyeongkim@snu.ac.kr}
\email{slim@snu.ac.kr}

\address{Mathematics Institute, University of Warwick, Coventry CV4 7AL, UK}
\curraddr{Max Planck Institute for Mathematics, Vivatsgasse 7, 53111 Bonn, Germany}
\email{jungwon.lee@warwick.ac.uk; jungwon@mpim-bonn.mpg.de}

\date{\today. \\
2020 \textit{Mathematics Subject Classification}. 11J70, 37C30, 60F05, 11R04.}

\begin{abstract}
We prove that the distribution of the number of steps of the Euclidean algorithm of rationals in imaginary quadratic fields with denominators bounded by $N$ is asymptotically Gaussian as $N$ goes to infinity, extending a result by
Baladi and Vall\'{e}e for the real case. 
The proof is based on the spectral analysis of the transfer operator associated to the nearest integer complex continued fraction map, which is piecewise analytic and expanding but not a full branch map. By observing a finite Markov partition with a regular CW-structure, which enables us to associate the transfer operator acting on a direct sum of spaces of $C^1$-functions, we obtain the limit Gaussian distribution as well as residual equidistribution.
\end{abstract}

\maketitle
\setcounter{tocdepth}{1}

\section{Introduction}
The Euclidean algorithm is one of the oldest algorithms which remains useful, for example in computer algebra systems and multi-precision arithmetic libraries. The algorithm can be recorded via \textit{continued fractions}:
the Euclidean algorithm for natural numbers $a,b$ with $a<b$ gives rise to the sequence $(a_i)_{i=1, \cdots, \ell}$ of positive integers satisfying
\[ b = a_1 a + r_1, \quad a = a_2 r_1 + r_2, \quad  \dots, \quad r_{\ell-2} = a_\ell r_{\ell-1} + r_\ell,
 \] 
where $0 \le r_{j+1}<r_j$ holds for all $j$, and the terms of the sequence are exactly the \textit{digits} (also called the partial quotients) of the continued fraction expansion of the rational 
\begin{equation}\label{eqn:1}
    \frac{a}{b} = \frac{1}{a_1 + \frac{1}{a_2 + \frac{1}{\ddots + \frac{1}{a_\ell}}}}=:[a_1, a_2, \cdots, a_\ell].
\end{equation}
Note that the algorithm terminates upon the condition $r_\ell=0$ and that the requirement $0\le r_{\ell-2}<r_{\ell-1}$ implies that $a_\ell \geq 2$.


For a rational $x \in [0,1]\cap \mathbb{Q}$, we let $\ell(x) :=\ell$ from \eqref{eqn:1} and call it the \emph{length of the continued fraction (CF) of $x$}.
Baladi and Vall\'ee showed that the length $\ell(x)$ of CF of the rationals with bounded denominator follows asymptotically the Gaussian law. They further showed the same asymptotic distribution for the total cost $C(x):=\sum_{j=1}^{\ell(x)} c(a_j)$ (for $x=[a_1, \cdots a_{\ell(x)}]$) of a \emph{digit cost} function $c: \N \ra \R_{\geq 0}$ with moderate growth. For the length function $C=\ell,$ the digit cost is $c\equiv 1.$
\begin{thm}[Baladi--Vallée {\cite[Theorem\,3]{bal:val}}] 
\label{thm:bv} For a digit cost $c$ with a moderate growth $c(a)=O(\log a)$,
the distribution of the \textit{total cost} $C$ on the set \[ \Omega_{\R,N}:= \left\{ \frac{a}{b} : 1 \leq a < b \leq N, (a,b)=1   \right\} \] is asymptotically Gaussian, with the speed of convergence $O(1/\sqrt{\log N})$ as $N \to \infty.$ 
\end{thm}


\subsection{Main results}\label{sec:1.1}
The aim of this article is to generalise the above result and techniques to $\C$. We want to replace $\Z$ with a discrete ring $\Oc \subset \C$ for which the field of fractions is Euclidean: it is one of the imaginary quadratic fields $K_d :=\mathbb Q (\sqrt{-d})$ for $d=1,2,3,7,11.$ For such $K_d$ and its ring of integers $\Oc_d$, one can find a strict fundamental domain $I_d' \subset \{z \in \C: |z| \leq 1 \} $ of the translation action of $\mathcal O_d$ on $\mathbb C$ (see Definition~\ref{def:2.1}). The main example is $K_1 = \Q(i), \mathcal O_1 = \Z [i]$ and 
$$I_1' = \{ z \in \C: -1/2 \leq \mathrm{Re}(z), \mathrm{Im}(z) < 1/2, \; \mathrm{or}\; z=(1-i)/2\}.$$

For $z \in \mathbb{C}$, by defining $[z]$ to be the unique element of $\Oc_d$ such that  $z -[ z] \in I_d'$, the continued fraction expansion is unique on the closure $I_d:=\overline{I_d'}$ and the map
\begin{equation}\label{def:compgauss}
T_d : I_d \to I_d, \quad T_d(z) = \frac{1}{z} - \left[\frac{1}{z} \right], \text{if $z\neq 0$ and }  T_d(0)=0,
\end{equation}
is well-defined. This map, an analogue of the Gauss map, is called Hurwitz continued fraction map or the \textit{nearest integer} complex continued fraction map. It was first introduced by A. Hurwitz \cite{hurwitz} for $d=1,3$ and has been studied by Lakein \cite{lakein} in a wider context, by Ei--Nakada--Natsui \cite{Nakada:etal2} for certain ergodic properties, and by Hensley \cite{hensley} and Nakada \textit{et al.} \cite{Nakada:etal, Nakada:pre, Nakada} for Kuzmin-type theorem. More recently, Bugeaud--Robert--Hussain \cite{bugeaud} established the metrical theory of Hurwitz continued fractions towards the complex Diophantine approximations.

Here, we present a dynamical framework for the statistical study of $K$-rational trajectories based on the transfer operator methods. 
The lengths of such trajectories have been investigated in the literature.
For example, an upper bound of the length of the continued fraction expansions for Gaussian integers was obtained in \cite{R1, R2}.
The goal of the present paper is to establish the Baladi--Vallée-type limit theorems as described in Theorem \ref{thm:bv} for complex continued fractions.

From now on, we fix $d \in \{1,2,3,7,11\}$ and suppress it from the notation, unless otherwise stated. As in the real case, the digits $\al_j$ of the nearest continued fraction expansion $z=[\al_1, \al_2, \ldots, \al_n, \ldots]$ are obtained by $\alpha_j=\left[ \frac{1}{T^{j-1}(z)} \right],$
and the expansion terminates in a finite step $\ell(z)$ if $z \in I \cap K$. For a digit cost function $c: \Oc \to \R_{\geq 0},$ define its total cost as
\begin{align}\label{eq:totalcost}
C(z):=\sum_{j=1}^{\ell(z)} c(\al_j) \mathrm{\;for\;} z = [\al_1, \cdots \al_{\ell(z)}] \in I \cap K. 
\end{align}

On the other hand, $c$ induces a function $f_c$ on $I-\{0\}$ in the following sense.
If $z \in I$ has the continued fraction expansion $z=[\alpha_1,\cdots]$, we define $f_c(z)=c(\alpha_1)$. 
In what follows, by abuse of notation, we put $c(z):=f_c(z)$ for $z \in I$. 
This should not cause confusion since the domain of $f_c$, which is $I-\{0\}$, is disjoint from $\mathcal O$.

Our first result is the following theorem on the asymptotic distribution of the \textit{cost $ C_n$ up to $n$} on the whole space $I$ (rather than just rationals $I\cap K$) which is defined by 
$$C_n(z):=\sum_{j=1}^n c(\al_j) \quad \mathrm{for} \;\; z=[\al_1, \al_2, \ldots, \al_n, \ldots] \in I.$$

\begin{thmL}[Theorem \ref{clt:cont}] \label{intro:clt1}
Let $c: \Oc \to \mathbb R_{\geq 0}$ be a digit cost function of moderate growth $c(\al) = O(\log |\al|),$ which is not cohomologous to zero. 
For any $u \in \R$, 
the distribution of $C_n$ in $I$ is asymptotically Gaussian;
\[\Pbb \left[\frac{C_n - \widehat{\mu}(c) n}{\widehat{\delta}(c) \sqrt{n}}\leq u \right]=\frac{1}{\sqrt{2\pi}}\int_{-\infty}^u e^{-\frac{t^2}{2}}dt+O\left(\frac{1}{\sqrt n}\right)  \]
as $n \to \infty,$ where $\Pbb$ denotes a probability measure  on $I$ with $C^1$-invariant density.

The expectation and variance satisfy
\begin{align*}
\Eb[C_n] &= \widehat{\mu}(c) n+\widehat{\mu}_1(c)+O(\theta^n) \\
\Vb[C_n] &= \widehat{\delta}(c) n+\widehat{\delta}_1(c)+O(\theta^n)
\end{align*}
for some $\theta<1$ and real constants $\widehat{\mu}_1(c)>0$ and $\widehat{\delta}_1(c)>0$.
\end{thmL}

The next theorem is our main result, which is the analogue of Theorem 1.1. 


\begin{thmL}[Theorem \ref{clt:discrete}]  \label{intro:clt2} 
Let $c: \Oc \to \R_{\geq 0}$ be bounded and  not cohomologous to zero.
The distribution of the total cost $C$  on 
$$\Omega_N := \left\{\frac{a}{b} \in I: |b|^2 <N \right\}$$
is asymptotically Gaussian, i.e. there exist real numbers $\mu(c), \delta(c)>0$ such that for any $u \in \R$,
\[\Pbb_N \left[\frac{C - \mu(c) \log N}{\delta(c) \sqrt{\log N}}\leq u \right]=\frac{1}{\sqrt{2\pi}}\int_{-\infty}^u e^{-\frac{t^2}{2}}dt+O\left(\frac{1}{\sqrt{\log N}}\right)  \]
as $N \to \infty$, where $\Pbb_N$ denotes the uniform probability measure on $\Omega_N$.

The expectation and variance satisfy
\begin{align*}
\Eb_N[C|\Omega_N] &= \mu(c)\log N+\mu_1(c)+O(N^{-\gamma}) \\
\Vb_N[C|\Omega_N] &= \delta(c) \log N+\delta_1(c)+O(N^{-\gamma})
\end{align*}
for some $\mu_1(c), \delta_1(c),$ and $\gamma>0$.
\end{thmL}

We present another consequence, namely the residual equidistribution mod $q$, extending a result of Lee--Sun \cite{lee:sun} for the real case. 
\begin{thmL} [Theorem \ref{thm:equid}] \label{intro:equid}
Let $c: \Oc \to \Z_{\geq 0}$ be bounded and not cohomologous to zero and let $q \in \N$. The values of $C$ modulo $q$ are equidistributed on $\Omega_N$ as $N \to \infty$, i.e., for any $a \in \Z/q\Z$, 
\[ \Pbb_N[C \equiv a \ (\mathrm{ mod } \ q) |\Omega_N]=q^{-1}+o(1) .  \]
\end{thmL}




\begin{rem}
The motivation behind our work is to extend the dynamical approach to the statistical study of modular symbols and twisted $L$-values formulated in Lee--Sun \cite{lee:sun} and Bettin--Drappeau \cite{bet:dra} to imaginary quadratic fields. However, as the period (a normalisation to make the values integral) is not known yet, we can obtain results only on the cohomology level, for instance, an alternative proof on the normal distribution and residual equidistribution of Bianchi modular symbols in hyperbolic 3-space by Constantinescu and Nordentoft \cite{con, con:nor}. 
\end{rem}

\begin{rem} \label{rmk:growth}
Note that the boundedness assumption on $c$ in Theorem \ref{intro:clt2} and \ref{intro:equid} can be relaxed to moderate growth. The boundedness of $c$ is satisfied for our major interest and it simplifies the proofs by allowing us to use the truncated Perron formula Theorem~\ref{perron} and deduce $C(z)=O(\log N)$ for $z \in \Omega_N$. The moderate growth condition is sufficient if we use the Perron formula without truncation in conjunction with the smoothing process \cite[\textsection\,4]{bal:val}, which gives $C(z)=O(\log ^2 N)$. 
\end{rem}

\subsection{Complex continued fraction maps and its transfer operator}\label{sec:1.2} The proofs of the main results above are mainly inherited from the strategy of \cite{bal:val}: our approach is based on dynamical analysis of the complex continued fraction map, an extensive use of the weighted transfer operator of the system, and a choice of an appropriate function space where the transfer operator has good spectral properties, a relation between Dirichlet series and the transfer operator, and the connection between moment generating function and the Gaussian behavior.

Specifically, $T_d$ in \eqref{def:compgauss} corresponds to $T$ in \cite[\textsection\,1]{bal:val}, the weighted transfer operator $\mathcal L_{s,w}$ of Definition~\ref{def:1.5} to \cite[(2.6)]{bal:val}, the space $C^1(\mathcal P)$ of Definition~\ref{def:functionspace} to $\mathcal C^1(\mathcal I)$ of \cite[\textsection\,2.2]{bal:val}, the expression \eqref{ex:dirichlet} to \cite[(2.17)]{bal:val}.
Theorem~\ref{thm:hwang}, which connects the moment generating function and the Gaussian behavior, is the same as \cite[Theorem 0]{bal:val}.
Despite of the similarities between the strategies, however, the novelties of the present paper include a crucial adaptation of introducing the new function space $C^1(\mathcal P)$ and a technical adaptation involving an extended two-dimensional Van Der Corput Lemma. Let us explain the new function space. 

Theorem~\ref{intro:clt1} and \ref{intro:clt2} are central limit theorems for $(I,T)$, which we call the complex Gauss dynamical system. To obtain limit theorems via thermodynamic formalism, we need a Banach space containing $C^\infty(I)$ and stable under the transfer operator of $T,$ which is, in the simplest case, of the form
\begin{align*}
    \mathcal L_{1,0} f(z)&=\sum_{z_0 \in T^{-1} (z)}1/|J_T (z_0)|  f(z_0) = \sum_{\alpha \in \Oc} \left| h_{\alpha}' (z)\right|^{2} 1_{TO_\al} (z)f(h_\al (z)).
\end{align*}
Here, $J_T$ is the Jacobian determinant of $T$ (as $\mathbb{R}^2$-valued function), $\alpha$ is the first digit of $z_0$,
$$O_\al=\{z \in I: [1/z]=\al \},$$ 
$h'_\alpha$ is the complex derivative of the inverse $h_\al$ of $T$ restricted to $O_\al$, and $1_{TO_\al}$ is the characteristic function on $TO_\al$. As the characteristic functions on $TO_\al$ appear in the transfer operator, our Banach space must contain these functions and, in particular, properly contain $C^1(I).$


Such a function space will be constructed using a theorem of Ei--Nakada--Natsui \cite{Nakada:pre}, which provides some sort of
Markovian structure on $I$ (see Proposition~\ref{part:markov}), from which we show that $I$ is a cell complex with cells in a finite partition $\Pc$ of $I$ such that any $TO_\alpha$
is a union of cells and that the space $C^1(\Pc)$ of (roughly speaking) piecewise $C^1$-functions is stable under the transfer operator. See Section~\ref{section:functionspace} for the precise definition and suppose for now that we are given $C^1(\Pc).$

\begin{rem}
We emphasize that our main theorem, Theorem~\ref{intro:clt2} is about rationals, i.e. \emph{finite trajectories} of the dynamical systems, whence one cannot ignore cells of dimension zero and one
despite of the fact that their measure is zero for the relevant measure $\mathbb P$ in the main theorems. This contrasts the common practice in ergodic theory where sets of measure zero may be ignored. 

We would like to point out that the partition $\mathcal V$ given in \cite{Nakada:pre} is a metric partition whose elements are exactly the elements of $\mathcal P$ of dimension 2, where as our partition $\mathcal P$ is a set partition, i.e. the union of elements of the partition $\mathcal P$ is the whole domain $I_d.$
We also remark that for the purposes of \cite{Nakada:pre} it was enough for the authors to consider the 2-dimensional cells of $\mathcal P$ but not those of lower dimensions, although the pictures in \cite{Nakada:pre} well visualize the cells in all dimensions.

For our purpose, it is important to work with a Banach space whose elements are honest functions and can be evaluated at all points of $I$, because the resolvent trick, the same idea as \cite[(2.17)]{bal:val}, in \textsection\,\ref{subsec:resolvent} uses the value of the iterates of the transfer operator applied to the characteristic function on $I$ at the origin.
We will construct such a Banach space by introducing suitable norms on $C^1(\Pc)$.
The norm is defined on $C^1(\Pc)$ rather than a quotient of it, we are able to insist that the Banach space consists of functions.

As our partition $\mathcal P$ is different from $\mathcal V$ from \cite{Nakada:pre}, we cannot just cite their results as the existence of some stable metric partition, but has to go through the construction of the partition and prove nice properties of the partition. In particular, our main Markovian property in Proposition \ref{part:markov} and its proof are not stated in \cite{Nakada:pre}. We explicitly prove it using the stable property of the partition $\mathcal V.$

\end{rem}

\hide{
Then for suitable positive constants $\widehat{\mu}(c)$ and $\widehat{\delta}(c)$, 
\[\Pbb \left[\frac{C_n - \widehat{\mu}(c) n}{\widehat{\delta}(c) \sqrt{n}}\leq u \right]=\frac{1}{\sqrt{2\pi}}\int_{-\infty}^u e^{-\frac{t^2}{2}}dt+O\left(\frac{1}{\sqrt n}\right)  \]
for any $n \geq 1$ and $u \in \R$.
}




With $C^1(\Pc)$ in hand, let us now formally define the weighted transfer operators. 
Regarding $h_\alpha$ as an $\mathbb R^2$-valued function, the Cauchy--Riemann equation implies that its Jacobian determinant $J_\al$ satisfies
\begin{equation}  \label{der:jac}
J_\alpha(z)
:=J_{h_\al}(z) 
= \left| h_\alpha' (z)\right|^2 =\frac{1}{|z+ \al |^4}>0. 
\end{equation} 

\hide{--------
\begin{prop}
For $z_0=x_0+y_0 i \in TO_\alpha$, we have  $J_\alpha(x_0+iy_0) = \left| h_\alpha' (z_0)\right|^2$.
\end{prop}
\begin{proof}
It is a consequence of the Cauchy-Riemann equation.
\end{proof}

It depends on two complex parameters $s$ and $w$ as the notation suggests, and also on a cost function $c$ which is suppressed in the notation. 
\begin{defn}

A cost function is a function on the set $\{\alpha \in \mathcal O \colon \text{$O_\alpha$ is non-empty}\}$ with values in the set of non-negative real numbers.
\end{defn}

Recall that $h_\alpha \colon TO_\alpha \to O_\alpha$ is an inverse branch of $T$.
Viewing $TO_\alpha$ and $O_\alpha$ as subsets of $\mathbb C$,  $h_\alpha$ is holomorphic in the interior of $TO_\alpha$, and it extends holomorphically to an open neighbourhood of $TO_\alpha$. Denote by $h_\alpha'$ its holomorphic derivative, defined on $TO_\alpha$.
Under the identification $\mathbb C \simeq \mathbb R^2$ via $z=x+iy$, we regard $h_\alpha$ as an $\mathbb R^2$-valued function on an open subset of $\mathbb R^2$. It is differentiable and we write $J_\alpha$ for its Jacobian.
It is well-known that $J_\alpha$ is determined by the holomorphic derivative of $h_\alpha$.

\begin{prop}
For $z_0=x_0+y_0 i \in TO_\alpha$, we have  $J_\alpha(x_0+iy_0) = \left| h_\alpha' (z_0)\right|^2$.
\end{prop}
\begin{proof}
It is a consequence of the Cauchy-Riemann equation.
\end{proof}

-----------}


We recall that for a digit cost function $c:\Oc \to \R_{\geq0}$, by abuse of notation, we denote the induced function again by $c$; see the paragraph below \eqref{eq:totalcost}.

\begin{defn}\label{def:1.5}
Let $s, w \in \mathbb{C}$.
The transfer operator $\Lc_{s,w}: C^1(\Pc) \rightarrow C^1(\Pc)$ of the map $T$ associated with $c$ is defined by
\begin{align*}
    \mathcal L_{s,w} f(z)&:=\sum_{z_0 \in T^{-1}(z)} g_{s,w}(z_0)  f(z_0) \\
    & =\sum_{\al \in \Oc}  (g_{s,w} \cdot f) \circ h_\al(z) \cdot 1_{TO_\al}(z),
    \numberthis\label{eq:Lsw}
\end{align*}
where $g_{s,w}(z):= \exp(w c(z))  J_{[z^{-1}]}(T(z))^s.$

\end{defn}

\begin{rem}
The finite CW-structure of $\Pc = \cup_{i=0}^2 \Pc[i]$ induces a decomposition of the function space $C^1(\Pc)=\bigoplus_{i=1}^2 C^1(\Pc[i])$ and of the operator $\Lc:=\Lc_{s,w}$ as a lower-triangular matrix 
\[ \Lc=\begin{bmatrix}
\Lc_{[2]}^{[2]} & 0 & 0 \\
\Lc_{[2]}^{[1]} & \Lc_{[1]}^{[1]} & 0 \\
\Lc_{[2]}^{[0]} & \Lc_{[1]}^{[0]} & \Lc_{[0]}^{[0]} 
\end{bmatrix} \]
where $\Lc^{[i]}_{[j]}: C^1(\Pc[i]) \rightarrow C^1(\Pc[j])$ with $0 \leq i,j \leq 2$ is the component operator.

We remark that this triangular form plays a prominent role in the proof of following Theorem \ref{intro:spectrum} and displays the technical issues in generalising the transfer operator methods for finite trajectories in higher dimensions.
\end{rem}

    \hide{-----
    The term corresponding to a given pre-image $\tilde z \in T^{-1}(z)$ is equal to $\exp\left(w c\left(\alpha\right)\right) \cdot J_{\alpha}(z)^s \cdot f(h_\alpha(z))$ with $\alpha = [\tilde z^{-1}]$, so \eqref{eq:L} is equal to the sum of $\exp\left(w c\left(\alpha\right)\right) \cdot J_{\alpha}^s \cdot \left( f\circ h \right) \cdot 1_{TO_\alpha}$ over $\alpha$'s, where $1_{TO_\alpha}$ is the characteristic function on $TO_\alpha$. 
    -------}



\hide{
    To proceed, we settle a few notations.
    For a subset $P \in \mathcal P$ of $TO_\alpha$, denote the restriction of $h_\alpha$ to $P$ by $\langle \alpha \rangle ^P_Q \colon P \to Q$ if $h_\alpha(P) \subset Q$.
    For $P,Q \in \mathcal P$, set
    \begin{align*}
    \mathcal H(P,Q) = \left\{h\colon P \to Q  : \text{ $h= \langle \alpha \rangle ^P_Q$  for some $\alpha \in \mathcal O$}\right\}
    \end{align*}
    to be the collection of restricted inverse branch maps from $P$ to $Q$.
    Then \eqref{eq:Lsw} becomes, for $z \in P$ 
    \begin{align} \label{def:op:eachp}
    \left(\mathcal L_{s,w}f\right)_P(z) = \sum_{Q \in \Pc} \sum_{\langle \alpha \rangle ^P_Q \in \mathcal H(P,Q)} g_{s,w}(z) \cdot \left( f_Q \circ \langle \alpha \rangle ^P_Q \right) (z) .
    \end{align}
    This shows that if $ \Pc$ is compatible with $T$ and \eqref{def:op:eachp} is convergent for each $P$, then the operator $\mathcal L_{s,w}$ preserves $C^1(\mathcal P)$.
    To ensure the convergence, we take the moderate growth assumption on the digit cost function $c$; see \eqref{def:modgrowth}.
}

\hide{-----

Put $i_P({z_0}) = \left(Q,\langle \alpha \rangle ^P_Q\right)$.
Temporarily, define $\mathcal H_P$ to be the disjoint union $\coprod_{Q \in \mathcal P} \mathcal H(P,Q)$ taken over the set $\mathcal P$. Then by the compatibility of $\Pc$ and $T$, $h$ is uniquely represented by a pair $(Q,\langle \alpha \rangle ^P_Q)$ where $Q \in \mathcal P $ is the unique cell with $h \in \mathcal H(P,Q)$. For $z \in P$ and $z_0 \in T^{-1}(z)$ fixed, $\alpha = [z_0^{-1}]$ is the unique element of $\Oc$ such that $h_\alpha (z) = z_0$ by definition. Let $Q \in \mathcal P$ be the unique cell with $z_0 \in Q$.
Then it follows that $h_\alpha(P) \subset Q$, i.e. $\langle \alpha \rangle ^P_Q \in \mathcal H(P,Q).$ 

Conversely, for a fixed $Q$ and $h =\langle \alpha \rangle ^P_Q \in \mathcal H(P,Q)$, there exists a unique $z_0 \in T^{-1}(z)$ such that $h_\alpha(z) = z_0$, namely
$$z_0 = \frac{1}{z+ \alpha}.$$
Thus \eqref{eq:Lsw} becomes, for $z \in P$, 
\begin{align} \label{def:op:eachp}
\left(\mathcal L_{s,w}f\right)_P(z) = \sum_{Q \in \Pc} \sum_{\langle \alpha \rangle ^P_Q \in \mathcal H(P,Q)} g_{s,w}(z) \cdot \left( f_Q \circ \langle \alpha \rangle ^P_Q \right) (z) .
\end{align}
This shows that if $ \Pc$ is compatible with $T$ and \eqref{def:op:eachp} is convergent for each $P$, then the operator $\mathcal L_{s,w}$ preserves $C^1(\mathcal P)$.
To ensure the convergence, we will assume the moderate growth condition on the cost function $c$; see Definition\,\ref{def:modgrowth}.


------}

The function space $C^1(\mathcal P)$ is a Banach space with respect to a family of norms $\{ \| \cdot \|_{(t)} \}_{t \in \R \backslash \{0\}}$ defined by
\begin{align} \label{intro:norms}
  \|f\|_{(t)} = \|f\|_0 + \frac{1}{|t|}\|f\|_1,
\end{align}
where $\| \rdot \|_0$ is essentially the sup-norm and $\| \rdot \|_1$ is a semi-norm (see \S\ref{sub:norms}).

We establish the following key spectral properties, namely the Ruelle--Perron--Frobenius Theorem and Dolgopyat-type uniform estimate.  
We note that the spectral properties stated below are almost exactly the same as \cite[Thm.\,2]{bal:val} except for the explicit and implied absolute constants in the statements.
Denote $s=\sigma+it$ and $w=u+i\tau$, with $\sigma,t,u,\tau \in \mathbb R$.

\begin{thmL}[Theorem \ref{thm:ruelle} and \ref{main:dolgopyat}] \label{intro:spectrum} Consider the operator $\Lc_{s,w}$ on $C^1(\Pc).$
\begin{enumerate}
\item For $(s,w)$ near $(1,0)$, the operator $\Lc_{s,w}$ has an eigenvalue $\lambda_{s,w}$ of maximal modulus and there are no other eigenvalues on the circle of radius $|\lambda_{s,w}|$, and $\lambda_{s,w}$ is algebraically simple.
\item Let $(s,w)$ with $(\sigma,u)$ near $(1,0)$. For $0<\xi<1/10$ and sufficiently large $|t|$, 
\[ \| (I-\Lc_{s,w})^{-1} \|_{(t)} \ll |t|^\xi . \]
Here, $f(x) \ll g(x)$ means $f(x)=O(g(x))$ and the implied constant is determined by a neighbourhood of $(1,0)$ in $\R \times \R$ on which $(\sigma, u)$ belongs.
\end{enumerate}
\end{thmL}

To prove the above theorem, new ingredients compared to Baladi--Vall\'ee \cite{bal:val} are needed due to technical difficulties that arise from higher dimensional nature of complex continued fractions. In particular, our argument relies on the analysis due to Ei--Nakada--Natsui \cite{Nakada:pre} of the natural invertible extension and the dual system of $(I,T)$ as well as a 2-dimensional version of Van der Corput Lemma. See \S\ref{sec:dolgopyat} for details.

Once the aforementioned technical difficulties have been overcome, one can establish spectral properties and use them for our purposes.
To do so, we closely follow the logical structure of \cite{bal:val}.
That is, we consider an auxiliary complex Dirichlet series which can be written in terms of the resolvent of the operator $\Lc_{s,w}$. Accordingly, Theorem~\ref{intro:spectrum} is translated into necessary analytic properties of the Dirichlet series that allow us to apply a Tauberian argument and obtain the estimates of moment generating functions, hence the limit laws for complex continued fractions.


Given the similarities between our proofs and those in \cite{bal:val}, here we highlight the differences and the associated difficulties.
A single major difference lies in the fact that $(I,T)$ is not Bernoulli: observe that there exists $\alpha$ such that $O_\alpha$ contains an open but non-dense subset of $I$.
This difference gives rise to difficulties that were not present in \cite{bal:val}.
First, we need to replace the function space since $C^1(I)$ is not invariant under the transfer operator.  
Clues to solve the problem are found in \cite{Nakada:etal2, Nakada:pre}, but for our purposes we need to analyze the cell structure that were irrelevant therein and to establish functional analytic results -- completeness of a suitable family of norms, quasi-compactness of the transfer operator and distortion properties. 
Once such preliminary ingredients have been established, a key remaining piece is the analogue of the Dolgopyat estimate in \cite{bal:val}.

Here, a new difficulty arises because $(I,T)$ is not Bernoulli. Specifically, 
one needs a explicit description of the so-called dual algorithm.
For our systems $(I,T)$, it is not at all clear whether dual systems admit an elementary description unliike \cite{bal:val}. Indeed, it is one of the main results of \cite{Nakada:pre} where the so-called natural extensions are constructed using the ergodicity of geodesic flows. Our argument for the Dolgopyat estimate relies on metric properties of the natural extensions of \cite{Nakada:pre} with a few modifications related to the presence of lower dimensional cells.

This article is organised as follows. In \S\ref{sec:dynamics}, we study expansion and distortion properties of the complex Gauss dynamical system. In \S\ref{sec:markov}, we introduce a finite partition $\Pc$ of $I$ and a finite Markov partition compatible with the countable inverse branches using the work of \cite{Nakada:pre}. In \S\ref{sec:acim}, we show quasi-compactness and a spectral gap of the transfer operator $\Lc_{s,w}$ acting on $C^1(\Pc)$.   In \S\ref{sec:nor:lasota}, we settle a priori bounds for the normalised family of operator which will be used in \S\ref{sec:dolgopyat}, where we have Dolgopyat estimate. We obtain Theorems \ref{intro:clt1} and \ref{intro:clt2} in \S\ref{sec:clt1}-\ref{sec:clt2}, and Theorem~\ref{intro:equid} in \S\ref{sec:equid}.



\vspace{.15 in}

\noindent \textit{Acknowledgements.}
We are grateful to Hans Henrik Rugh for the idea of Lemma~\ref{vandercorput}. We also thank Hitoshi Nakada, Hiromi Ei, Rie Natsui for helpful discussion and sharing the preprint with us, and Malo Jézéquel for clarifying comments.

This work is supported by (DK) Korea NRF-2020R1C1C1A01006819, Samsung Science and Technology Foundation SSTF-BA2001-01, (JL) ERC-Advanced Grant 833802 Resonances, (SL) Korea NRF RS-2025-00515082, and RS-2023-00301976. SL is an associate member of Korea Institute for Advanced Study.

\section{Complex Gauss dynamical system} \label{sec:dynamics}

From now on, we call the nearest integer continued fraction map $T$ as the CF map.
In this section, we show uniform expanding and distortion properties of the CF map, which will be crucially used later for spectral analysis.

\subsection{Metric properties of inverse branch} \label{sec:branch} Let us start with an explicit description of $I_d$ promised in Section~\ref{sec:1.1}.
For a fixed $d\in \{1,2,3,7,11\}$, let $K= \mathbb Q(\sqrt {-d})$.
Its ring $\Oc$ of integers, which is a lattice in $\mathbb C$, is of the form
\begin{align} \label{ring:integer}
    \Oc &= 
    \begin{cases}
    \Z[\sqrt{-d}]&\text{if $d \not\equiv 3 \ (\mbox{mod } 4)$,}
    \\
     \Z[\frac{1+\sqrt{-d}}{2}]&\text{if $d \equiv 3 \ (\mbox{mod } 4)$.}
    \end{cases}
\end{align}


 
A natural fundamental domain for the translation action of $\Oc$ on $\C$ would contain a connected component of the set $\C$ minus the equidistant lines with respect to two points in $\Oc$. We choose a component containing the origin and further make a choice on the boundary as follows for the strict fundamental domain $I_d'$.
\begin{defn}\label{def:2.1} For $d=1,2,$ we choose
rectangles
\begin{align*}
 I_{d} &:= \left\{ x + iy : |x| \le \frac 1 2, \  |y| \le \frac{\sqrt d}{2}\right \}, \quad \quad
    I_{d}' := I_{d} - \bigcup_{\al=1,\sqrt{-d}} I_d+\alpha,
\end{align*}
and for $d=3,7,11,$ we choose hexagons
$$
I_{d} := \left\{ x + iy : |x| \le \frac 1 2, \  \left|y \pm \frac {x}{\sqrt d}\right| \le \frac{d+1}{4\sqrt d} \right\}, \quad 
I_{d}' := I_{d} - \bigcup_{\al=1, \frac{1\pm\sqrt{-d}}{2}}I_{d}+\alpha.
$$
\end{defn}

Before defining inverse branches, let us look into the ``cylinder sets" $\Oc_\mathbf{\ba}$ of $T$, namely the sets of $z \in I:=I_d$ with fixed $n$ first digits, for some $n \in \N.$ For $n=1$, they are the sets
\begin{equation} \label{part:Oalpha}
O_\alpha =  \left \{ z \in I  \colon [1/ z ] = \alpha \right\}
\end{equation}
for $\al \in \Oc$, mentioned in Section~\ref{sec:1.2}.
The set $O_\al$ is empty for finitely many $\alpha$'s, namely those for which $I+\alpha$ is disjoint with the image of $I$ under the inversion. See Table~\ref{table:empty} for the complete list. Similarly, there are finitely many $\al$'s such that $T O_\al \neq I$, namely those for which $I+\alpha$ is not contained in the image of $I$ under the inversion. See Figure \ref{fig:d=3} for the case of $d=3$. 
Non-empty $O_\al$'s form a partition of $I$ such that $T|_{O_\al}: O_\al \ra TO_\al$ given by $z \mapsto 1/z-\al$ is bijective. 

For $n>1$, for a sequence $\ba = (\alpha_1,\cdots,\alpha_n) \in \Oc^n$, define $O_{\boldsymbol \alpha}$ to be the set of $z$ whose $j$-th digit $\al_j(z)$ equals $\al_j$ for $j \leq n$, in other words
$$O_{(\alpha_1,\cdots,\alpha_{n})}: = \left \{ z\in O_{\alpha_1} \colon T(z) \in O_{(\alpha_2,\cdots,\alpha_{n})} \right \}.$$

\begin{figure}[h] 
   \centering
   \includegraphics[width=1.5 in]{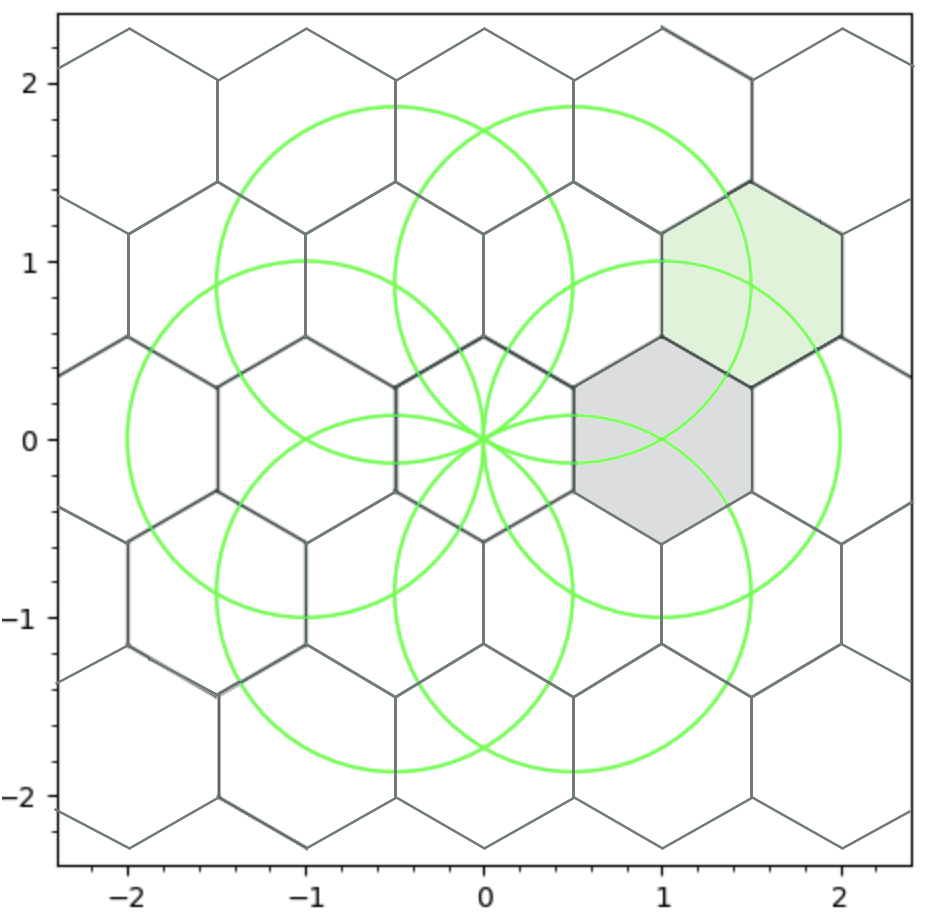} 
   \caption{For $d=3$, the domain $I$ is the hexagon in the center. The inversion $z \mapsto 1/z$ maps $I$ to the region outside the green circles. The grey hexagon $I+1$ lies in the union of the interior of circles, thus $O_1$ is empty. The green hexagon $I+ \frac{3+\sqrt{-3}}{2}$ intersects with some circles, thus $O_{\frac{3+\sqrt{-3}}{2}} \neq I.$ }
   \label{fig:d=3}
\end{figure}

\begin{table}[h]
\centering
\begin{tabular}[h]{|c|c|}
\hline
$d$ &  $\alpha$ 
\\
\hline
$1$&$\pm 1, \pm \sqrt{-1}$
\\
\hline
$2$&$\pm 1
$
\\
\hline
$3$&$\pm 1, \pm \frac{1+\sqrt{-3}}{2}, \pm\frac{1-\sqrt{-3}}{2}$ 
\\
\hline
$7$&$\pm 1$
\\
\hline
$11$&$\pm 1$
\\
\hline
\end{tabular}\caption{The list of $\alpha$'s for which $O_\alpha$ is empty.} \label{table:empty}
\end{table}

\begin{defn} For $\al$ with non-empty $O_\alpha$,
    denote the inverse of $T|_{O_\alpha}$ by $h_\alpha:TO_\alpha \to O_\alpha$, i.e. 
\begin{align*}
h_\al: z& \longmapsto \frac{1}{z+\alpha},
\end{align*} and call it an \textit{inverse branch} (of depth 1) of $T$.
\end{defn}
Note that 
the inverse branch $h_\alpha \colon TO_\alpha \to O_\alpha$ extends holomorphically and uniquely to an open neighbourhood of $TO_\alpha$, since the origin is not a limit point of any non-empty set $O_\alpha$ and $TO_\alpha$ has non-empty interior.

Since our dynamical system $(I,T)$ fails to be a full branch map, i.e. $TO_\alpha \neq I'$ for some non-empty $O_\alpha$, our analysis involves additional steps compared to Baladi--Vallée \cite{bal:val}.
For a sequence $\ba = (\alpha_1,\cdots,\alpha_n) \in \Oc^n$, we call 
the bijection $$h_{\boldsymbol\alpha} := h_{\alpha_1} \circ \cdots \circ h_{\alpha_n}:T^nO_{ \ba} \to O_{\ba}$$
an inverse branch of depth $|\ba|:=n.$
Denote by $J_{\ba}$ the Jacobian determinant of $h_\ba$.
Observe that the inverse branches are conformal and uniformly contracting as follows. 

For $\al \in \mathcal O$ with non-empty $O_\al,$ recall from \eqref{der:jac} that we have
$$|J_\al (z) | = \frac{1}{|z+\al|^4}=|h_\al(z)|^4 \leq R^4,$$
where $R$ is the radius of the ball $\{ z\in \C: |z|<R\}$ containing $I$.
By the chain rule, $|J_{\ba}| \leq R^{4|\ba|}$, and
the \textit{contraction ratio of the inverse branches} 
\begin{align} \label{contrac}
\rho:=\limsup_{n \to \infty} \ \sup_{ | \ba| =n} \sup_{z \in TO_{ \ba}}|J_{ \ba}(z)|^{1/n}
\end{align}
 is at most $R^4$. 

\begin{lem}\label{prop:2.1}
For $I=I_d$, the domain $I \subseteq \mathbb C$ is contained in an open ball centered at zero of radius $R<1$. Consequently, $\rho \leq R^4 <1$.
\end{lem}

Note that $R\le \sqrt{15/16}$ for $d=1,2,3,7,11$, with equality for $d=11.$
Using the lemma, we obtain the following distortion property of inverse branches.

\begin{prop}[Bounded distortion] \label{distortion1}
There is a uniform constant $M>0$ such that for any $n$ and $h_{\ba}= h_{\al_1} \circ \cdots \circ h_{\al_n}$, and any unit tangent vector $v$,
$$| \partial_{v} J_{\ba}(z)| \leq M |J_{\ba}(z)|$$ for all $z \in T^nO_\ba$.
Here $\partial _v$ denotes the directional derivative.
\end{prop}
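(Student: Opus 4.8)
The plan is to reduce the statement to a single iteration estimate plus a telescoping sum, exploiting the chain rule for $J_{\ba}$ and the uniform contraction already established. Write $h_{\ba} = h_{\al_1} \circ \cdots \circ h_{\al_n}$ and set $z_k = h_{\al_{k+1}} \circ \cdots \circ h_{\al_n}(z)$ for $0 \le k \le n$, so $z_n = z \in T^n O_{\ba}$ and $z_0 = h_{\ba}(z) \in O_{\ba}$. By the chain rule for Jacobian determinants, $J_{\ba}(z) = \prod_{k=1}^{n} J_{\al_k}(z_{k-1})$, hence taking logs and differentiating along a unit tangent vector $v$ gives
\[
\frac{\partial_v J_{\ba}(z)}{J_{\ba}(z)} = \partial_v \log |J_{\ba}(z)| = \sum_{k=1}^{n} \partial_v \bigl( \log |J_{\al_k}| \circ (h_{\al_{k+1}} \circ \cdots \circ h_{\al_n}) \bigr)(z).
\]
So it suffices to bound this sum by $2M$ uniformly. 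First I would handle a single term: since $J_{\al}(w) = |h_{\al}'(w)|^2 = |w+\al|^{-4}$, the function $\log |J_{\al}(w)| = -4 \log|w+\al|$ has gradient of size $4/|w+\al|$, and because $O_\al$ is non-empty forces $|w+\al|$ to be bounded below (the origin is not a limit point of $O_\al$, as recalled in \S\ref{int:inverse}), there is a uniform constant $M_0$ with $|\nabla \log|J_\al|(w)| \le M_0$ on a neighborhood of each $TO_\al$; one can take $M_0$ to be the supremum over the finitely many ``small'' $\al$ and a crude bound for the rest.

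Next I would insert the contraction: differentiating the composite $\log|J_{\al_k}| \circ (h_{\al_{k+1}} \circ \cdots \circ h_{\al_n})$ along $v$ produces, by the chain rule, the gradient of $\log|J_{\al_k}|$ paired with the image of $v$ under the derivative of $h_{\al_{k+1}} \circ \cdots \circ h_{\al_n}$, whose operator norm is comparable to $|J_{(\al_{k+1},\ldots,\al_n)}(z)|^{1/2} \ll \rho^{(n-k)/2}$ by the conformality and the definition \eqref{contrac} of the contraction ratio (up to a uniform multiplicative constant absorbing the finitely many initial iterates). Therefore each term in the sum is $\le M_0 \cdot C \rho^{(n-k)/2}$, and summing the geometric series over $k$ yields
\[
\left| \frac{\partial_v J_{\ba}(z)}{J_{\ba}(z)} \right| \le M_0 C \sum_{j \ge 0} \rho^{j/2} = \frac{M_0 C}{1 - \rho^{1/2}} =: 2M,
\]
a bound independent of $n$, $\ba$, $z$ and $v$. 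Multiplying through by $|J_{\ba}(z)| > 0$ gives the claim.

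The main obstacle I anticipate is the careful bookkeeping in the second step: one must verify that the derivative of the composition of inverse branches contracts vectors at the geometric rate $\rho^{(n-k)/2}$ \emph{uniformly}, with the multiplicative constant independent of $n$ and of which cells the intermediate points $z_k$ land in. This requires knowing that $z_k$ stays in a fixed neighborhood on which all the $h_{\al}$ extend holomorphically with uniformly controlled derivatives — which is exactly where the finite Markov structure of Proposition~\ref{prop:compatibility} and the fact that $TO_\al$ has non-empty interior (so the holomorphic extension is unique and defined on a definite neighborhood) are used. A minor secondary point is matching the square root: $\|D(h_{\al_{k+1}} \circ \cdots \circ h_{\al_n})(z)\| = |h'_{(\al_{k+1},\ldots,\al_n)}(z)|$ because these maps are conformal, and $|J_{(\al_{k+1},\ldots,\al_n)}| = |h'_{(\al_{k+1},\ldots,\al_n)}|^2$ by \eqref{der:jac}, so the operator norm is genuinely the square root of the Jacobian — this is where the stated bound naturally carries the exponent $1/2$ and hence converges even though $J$ itself decays only like $\rho^{n-k}$.
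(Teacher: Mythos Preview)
Your proof is correct and follows essentially the same approach as the paper: both bound the logarithmic derivative $|\partial_v J_{\ba}|/|J_{\ba}|$ (equivalently $|h_{\ba}''/h_{\ba}'|$) by a geometric series arising from the chain rule and the contraction rate $\rho^{1/2}$, with the single-step bound coming from $|z+\alpha|\ge 1$. Two minor remarks: in your chain-rule product $J_{\al_k}$ should be evaluated at $z_k$ rather than $z_{k-1}$, and the Markov structure you flag as an obstacle is not actually needed---the uniformity follows directly from $h_{\al_k}(z_k)\in I\subset B(0,R)$, which gives $|z_k+\al_k|>1/R$.
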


\begin{proof}
Let $v=(v_1 \frac{\partial}{\partial z}, v_2 \frac{\partial}{\partial \bar{z}})$ be a unit tangent vector in the complex plane so that $v_1^2+v_2^2=1/2$. Then for $n=1$, and $z \in TO_\al,$
\begin{align*} 
 \partial_{v} J_\al (z) &= \partial_{v}|h'_\al(z)|^2 = v_1 \cdot h''_\al(z) \overline{h'_\al(z)}+ v_2 \cdot h'_\al(z) \overline{h''_\al(z)}
\end{align*}
and obtain
\begin{align*} 
\left| \frac{ \partial_{v} J_\al (z) }{ J_\al(z)} \right|&= \left| \frac{v_1 \cdot h''_\al(z) \overline{h'_\al(z)}+v_2 \cdot h'_\al(z) \overline{h''_\al(z)}}{|h_\al'(z)|^2} \right| \numberthis \label{dist:n=1} \\
&= \left| v_1  \frac{h_\al''(z)}{h_\al'(z)}+ v_2 \frac{\overline{h''_\al(z)}}{\overline{h'_\al(z)}}  \right| \\
&\leq 4(v_1^2+v_2^2) \left|   \frac{h_\al''(z)}{h_\al'(z)} \right|  \leq  \frac{2}{|z+\al|} < 2R
\end{align*}
since $1/(z+\al) = h_\al(z) \in O_\al \subset \{z\in \C: |z|<R\}$ by Lemma~\ref{prop:2.1}. 

For $n>1,$ say for $\ba = (\alpha_1,\cdots,\alpha_n)$, i.e. $h_{\ba}= h_{\al_1} \circ \cdots \circ h_{\al_n}$ and $z \in T^nO_\ba,$  let $k_{n-i}=h_{\al_{i+1}} \circ \cdots \circ h_{\al_n}$. By the chain rule of complex derivative and contraction from Lemma~\ref{prop:2.1}, inductively, we have  
\begin{align*}  
\left| \frac{ \partial_{v} J_\al (z) }{2 J_\al(z)} \right| \leq 
\left| \frac{h_\ba''(z)}{h_\ba'(z)} \right| &=  \left| \frac{(h_{\al_1}'' \circ k_{n-1}) (z)}{(h_{\al_1}' \circ k_{n-1}) (z)} \cdot k_{n-1}'(z) + \frac{k_{n-1}''(z)}{k_{n-1}'(z)} \right| \\
&\leq R \rho^{\frac{n-1}{2}} + \left| \frac{(h_{\al_2}'' \circ k_{n-2}) (z)}{(h_{\al_2}' \circ k_{n-2}) (z)} \cdot k'_{n-2}(z) + \frac{k_{n-2}''(z)}{k_{n-2}'(z)} \right| \\
&\leq R(\rho^{\frac{n-1}{2}}+ \cdots + \rho^{\frac{1}{2}}+1),
\end{align*}
which is uniformly bounded by the constant $M:=\frac{2R}{1-\rho^{1/2}}>0$. 
\end{proof}

\section{Finite Markov partition with cell structure} \label{sec:markov}

In this section, we use the work of Ei--Nakada--Natsui \cite{Nakada:pre} to endow $I$ with a cell structure, which we denote by $\mathcal P$. We show that $\mathcal P$ is compatible with $T$ in the sense of Definition \ref{def:compatibility}. Accordingly, we remark the existence of dual inverse branches through their construction of the natural extension and their metric properties, which will be crucially used later in \S\ref{L2:1}.

\subsection{Cell structures on $I$ and function spaces} \label{section:functionspace}
In this subsection, we equip $I$ with a cell structure to define a function space.
A cell structure on a space 
$X$ is a homeomorphism identifying 
$X$ with a regular CW-complex; we hence view 
$X$ as such.
For $k \geq 1,$ a
$k$-cell is the image of an open 
$k$-ball under an attaching map, which is an open subset of the 
$k$-skeleton, and is properly contained in its closure.
A $0$-cell is a vertex, which is equal to its closure.
The structure is finite if the set $\mathcal P$ of cells is finite.

We introduce a finite cell structure $\mathcal P$ on $I$, which is required to have a certain compatibility with the countable partition $\{ \mathcal O_\al\}$ defined in \eqref{part:Oalpha} (see Definition~\ref{def:compatibility} below). For $0 \le i \le 2$, let $\mathcal P[i]$ be the set of cells of real dimension $i$. Since $I \subset \mathbb{C}$, we have $\mathcal P = \bigcup_{i=0}^2 \mathcal P[i]$. For $P \in \mathcal P$, denote by $\overline P$ its closure in $I$. 


\begin{figure}[h]
   \centering
   \includegraphics[width=4.6 in]{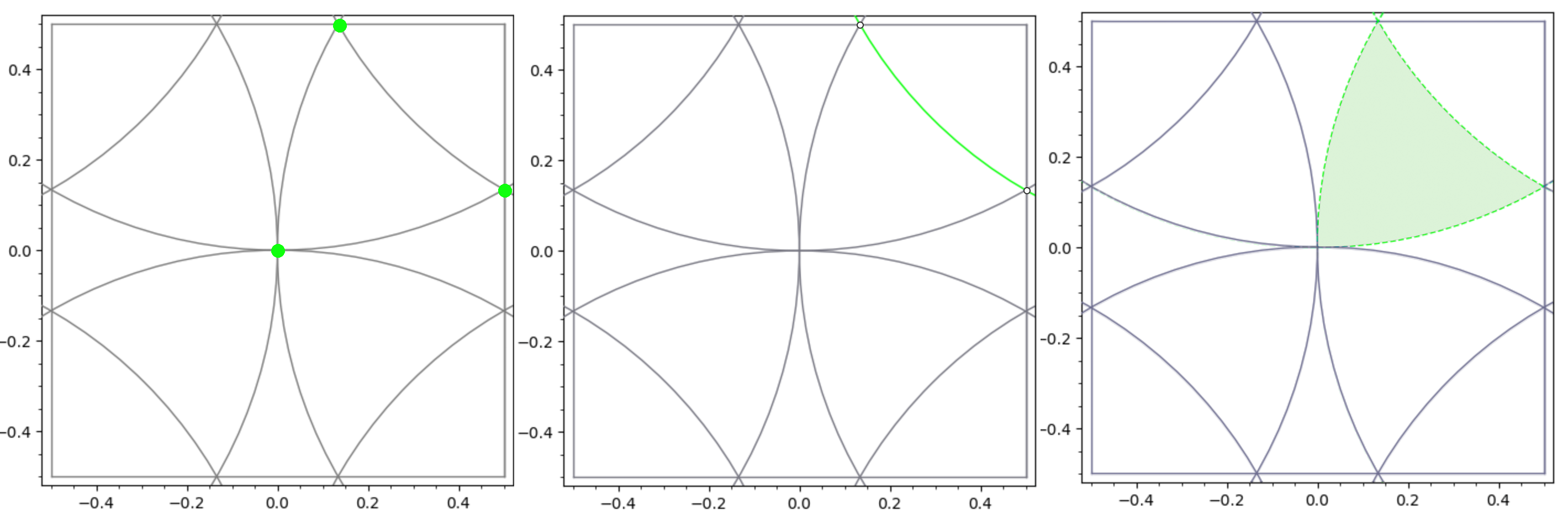} 
   \caption{ Examples of 0, 1, 2-cells in a finite partition $\Pc$ depicted in green ($d=1$).}
   \label{cells}
\end{figure}

    \begin{defn}\label{def:functionspace}
    Define $C^1(\mathcal P)$ to be the space of functions $f \colon I \to \mathbb C$ such that for every $P \in \mathcal P$, $f|_P$ extends to a continuously differentiable function on an open neighbourhood of $\overline P$.
    \end{defn}
    
\begin{rem}
    If $P\in\mathcal P[0]$, then the condition is vacuous since any function on a single point extends uniquely to a constant function on $\mathbb C$.
\end{rem}
    Denote the extension of $f|_P$ to $\overline P$ by $\operatorname{res}_P(f)$.
    By the uniqueness of such an extension, it defines a linear map $\operatorname{res}_P \colon C^1(\mathcal P) \to C^1(\overline P)$.
    They collectively define a linear map
    \begin{align*}
    \operatorname{res}_{\mathcal P} \colon C^1(\mathcal P) &\longrightarrow \bigoplus_{P \in \mathcal P} C^1(\overline P) \numberthis \label{int:fsp}
    \\
    f &\longmapsto \left(\operatorname{res}_P\left( f \right)\right)_{P \in \mathcal P}, 
    \end{align*}
    which is in fact bijective. We introduce a key definition.

    \hide{------
        \begin{prop}\label{intro:dec}
        The map $\operatorname{res}_{\mathcal P}$ is an isomorphism.
        \end{prop}
        \begin{proof}
        We show that $\operatorname{res}_{\mathcal P}$ is an isomorphism by constructing an inverse.
        For $f_P \in C^1(\overline P)$, we let $\tilde f_P \colon I \to \mathbb C$ be the function defined as
        \begin{align*}
            \tilde f_P(z) =
            \begin{cases}
            f_P(z) &\text{if $z \in P$,}
            \\
            0 &\text{if $z \not \in P$.}
            \end{cases}
        \end{align*}
        Define $j \colon 
        \bigoplus_{P \in \mathcal P} C^1(\overline P)
        \to
        C^1(\mathcal P)$
        by sending $(f_P)_{P \in \mathcal P}$ to $\sum_{P \in \mathcal P} \tilde f_P$.
        Since $\mathcal P$ is a set-theoretic partition of $I$, the restriction of $\sum_{P \in \mathcal P} \tilde f_P$ to a given $P \in \mathcal P$ agrees with $f_P$ on $P$. 
        Thus $\sum_{P \in \mathcal P} \tilde f_P$ belongs to $C^1(\mathcal P)$. 
        Once we have defined $j$, it is easyhttps://www.overleaf.com/project/655754eeaf5a9e49d9ce23e8 to verify that $\operatorname{res}_{\mathcal P} \circ j$ and $j \circ \operatorname{res}_{\mathcal P}$ are identity maps, respectively.
        \end{proof}

        Given a cell structure $\mathcal P$ on $I$, the associated function space $C^1(\mathcal P)$ is not necessarily preserved by the transfer operator.
        Therefore, it is natural to restrict our discussion to cell structures which are compatible with $T$ in a suitable sense.
        This motivates the following definition.
        
        ------}

    
    
    \begin{defn}\label{def:compatibility}
    A finite cell structure $\mathcal P$ on $I$ is said to be compatible with $T$ if the following conditions are satisfied.
    \begin{enumerate}
        \item (Markov) For each non-empty $O_\alpha$, $TO_\alpha$ is a union of cells in $\mathcal P$.
        \item For any inverse branch $h_\alpha$ and any $P \in \mathcal P$, either there is a unique element $Q \in \mathcal P$ such that $h_\alpha(P) \subset Q$ or $h_\alpha(P)$ is disjoint from $I$.
    \end{enumerate}
    \end{defn}

    Note that if $\mathcal P$ is compatible with $T$, then the characteristic function $1_{TO_\alpha}$ of $TO_{\alpha}$ belongs to $C^1(\mathcal P)$. The following proposition heavily depends on the work of Ei--Nakada--Natsui \cite{Nakada:pre}.


    
    \begin{propL}[Proposition \ref{part:markov}] \label{prop:compatibility}
    For each of the systems $I_d$ with $d=1,2,3,7,11$, there exists a finite cell structure $\Pc$ compatible with $T$.
    \end{propL}

\begin{figure}[h] \label{d=1_part}
   \centering
   \includegraphics[width=4.2 in]{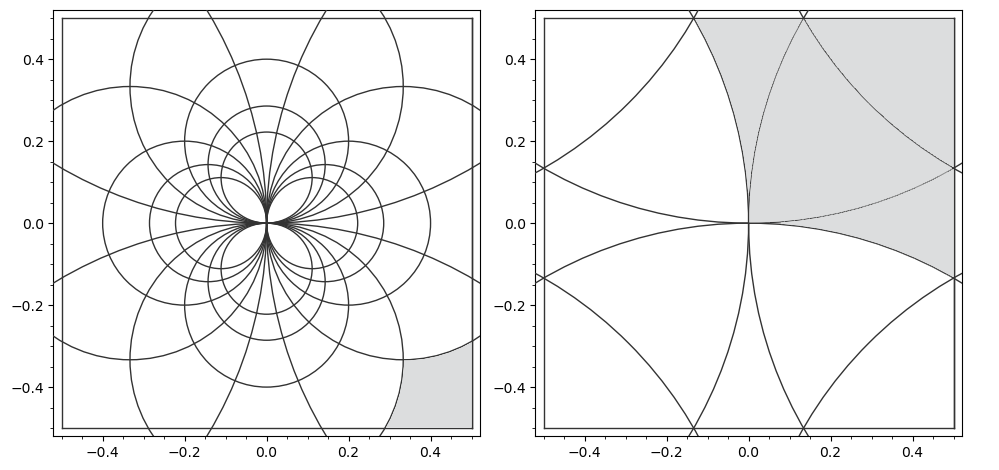} 
   \caption{Partition element $O_{1+i}$ and image $TO_{1+i}$ (as a disjoint union of cells in a finite partition $\Pc$) depicted in grey ($d=1$).}
   \label{fig:1}
\end{figure}

\hide{--------
Let $\alpha \in \Ac$. For each $z=(x,y) \in TO_\al$, we have $h_\alpha(z) \in O_\al \subset I$, thus $(a+x)^2+(b+y)^2 \leq R < 1$. On the other hand, differentiation yields
\[\left| \frac{\partial J_\al(x,y)}{\partial x} \right| = \frac{4\left | a+x \right |}{((a+x)^2+(b+y)^2)^3},  \]
and we have
\[
\max \left(
\left| \frac{\partial J_\al(x,y)}{\partial x} \right| 
,
\left |\frac{\partial J_\al(x,y)}{\partial y} \right| 
\right)
= \frac{4 \max \left( \left | a+x \right |, |b+y| \right)}{((a+x)^2+(b+y)^2)^3} \le \frac{4}{\left((a+x)^2+(b+y)^2\right)^2},
\]
establishing the first upper bound with $M(1)= 2 \sqrt 2$.

To proceed by induction, assume that $M(1), M(2), \cdots,  M(n-1)>0$ with desired properties have been found. Let $ \alpha = (\alpha_1,\alpha_2,\cdots,\alpha_n)$ and put $ \beta = (\alpha_2,\cdots,\alpha_n)$. Then, applying the chain rule to $h_{\alpha_1} \circ h_{ \beta}$, we obtain
\[
J_{ \alpha}(x,y) 
= J_{\alpha_1}(h_{ \beta}(x,y)) \cdot J_{ \beta}(x,y) 
\]
whose partial derivative is
\[
\frac{\partial J_{ \alpha}}{\partial x}(x,y) 
=
\frac{\partial h_{ \beta}}{\partial x}(x,y) 
\cdot
\frac{\partial J_{\alpha_1}}{\partial x}(h_{ \beta}(x,y))
\cdot 
J_{ \beta}(x,y) 
+
J_{\alpha_1}(h_{ \beta}(x,y))
\cdot 
\frac{\partial J_{ \beta}}{\partial x}(x,y).
\]
The condition $h_{\alpha_1}(P_1) \subset Q_1$ implies that $\left |\frac{\partial h_{ \beta}}{\partial x}(x,y) 
\right| \le 1$, and the induction hypothesis ensures that
\begin{align*}
    \left|\frac{\partial J_{\alpha_1}}{\partial x}(h_{ \beta}(x,y))\right|
    & \le 
    M(1) \left| J_{\alpha_1}(h_{ \beta}(x,y)) \right|
    \\
    \left | \frac{\partial J_{ \beta}}{\partial x}(x,y)\right |
    & \le
    M(n-1) \left| J_{ \beta}(x,y)\right|
\end{align*}
which we use to obtain
\begin{align*}
    \left | 
    \frac{\partial J_{ \alpha}}{\partial x}(x,y) 
    \right |
    \le
    \left(M(1)+M(n-1)\right) \left| J_{ \alpha}(x,y) \right| .
\end{align*}
We established the desired bound with $M(n) = M(1) + M(n-1)$.
----------}

\subsection{Cell structure from lines and circles}
In this subsection, we introduce a procedure which yields a cell structure that is compatible with $T$. 
Here, we focus on the procedure and its property will be verified in the following subsection.

The finite cell structure $\mathcal P$ for $I$ will be constructed from a finite set $Z$ of lines and circles.
We begin by noting that $I=I_d \subset \C$ is a bounded convex closed subset whose boundary $\partial I$ is a piecewise linear closed curve embedded in $\mathbb C$.
Suppose that we are given a finite set $Z$ of lines and circles in $\C$ that contains all lines that are extensions of a line segment in $\partial I$.
This condition means that $Z$ contains four (for $d=1,2$) or six (for $d=3,7,11$) lines obtained by elongating the sides of $\partial I$.

Given such a finite set $Z$, define the \textit{local dimension} function
$r \colon I \to \{0,1,2\}$
by 
\begin{align} \label{eq:ftn-r}
    r(x) &= 
    \begin{cases}
    2 &\text{if $x$ belongs to no line or circle in $Z$,}
    \\
    1 &\text{if $x$ belongs to exactly one line or circle in $Z$,}
     \\
   0 &\text{if $x$ belongs to two or more lines or circles in $Z$.}
    \end{cases}
\end{align}
\begin{prop}\label{prop:cell-from-lines}
    The union of the collection $\mathcal P[i]$ of connected components of $r^{-1}(i)$ with $i=0,1,2$ defines a cell structure on $I$.
\end{prop}
\begin{proof}
    Let $\ell_1,\cdots,\ell_k$ be the sides of $I$, i.e. the line segments of $\partial I$, which extend to distinct lines $\tilde l_1, \cdots \tilde l_k$, respectively. By assumption, $Z$ contains the lines $\tilde \ell_i$'s.
    We use induction on $|Z|$. The smallest possible $Z$ is the set of lines $\tilde \ell_1,\cdots,\tilde \ell_k$. Since $I$ is convex, $\mathcal P[2]$ is the interior of $I$. A member of $\mathcal P[1]$ is the set $\ell_i$ minus its endpoints for some $i$. Finally $\mathcal P[0]$ consists of intersections of $\ell_i$'s. The collection $\Pc = \cup \Pc[i]$ is clearly a cell structure of $I$.
    
    Now suppose that we have a cell structure $\mathcal P_Z$ arising from $Z$ and let $z$ be an additional line or circle.
    We claim that $Z\cup\{z\}$ refines $\mathcal P_Z$ and yields a cell structure.
    Define a function
    $$
    s \colon z \to \{0,1\}
    $$
    by the rules $s(x)=1$ if $x$ belongs to no line $w \in Z$ and $s(x)=0$ otherwise.
    Since we only have lines and circles in $Z$, $s^{-1}(0)$ is finite and $s^{-1}(1)$ has a finite number of bounded connected components, say $m$.
    Each bounded component intersecting $I$ divides a member of $\mathcal P_Z[2]$ into two distinct connected components. Proceeding by induction on $m$, one sees that $\mathcal P_{Z \cup \{z\}}$ is a cell structure on $I$.
\end{proof}

Now we apply Proposition~\ref{prop:cell-from-lines} in order to specify a cell structure on $I$.
For example, when $d=1$, the set $Z$ of lines or circles will be those in Figure~\ref{cells}. In general, we want to find the set $Z$ of lines and circles that is stable under $T$. 
In \cite[\textsection\,4]{Nakada:pre}, the authors define a sequence of subsets
$$
\mathcal W_0 :=\partial I, \mathcal W_1 : =T(\partial I), \mathcal W_2, \mathcal W_3, \cdots  \subset I,
$$
where 
$\mathcal W_n$ is defined recursively as the union of sets of the form\footnote{In \cite{Nakada:pre}, p.3894 does not have ``$\cap I$" but it is most likely a typo.}
\begin{align}\label{eq:Wi}
\left(\left(w - b\right) \backslash \mathcal W_0 \right)\cap I
\end{align}
with
\begin{itemize}
    \item $w$ being a line or a circle which extends a line segment or an arc in $\mathcal W_{n-1}^{-1}$,
    \item $b \in \mathcal O$ satisfies $A \cap \left(I+b\right) \cap I^{-1}\not = \emptyset$.
\end{itemize}
Here, for a set $S$, denote $S^{-1} : = \{ 1/z: z \in S\}.$

One of the main results in \cite{Nakada:pre} is a case-by-case analysis of CF map to construct the so-called finite range structure (see Theorem\,1 of \cite{Nakada:pre}). A sufficient condition for the existence of a finite range structure is provided in Theorem\,2 of \cite{Nakada:pre}, and is verified by means of explicit calculations, which is quite extensive when $d\not = 1,3$. This sufficient condition is reproduced below as Theorem\,\ref{thm:nakada}.

\begin{thm}[Ei--Nakada--Natsui \cite{Nakada:pre}] \label{thm:nakada}
For the complex Gauss system $(I,T)$, there exists $n_0=n_0(d) \geq 1$ such that 
\begin{align}\label{eq:stable}
\mathcal W_{n_0+1} \subseteq \bigcup_{j=1}^{n_0} \mathcal W_j.
\end{align}
and the set
$
\mathcal W:= \bigcup_{n \ge 0} \mathcal W_n
$
is a finite union of line segments and arcs.
\end{thm}

\begin{defn} Let $Z(\mathcal W)$ be the set of all lines and circles extending line segments or arcs of $\mathcal W$. 
    Define $\mathcal P_{\operatorname{ENN}}$ to be the cell structure on $I$ induced by $Z(\mathcal W)$.
\end{defn}
For the equations of members of $Z(\mathcal W)$,
see \cite[\textsection\,4]{Nakada:pre}.
From now on, we denote $\mathcal P=\mathcal P_{\operatorname{ENN}}$ as our discussion will be restricted to the cell structure $\mathcal P_{\operatorname{ENN}}$.

\subsection{Compatibility of $\mathcal P$ and $T$
}

Here is a consequence of Theorem\,\ref{thm:nakada}.
\begin{lem}\label{lem:r-monotone}
    If $x,y \in I$ satisfies $T(x)=y$, then $r(x) \ge r(y)$.
\end{lem}
\begin{proof}
Suppose that $w \in Z(\mathcal W)$ passes through $x$. Unfolding the conditions defining $\mathcal W_n$'s, if a small neighborhood -- a line segment or an arc --  of $x$ belongs to $w$, it will be a subset of $\mathcal W$. Hence, it generates another $w' \in Z(\mathcal W)$ which passes through $y$. To verify $r(x) \le r(y)$, we consider three cases.
If $r(y)=2$, then there are no $w' \in Z(\mathcal W)$ which passes through $y$, by definition. The above observation shows that no $w \in Z(\mathcal W)$ passes through $x$, showing that $r(x)=2$.

If $r(y)=1$, then there is exactly one $w' \in Z(\mathcal W)$ passing through $y$. We need to show that there are at most one $w \in Z(\mathcal W)$ passing through $x$. 
Since $w'$ is the unique member of $Z(\mathcal W)$ passes through $y$, there are two possibilities. If $y \in \partial I$, then there is a unique $P \in \mathcal P[2]$ such that $y \in \overline{\mathcal P}$. If $y \not \in \partial I$, then there are exactly two distinct $P,Q \in \mathcal P[2]$ whose closure contains $y$. In the former case, there is exactly one $w$ passing through $y$. In the latter, there are exactly one such $w$ when $P$ and $Q$ belong to different $O_\alpha$'s, or no such $w$ if $P$ and $Q$ are contained in a single $O_\alpha$ for some $\alpha \in \mathcal A$.
If $r(y)=0$, there is nothing to prove.
\end{proof}

\begin{prop} \label{part:markov} The cell structure $\mathcal P$ is compatible with $T$, i.e.
\begin{enumerate}
\item For each non-empty $O_\alpha$, $TO_\alpha$ is a disjoint union of cells in $\mathcal P$.
\item For each inverse branch $h_\alpha$ and $P \in \mathcal P$, either there is a unique member $Q \in \mathcal P$ such that $h_\alpha(P) \subset Q$ or $h_\alpha(P)$ is disjoint from $I$.
\end{enumerate}

\end{prop}

\begin{proof}
(1) This follows immediately by definition of $\mathcal W_1$. 

(2) 
It is enough to show that  if $h_\al(P) \cap I \neq  \varnothing$, then $h_\al (P)\cap w =  \varnothing$ for all $w \in Z(\mathcal W)$.
If $h_\al(P) \cap w \neq  \varnothing$ for some $w \in Z(\mathcal W)$, then $\mathcal O_\al \cap w \neq  \varnothing$ since $\mathcal O_\al$ contains $h_\al (P)$. It follows that $\mathcal O_\alpha ^{-1} \cap w^{-1} \neq  \varnothing$, which in turn implies that $(I+\al) \cap w^{-1} \neq  \varnothing.$
Thus $\al$ is one of the elements that are used in the inductive process of constructing $Z(\mathcal W)$. It follows that $w = h_\al (w')$ for some $w' \in Z(\mathcal W)$, since $Z(\mathcal W)$ is stable under the inductive process \eqref{eq:stable}.
In other words, $h_\al (P) \cap h_\al (w') \neq  \varnothing$, i.e. $(P + \al)^{-1} \cap (w'+\al)^{-1} \neq  \varnothing.$ 
Thus we conclude that $P \cap w' \neq  \varnothing$, which contradicts the construction of the partition $\mathcal P$. See Figure \ref{al=1+2i}. 
\end{proof}
\begin{figure}[h] 
   \centering
   \includegraphics[width=4.5 in]{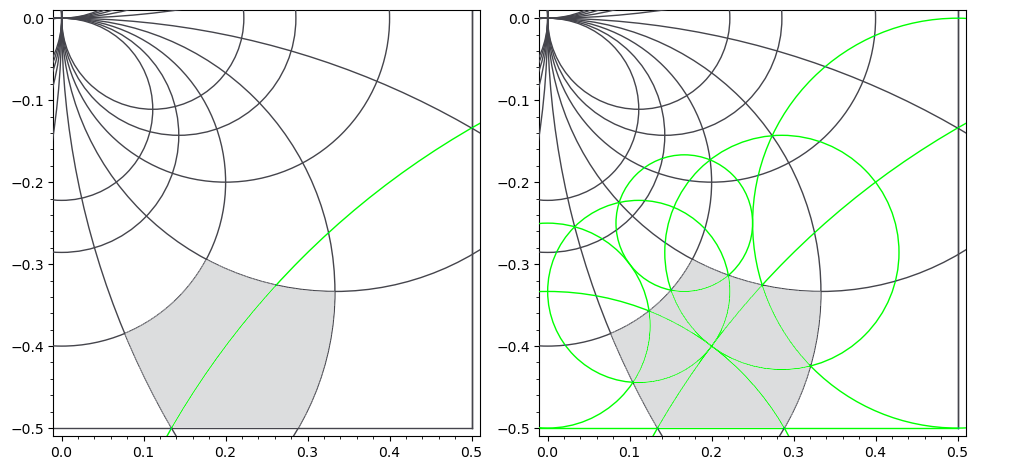} 
   \caption{Boundary of cells in $\Pc$ induced by a circle in $W_0 \cup W_1$ intersecting $I+(1+2i)$, and all images of $h_{1+2i}(P)$ inside $O_{1+2i}$ depicted in grey $(d=1)$. }
   \label{al=1+2i}
\end{figure}

\hide{----

\begin{proof}[Alternative proof]
(1) This follows from the construction of $\mathcal W_1$ from \S\ref{sec:nakada}. 
\par
(2) It suffices to show that $r$ is constant on $h_\alpha(P)$. Indeed, if $r$ on $h_\alpha(P)$ takes the constant value $i \in \{0,1,2\}$, then from the connectedness of $P$ we see that there will be a unique connected component, say $Q$ of $r^{-1}(i)$, such that $h_\alpha(P) \subset Q$. 

We proceed to show the constancy of $r$ on $h_\alpha(P)$. By definition of $\mathcal P$, $r$ is constant on $P$, say $r(x)=j$ for all $x \in P$. We consider each of the cases $j=0,1,2$. If $j=0$, there is nothing to prove because $h_\alpha(P)$ is a point. If $j=2$, then Lemma\,\ref{lem:r-monotone} implies that $r(x)=2$ for all $x \in h_\alpha(P)$. 

The last case $j=1$ remains. In that case, the only way it may fail to be constant is $r(h_\alpha(P))=\{1,2\}$, given the constraint from Lemma\,\ref{lem:r-monotone}. We claim that this is impossible. With $j=1$, $P$ is the boundary of some two-dimensional cell $P'$ which is contained in $TO_\alpha$. We first argue that there can be at most one such $P'$. Indeed, if $P',P'' \subset TO_\alpha$ have $P$ as boundary, then $h_\alpha(P)$ is contained in the boundaries of $h_\alpha(P')$ and $h_\alpha(P'')$, which are two-dimensional cells which share $h(P)$ as a boundary segment. It shows that $r\equiv 1$ on $h(P)$. Now we consider the case when there is exactly one $P'$. This happens only if $P$ is contained in $\partial I$. If $r(h_\alpha(P))=\{1,2\}$, then there is some $x \in h_\alpha(P)$ with $r(x)=2$. Since $r^{-1}(2)$ is discrete, $r(x')=1$ if $x' \in h_\alpha(P)$ near $x$ while $x \not = x'$. So there is $w \in Z(\mathcal W)$ such that $w$ passes through $x$ but does not extend $h(P)$. Then $Tw$ must cut through $P'$, which is a contradiction. 
\end{proof}
----}

\hide{----

For part (2), notice that there are a finite or countable number of $\al$'s such that $O_\al$ intersects with several elements of $\Pc$. For instance, for $d=1$, we have $\al=1+2i, 2+i, 2+2i$ (for which $O_\al$ lies in 4th quadrant and intersects two disjoint 2-cells and 1-cell in $\Pc$) and its rotations. When $d=3$, we have infinitely many such $\al$'s; e.g., we have $\al=\frac{m+(m-1)\sqrt{-3}}{2}$ $(m \geq 2)$ such that $O_\al$ lies in 4th quadrant and intersects with two disjoint 2-cells and 1-cell. In general, for other cases of $d$, it may intersect with more than three cells.

Since $O_\al=h_\al(TO_\al)$, the above can be understood as $I+\al$ intersecting with a finite number of circles $w$ in $W:=W_0 \cup \cdots \cup W_{n_0(d)}$. Under the inversion map, these circles map to circles that induce the boundaries of cells in $\Pc$ that intersect with $O_\al$ in $I$. Further, from the construction of $W_j$, circles $w-\al$ belong to $W$, i.e., each $w$ is a boundary of $P+\al$ for some $P \in \Pc$. Hence all the images of $P+\al$ under the inversion induce the circles intersecting with $O_\al$, where some of them coincide with the boundaries of cells, and preserving the angles at the intersections (0-cells) due to conformality of the linear fractional transformation $h_\al$ being homography. See Figure \ref{al=1+2i}.

-----}
\hide{------

Ei--Nakada--Natsui \cite[Theorem 2]{Nakada:pre} showed that $W_n$ stabilises. 



Initialize $W_1$ as above. Denote by $w^{-1}$ the  image of $w \in W_1$ under the inversion map $z \mapsto 1/z$. Each of them may intersect with the interior of $I+\al$ for some (multiple) $\al$. We denote by $\Ac(w) \subset \Ac$ the set of such $\al$'s. Define

\begin{equation} \tag{\textasteriskcentered} \label{algorithm}
W_2:= \bigcup_{w \in W_1} \bigcup_{\al \in \Ac(w)} (w^{-1}-\al) .
\end{equation}
If $W_2 \neq W_1$, then return to Step \eqref{algorithm} and one can obtain $W_n$ inductively. Ei--Nakada--Natsui \cite[Theorem 2]{nakada:pre} show that this algorithm stabilises in a finite step.

------}



\hide{
\begin{expl}[$d=1$]
The set $W_0$ contains the lines $\ell_1^\pm: \mathrm{Re}(z)=\pm 1/2$ and $\ell_2^\pm:\mathrm{Im}(z)=\pm 1/2$. Then the step \eqref{algorithm} induces the circles $C_1^\pm:|z \pm 1|=1$, $C_2^\pm: |z \pm i|=1$, $C_3^\pm:|z \pm (1+i)|=1$, and $C_4^\pm:|z \pm (1-i)|=1$ in $W_1$.
Then we see, under the inversion map, $\ell_1^\pm$ maps to $C_1^\mp$, $\ell_2^\pm$ maps to $C_2^\mp$, and $C_3^\pm$ maps to $C_4^\mp$. Hence, $W_2=W_1$ and $n_0=1$. 
\end{expl}
}

\subsection{Natural extension and dual inverse branch}
In this subsection, we record some of the results of \cite{Nakada:etal2} that will be used later.

Based on Theorem \ref{thm:nakada}, Ei-Nakada-Natsui constructed the natural extension map $\widehat{T}$ of $T$ and found a subset of $ I \times \C$ on which $\widehat{T}$ is bijective (modulo a null set) as follows.

For $z \in I$, write $\frac{P_n}{Q_n}=[ \al_1, \ldots, \al_n]$ for the $n$-th convergent of $z$. Remark that we have $\frac{Q_{n-1}}{Q_n}=[\al_n, \ldots, \al_1]$. Put
\[ V_z^*:=\overline{\left\{-\frac{Q_n(z_n)}{Q_{n-1}(z_n)} : z_n \in I, T^n(z_n)=z, n \geq 1   \right\}} \cup \{ \infty \} \]
and define $\widehat{T}$ on $\widehat{I}=\{(z,w): z \in I, w \in V_z^* \}$ by $\widehat{T}(0,w)=(0, \infty)$ and 
\[ \widehat{T}:(z,w) \longmapsto \left(T(z), \frac{1}{w}-\al_1(z) \right)  \ \ (z \neq 0). \]
 
For each $P \in \Pc$ and $z \in P$, $(V_z^*)^{-1}$ lies in the closed unit disk \cite[\S5]{Nakada:pre}. Hence, we have a bounded fractal domain $I^*=\cup_{P \in \Pc} P^*$ which is contained in the closed unit disk, where
\[ P^*:= \bigcup_{z \in P}(V_z^*)^{-1}= \bigcup_{z \in P} \overline{\left\{\frac{Q_{n-1}(z_n)}{Q_{n}(z_n)} : z_n \in I, T^n(z_n)=z, n \geq 1   \right\}} . \]



\hide{------

\begin{thm} \label{dual:corresp}
There is a piecewise invertible map $T^* \colon I^* \to I^*$ whose inverse branches in all depths are in one-to-one correspondence under $h_{\alpha^*}\mapsto h_{\alpha}$. Moreover, $(I^*,T^*)$ is locally expanding.
\end{thm}

--------}

In view of continued fraction expansion, if the sequence of digits $\ba=(\al_1, \cdots, \al_n)$ is an expansion for $z \in I$, then the backward sequence $\ba^*=(\al_n, \cdots, \al_1)$ is also an admissible expansion for some $w \in I^*$. We denote by $h_{\ba^*}$ the corresponding inverse branch and call this the dual inverse branch. 

Denote by $\mathrm{Leb}$ the Lebesgue measure on $\R^2$. We then have: 

\begin{prop} \label{dual:metric} 

For $d \in \{1,2,3,7,11 \}$, there is a positive constant $R=R_d < 1$ such that
for $h_{\ba^*} \in \Hc^{*n}$ and $P \in \mathcal P,$
\begin{enumerate}
\item $\mathrm{Diam}(h_{\ba^*}(P^*)) \leq R^{2(n-1)} |1-R|^{-1}.$

\item $\mathrm{Leb}(h_{\ba^*}(P^*)) \leq \left(R^{2(n-1)} |1-R|^{-1}\right)^2$.
\end{enumerate}
\end{prop}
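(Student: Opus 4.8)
The key observation is that each dual inverse branch $h_{\ba^*}$, being a composition of $n$ maps of the form $z \mapsto 1/(z+\alpha)$, is conformal, so its derivative controls both the diameter and the area of the image. The plan is to first establish a pointwise bound on $|h_{\ba^*}'|$ on a neighborhood of the bounded domain $I^*$, and then integrate.

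\textbf{Step 1: uniform derivative bound.} Write $h_{\ba^*} = h_{\alpha_n} \circ \cdots \circ h_{\alpha_1}$ (the backward composition), and let $k_j$ denote the tail composition, so that by the chain rule $h_{\ba^*}' = \prod_{j} (h_{\alpha_j}' \circ k_{j-1})$. Since $I^*$ is contained in the closed unit disc and, more precisely, since for every admissible digit $\alpha$ and every $z$ in (a neighborhood of) the relevant piece of $I^*$ one has $|z + \alpha|$ bounded below — this is exactly the geometric fact already used in the proof of Proposition~\ref{distortion1}, that the unit circle lies strictly inside the union of the inversion images of the boundary curves — each factor satisfies $|h_{\alpha_j}'(k_{j-1}(w))| = |k_{j-1}(w) + \alpha_j|^{-2} \le R^2$ for the constant $R = R_d < 1$. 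Hence $\sup_{w} |h_{\ba^*}'(w)| \le R^{2n}$. (One should be slightly careful and extract one factor separately to match the exponent $2(n-1)$ in the statement, absorbing the first map, whose image already lies in $I^*$, into the constant; this is where the clean power $R^{2(n-1)}$ comes from rather than $R^{2n}$.)

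\textbf{Step 2: from derivative bound to diameter and area.} For the diameter, note that $h_{\ba^*}$ maps $P^*$ into $I^*$, and $P^*$ is contained in the closed unit disc, so $P^*$ — or rather a convex neighborhood of it within the domain of holomorphy — has diameter $O(1)$; integrating $|h_{\ba^*}'| \le R^{2(n-1)}$ along a path of length $O(1)$ joining two points of $h_{\ba^*}(P^*)$ yields $\mathrm{Diam}(h_{\ba^*}(P^*)) \le R^{2(n-1)} \cdot |1-R|^{-1}$, where the factor $|1-R|^{-1}$ is a (crude) bound for the geometric-type constant coming from the diameter of the union $I^* = \cup P^*$ and the domain neighborhoods; one may need to adjust $R$ slightly upward (still $<1$) so that this bound is genuinely valid with the stated constant. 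For the area bound (2), recall from \eqref{der:jac} that the real Jacobian of a conformal map is $J = |h'|^2$, so
\[
\mathrm{Leb}(h_{\ba^*}(P^*)) = \int_{P^*} |h_{\ba^*}'(w)|^2 \, d\mathrm{Leb}(w) \le R^{4(n-1)} \, |1-R|^{-2}\,
\]
upon bounding $\mathrm{Leb}(P^*) \le |1-R|^{-2}$ as part of the same geometric normalization (again $P^* \subset$ unit disc gives $\mathrm{Leb}(P^*) = O(1)$, and one absorbs this into the constant). This gives exactly $\left(R^{2(n-1)}|1-R|^{-1}\right)^2$.

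\textbf{Main obstacle.} The only real subtlety is bookkeeping the constants so that the bounds come out with precisely the factor $|1-R|^{-1}$ (respectively its square) and the exponent $2(n-1)$ rather than, say, $C_d R^{2n}$ for an unspecified $C_d$. This forces one to be careful about: (i) peeling off the innermost inverse branch $h_{\alpha_1}$ and using that $h_{\alpha_1}(P)$ already sits inside the uniformly bounded set $I^*$; (ii) choosing the holomorphic neighborhoods of the $P^*$ on which the derivative estimate $|h_\alpha'| \le R^2$ is uniform — this relies on the finiteness of the Markov partition $\mathcal P$ (Proposition~\ref{prop:compatibility}) and the boundedness of $I^*$ from Ei--Nakada--Natsui's construction, both already available; and (iii) possibly enlarging $R$ to a slightly larger number still $<1$ so that the single constant $|1-R|^{-1}$ dominates both the path-length and the area normalizations simultaneously. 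None of these steps is deep, but the statement is phrased with explicit constants, so the write-up must track them honestly rather than hide everything in $\ll$.
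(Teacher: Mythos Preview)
Your Step~1 contains a real gap. You claim each chain-rule factor satisfies $|h_{\alpha_j}'(k_{j-1}(w))| \le R^2$, by analogy with Propositions~\ref{prop:2.1}--\ref{distortion1}. But those results concern inverse branches acting on $I$: there $h_\alpha(z) \in O_\alpha \subset I$, so $|h_\alpha'(z)| = |h_\alpha(z)|^2 \le R^2$. For the \emph{dual} branches the intermediate images $k_{j-1}(w)$ lie in $I^*$, which is only known to sit in the closed unit disc, not in a disc of radius $R<1$. Hence the per-step bound is merely $|h_{\alpha_j}'| \le 1$, which yields no contraction.

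The paper does not split into individual steps. Instead it uses the convergent $P^*_n/Q^*_n$ as a common reference point and the classical approximation bound
\[
\Bigl|w - \frac{P^*_n}{Q^*_n}\Bigr| \le \frac{1}{|Q^*_n|^2\,|1-R|}
\]
for every $w \in h_{\ba^*}(P^*)$. The key growth $|Q^*_n| \ge R^{-(n-1)}$ comes from the mirror identity $Q^*_{j-1}/Q^*_j = [0;\alpha_{n-j+1},\ldots,\alpha_n]$: this is a \emph{suffix} of the forward-admissible sequence $\ba$, hence an admissible forward expansion, so it lies in $I$ (not $I^*$) and has modulus $\le R$. That is where the constant $R<1$ actually enters. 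Your chain-rule factors $|w_j|^2$ (with $w_j \in I^*$) never see this; although their product telescopes to $|P_n z^*+Q_n|^{-2}$, so estimating this denominator directly via the $Q^*_j$ growth would repair your argument and make it essentially coincide with the paper's. Part~(2) then follows from~(1) immediately, as in your Step~2.
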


\begin{proof}
Let $C_d=1/R_d$, where $R_d<1$ is the minimal radius of the ball containing $I_d$ centered at the origin. Since $Q^*_{n-1}/Q^*_n  \in I_d,$ it follows that $C_d |Q^*_{n-1}|<|Q^*_n|$. For instance, we have $C_1 = \sqrt{2}.$

Following Ei--Ito--Nakada--Natsui \cite{Nakada:etal} (which covers the case $d=1$), we have
\begin{align*}
\left|w - \frac{P^*_n}{Q^*_n}\right| & \leq \frac{1}{|Q^*_n|^2 \left| 1 + T^n_d(w) \frac{Q^*_{n-1}}{Q^*_n}\right|}\\
 & \leq \frac{1}{|Q^*_n|^2| |1-1/C_d|} \leq R_d^{2(n-1)} |1-R_d|^{-1}.
\end{align*}
Then (2) follows immediately from (1). 
\end{proof}

\begin{prop} \label{distortion2} 

There exist $L_1, L_2>0$ such that for any $n \geq 1$ and $h_\ba \in \Hc^n$, all $z_1, z_2 \in I$,
$$ L_1 \leq \left | \frac{h_\ba'(z_1)}{h_{\ba}' (z_2)} \right|  \leq L_2 .$$
The same property holds for the dual inverse branch $h_{\ba^*} \in \Hc^{*n}$.
\end{prop}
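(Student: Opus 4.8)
The plan is to exploit that every inverse branch $h_\ba = h_{\al_1}\circ\cdots\circ h_{\al_n}$, being a composition of the Möbius maps $z\mapsto 1/(z+\al_j)$, is itself a Möbius transformation, and to locate its pole explicitly. Writing $P_k/Q_k = [0;\al_1,\ldots,\al_k]$ for the convergents of $\ba=(\al_1,\ldots,\al_n)$, a straightforward induction on $n$ gives
\[
h_\ba(z) = \frac{P_{n-1}\,z + P_n}{Q_{n-1}\,z + Q_n},
\qquad
h_\ba'(z) = \frac{P_{n-1}Q_n - P_nQ_{n-1}}{(Q_{n-1}z + Q_n)^2} = \frac{\pm\, Q_{n-1}^{-2}}{(z-z_\ba)^2},
\]
where $z_\ba := -Q_n/Q_{n-1}$ is the unique pole of $h_\ba$ and $|P_{n-1}Q_n - P_nQ_{n-1}| = 1$. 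Hence for all $z_1,z_2\in I$ one has
\[
\left|\frac{h_\ba'(z_1)}{h_\ba'(z_2)}\right| = \left(\frac{|z_2 - z_\ba|}{|z_1 - z_\ba|}\right)^2 ,
\]
and the task reduces to bounding $|z_i - z_\ba|$ from below and above, uniformly in $n$ and $\ba$.

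First I would control the pole location via the dual-domain structure of Ei--Nakada--Natsui \cite{Nakada:pre}: the reversal identity $Q_{n-1}/Q_n = [0;\al_n,\ldots,\al_1]$ exhibits $Q_{n-1}/Q_n$ as a ratio of consecutive denominators in the reversed (dual) continued fraction, and the argument in the proof of \propref{dual:metric} — which yields $|Q^*_{n-1}/Q^*_n| < R_d$ — gives $|Q_{n-1}/Q_n| \le R_d < 1$, hence $|z_\ba| \ge 1/R_d$. Since $I \subset B(0,R_d)$, this forces $\operatorname{dist}(z_\ba, I) \ge \eta_d := 1/R_d - R_d > 0$, a constant depending only on $d$. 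Combining $|z_i - z_\ba| \ge \eta_d$ with $|z_2 - z_\ba| \le |z_1 - z_\ba| + \mathrm{Diam}(I)$ and its symmetric counterpart yields
\[
\Lambda^{-1} \le \frac{|z_2-z_\ba|}{|z_1-z_\ba|} \le \Lambda, \qquad \Lambda := 1 + \frac{\mathrm{Diam}(I)}{\eta_d},
\]
so the claim holds with $L_1 = \Lambda^{-2}$ and $L_2 = \Lambda^{2}$, independent of $n$ and $\ba$. (Alternatively, one may integrate the infinitesimal bound $\bigl|\partial_v\log|h_\ba'|\bigr| \le M$ of \propref{distortion1} along a path in $I$; the explicit Möbius formula has the advantage of making the uniformity in the depth $n$ transparent and of sidestepping any question about the domain on which the estimate is valid.) For the dual inverse branches $h_{\ba^*}\in\Hc^{*n}$ the argument is verbatim the same: $h_{\ba^*}$ is again Möbius, its pole is $-Q^*_n/Q^*_{n-1}$ with $|Q^*_{n-1}/Q^*_n| < R_d$ by the proof of \propref{dual:metric}, and $I^*$ is contained in the closed unit disc, hence of bounded diameter and at distance $\ge 1-R_d>0$ from the pole; after enlarging $L_2$ and shrinking $L_1$ if necessary, a single pair of constants serves both families.

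The only genuine obstacle is the uniform-in-depth bound $|Q_{n-1}/Q_n|\le R_d<1$ — equivalently, that the reversed continued fraction cannot approach the unit circle — and this is precisely what the finite-range property and the bounded dual domain $I^*$ of Ei--Nakada--Natsui provide (and what \propref{dual:metric} quantifies). Everything else is the elementary computation recorded above.
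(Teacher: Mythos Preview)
Your approach is essentially the paper's own: both write $h_\ba$ as the M\"obius map $(P_{n-1}z+P_n)/(Q_{n-1}z+Q_n)$, reduce the derivative ratio to $|Q_{n-1}z_2+Q_n|^2/|Q_{n-1}z_1+Q_n|^2$, and then bound the factors using $|z_j|\le R_d$ together with a bound on $|Q_{n-1}/Q_n|$. Your formulation in terms of the pole $z_\ba=-Q_n/Q_{n-1}$ is just a rewriting of the same computation.

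There is one misstep worth flagging. You claim $|Q_{n-1}/Q_n|\le R_d$ by invoking the bound $|Q^*_{n-1}/Q^*_n|<R_d$ from \propref{dual:metric}, but these are different quantities: transposing the matrix identity \eqref{eq:gl2} gives $Q^*_{n-1}=P_n$ and $Q^*_n=Q_n$, so $Q^*_{n-1}/Q^*_n=P_n/Q_n$, not $Q_{n-1}/Q_n$. What the reversal identity $Q_{n-1}/Q_n=[0;\al_n,\ldots,\al_1]$ actually yields is $Q_{n-1}/Q_n=h_{\ba^*}(0)\in I^*$, hence only $|Q_{n-1}/Q_n|\le 1$, which is exactly the bound the paper uses. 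Fortunately this weaker bound still gives $|z_\ba|\ge 1$ and $\eta_d\ge 1-R_d>0$, so your argument survives with this correction; only the explicit constants change.
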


\begin{proof}

Notice that $h_\ba$ with an admissible $\ba=(\al_1, \cdots, \al_n)$ corresponds to $\GL_2(\Oc)$ matrices with determinant $\pm 1$, 
\[ \twobytwo{0}{1}{1}{\al_1} \cdots \twobytwo{0}{1}{1}{\al_n}=\twobytwo{P_{n-1}}{P_n}{Q_{n-1}}{Q_n} .  \]
Thus we have $h_\ba(z)=\frac{P_{n-1}z +P_n}{Q_{n-1}z +Q_n}$ and $h_{\ba^*}(z^*)=\frac{P_{n-1}z^* +Q_{n-1}}{P_n z^* +Q_n}$, in turn we obtain the expression
\begin{equation} \label{distor}
\left| \frac{h_\ba'(z_1)}{h_{\ba}' (z_2)} \right| =\left| \frac{\frac{Q_{n-1}}{Q_n} z_2+1}{\frac{Q_{n-1}}{Q_n} z_1+1} \right|^2 \ \mbox{and } 
\left| \frac{h_{\ba^*}'(z_1^*)}{h_{\ba^*}' (z_2^*)} \right| =\left| \frac{\frac{P_n}{Q_n} z_2^*+1}{\frac{P_n}{Q_n} z_1^*+1} \right|^2 . 
\end{equation}

Recall the triangle inequality that $||z_1|-|z_2| | \leq |z_1+z_2| \leq |z_1|+|z_2|$ for any $z_1, z_2 \in \C$. Since $\ba$ and $\ba^*$ are admissible, we have $|\frac{P_n}{Q_n}|<R_d$, $|\frac{Q_{n-1}}{Q_n}|\leq 1$. Further for $j \in \{1,2\}$, $|z_j|<R_d$ and $|z_j^*|\leq 1$ as $I^*$ is a domain bounded by the unit circle.  Hence \eqref{distor} yields the final bounds, e.g. by taking $L_2=\frac{4}{|R_d-1|^2}$ and $L_1=\frac{1}{L_2}$. 
\end{proof}

\begin{rem} \label{rem:distortion}
The same argument yields
$$
L_1 \le \left| 
    \frac
        {h_\ba'(z_1)}
        {h_{\ba^*}' (z_2^*)}
\right| 
\le
L_2
$$
for all $z_1 \in I$ and $z_2^* \in I^*$.
\end{rem}

\section{Spectral gap of the transfer operators on piecewise $C^1$-space} \label{sec:acim}

In this section, we show that the transfer operator $\Lc_{s,w}$ acting on $C^1(\Pc)$ has a spectral gap with the unique simple dominant eigenvalue $\lambda_{s,w}$ which is the spectral radius $r(\Lc_{s,w})$ of $\Lc_{s,w}$ when $(s,w)$ is close to $(1,0)$.
The proofs are modified from those in \cite{bal:val} to deal with additional complexities which arise from the presence of cells in multiple dimensions.

More specifically, we show that the contributions from the cells in dimension less than two are negligible as long as one's interest is limited to the peripheral spectrum.

For $\ba = (\al_1, \cdots, \al_n) \in \Oc^n$ and $P \in \mathcal P$, by Proposition~\ref{part:markov}, $$h_\ba(P) =h_{\al_1} \circ \cdots h_{\al_n}(P)  \subset Q$$ for a unique $Q \in \mathcal P$ if $h_\ba(P)$ intersects $I.$
\begin{defn}\label{def:4.1} If $h_\ba(P) \subset Q$ as above, denote the restriction of $h_\ba$ to $P$ by $$\langle \ba \rangle ^P_Q \colon P \to Q$$ and call it \emph{the inverse branch of depth $n$ from $P$ to $Q$}. For $n=1,$ we denote $\ba = (\al)$ simply by $\al$ and $h_\ba$ simply by $h_\al$.
\end{defn}

For $P,Q \in \mathcal P$, denote the set of inverse branches from $P$ to $Q$ by
\begin{align*}
\mathcal H(P,Q) = \left\{h\colon P \to Q  : \text{ $h= \langle \alpha \rangle ^P_Q$  for some $\alpha \in \mathcal O$}\right\}.
\end{align*}


By Proposition ~\ref{prop:compatibility},  for $z \in P$, \eqref{eq:Lsw} can be rewritten as
\begin{align} \label{def:op:eachp}
\left(\mathcal L_{s,w}f\right)_P(z) = \sum_{Q \in \Pc} \sum_{\langle \alpha \rangle ^P_Q \in \mathcal H(P,Q)} (g_{s,w} \cdot f_Q ) \circ \langle \alpha \rangle ^P_Q (z) .
\end{align}
This shows that if $ \Pc$ is compatible with $T$, then the operator $\mathcal L_{s,w}$ preserves $C^1(\mathcal P)$.
We assume that the digit cost $c:\Oc \to \R_{\geq 0}$ is \emph{of moderate growth,} which means that $c(\al)=O(\log |\al|)$ for $\al \in \Oc$. For such $c$, there exists a neighborhood $U$ of $(1,0)$ in $\mathbb R^2$ such that for any $(s,w)$ with real parts $(\sigma, u) \in U$, the series
\begin{align}\label{def:modgrowth}
\sum_{\langle \alpha \rangle^P_Q \in \mathcal H(P,Q)} \exp\left (w c( \alpha )\right ) |J_\alpha|^s
\end{align}
converges for all $P,Q \in \mathcal P$. Therefore there exists $A_U>0$, depending only on $U$, such that the absolute value of (\ref{def:modgrowth}) is bounded by $A_U$.


\subsection{Function space: Norms on $C^1(\mathcal P)$} \label{sub:norms}

We show that Proposition \ref{part:markov} allows us to consider the space of piecewise continuously differentiable functions, on which the $\Lc_{s,w}$ acts properly.


\hide{\tcb{[In the definition, we should be careful about zero cells. Differentiability is not appropriate. "in $I$" added at the end. Check if it is correct.]}}


\begin{prop}\label{prop:dec}
The map $\operatorname{res}_{\mathcal P}$ is a bijection.
\end{prop}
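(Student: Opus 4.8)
The plan is to prove bijectivity by exhibiting an explicit two-sided inverse, using the fact that $\mathcal P$ is a set-theoretic partition of $I$ into locally closed sets coming from a regular CW-structure. First I would recall that each $P \in \mathcal P$ is a cell of a regular CW-structure on $I$, so $\overline P$ is a closed ball of the appropriate dimension and $P$ is its interior (as a manifold with boundary); in particular the cells are pairwise disjoint and their union is $I$. This is what makes the pointwise reconstruction below well-defined.

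Next I would construct the candidate inverse $j \colon \bigoplus_{P \in \mathcal P} C^1(\overline P) \to C^1(\mathcal P)$. Given a tuple $(f_P)_{P \in \mathcal P}$, for each $P$ let $\widetilde f_P \colon I \to \mathbb C$ be the function equal to $f_P$ on $P$ and to $0$ off $P$, and set $j\bigl((f_P)_P\bigr) = \sum_{P \in \mathcal P} \widetilde f_P$. Since the cells partition $I$, this sum has exactly one nonzero term at each point of $I$, so the resulting function $g$ is well-defined, and $g|_P = f_P|_P$ for every $P$. The point that needs a word is that $g$ actually lies in $C^1(\mathcal P)$, i.e.\ that $g|_P$ extends to a $C^1$-function on a neighborhood of $\overline P$: but $f_P$ is by hypothesis already an element of $C^1(\overline P)$, hence extends to a $C^1$-function on an open neighborhood of $\overline P$, and $g|_P = f_P|_P$, so the same extension works for $g$. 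Thus $j$ is well-defined with values in $C^1(\mathcal P)$.

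Finally I would check $\operatorname{res}_{\mathcal P} \circ j = \operatorname{id}$ and $j \circ \operatorname{res}_{\mathcal P} = \operatorname{id}$. For the first: given $(f_P)_P$, the function $g = j((f_P)_P)$ satisfies $g|_P = f_P|_P$, and since the $C^1$-extension to $\overline P$ is unique (closure of $P$ in $I$ meets $P$ in a dense subset, so an extension is determined by its values on $P$), $\operatorname{res}_P(g) = f_P$; hence $\operatorname{res}_{\mathcal P}(g) = (f_P)_P$. For the second: given $f \in C^1(\mathcal P)$, the tuple $\operatorname{res}_{\mathcal P}(f) = (\operatorname{res}_P(f))_P$ has $\operatorname{res}_P(f)|_P = f|_P$, so $j(\operatorname{res}_{\mathcal P}(f))$ agrees with $f$ on every cell $P$, hence on all of $I$ since the cells cover $I$; thus $j(\operatorname{res}_{\mathcal P}(f)) = f$. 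This establishes that $\operatorname{res}_{\mathcal P}$ is a bijection.

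The only genuinely delicate point — and the one I would be most careful about — is the claim that $\operatorname{res}_P(f)$ is \emph{uniquely} determined, equivalently that $P$ is dense in $\overline P$; this is exactly where the regularity of the CW-structure (each closed cell is a genuine closed ball with $P$ its dense interior) is used, and it is the reason the paper insists on "regular CW-structure" and on cells being "properly contained in their closure." Everything else is a routine unwinding of definitions, so I would keep the write-up short and simply invoke the partition property and the density of cells in their closures.
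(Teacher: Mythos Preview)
Your proof is correct and follows essentially the same approach as the paper: both construct the inverse $j$ by defining $\widetilde f_P$ to be $f_P$ on $P$ and zero elsewhere, use that $\mathcal P$ is a set-theoretic partition of $I$, and then check the two compositions are identities. If anything, you are more explicit than the paper about the uniqueness of the $C^1$-extension (density of $P$ in $\overline P$), which the paper simply asserts before the proposition and leaves implicit in the proof.
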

\begin{proof}
We show that $\operatorname{res}_{\mathcal P}$ is an isomorphism by constructing an inverse.
For each $f_P \in C^1(\overline P)$ let $\tilde f_P \colon I \to \mathbb C$ be the function defined as
\begin{align*}
    \tilde f_P(z) =
    \begin{cases}
    f_P(z) &\text{if $z \in P$,}
    \\
    0 &\text{if $z \not \in P$.}
    \end{cases}
\end{align*}
Define $j \colon 
\bigoplus_{P \in \mathcal P} C^1(\overline P)
\to
C^1(\mathcal P)$
by sending $(f_P)_{P \in \mathcal P}$ to $\sum_{P \in \mathcal P} \tilde f_P$.
Since $\mathcal P$ is a set-theoretic partition of $I$, the restriction of $\sum_{P \in \mathcal P} \tilde f_P$ to a given $P \in \mathcal P$ agrees with $f_P$ on $P$. 
Thus, $\sum_{P \in \mathcal P} \tilde f_P$ belongs to $C^1(\mathcal P)$. 
Once we have defined $j$, it is easy to verify that $\operatorname{res}_{\mathcal P} \circ j$ and $j \circ \operatorname{res}_{\mathcal P}$ are identity maps, respectively.
\end{proof}

Recall that $\mathcal P[i] \subset \mathcal P$ be the set of open $i$-cells for $i=0,1,2$. Consider the following norms and semi-norms on $C^1(\mathcal P[i])$. For $P \in \mathcal P$ and $f_P \in C^1( \overline P)$, define
\[ 
\|f_P\|_0 := \sup_{z \in P}  \left| f_P(z) \right|.
\]
For a positive-dimensional cell $P$, define
\[
\|f_P\|_1 =  \sup_{z \in P} \sup_{v \in T^1(z)} \left|\partial_{v}f_P(z) \right|,
\]
where $T^1(z)$ is the set of all unit tangent vectors $v$ with directional derivative $\partial_{v}$ at $z$.
When the dimension of $P$ is zero, we adopt the convention that $\|f_P\|_1=0$.

For $t \neq 0$, put
\begin{align}\label{eq:norm}
\|f_P \|_{(t)}= \|f_P\|_0+ \frac{1}{|t|} \|f_P\|_1.
\end{align}
By abuse of notation, we equip $C^1(\Pc)$ with following norms. For $f = (f_P)_P$ and $k=0,1$, set
\begin{align} 
 \|f\|_{k} &= \sup_{P \in \mathcal P} \|f_P\|_k \label{norm1}
\\
 \|f\|_{(t)} &= \|f\|_0 + \frac{1}{|t|} \|f\|_1. \label{norm2}
\end{align}
The norm $\| \cdot \|_{(t)}$ is equivalent to $\| \cdot \|_{(1)}$ for any $t \not = 0$.

\begin{prop}
For any $t \not = 0$, $\left(C^1(\mathcal P), \|\cdot\|_{(t)}\right)$ is a Banach space.
\end{prop}
\begin{proof}
It suffices to show that for each $P \in \mathcal P$, $C^1\left( \overline P \right)$ is a Banach space with respect to the norm \eqref{eq:norm}, which is trivial for $P \in \mathcal P[0]$. For $P \in \mathcal P[i]$ with $i>0$, it is an elementary property of Sobolev spaces; see \cite[Prop.\,9.1]{brezis} and \cite[\textsection\,9.1\,Rmk.\,2]{brezis}.
\end{proof}


Decompose $\mathcal L_{s,w}$ into the sum of component operators
\begin{equation} \label{def:op:comp}
\mathcal L^{[i]}_{[j],(s,w)} \colon C^1(\mathcal P[j]) \to C^1(\mathcal P[i])
\end{equation}
with $0 \le i,j \le 2$. In particular, $\mathcal L^{[i]}_{[j],(s,w)}=0$ whenever $j<i$. 
We first look into the real parameter family $\Lc_{\sigma, u}$ and obtain the boundedness.

\begin{prop} \label{op:bounded}
For $(\sigma, u) \in U$, we have $\Lc_{\sigma,u}(C^1(\Pc)) \subset C^1(\Pc)$ and the operator norm $\| \Lc_{\sigma,u} \|_{(1)} \leq \widehat{A}_U$ with $\widehat{A}_U >0$.
\end{prop}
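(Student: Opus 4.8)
The plan is to reduce the boundedness of $\Lc_{\sigma,u}$ on $C^1(\Pc)$ to uniform control of the individual terms in the fibered expression \eqref{def:op:eachp}, using the isomorphism $\operatorname{res}_{\mathcal P}$ from Proposition \ref{prop:dec} to work componentwise on $\bigoplus_{P} C^1(\overline P)$. First I would fix $(\sigma,u) \in K$ and a cell $P \in \mathcal P$, and for $f = (f_Q)_Q \in C^1(\Pc)$ write, for $z \in P$,
\[
\left(\Lc_{\sigma,u} f\right)_P(z) = \sum_{Q \in \mathcal P} \sum_{\langle \alpha \rangle^P_Q \in \mathcal H(P,Q)} g_{\sigma,u}(z)\cdot \left(f_Q \circ \langle \alpha \rangle^P_Q\right)(z),
\]
where $g_{\sigma,u}(z) = \exp(u\,c(\alpha))\,|h_\alpha'(z)|^{2\sigma}$ on the relevant branch (using \eqref{der:jac}). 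Since $\Lc^i_{j,(\sigma,u)} = 0$ for $j > i$, this is a finite sum over cells $Q$ for each $P$, and compatibility (Proposition \ref{part:markov}) guarantees each summand is well-defined.

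For the sup-norm bound, I would estimate $\|(\Lc_{\sigma,u}f)_P\|_0 \le \|f\|_0 \cdot \sum_{Q}\sum_{\langle\alpha\rangle^P_Q} \sup_{z\in P}|g_{\sigma,u}(z)|$, and observe that $\sup_z |g_{\sigma,u}(z)| = \sup_z \exp(u\,c(\alpha))|J_\alpha(z)|^\sigma$ is exactly the general term of the series \eqref{def:modgrowth}; hence the total is bounded by $A_K$. Taking the supremum over $P$ gives $\|\Lc_{\sigma,u}f\|_0 \le A_K \|f\|_0$. For the semi-norm $\|\cdot\|_1$, I would differentiate: for a unit tangent vector $v$,
\[
\partial_v\left[g_{\sigma,u}(z)(f_Q\circ\langle\alpha\rangle^P_Q)(z)\right] = \left(\partial_v g_{\sigma,u}\right)(z)\,(f_Q\circ\langle\alpha\rangle^P_Q)(z) + g_{\sigma,u}(z)\,\partial_v\left(f_Q\circ\langle\alpha\rangle^P_Q\right)(z).
\]
The first term is controlled by $\|f\|_0$ times $|\partial_v g_{\sigma,u}|$; writing $g_{\sigma,u} = \exp(u\,c(\alpha))\,J_\alpha^\sigma$ and using the bounded distortion estimate $|\partial_v J_\alpha| \le 2M|J_\alpha|$ from Proposition \ref{distortion1}, we get $|\partial_v g_{\sigma,u}| \le 2M\sigma \cdot \exp(u\,c(\alpha))\,|J_\alpha|^\sigma$, again a constant multiple of the general term of \eqref{def:modgrowth}. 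The second term is bounded by $\|f\|_1$ times $|g_{\sigma,u}(z)|\cdot\|(\langle\alpha\rangle^P_Q)'\|$, and since $|(\langle\alpha\rangle^P_Q)'(z)|^2 = J_\alpha(z) \le \rho < 1$ by Proposition \ref{prop:2.1} (so the derivative of the inverse branch is bounded by $1$, actually by $\sqrt\rho$), this contributes at most $A_K\|f\|_1$. Summing over the finitely many $Q$ and the branches, and taking the supremum over $P$, yields $\|\Lc_{\sigma,u}f\|_1 \le C(K)(\|f\|_0 + \|f\|_1)$ for a constant $C(K)$ depending only on $K$ (through $A_K$, $M$, and the bound on $\sigma$ over $K$). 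Combining, $\|\Lc_{\sigma,u}f\|_{(1)} = \|\Lc_{\sigma,u}f\|_0 + \|\Lc_{\sigma,u}f\|_1 \le \widehat A_K\|f\|_{(1)}$ for a suitable $\widehat A_K > 0$; in particular $\Lc_{\sigma,u}f \in C^1(\Pc)$, since each component extends $C^1$ to a neighborhood of $\overline P$ (the inverse branches extend holomorphically past $TO_\alpha$, as noted after \eqref{der:jac}).

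The main obstacle I anticipate is not any single estimate but bookkeeping the interchange of differentiation and the (a priori infinite, but effectively finite per $P$ by the triangularity $\Lc^i_{j}=0$ for $j>i$) sum, and making sure the constant genuinely depends only on $K$ and not on the cell $P$ or the branch $\langle\alpha\rangle^P_Q$. The uniformity in $P$ is exactly what the finiteness of $\mathcal P$ buys us, and the uniformity in $\alpha$ comes from the distortion bound (Proposition \ref{distortion1}) and the convergence of \eqref{def:modgrowth} being uniform over $P,Q$; I would make these dependencies explicit when assembling $\widehat A_K$. A minor point to handle cleanly is the zero-dimensional cells, where $\|\cdot\|_1$ vanishes by convention, so the corresponding component estimates are trivial.
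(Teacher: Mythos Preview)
Your argument is correct and matches the paper's approach exactly: the paper's proof is a one-liner deferring to the computation in Proposition~\ref{op:lasota}, which is precisely the termwise differentiation plus bounded distortion (Proposition~\ref{distortion1}) plus contraction (Proposition~\ref{prop:2.1}) plus convergence of \eqref{def:modgrowth} that you carry out, arriving at $\widehat A_K = |\mathcal P|\,A_K(1+|\sigma|+R^4)$. Two cosmetic points: your sup-norm bound should pick up a factor of $|\mathcal P|$ when you sum over $Q$ (since $A_K$ bounds the series for a \emph{fixed} pair $(P,Q)$), and your parenthetical that the branch sum is ``effectively finite per $P$'' is a misreading of the triangularity---the sum over $\alpha$ is genuinely infinite, but this is harmless since your actual argument correctly relies on the uniform convergence of \eqref{def:modgrowth} and its differentiated version, not on finiteness.
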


\begin{proof}
This is a straightforward calculation using \eqref{def:modgrowth}, similar to Proposition \ref{op:lasota} below, by taking $\widehat{A}_U= |\Pc|A_U (1+|\sigma|+R^4)$.
\end{proof}

\hide{-------
    \begin{proof}
    It is sufficient to check for a positive dimensional $P$. Let $v=(v_1 \frac{\partial}{\partial z}, v_2 \frac{\partial}{\partial \bar{z}})$ be a unit tangent vector with $v_1^2+v_2^2=1/2$. Recall that for any $\langle \alpha \rangle^P_Q \in \mathcal H(P,Q)$ and $z \in P$, 
    \begin{align*}
      \nabla_{v} J_\alpha (z) &= \nabla_{v}|(h'_\al)(z)|^2 = v_1 \cdot (h''_\al)(z) (\bar{h'}_\al)(z).
    \end{align*}
    
    Then we have
    \begin{align*}
      |\nabla_{v}(\Lc_{\sigma,u} f)_P(z)| &\leq \left| \sum_Q \sum_{\langle \alpha \rangle^P_Q} e^{u c(\al)} \nabla_v |J_\al(z)|^\sigma \cdot f_Q \circ \langle \alpha \rangle^P_Q(z) \right| \\
      &+ \left| \sum_Q \sum_{\langle \alpha \rangle^P_Q} e^{u c(\al)} |J_\al(z)|^\sigma \cdot  \nabla_v (f_Q \circ \langle \alpha \rangle^P_Q)(z) \right| \\
      & \leq \left( \sum_Q \sum_{\langle \alpha \rangle^P_Q} e^{u c(\al)} |\sigma| |J_\al(z)|^\sigma \frac{|h''_\alpha(z)|}{|h'_\al(z)|} \cdot |f_Q \circ \langle \alpha \rangle^P_Q(z)| \right)  \\
      & +\left( \sum_Q \sum_{\langle \alpha \rangle^P_Q} e^{u c(\al)} |J_\al(z)|^\sigma \cdot 2 |J_\al(z)| \cdot |\nabla_v f_Q \circ \langle \alpha \rangle^P_Q(z)| \right) .
    \end{align*}
    The first term is then bounded by $A_K M|\sigma| \|f \|_0$ where $A_K$ from \eqref{def:modgrowth} and $M$ from Proposition \ref{distortion1}, and the second term is bounded by $A_K \rho \|f \|_1$. By taking $\widetilde{A}_K$ to be $\max\{ A_K M |\sigma|, A_K \rho \}$, we obtain the boundedness of the operator. 
    \end{proof}
-------}

\subsection{Sufficient conditions for quasi-compactness} \label{sub:quasicomp}

The following is a sufficient criterion for the quasi-compactness of the bounded linear operators on a Banach space due to Hennion \cite[Theorem XIV.3]{Hennion}:

\begin{thm}[Hennion] \label{hennion}
Let $(B, \| \cdot \|)$ be a Banach space. Let $\| \cdot \|'$ be a continuous semi-norm on $B$ and $\Lc$ a bounded linear operator on $B$ such that
\begin{enumerate}
\item The set $\left\{\Lc(f) \colon f \in B, \| f \|\le 1 \right\}$ is pre-compact in $(B, \|\rdot \|')$.
\item For $f \in B$, $\|\Lc f \|' \ll \|f \|'$.
\item There exist $n \geq 1$, and real positive numbers $r$ and $C$ such that for $f \in B$,
\begin{align}\label{eq:ly}
\|\Lc^n f \| \le  r^n \|f\|+C\|f\|' \mbox{ and } r < r(\Lc) .
\end{align}
\end{enumerate}
Then $\Lc$ is quasi-compact, i.e., there is $r_e<r(\Lc)$ such that the part of its spectrum outside the disc of radius $r_e$ is discrete. 
\end{thm}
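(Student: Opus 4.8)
The statement to prove is Hennion's theorem (Theorem~\ref{hennion}), which is cited from Hennion's work~\cite{Hennion}. Since this is an external result being quoted, I would not prove it from scratch in the paper itself. However, if a self-contained proof were required, here is the plan I would follow.

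\textbf{Overall strategy.} The plan is to use the Nussbaum formula for the essential spectral radius $r_e(\Lc)$ in terms of measures of non-compactness, and to bound the measure of non-compactness of $\Lc^n$ using hypotheses (1)--(3). Recall that for a bounded operator on a Banach space $B$, one has $r_e(\Lc) = \lim_{n\to\infty} \|\Lc^n\|_\chi^{1/n}$, where $\|\cdot\|_\chi$ denotes the Kuratowski measure of non-compactness of the image of the unit ball. The goal is to show $\|\Lc^n\|_\chi \le r^n$ (up to subexponential factors absorbed in the limit), which gives $r_e(\Lc) \le r < r(\Lc)$ and hence quasi-compactness.

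\textbf{Key steps in order.} First I would fix a bounded sequence $(f_k)_{k\ge1}$ in $B$ with $\|f_k\|\le 1$; the aim is to extract from $(\Lc^n f_k)_k$ a subsequence that is Cauchy up to an error of size $\approx r^n$ in the $\|\cdot\|$-norm. Second, using hypothesis (1), the set $\{\Lc f : \|f\|\le 1\}$ is precompact in $(B,\|\cdot\|')$; applying this together with hypothesis (2) (which says $\Lc$ does not expand the seminorm $\|\cdot\|'$ much, so that $\{\Lc^n f : \|f\| \le 1\}$ is also precompact in $\|\cdot\|'$ for each fixed $n$, by iterating), I can pass to a subsequence $(f_{k_j})_j$ such that $(\Lc^n f_{k_j})_j$ is $\|\cdot\|'$-Cauchy. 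Third, I apply the Lasota--Yorke inequality \eqref{eq:ly} to the differences: for $j, j'$,
\[
\|\Lc^n(f_{k_j} - f_{k_{j'}})\| \le r^n \|f_{k_j} - f_{k_{j'}}\| + C \|f_{k_j} - f_{k_{j'}}\|' \le 2 r^n + C\|f_{k_j} - f_{k_{j'}}\|'.
\]
Since the second term goes to zero along the subsequence, $\limsup_{j,j'} \|\Lc^n(f_{k_j}-f_{k_{j'}})\| \le 2r^n$. Fourth, this shows the image of the unit ball under $\Lc^n$ can be covered by finitely many balls of radius slightly more than $r^n$, i.e. $\|\Lc^n\|_\chi \lesssim r^n$; taking $n$-th roots and letting $n\to\infty$ gives $r_e(\Lc) \le r$. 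Since \eqref{eq:ly} also forces $r < r(\Lc)$, the spectrum outside the disc of radius $r_e < r(\Lc)$ consists of isolated eigenvalues of finite multiplicity.

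\textbf{Main obstacle.} The delicate point is the interplay between the two topologies: one must ensure that precompactness in the weaker norm $\|\cdot\|'$ (hypothesis (1)) genuinely upgrades, via the Lasota--Yorke inequality, to an approximate precompactness in the strong norm $\|\cdot\|$ at geometric rate $r^n$. In particular, step two requires iterating hypothesis (1) using hypothesis (2) to control $\|\cdot\|'$-precompactness of $\Lc^n$-images, and one must be careful that the constant $C$ in \eqref{eq:ly} is uniform in the pair $(j,j')$ — which it is, since it depends only on $n$. A cleaner alternative, and the one I would actually adopt, is simply to invoke Hennion's theorem as a black box (as the paper does), since reproducing its proof is standard and not the point of this article.
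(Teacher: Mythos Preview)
Your reading is correct: the paper does not prove Theorem~\ref{hennion} at all; it merely states it with attribution to Hennion~\cite{Hennion} and then verifies its hypotheses for the specific operator $\mathcal L_{\sigma,u}$ in the subsequent lemmas and propositions. Your decision to treat it as a black box is exactly what the paper does, and your sketched Nussbaum/measure-of-non-compactness argument is the standard route to this result, so there is nothing to compare.
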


We remark that the two-norm estimate in (3) is so-called Lasota--Yorke (or Doeblin--Fortet, Ionescu--Tulcea and Marinescu) inequality. In this subsection, we verify the conditions of Hennion's criterion to obtain the quasi-compactness of the operator $\Lc_{\sigma,u}$ on $C^1(\Pc)$.

We immediately have (2) with $\| \Lc_{\sigma,u} \|_0 \leq |\Pc| A_U$. Further, we observe the following compact inclusion, which implies (1) that $\|\cdot \|_{(1)}$ is pre-compact in $\|\cdot \|_0$.

\begin{lem} \label{embedding}
The embedding $(C^1(\mathcal P), \|\cdot\|_{(1)}) \ra (C^1(\mathcal P), \|\cdot\|_{0})$ is a compact operator.
\end{lem}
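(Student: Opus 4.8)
**Proof plan for Lemma \ref{embedding} (compact embedding $(C^1(\mathcal P),\|\cdot\|_{(1)}) \hookrightarrow (C^1(\mathcal P),\|\cdot\|_0)$).**

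The plan is to reduce the statement to the classical Arzelà--Ascoli theorem via the isomorphism $\operatorname{res}_{\mathcal P}$ of Proposition \ref{prop:dec}. First I would note that since $\mathcal P$ is finite, it suffices to prove the corresponding compact embedding $(C^1(\overline P),\|\cdot\|_{(1)}) \hookrightarrow (C^1(\overline P),\|\cdot\|_0)$ for each fixed cell $P \in \mathcal P$ separately, because a finite direct sum of compact operators is compact and the norms $\|\cdot\|_{(1)}$, $\|\cdot\|_0$ on $C^1(\mathcal P)$ are by definition the suprema over $P$ of the corresponding norms on the summands. For a zero-dimensional cell the space $C^1(\overline P)$ is one-dimensional and there is nothing to prove, so I may assume $\dim P \in \{1,2\}$.

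Next, fix such a $P$ and let $(f_n)_{n\ge 1}$ be a sequence in $C^1(\overline P)$ with $\|f_n\|_{(1)} = \|f_n\|_0 + \|f_n\|_1 \le 1$ for all $n$; I want to extract a subsequence that converges in $\|\cdot\|_0$. By definition of $C^1(\overline P)$ each $f_n$ extends to a $C^1$ function on a neighborhood of the compact set $\overline P$, and the bound $\|f_n\|_1 \le 1$ gives a uniform Lipschitz bound for $f_n$ on $\overline P$ (any two points of $\overline P$ can be joined by a path in a fixed neighborhood of bounded length, since $\overline P$ is the closure of an open cell with piecewise-analytic boundary, hence has finitely many components each of bounded diameter); together with $\|f_n\|_0 \le 1$ this makes $\{f_n\}$ uniformly bounded and equicontinuous on $\overline P$. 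Arzelà--Ascoli then yields a subsequence converging uniformly on $\overline P$, i.e. in $\|\cdot\|_0$. Since every bounded sequence has a $\|\cdot\|_0$-convergent subsequence, the embedding is compact.

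The only point requiring a little care — and the place I expect to be the mild obstacle — is the passage from the pointwise bound $\|f_n\|_1 \le 1$ on directional derivatives to a genuine equicontinuity estimate, because $\overline P$ need not be convex (the cells of $\mathcal P$ are cut out by arcs of circles and line segments). The fix is to observe that each $\overline P$, being the closure of a connected component of $I$ minus a finite union of lines and circles, is a compact region whose boundary consists of finitely many analytic arcs; such a region is \emph{quasiconvex}, meaning there is a constant $c_P$ so that any two points $z,z' \in \overline P$ are joined by a rectifiable path in $\overline P$ (or in a fixed neighborhood on which all the $f_n$ are $C^1$) of length at most $c_P|z-z'|$. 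Integrating the derivative bound along such a path gives $|f_n(z)-f_n(z')| \le c_P |z - z'|$, uniformly in $n$, which is the equicontinuity needed. Taking the maximum of the finitely many constants $c_P$ over $P \in \mathcal P$ completes the argument.
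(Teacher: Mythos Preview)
Your proposal is correct and follows essentially the same route as the paper: reduce via the finite decomposition $C^1(\mathcal P)\simeq\bigoplus_P C^1(\overline P)$, handle $0$-cells trivially (the paper invokes Bolzano--Weierstrass), and invoke Arzel\`a--Ascoli for the positive-dimensional cells. The quasiconvexity discussion you add to pass from $\|f\|_1\le 1$ to equicontinuity is a detail the paper leaves implicit, but it is a reasonable clarification and does not change the strategy.
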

\begin{proof}
It suffices to show that $(C^1(\mathcal P[i]), \|\rdot\|_{(1)}) \ra (C^1(\mathcal P[i]), \|\rdot\|_0)$ is compact for each $i=0,1,2$. When $i=0$, it follows from the Bolzano--Weierstrass theorem. For $i=1,2$, it follows from the Arzel\`a--Ascoli theorem. 
\end{proof}


Now let us prove the key Lasota--Yorke estimate \eqref{eq:ly}  in (3). This will be also useful for the later purpose. 
For $\ba = (\al_1, \cdots, \al_n) \in \Oc^n$ and $P \in \mathcal P$, recall from Definition~\ref{def:4.1} that the inverse branch of depth $n$ from $P$ to $Q$, which we denote by $\langle \alpha \rangle ^P_Q \colon P \to Q$, is the restriction of $h_\ba = h_{\al_1} \circ \cdots h_{\al_n}$ to $P$. Denote by 
$\Hc^n(P,Q)$ the set of all inverse branches of depth $n$ from $P$ to $Q$,
\[ 
\mathcal H^\star (P,Q) := \bigcup_{n \ge 1} \mathcal H^n(P,Q)
 \ \mbox{ and } \
\mathcal H^\star := \bigcup_{P,Q \in \mathcal P} \mathcal H^\star(P,Q) .
\]
 Note that $\langle \ba \rangle_Q^P$ extends uniquely to a conformal map on $\mathbb C \cup \{\infty\}$.

\begin{prop} \label{op:lasota}
Let $(\sigma,u) \in U$. For $f \in C^1(\Pc)$ and $n \geq 1$, we have 
\[ 
\| \Lc^n_{\sigma,u} f \|_{(1)} \leq C_U ( |\sigma| \| f \|_0 + \rho^{n} \| f\|_{(1)})  
\]
for some $C_U>0$, depending only on $U$, where $\rho<1$ is the contraction ratio. 
\end{prop}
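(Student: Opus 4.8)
The plan is to prove the Lasota--Yorke estimate for $\mathcal L_{\sigma,u}^n$ by iterating the one-step structure and carefully tracking how the norms $\|\cdot\|_0$ and $\|\cdot\|_1$ interact under composition of inverse branches. Observe first that, since each iterate $\mathcal L_{\sigma,u}^n$ is itself a transfer operator with inverse branches given by the depth-$n$ maps $\langle\ba\rangle_Q^P \in \mathcal H^n(P,Q)$ and weight $\prod$-telescoped into $\exp(uC_n)\,|J_\ba|^\sigma$, it suffices to establish an estimate uniform in $n$ with the right dependence on $\rho^n$; the compatibility of $\mathcal P$ with $T$ from Proposition~\ref{part:markov} guarantees $\mathcal L_{\sigma,u}^n$ preserves $C^1(\mathcal P)$, so the formula
\[
\left(\mathcal L_{\sigma,u}^n f\right)_P(z) = \sum_{Q \in \mathcal P} \sum_{\langle\ba\rangle_Q^P \in \mathcal H^n(P,Q)} \exp(u C_n(z)) \, |J_\ba(z)|^\sigma \cdot \left(f_Q \circ \langle\ba\rangle_Q^P\right)(z)
\]
makes sense termwise. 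The $\|\cdot\|_0$ bound is immediate: the moderate-growth hypothesis packaged in \eqref{def:modgrowth} gives $\sum_{\langle\ba\rangle} \exp(uC_n)|J_\ba|^\sigma \le A_K^n$ (or, after a standard submultiplicativity argument over the finitely many $P,Q$, a bound of the form $|\mathcal P| A_K$ uniformly in $n$), so $\|\mathcal L_{\sigma,u}^n f\|_0 \le |\mathcal P| A_K \|f\|_0$, which already feeds the $|\sigma|\|f\|_0$ term.

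Next I would differentiate. Fix $P$, a point $z \in P$, and a unit tangent vector $v$; applying $\partial_v$ to the displayed sum and using the product rule splits $\partial_v (\mathcal L_{\sigma,u}^n f)_P(z)$ into three groups of terms: one where the derivative hits $\exp(uC_n(z))$ — but $C_n$ is locally constant on each cell (it depends only on the digit sequence, which is fixed on each branch domain), so this contributes nothing; one where $\partial_v$ hits $|J_\ba(z)|^\sigma$, producing a factor $\sigma\, \partial_v J_\ba / J_\ba$ times the original summand; and one where $\partial_v$ hits $f_Q \circ \langle\ba\rangle_Q^P$, producing $|J_\ba|^\sigma \exp(uC_n)$ times $(\partial_{w} f_Q)(\langle\ba\rangle_Q^P(z))$ for the pushed-forward vector $w = D\langle\ba\rangle_Q^P(z) v$. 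For the first of these I invoke the bounded-distortion estimate, Proposition~\ref{distortion1}, which gives $|\partial_v J_\ba / J_\ba| \le 2M$ uniformly in $\ba$ and $z$; hence this group is bounded by $|\sigma| \cdot 2M \cdot |\mathcal P| A_K \|f\|_0$. For the second group I use that $\langle\ba\rangle_Q^P$ is a depth-$n$ inverse branch, so the norm of its derivative $|w| = |h_\ba'(z)| \cdot |v| = |J_\ba(z)|^{1/2}$, and crucially $\sup|J_\ba| \ll \rho^n$ from the definition \eqref{contrac} of the contraction ratio; thus $|w| \ll \rho^{n/2}$, and this group is bounded by $\left(\sum_{\langle\ba\rangle} \exp(uC_n)|J_\ba|^\sigma\right) \cdot \sup|w| \cdot \|f\|_1 \ll \rho^{n/2} |\mathcal P| A_K \|f\|_1$. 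Actually, since the weight also carries a factor $|J_\ba|^\sigma$ with $\sigma$ near $1$, one can be more efficient and extract $\rho^n$ rather than $\rho^{n/2}$ by absorbing one factor of $|J_\ba|^{1/2}$ into the Jacobian weight — this is the standard trick and I would present it that way to land exactly the $\rho^n\|f\|_{(1)}$ in the statement.

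Assembling, dividing the derivative bound by $|t|=1$ (we are at $t=1$ here, or equivalently working with $\|\cdot\|_{(1)}$), and adding the $\|\cdot\|_0$ bound, gives
\[
\|\mathcal L_{\sigma,u}^n f\|_{(1)} \le |\mathcal P| A_K\left( (1 + 2M|\sigma|)\|f\|_0 + \rho^n \|f\|_1 \right) \le C_K\left(|\sigma|\|f\|_0 + \rho^n\|f\|_{(1)}\right)
\]
with $C_K := |\mathcal P| A_K \max\{1 + 2M, 1\}$ (using $\|f\|_1 \le \|f\|_{(1)}$ and that $|\sigma|$ is bounded away from $0$ on $K$, so $\|f\|_0 \ll |\sigma|\|f\|_0$), which is the claimed inequality. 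I expect the main obstacle to be the bookkeeping that turns the naive $\rho^{n/2}$ coming from $|h_\ba'| = |J_\ba|^{1/2}$ into the sharp $\rho^n$: this requires noticing that the derivative term already comes weighted by $|J_\ba|^\sigma$ and that for $\sigma$ near $1$ the product $|J_\ba|^\sigma \cdot |J_\ba|^{1/2}$ can be rebalanced as $|J_\ba|^{\sigma} \cdot |J_\ba|^{1/2} \le |J_\ba|^{\sigma'} \cdot \sup|J_\ba|^{1/2}$ for a slightly smaller exponent $\sigma'$ still in the convergence range, keeping the series bounded by $A_K$ while pulling out a full $\sup|J_\ba|^{1/2} \asymp \rho^{n/2}$ — and then iterating, or more simply observing that the relevant contraction is of the pushed-forward \emph{tangent vector under $n$ branches}, whose length is genuinely $O(\rho^{n/2})$ but whose square (the Jacobian) is $O(\rho^n)$, so one should instead bound $\|\mathcal L^n f\|_1$ by splitting off a half-power of the Jacobian into the weight. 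Getting the exponent stated in the proposition, rather than a square root of it, is the only subtle point; everything else is the routine application of Propositions~\ref{prop:2.1}, \ref{distortion1} and the bound \eqref{def:modgrowth}.
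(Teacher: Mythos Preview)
Your approach is essentially the paper's: expand $\mathcal L_{\sigma,u}^n$ over depth-$n$ branches, differentiate termwise, and bound the two resulting pieces via bounded distortion (Proposition~\ref{distortion1}) and contraction~\eqref{contrac}. Two remarks. First, your reason for dropping the derivative of the cost weight is slightly off: $C_n$ as a function on $I$ is \emph{not} locally constant on a cell $P$ (the first $n$ digits vary inside $P$); the correct point is simply that in the branch-by-branch expansion the weight $\exp(u\,c(\ba))$ is a constant independent of $z$, so there is nothing to differentiate.

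Second --- the exponent on $\rho$, where you spend most of your effort. You are right to flag it: the chain rule for a real directional derivative under a holomorphic map gives $|\partial_v(f_Q\circ h_\ba)(z)| \le |h_\ba'(z)|\,\|f_Q\|_1 = |J_\ba(z)|^{1/2}\,\|f_Q\|_1$, so the contraction factor one extracts is $\rho^{n/2}$, not $\rho^n$. The paper's proof records the chain-rule factor as $2|J_\ba(z)|$ and asserts $\rho^n$ without further comment; your ``rebalancing'' attempt does not actually recover $\rho^n$ either (shifting $|J_\ba|^{1/2}$ into the weight moves the exponent from $\sigma$ to $\sigma+\tfrac12$, which still only pulls out a single $\rho^{n/2}$). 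The clean resolution is to accept $\rho^{n/2}$: since $\sqrt\rho<1$, the Lasota--Yorke inequality holds with contraction constant $\sqrt\rho$ in place of $\rho$, and this is all that Hennion's criterion (Theorem~\ref{hennion}) and the later Lemma~\ref{est:normalised} require. Drop the rebalancing paragraph and state the bound with $\rho^{n/2}$.
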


\begin{proof}

It suffices to check for a positive dimensional $P$. Let $v=(v_1 \frac{\partial}{\partial z}, v_2 \frac{\partial}{\partial \bar{z}})$ be a unit tangent vector with $v_1^2+v_2^2=1/2$. Recall that for any $\langle \alpha \rangle^P_Q \in \mathcal H(P,Q)$ and $z \in P$, 
\begin{align*}
  \partial_{v} J_\alpha (z) &= \partial_{v}|(h'_\al)(z)|^2 = v_1 (h''_\al)(z) (\overline{h'}_\al)(z)+v_2 (h'_\al)(z) (\overline{h''}_\al)(z).
\end{align*}

Recall the notation that for $\langle \ba \rangle \in \Hc^n(P,Q)$, $\langle \ba \rangle= \langle \al_n \rangle_Q^{R_{n-1}} \circ \cdots \circ \langle \al_1 \rangle_{R_1}^{P}$ for some $R_1, \cdots , R_{n-1} \in \Pc$. 
We put $c(\ba) := \sum_{j=1}^n c(\al_j)$. 
For $n \geq 1$, we have
\[ (\Lc^n_{\sigma,u} f)_P(z)= \sum_{Q} \sum_{\langle \ba \rangle \in \Hc^n(P,Q)} 
e^{u c(\ba) } 
|J_\ba(z)|^\sigma \cdot f_Q \circ \langle \ba \rangle (z). \]
Thus we have
\begin{align*}
  |\partial_{v}(\Lc_{\sigma,u}^n f)_P(z)| &\leq \left| \sum_Q \sum_{\langle \ba \rangle} 
  e^{u c(\ba)}
  \partial_v |J_\ba(z)|^\sigma \cdot f_Q \circ \langle \ba \rangle(z) \right| \\
  &+ \left| \sum_Q \sum_{\langle \ba \rangle} 
  e^{u c(\ba)} 
  |J_\ba(z)|^\sigma \cdot \partial_v (f_Q \circ \langle \ba \rangle)(z) \right| \\
  & \ll \left( \sum_Q \sum_{\langle \ba \rangle} 
  e^{u c(\ba)} 
  |\sigma| |J_\ba(z)|^\sigma \left| \frac{ \partial_{v} J_\ba (z) }{ J_\ba(z)} \right| \cdot |f_Q \circ \langle \ba \rangle(z)| \right)  \\
  &+\left( \sum_Q \sum_{\langle \ba \rangle} 
  e^{u c(\ba)} 
  |J_\ba(z)|^\sigma \cdot 2 |J_\ba(z)| \cdot |\partial_v f_Q \circ \langle \ba \rangle(z)| \right) .
\end{align*}
The first term is then bounded by $\widetilde{A}_U M|\sigma| \| f \|_0$ and the second term is bounded by $\widetilde{A}_U \rho^{n} \|f \|_1$ (for a suitable $\widetilde{A}_U>0$ due to moderate growth  \eqref{def:modgrowth}), where $\rho$ from Proposition \ref{prop:2.1} and $M$ from Proposition \ref{distortion1}. By taking supremum and maximum on both sides, we obtain the inequality for some $C_U>0$.
\end{proof}

\hide{-----

For the operator $\Lc_{\sigma,u}$ on $(C^1(\Pc),\|\cdot\|_{(1)})$ and $(C^1(\Pc),\|\cdot\|_0)$, we are ready to verify the conditions in Theorem \ref{hennion} using the inequalities settled in \S\ref{sub:norms}-\ref{sub:quasicompact}.

\begin{lem} \label{quasi}
The operator $\Lc_{\sigma,u}$ on $C^1(\Pc)$ is quasi-compact. 
\end{lem}

\begin{proof}
We apply Theorem \ref{hennion} to a Banach space $(C^1(\Pc),\|\rdot\|_{(1)})$ with (semi-)norm $\|\rdot\|_0$. The conditions (1),(2) are implied by Lemma \ref{embedding} and direct computation, respectively. For condition (3), we obtain the inequality
\begin{equation} \label{eq:lasota}
\| \Lc^n_{\sigma,u} f \|_{(1)} \leq \widetilde{C}_K ((1+ |\sigma|) \| f \|_0 + \rho^n \| f\|_{(1)})  
\end{equation}
for some $\widetilde{C}_K>0$ instantly from Proposition \ref{op:lasota}. 
\end{proof}

--------}

\subsection{Ruelle--Perron--Frobenius Theorem} \label{subsec:pf}

In this subsection, we conclude the quasi-compactness by \S\ref{sub:quasicomp}, and in turn obtain the following Ruelle--Perron--Frobenius theorem, i.e. spectral gap for $\Lc_{\sigma,u}$ on $C^1(\Pc)$.

\begin{thm}\label{thm:ruelle}
There exists a small neighbourhood $U$ of $(\sigma, u) = (1,0)$ such that for any $(\sigma,u) \in U$, the operator $\mathcal L_{\sigma,u}$ on $C^1(\Pc)$ is quasi-compact. It has a real eigenvalue $\lambda_{\sigma,u}$ with the following properties:
\begin{enumerate}
\item The eigenvalue $\lambda_{\sigma,u}>0$ is unique and simple.  If $\lambda$ is an eigenvalue other than $\lambda_{\sigma,u}$, then $|\lambda|< \lambda_{\sigma,u}$.
\item A corresponding eigenfunction $\psi_{\sigma,u}=(\psi_{\sigma,u,2},\psi_{\sigma,u,1},\psi_{\sigma,u,0})$ for $\lambda_{\sigma,u}$ is positive. That is, $\psi_{\sigma,u,j}>0$ for all $j=0,1,2$.
\item There exists a unique linear functional $\nu_{\sigma,u}=(\nu_{\sigma,u,2},\nu_{\sigma,u,1},\nu_{\sigma,u,0} )$ and the dual operator satisfies $\Lc_{\sigma,u}^* \nu_{\sigma,u}=\lambda_{\sigma,u} \nu_{\sigma,u}$.

\item In particular, $\lambda_{1,0}=1$ and  $\nu_{1,0,2}$  is the 2-dimensional Lebesgue measure.
\end{enumerate}
\end{thm}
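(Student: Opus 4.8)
The plan is to deduce Theorem~\ref{thm:ruelle} from Hennion's criterion (Theorem~\ref{hennion}) together with a positivity/mixing argument in the spirit of the classical Ruelle--Perron--Frobenius theorem. First I would verify quasi-compactness: conditions (1) and (2) of Theorem~\ref{hennion} are supplied by Lemma~\ref{embedding} and the trivial bound $\|\Lc_{\sigma,u}f\|_0 \le |\Pc|A_K\|f\|_0$, while condition (3) is exactly the Lasota--Yorke inequality of Proposition~\ref{op:lasota}: for $n$ large enough one has $\rho^n < r(\Lc_{\sigma,u})$, since the spectral radius is bounded below (e.g.\ by evaluating $\Lc_{\sigma,u}$ on the constant function $1$, which gives a strictly positive function bounded below, forcing $r(\Lc_{\sigma,u})>0$ independently of $n$). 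This yields $r_e < r(\Lc_{\sigma,u})$ and hence that the spectrum outside a disc of radius $r_e$ consists of finitely many eigenvalues of finite multiplicity.

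Next I would establish the Perron--Frobenius structure. The operator $\Lc_{\sigma,u}$ is \emph{positive}: it maps functions that are positive on $I$ to functions that are positive on $I$, because each term $g_{\sigma,u}(z)(f\circ h_\alpha)(z)1_{TO_\alpha}(z)$ is nonnegative and, using topological mixing of $(I,T)$ (every cell is eventually covered: for some $N$, $T^N O_\alpha$ meets every $P\in\Pc$ for a suitable admissible $\ba$), some iterate $\Lc_{\sigma,u}^N$ is \emph{strictly} positive in the sense that $\Lc_{\sigma,u}^N f > 0$ on all of $I$ whenever $f\ge 0$, $f\not\equiv 0$. Combined with quasi-compactness, a standard argument (e.g.\ applying the theory of positive operators / the cone argument to the leading eigenprojection, or invoking the version of the RPF theorem for quasi-compact positive operators as in Baladi's book) shows that the spectral radius $\lambda_{\sigma,u} := r(\Lc_{\sigma,u})$ is itself an eigenvalue, is algebraically simple, admits a strictly positive eigenfunction $\psi_{\sigma,u}=(\psi_{\sigma,u,2},\psi_{\sigma,u,1},\psi_{\sigma,u,0})$ with all components $>0$, and is the unique eigenvalue of maximal modulus (no other eigenvalue on the circle $|\lambda|=\lambda_{\sigma,u}$), again using strict positivity of $\Lc_{\sigma,u}^N$ to rule out peripheral eigenvalues. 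This gives (1) and (2). For (3), I would pass to the dual: $\Lc_{\sigma,u}^*$ acting on the dual space has the same spectral radius, so it has an eigenfunctional $\nu_{\sigma,u}$ with $\Lc_{\sigma,u}^*\nu_{\sigma,u} = \lambda_{\sigma,u}\nu_{\sigma,u}$; positivity of $\Lc_{\sigma,u}$ makes $\nu_{\sigma,u}$ a positive functional, hence (after normalization) a probability measure, and the change-of-variables form \eqref{eq:Lsw} of the transfer operator shows it is absolutely continuous with respect to the $2$-dimensional Lebesgue measure on $I$ (indeed its density is controlled by the Jacobians $J_\ba$). Uniqueness follows from simplicity of $\lambda_{\sigma,u}$.

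For part (4), I would specialize to $(\sigma,u)=(1,0)$. Here $g_{1,0}(z) = J_{[z^{-1}]}(T(z))$ is exactly the Jacobian of the inverse branch, so $\Lc_{1,0}$ is the classical Perron--Frobenius (transfer) operator associated with the map $T$ and Lebesgue measure; the identity $\int_I \Lc_{1,0}f \, d\Leb = \int_I f\, d\Leb$ (a change-of-variables computation over the inverse branches, using $TO_\alpha = I^\circ$ up to measure zero) shows that Lebesgue measure is a fixed point of $\Lc_{1,0}^*$, whence $\lambda_{1,0}=1$ by uniqueness of the leading eigenvalue and its eigenmeasure. Then $\psi_{1,0,2}$ (the top-dimensional component of the leading eigenfunction, normalized so that $\int_I \psi_{1,0,2}\,d\Leb = 1$) is the $T$-invariant density, i.e.\ the density of $\nu_{1,0}$ with respect to $\Leb$; this is precisely the absolutely continuous invariant measure produced by Ei--Nakada--Natsui, which matches Theorem~\ref{thm:nakada}.

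The main obstacle I anticipate is the rigorous transfer of the classical RPF argument to this setting of a \emph{direct sum} of $C^1$-spaces over a cell structure rather than a single function space on a connected domain: one must check that strict positivity and topological mixing interact correctly across cells of different dimensions (in particular that the lower-dimensional components $\psi_{\sigma,u,1},\psi_{\sigma,u,0}$ are forced to be positive, not merely the top one, which requires tracking how $1_{TO_\alpha}$ and the boundary cells propagate under $\Lc_{s,w}$), and that the eigenfunction obtained abstractly as an element of $C^1(\Pc)$ really has the claimed sign on every cell. Verifying the topological mixing of $(I,T)$ in the precise form needed — that a fixed iterate $\Lc_{\sigma,u}^N$ is strictly positive on all cells simultaneously — will likely be the technical heart, and I would lean on the finite-range/Markov structure from Proposition~\ref{part:markov} and the work of Ei--Nakada--Natsui to establish it.
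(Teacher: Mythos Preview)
Your plan is in the right spirit, but there are two points to flag.

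First, a small slip in applying Hennion: the condition needed is $\rho<r(\Lc_{\sigma,u})$, not $\rho^n<r(\Lc_{\sigma,u})$ for large $n$ (the latter follows from $r>0$ and says nothing). The paper gets this by noting that the change-of-variables identity $\int_I \Lc_{1,0}f=\int_I f$ forces $1\in\mathrm{Sp}(\Lc_{1,0}^*)$, hence $r(\Lc_{1,0})\ge 1>\rho$, and then shrinks $K$ using continuity in $(\sigma,u)$.

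Second, and more importantly, you correctly identify the real obstacle---propagating positivity and simplicity across cells of different dimensions---but the paper resolves it by a structural device you do not mention. Writing $\Lc$ in block lower-triangular form with respect to $C^1(\Pc)=C^1(\Pc[2])\oplus C^1(\Pc[1])\oplus C^1(\Pc[0])$, the paper proves $L^1$-estimates $\|\Lc_i^i\|_{L^1}\le R^{4-2i}$ at $(\sigma,u)=(1,0)$, yielding the crucial inequality $r(\Lc_1^1),\,r(\Lc_0^0)<r(\Lc_2^2)=\lambda_{\sigma,u}$ throughout $K$. One then proves RPF for $\Lc_2^2$ alone (a standard setting, handled by citing \cite{Baladi}), and the block structure gives an isomorphism between the $\lambda_{\sigma,u}$-eigenspaces of $\Lc$ and $\Lc_2^2$: given a positive eigenfunction $\psi_2$ of $\Lc_2^2$, the lower components are \emph{forced} by
\[
\psi_1=\lambda_{\sigma,u}^{-1}\bigl(1-\lambda_{\sigma,u}^{-1}\Lc_1^1\bigr)^{-1}\Lc_2^1\psi_2,
\qquad
\psi_0=\lambda_{\sigma,u}^{-1}\bigl(1-\lambda_{\sigma,u}^{-1}\Lc_0^0\bigr)^{-1}\bigl(\Lc_2^0\psi_2+\Lc_1^0\psi_1\bigr),
\]
where the inverses exist precisely because of the spectral-radius inequality and are positive operators (Neumann series). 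This delivers positivity of $\psi_1,\psi_0$ and simplicity of $\lambda_{\sigma,u}$ for $\Lc$ without ever needing a uniform ``$\Lc^N$ strictly positive on every cell'' statement. Your alternative route via topological mixing across all cell dimensions simultaneously may be workable, but is strictly harder and is not what the paper does; the block-triangular reduction is the key idea you are missing.
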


\begin{proof}
First we prove the quasi-compactness using Theorem~\ref{hennion}.
The required estimate \eqref{eq:ly} for some $n$ would follow from Proposition~\ref{op:lasota} if $\rho<r(\mathcal L_{\sigma,u})$ for any $(\sigma,u) \in U$. 
Since $r(\mathcal L_{\sigma,u})=r(\mathcal L_{\sigma,u}^*)$, where $\mathcal L_{\sigma,u}^*$ is the dual operator, it suffices to prove $\rho<r(\mathcal L_{\sigma,u}^*)$.
Indeed, observe that the change of variable formula implies
\begin{equation} \label{change:jac}
\int_I \Lc_{1,0} f(x,y) dxdy=\int_I f(x,y) dxdy  
\end{equation} for any $f \in C^1(\Pc).$  Thus, the linear functional $\alpha : (f_2, f_1, f_0) \mapsto \sum_{P \in \mathcal{P}[2]}\int_P f_2 dxdy$ is an eigenfunctional, i.e. an element of $(C^1(\mathcal P))^*$ with eigenvalue 1 for $\mathcal L_{1,0}^*$. So we conclude $1 \leq r(\Lc_{1,0}^*)$. 
By the analyticity of $r(\mathcal L_{\sigma,u}^*)$ in $(\sigma,u)$, if $U$ is a sufficiently small neighbourhood of $(1,0)$, we have $\rho< R^4 < r(\mathcal L_{\sigma,u}^*)$ for any $(\sigma,u) \in U.$



To proceed, we state and prove some $L^1$-estimates. In view of Proposition~\ref{prop:dec}, we have a decomposition 
\[ C^1(\Pc)= C^1(\Pc[2]) \oplus C^1(\Pc[1]) \oplus C^1(\Pc[0])   \]
and accordingly the operator 
$\Lc:=\Lc_{\sigma,u}$
can be written as 
\begin{equation} \label{eqn:matrixform} \Lc f= \begin{bmatrix}
\Lc_{[2]}^{[2]} & 0 & 0 \\
\Lc_{[2]}^{[1]} & \Lc_{[1]}^{[1]} & 0 \\
\Lc_{[2]}^{[0]} & \Lc_{[1]}^{[0]} & \Lc_{[0]}^{[0]} 
\end{bmatrix} \begin{bmatrix}
f_2  \\
f_1  \\
f_0  
\end{bmatrix}  \end{equation}
with $\Lc_{[j]}^{[i]}: C^1(\Pc[j]) \ra C^1(\Pc[i])$ from \eqref{def:op:comp}.
Equip each $C^1(P)$ for $P \in \mathcal P[i]$ with the $L^1$-norm, by which we mean the $L^1$-norm with respect to the Lebesgue measure, $L^1$-norm with respect to the length element, and the counting measure, respectively for $i=2,1,0$. Define the $L^1$-norm on $C^1(\mathcal P[i])$ to be the sum of $L^1$-norms on its direct summands $C^1(P)$.

We claim that, for $(\sigma,u)=(1,0)$,
\begin{align}
    \label{eq:lii-l1}
    \| \Lc_{[i]}^{[i]}\|_{L^1} &\le R^{4-2i}\text{ for $i=2,1,0$.}
\end{align}
The case $i=2$ is immediate since for $f \in C^1(\mathcal P[2])$, the change of variable formula with the triangle inequality implies $\|\mathcal L_{[2]}^{[2]} f\|_{L^1}\le \|f\|_{L^1}.$ To obtain the cases $i=1,0$ we use similar arguments. First consider the case $i=1$. By definition of $\Lc_{[1]}^{[1]}$, for $f \in L^1(\Pc[1])$ and $P \in \Pc[1]$, we have
\[
\| ( \Lc_{[1]}^{[1]} f )_P \|_{L^1}
    = \sum_{Q \in \mathcal P[1] }
    \int_{P}
    \left|
   \sum_{\langle \alpha \rangle \in \Hc(P,Q)} \left|z+\alpha\right|^{-4} \cdot f_Q \circ \langle \alpha \rangle (z)
    \right|
    d \ell_P
\]
where $d \ell_P$ is the length element of the curve $P$. 
Applying the change of variable formula to the right hand side, we obtain
\begin{align*}
\| ( \Lc_{[1]}^{[1]} f )_P \|_{L^1} =     \sum_{Q \in \mathcal P[1]} \int_{h_\al(P)} |z|^2 |f_Q(z)| d\ell_Q .
\end{align*}
Since $h_\alpha(P)$'s are disjoint and $|z| \le R$ for $z \in I$, we conclude $\|\mathcal L_{[1]}^{[1]} f\|_{L^1}\le R^2 \|f\|_{L^1}$.

Now consider the case $i=0$. For $P \in \Pc[0]$, we have
\[
\| ( \Lc_{[0]}^{[0]} f )_P \|_{L^1}
    = \sum_{Q \in \mathcal P[0]} 
    \left|
   \sum_{\langle \alpha \rangle \in \Hc(P,Q)} \left|z+\alpha\right|^{-4} \cdot f_Q \circ \langle \alpha \rangle (z)
    \right|
\] 
where $L^1(\Pc[0])$-norm is given by the integral with respect to a counting measure. Again by the disjointness of $h_\alpha(P)$, we conclude $\| \Lc_{[0]}^{[0]}  f\|_{L^1} \leq R^4 \| f \|_{L^1}$.

Since $C^1(P)$ is a subspace of $L^1(P)$, \eqref{eq:lii-l1} yields, for $(\sigma,u)=(1,0)$, $r(\mathcal L_{[i]}^{[i]}) \le \| \Lc_{[i]}^{[i]} \|_{L^1} \le R^{4-2i}$ for $i=2,1,0$. 
\hide{\tcb{Seems that we are not using the density, but just that $C^1(P)$ is a subset of $L^1(P)$, right?}}
With $1 \leq r((\Lc_{[2]}^{[2]})^*)=r(\Lc_{[2]}^{[2]})$, it follows that 
\begin{align}\label{eq:rineq}
r(\mathcal L_{[2]}^{[2]}) > r(\mathcal L_{[i]}^{[i]})
\end{align}
for $i=0,1,$ thus for all $(\sigma,u) \in U$, 
we have $r(\mathcal L_{\sigma, u}) = r((\mathcal L^{[2]}_{[2],(\sigma, u)})$.

Now to prove (1), observe first that the assertion (1) and (2) for $\mathcal L_{[2]}^{[2]}$ when $(\sigma,u)\in U$ follows by adapting the proof of {\cite[Theorem 1.5.(4)]{Baladi}} with Proposition \ref{op:lasota}. Thus the spectral radius $r(\mathcal L^{[2]}_{[2],(\sigma, u)})$ is a positive simple eigenvalue $\lambda_{\sigma, u}$ with a positive eigenfunction
$\psi_{\sigma, u, 2} \in C^1(\Pc[2]),$ i.e., 
$\mathcal L^{[2]}_{[2],(\sigma, u)} \psi_{\sigma, u, 2} = \lambda_{\sigma, u} \psi_{\sigma, u, 2}.$


Observe that $(f_2,f_1,f_0) \mapsto f_2$ induces a map from the $\lambda_{\sigma,u}$-eigenspace of $\mathcal L$ to that of $\mathcal L_{[2]}^{[2]}$ by the equation \eqref{eqn:matrixform}. 
We claim that~\eqref{eq:rineq} implies that this is an isomorphism. Indeed, if $f_2=0,$ then $\Lc f_1 = \Lc_{[1]}^{[1]} f_1$, thus by \eqref{eq:rineq}, $(0, f_1, f_0)$ cannot be an eigenfunction for $\Lc$. If $f_2 \neq 0$ is a $\lambda_{\sigma,u}$-eigenfunction for $\mathcal L_{[2]}^{[2]}$ then there is a unique way to complete it as a triple $(f_2,f_1,f_0)$ which is an eigenfunction of $\mathcal L$. Concretely, $f_1$ and $f_0$ are determined by $f_2$ via the formulae
\begin{align}\label{eq:f1}
f_1=\lambda_{\sigma,u}^{-1}(1- \lambda_{\sigma,u}^{-1} \mathcal L_{[1]}^{[1]})^{-1}(\mathcal L_{[2]}^{[1]} f_2)
\end{align}
and
\begin{align}\label{eq:f0}
f_0=\lambda_{\sigma,u}^{-1}(1- \lambda_{\sigma,u}^{-1} \mathcal L_{[0]}^{[0]})^{-1}(\mathcal L_{[2]}^{[0]} f_2+\mathcal L^{[0]}_{[1]} f_1)
\end{align}
where the existence of $(1- \lambda_{\sigma,u}^{-1}\mathcal L_{[i]}^{[i]})^{-1}$ for $i=0,1$ follows from~\eqref{eq:rineq}.

Now we prove~(2). 
From the referred proofs \cite{Baladi,hensley} for the first step in the preceding paragraph, we know that there is a $\lambda_{\sigma,u}$-eigenfunction $\psi_{\sigma,u,2}$ which is positive.
The positivity of $\psi_{\sigma,u,2}$ together with the formulae~\eqref{eq:f1} and~\eqref{eq:f0} implies $\psi_{\sigma,u,1}>0$ and $\psi_{\sigma,u,0}>0$ in order. So $\psi_{\sigma,u,2}=(\psi_{\sigma,u,2},\psi_{\sigma,u,1},\psi_{\sigma,u,0})$ is the positive eigenfunction for $\mathcal L$, as desired.

We prove (3). This is nothing but an equivalent form of (1) in terms of the dual of a Banach space. We remark that for a bounded linear operator $\Lc$ on a Banach space, the notion of dual $\Lc^*$ is well-defined and $\lambda \in \mathrm{Sp}(\Lc)$ if and only if $\lambda \in \mathrm{Sp}(\Lc^*)$. The operator $\mathcal L^*$ is upper-triangular and its $\lambda_{\sigma,u}$-eigenspace is identified with that for $(\mathcal L_{[2]}^{[2]})^*$. 


To prove (4), it suffices to show $r(\mathcal L_{[2]}^{[2]})=1$ when $(\sigma,u)=(1,0)$, because we had proved that $r(\mathcal L_{[2]}^{[2]})=r(\mathcal L)$. 
By \eqref{change:jac}, we have $r((\mathcal L_{[2]}^{[2]})^*)\ge 1$ when $(\sigma,u)=(1,0)$.
On the other hand, \eqref{eq:lii-l1} implies $r((\mathcal L_{[2]}^{[2]})^*)\le 1$. We conclude that $\lambda_{1,0}=1$. The assertion about the density function follows from the proof of~(3).
\end{proof}

\begin{rem} \label{pf:kuzmin}
Theorem \ref{thm:ruelle}.(4) can be viewed as an alternative proof of the main result of Ei--Nakada--Natsui \cite{Nakada:pre} based on a thermodynamic formalism. However, their proof based on the construction of an invertible extension yields an integral expression for the density function $\psi_{1,0,2}$;
\begin{equation} \label{nakada:density}
\psi_{1,0,2}(z)= \int_{P^*} \frac{1}{|z-w|^4} d \mathrm{Leb}(w) 
\end{equation}
for $z \in P$, where $P \in \Pc[2]$. See also Hensley \cite[Thm.5.5]{hensley} for the case $d=1$.
\end{rem}

\hide{------

\begin{prop}
Suppose that $\lambda_{1,0}=1$ is a simple eigenvalue of $\mathcal L_{1,0}$ and that any other eigenvalue $\lambda$ of $\mathcal L_{1,0}$ satisfies $|\lambda| < 1$ and that there is an eigenfunction $\psi_{1,0} = \left(\psi_{1,0,0},\psi_{1,0,1},\psi_{1,0,2}\right)$ with $\psi_{1,0,i}>0$ for $i=0,1,2$. 
\end{prop}

Put $\Lc = \mathcal L_{1,0}$ and for $i \le j,$ let
\begin{align}
    \mathbb H_{i,j} \colon \mathcal B_j \to \mathcal B_i
\end{align}

Let $\psi = \psi_0 + \psi_1 + \psi_2$ be a non-zero eigenvector for $\mathbb H$ with eigenvalue $\lambda$. \tcb{Positivity of each $\psi_i$ for $i=0,1$.}
\par
\begin{prop}\label{prop:PF}
The spectrum of $\mathbb H^*$ contains $\lambda$ and its eigenspace is one-dimensional. 
\end{prop}
\begin{proof}
\end{proof}

------}


We state some consequences of the assertion of Theorem~\ref{thm:ruelle}.(1). We refer the reader to
Kato \cite[{\S}VII.4.6, {\S}IV.3.6]{Kato}. First, there is a decomposition
$$
\mathcal L_{s,w} = \lambda_{s,w} \mathcal P_{s,w} + \mathcal N_{s,w}
$$
where $\mathcal P_{s,w}$ is a projection onto the $\lambda_{s,w}$-eigenspace and $\mathcal N_{s,w}$ satisfies 
both $r(\Nc_{s,w})<|\lambda_{s,w}|$ and $\mathcal P_{s,w}\mathcal N_{s,w}=\mathcal N_{s,w}\mathcal P_{s,w}=0$. Moreover, $\lambda_{s,w}$, $\mathcal P_{s,w}$, and $\mathcal N_{s,w}$ vary analytically in $(s,w)$.

In particular, for a given $\varepsilon>0$, for any $(s,w)$ in a sufficiently small neighborhood $K$ of $(1,0)$, we have $r(\Nc_{s,w})<|\lambda_{s,w}|-\varepsilon$. 
This yields
\begin{equation} \label{sp:decomp}
\Lc_{s,w}^n=\lambda_{s,w}^n \Pc_{s,w}+\Nc_{s,w}^n
\end{equation}
where $r(|\lambda_{s,w}|^{-n}\Nc_{s,w}^n)$ converges to zero as $n$ tends to infinity.

For later use, we state the following.

\begin{lem} \label{comp:pressure}
The function $(s,w) \mapsto \lambda_{s,w}$ satisfies: 
\begin{enumerate}
\item We have $\frac{\partial \lambda_{s,0}}{\partial s}  \big|_{s=1} < 0$, whence there is a complex neighborhood $W$ of 0 and unique analytic function $s_0:W \ra \C$ such that for all $w \in W$, 
\[ \lambda_{s_0(w),w}=1.   \]
In particular, $s_0(0)=1$.
\item We have $\frac{d^2}{dw^2} \lambda_{1+s_0'(w)w, w} \big|_{w=0} \neq 0$ if and only if $c$ is not of the form $g-g \circ T$ for some $g \in C^1(\Pc)$.
\end{enumerate}
\end{lem}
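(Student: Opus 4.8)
The plan is to treat the two parts separately, in both cases relying on the perturbation-theoretic fact, recorded after Theorem~\ref{thm:ruelle}, that $\lambda_{s,w}$, $\mathcal P_{s,w}$ and $\mathcal N_{s,w}$ are analytic in $(s,w)$ near $(1,0)$, together with the simplicity of $\lambda_{s,w}$ and the normalisation $\lambda_{1,0}=1$.

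For part~(1), I would first compute $\frac{\partial}{\partial s}\lambda_{s,0}\big|_{s=1}$ and show it is strictly negative. The standard device is to differentiate the eigenvalue equation $\mathcal L_{s,0}\psi_{s,0}=\lambda_{s,0}\psi_{s,0}$ at $s=1$, pair against the dual eigenmeasure $\nu_{1,0}$ (which satisfies $\mathcal L_{1,0}^*\nu_{1,0}=\nu_{1,0}$ by Theorem~\ref{thm:ruelle}.(3)), and use $\langle \nu_{1,0},\psi_{1,0}\rangle\neq 0$ together with $\mathcal P_{1,0}\mathcal N_{1,0}=0$ to cancel the unwanted terms. This gives
\[
\frac{\partial \lambda_{s,0}}{\partial s}\Big|_{s=1}
=\frac{\langle \nu_{1,0},\, (\partial_s g_{s,0}|_{s=1})\,\psi_{1,0}\rangle}{\langle \nu_{1,0},\psi_{1,0}\rangle},
\]
and since $\partial_s g_{s,0}|_{s=1}(z)=\log J_{[z^{-1}]}(Tz)\cdot g_{1,0}(z)$ with $J_{[z^{-1}]}(Tz)<1$ on a set of positive $\nu_{1,0}$-measure (the system is expanding, Proposition~\ref{prop:2.1}), the numerator is a negative multiple of a positive quantity, hence $\partial_s\lambda_{s,0}|_{s=1}<0$. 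The existence and uniqueness of the analytic solution $s_0(w)$ to $\lambda_{s_0(w),w}=1$ with $s_0(0)=1$ is then immediate from the analytic implicit function theorem applied to $F(s,w)=\lambda_{s,w}-1$, since $F(1,0)=0$ and $\partial_s F(1,0)\neq 0$; and $F$ is analytic in $(s,w)$ jointly.

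For part~(2), set $\Lambda(w):=\lambda_{1+s_0'(w)w,\,w}$; note this is the restriction of $\lambda_{s,w}$ to a curve through $(1,0)$ that is, to second order, the tangent line $s=1+s_0'(0)w$. (Here I should double-check that the intended curve in the statement, parametrised by $w\mapsto(1+s_0'(w)w,w)$, agrees to the relevant order with $w\mapsto(s_0(w),w)$ — along the latter $\lambda\equiv 1$, so the content is entirely in how the curve deviates from $s_0$; in any case $\Lambda(0)=1$ and $\Lambda'(0)=0$ by the chain rule and the choice of slope, so $\Lambda''(0)$ is the first nontrivial invariant.) The key identity is that $\Lambda''(0)$ equals, up to a positive constant, the asymptotic variance
\[
\sigma^2(c):=\lim_{n\to\infty}\frac{1}{n}\,\mathbb E\big[(C_n-\mathbb E[C_n])^2\big]
\]
of the Birkhoff sums $C_n=\sum_{j=1}^n c(\alpha_j)$ with respect to the invariant measure $\nu_{1,0}$; this is the usual "pressure-function $=$ variance" computation, carried out by expanding $\log\Lambda(w)$ to second order in $w$ using \eqref{sp:decomp} and the analytic perturbation of $\lambda,\mathcal P,\mathcal N$. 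The final step is the classical cohomological dichotomy: $\sigma^2(c)=0$ if and only if $c$ is an $L^2(\nu_{1,0})$-coboundary, i.e. $c=g-g\circ T$ for some $g$, and one upgrades the measurable/$L^2$ solution $g$ to an element of $C^1(\mathcal P)$ by a Livšic-type regularity argument — summing the cocycle along inverse branches and invoking the bounded-distortion and contraction estimates of Propositions~\ref{distortion1} and~\ref{prop:2.1} to see the resulting series converges in the $C^1(\mathcal P)$ norm.

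The main obstacle is the last step of part~(2): showing that vanishing of the variance forces the coboundary equation to have a solution \emph{in the piecewise-$C^1$ class} $C^1(\mathcal P)$, rather than merely an $L^2$ or continuous solution. Because $(I,T)$ is not a full-branch map and the relevant function space is the direct-sum space $C^1(\mathcal P)=\bigoplus_{P}C^1(\overline P)$ attached to the Markov cell structure, one must check that the natural candidate $g$ — built as a telescoping sum over preimages — respects the cell decomposition and extends $C^1$-smoothly up to the closure of each cell; this is where the compatibility of $\mathcal P$ with $T$ (Proposition~\ref{part:markov}) and the uniform distortion bounds do the real work. The converse direction (a $C^1$ coboundary has zero variance) is routine, since then $C_n=g-g\circ T^n$ is uniformly bounded.
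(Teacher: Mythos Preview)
Your proposal is correct and follows essentially the same route as the paper: part~(1) via differentiation of the eigenvalue equation paired against $\nu_{1,0}$ followed by the implicit function theorem, and part~(2) via identification of $\Lambda''(0)$ with the asymptotic variance and the standard coboundary dichotomy. The paper's own argument is in fact terser than yours and defers the details of part~(2)---including the regularity upgrade you flag as the main obstacle---to the standard references of Parry--Pollicott, Broise, and Morris rather than working it out for the piecewise-$C^1$ space $C^1(\mathcal P)$.
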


\begin{proof}

(1) Recall Theorem \ref{thm:ruelle} and \eqref{sp:decomp} that we have a spectral gap given by the identity $\Lc_{s,w} \psi_{s,w}=\lambda_{s,w} \psi_{s,w}$ and corresponding eigenmeasure $\nu_{s,w}$. We can assume that $\nu_{s,w}$ is normalised, i.e. $\int_I \psi_{s,w} d \nu_{s,w}=1$. Observe that 
\begin{align*}
\Lc_{s,w} \psi_{s,w} &= \sum_{Q \in \Pc} \sum_{\langle \alpha \rangle \in \Hc(P,Q)} e^{w c(\al)} |J_\al|^s \cdot (\psi_{s,w})_Q \circ \langle \alpha \rangle  \\
&= \Lc_{1,0} (e^{w c} |J_T|^{1-s} \cdot \psi_{s,w}) = \lambda_{s,w} \psi_{s,w} \numberthis \label{press:id1}
\end{align*}
where we regard $c$ as a function on $I$ given by $c (z):=c(\al)$ if $z \in O_\al$. Differentiating \eqref{press:id1} with respect to $s$ and integrating with respect to $\nu_{1,0}$ yields the identity: 
\[\frac{\partial \lambda_{s,0}}{\partial s} \bigg|_{s=1} = - \int_I \log |J_T| \psi_{1,0} d \nu_{1,0}.  \]
From the right-hand-side, we see that it is negative from the positivity of $|J_T|$ and $\psi_{1,0}$. Then the existence of $s_0$ is obtained by implicit function theorem.

(2) This is a standard argument (convexity of the pressure) using a spectral gap as detailed in e.g., Parry--Pollicott \cite[Proposition 4.9--4.12]{par:pol}, Broise \cite[Proposition 6.1]{broise}, or Morris \cite[Proposition 3.3]{morris}. Here, we briefly recall the main ideas.

Set $L(w):=\lambda_{1+s_0'(w)w,w}$ and $\Psi(w):=\psi_{1+s_0'(w)w,w}$. Notice that $L(0)=1$ and $L'(0)=0$ by the mean value theorem. Similarly as \eqref{press:id1}, we have for any $n \geq 1$, 
\[
\Lc_{1+s_0'(w)w,w}^n \Psi(w)=\Lc_{1,0}^n (e^{w \sum_{j=1}^n (c \circ T^{j-1})} |J_T|^{1-s} \cdot \Psi(w)) = L(w)^n \Psi(w).
\]
Differentiating this twice, setting $w=0$, and integrating gives 
\begin{equation} \label{press:id2} 
L''(0)= \lim_{n \ra \infty} \frac{1}{n} \int_I (\sum_{j=1}^n c \circ T^{j-1} )^2 \Psi(0) d\nu_{1,0}  
\end{equation}
with the use of some limiting argument for $\Psi'(0)$. 
Further, one can observe that the right hand side of \eqref{press:id2} equals to $\int_I \widetilde{c}^2 \Psi(0) d\nu_{1,0}$, where $\widetilde{c}:=c+ g \circ T - g$ for some $g \in C^1(\Pc)$. Hence $L''(0)=0$ if and only if $\widetilde{c}=0$, which yields the final form of the statement. 
\end{proof}

\hide{---------

For any $n \geq 1$, we have 
\begin{equation} \label{press:id2}
\Lc_{s,w}^n \psi_{s,w}=\Lc_{1,0}^n (e^{w \sum_{j=1}^n (c \circ T^j)} |J_T|^{1-s} \cdot \psi_{s,w}) = \lambda_{s,w}^n \psi_{s,w}.
\end{equation}
Then differentiating \eqref{press:id1} with respect to $s$, at $(s,w)=(1,0)$ gives 
\[ \frac{\partial \lambda_{s,0}}{\partial s} \bigg|_{s=1} \psi_{s,0}+ \lambda_{1,0} \frac{\partial \psi_{s,0}}{\partial s} \bigg|_{s=1}=\Lc_{1,0} \left(-\log |J_T| \psi_{1,0} +\frac{\partial \psi_{s,0}}{\partial s} \bigg|_{s=1} \right)  .\]
Integrating with respect to $\mu_{1,0}$ yields 
\[\frac{\partial \lambda_{s,w}}{\partial s}(1,0) = - \int_I \log |J_T| \psi_{1,0} d \mu_{1,0} <0  \]
by the positivity of $|J_T|$ and $\psi_{1,0}$. 

Differentiating \eqref{press:id2} with respect to $w$, at $(s,w)=(1,0)$ gives 
\[  n \lambda_{1,0}^{n-1} \frac{\partial \lambda_{1,w}}{\partial w} \bigg|_{w=0} \psi_{1,0}+ \lambda_{1,0}^n \frac{\partial \psi_{1,w}}{\partial w} \bigg|_{w=0}=\Lc_{1,0}^n \left( e^{w \sum_{j=1}^n (c \circ T^{j-1})} \left( \sum_{j=1}^n (c \circ T^{j-1}) \psi_{1,0} +\frac{\partial \psi_{1,w}}{\partial w} \bigg|_{w=0} \right) \right)  .\]
Setting $n=1$ and $w=0$, and integrating gives 
\[\frac{\partial \lambda_{s,w}}{\partial w}(1,0)= \int_I c \psi_{1,0} d\mu_{1,0}.    \]

Now differentiating \eqref{press:id2} twice with respect to $w$ at $(s,w)=(1,0)$ and setting $n=1$, we obtain 
\begin{align*}
\frac{\partial^2 \lambda_{1,w}}{\partial w^2} & \bigg|_{w=0}  \psi_{1,0} + 2 \frac{\partial \lambda_{1,w}}{\partial w}  \frac{\partial \psi_{1,w}}{\partial w} \bigg|_{w=0} +  \frac{\partial^2 \psi_{1,w}}{\partial w^2} \bigg|_{w=0}\\
&=\Lc_{1,0} \left(  \left( c \circ T \right)^2 \psi_{1,0} + 2 (c \circ T) \frac{\partial \psi_{1,w}}{\partial w} \bigg|_{w=0} +\frac{\partial^2 \psi_{1,w}}{\partial w^2} \bigg|_{w=0} \right)  .
\end{align*}
By integrating, we have 
\begin{equation} \label{press:id3}
\frac{\partial^2 \lambda_{s,w}}{\partial w^2}(1,0)= \int_I \left( (c \circ T)^2 \psi_{1,0}+2  \frac{\partial \psi_{1,w}}{\partial w} \bigg|_{w=0}  \left( c \circ T- \int_I  (c \circ T) \psi_{1,0} d\mu_{1,0}  \right)  \right) d\mu_{1,0} . 
\end{equation}
This shows that if $c$ is a positive constant, then the second term vanishes in the integral, hence \eqref{press:id3} is equal to $c^2>0$. If $c$ is non-constant, the identity yields that the first term of \eqref{press:id3} is non-zero if $c$ is not of the form $g-g \circ h$ for some $g \in C^1(\Pc)$.

--------------}

\section{A priori bounds for the normalised family} \label{sec:nor:lasota}

In this section, we establish some a priori bounds, which will be crucially used for Dolgopyat--Baladi--Vallée estimate in the section~\ref{sec:dolgopyat}.

For each $P \in \Pc$, normalise $\Lc_{s,w}$ by setting
\begin{equation} \label{def:normalop}
(\Lt_{s,w} f)_P= \frac{(\Lc_{s,w} (\psi_{\sigma,u} \rdot f ))_P}{\lambda_{\sigma,u} (\psi_{\sigma,u})_P} 
\end{equation}
where $\lambda_{\sigma,u}$ and $\psi_{\sigma,u}$ are from Theorem \ref{thm:ruelle}, and $(\psi_{\sigma,u})_P$ denotes the restriction of $\psi_{\sigma,u}$ to $P$. It follows that $\Lt_{\sigma,u} \bf 1=1$ and $\Lt_{\sigma,u}^*$ fixes the probability measure $\mu_{\sigma,u}:=\psi_{\sigma,u} \nu_{\sigma,u}$.


\subsection{Lasota--Yorke inequality} \label{sub:nor:lasota}

We begin with the Lasota--Yorke estimate and integral representation of the projection operator for the normalised family.

\begin{lem} \label{est:normalised}
For $(s,w)$ with $(\sigma,u) \in U$, we have for $f \in C^1(\Pc)$ and some constant $\widetilde{C}_K>0$
\begin{enumerate}
\item $\| \Lt_{s,w}^n f \|_{(1)} \leq \widetilde{C}_U (|s| \| f \|_0+ \rho^{n} \| f \|_{(1)})$.
\item $\| \Lt_{1,0}^n f \|_0=  \int_I f d\mu_{1,0}+O(r_{1,0}^n \|f \|_{(1)})$.
\end{enumerate}
Here $r_{s,w}$ denotes the spectral radius of $\frac{1}{\lambda_{\sigma,u}} \Lc_{s,w}-\Pc_{s,w}$. 
\end{lem}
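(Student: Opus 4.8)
The plan is to deduce both statements from the corresponding facts already established for the un-normalised operator $\Lc_{s,w}$, exploiting the explicit conjugation in \eqref{def:normalop}. Iterating \eqref{def:normalop} gives the clean identity $(\Lt_{s,w}^n f)_P = \bigl(\Lc_{s,w}^n(\psi_{\sigma,u}\cdot f)\bigr)_P \big/ \bigl(\lambda_{\sigma,u}^n (\psi_{\sigma,u})_P\bigr)$, since the conjugating factors telescope; this is the observation everything rests on. For part (1), I would substitute this into $\|\cdot\|_{(1)}$ and use that $\psi_{\sigma,u}$ is a fixed element of $C^1(\Pc)$ which, by Theorem \ref{thm:ruelle}.(2), is bounded below by a positive constant on each cell (so $1/\psi_{\sigma,u}$ is also in $C^1(\Pc)$ with controlled norm, the constants depending only on $K$ by analyticity of $\psi_{\sigma,u}$ in $(\sigma,u)$). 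Then multiplication by $\psi_{\sigma,u}$ and by $1/(\lambda_{\sigma,u}^n(\psi_{\sigma,u})_P)$ are bounded operations on $C^1(\overline P)$ whose effect on $\|\cdot\|_0$ and $\|\cdot\|_1$ is estimated by the product/Leibniz rule; feeding this through Proposition \ref{op:lasota} (applied to $\psi_{\sigma,u}\cdot f$, whose norms are comparable to those of $f$ up to $K$-dependent constants) yields $\|\Lt_{s,w}^n f\|_{(1)} \leq \widetilde C_K(|s|\,\|f\|_0 + \rho^n\|f\|_{(1)})$ after absorbing constants. One subtlety: Proposition \ref{op:lasota} is stated for the real-parameter operator $\Lc_{\sigma,u}$, so I would first note that $|g_{s,w}| = g_{\sigma,u}$ (the weight's modulus only sees real parts), hence $|(\Lc_{s,w}^n h)_P| \leq (\Lc_{\sigma,u}^n |h|)_P$ pointwise and a matching bound holds for the derivative after the elementary estimate $|\partial_v|J_\ba|^s| = |s|\,|J_\ba|^\sigma\,|\partial_v J_\ba|/|J_\ba| \le |s|\,|J_\ba|^\sigma\cdot 2M$ from Proposition \ref{distortion1}; this is where the factor $|s|$ rather than $|\sigma|$ enters.

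For part (2), I would use the spectral decomposition \eqref{sp:decomp} at $(s,w)=(1,0)$, where $\lambda_{1,0}=1$: writing $\Lc_{1,0}^n = \Pc_{1,0} + \Nc_{1,0}^n$ with $r(\Nc_{1,0})<1$, one has $\Lc_{1,0}^n(\psi_{1,0}\cdot f) = \Pc_{1,0}(\psi_{1,0}\cdot f) + \Nc_{1,0}^n(\psi_{1,0}\cdot f)$. The projection onto the simple top eigenspace is $\Pc_{1,0}(g) = \bigl(\int_I g\, d\nu_{1,0}\bigr)\psi_{1,0}$ by Theorem \ref{thm:ruelle}.(1),(3),(4) (normalising so $\int_I \psi_{1,0}\,d\nu_{1,0}=1$), so $\Pc_{1,0}(\psi_{1,0}\cdot f) = \bigl(\int_I \psi_{1,0} f\, d\nu_{1,0}\bigr)\psi_{1,0} = \bigl(\int_I f\, d\mu_{1,0}\bigr)\psi_{1,0}$ by definition $\mu_{1,0}=\psi_{1,0}\nu_{1,0}$. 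Dividing by $\lambda_{1,0}^n(\psi_{1,0})_P = (\psi_{1,0})_P$ gives $(\Lt_{1,0}^n f)_P = \int_I f\,d\mu_{1,0} + (\Nc_{1,0}^n(\psi_{1,0}\cdot f))_P/(\psi_{1,0})_P$, and the remainder is $O(r_{1,0}^n\|f\|_{(1)})$ since $\|\Nc_{1,0}^n\|_{(1)} \ll r_{1,0}^n$ (up to subexponential factors absorbed into a slightly enlarged $r_{1,0}$, or stated with the convention that $r_{s,w}$ denotes the relevant spectral radius as in the lemma), multiplication by $\psi_{1,0}$ and division by $(\psi_{1,0})_P$ are bounded on $C^1$, and $\|\cdot\|_0\le\|\cdot\|_{(1)}$.

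The main obstacle is bookkeeping rather than conceptual: one must check that all the $C^1$-norm manipulations — multiplying and dividing by $\psi_{\sigma,u}$, which varies with $(\sigma,u)$ — produce constants depending only on the compact neighborhood $K$, and that the derivative estimates survive the passage from $\sigma$ to the complex parameter $s$ (and $u$ to $w$) with only the harmless replacement of $|\sigma|$ by $|s|$. The positivity and $C^1$-regularity of $\psi_{\sigma,u}$ from Theorem \ref{thm:ruelle} is exactly what makes the conjugation legitimate, and the analytic dependence of $\lambda_{\sigma,u}$ and $\psi_{\sigma,u}$ on $(\sigma,u)$ is what gives uniformity of $\widetilde C_K$ over $K$.
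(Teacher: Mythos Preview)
Your overall strategy is the paper's, and for part~(2) your argument matches it essentially line for line. For part~(1), however, there is a genuine gap in the step ``feed through Proposition~\ref{op:lasota} and absorb constants'': after writing $(\Lt_{s,w}^n f)_P = \lambda_{\sigma,u}^{-n}(\psi_{\sigma,u})_P^{-1}\bigl(\Lc_{s,w}^n(\psi_{\sigma,u}f)\bigr)_P$ and applying a Lasota--Yorke bound of the shape $\|\Lc_{s,w}^n g\|_{(1)}\le C_K(|s|\,\|g\|_0+\rho^n\|g\|_{(1)})$, you are left with an overall factor $\lambda_{\sigma,u}^{-n}$, which is \emph{not} uniformly bounded in $n$ over $K$ (since $\lambda_{\sigma,u}$ may be strictly less than $1$ for $(\sigma,u)\neq(1,0)$). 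Dividing by a fixed $C^1$ function is harmless; dividing by $\lambda_{\sigma,u}^n$ is not.

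The paper avoids this by differentiating $(\Lt_{s,w}^n f)_P$ directly via the product/quotient rule, producing three terms (\textrm{I})--(\textrm{III}), and then observing that once the distortion bound $|\partial_v J_{\ba}|\le 2M|J_{\ba}|$ is inserted, each term is pointwise dominated by a fixed multiple of $(\Lt_{\sigma,u}^n|f|)_P$ or of $\rho^n$ times a similar expression. The point is that the prefactor $\lambda_{\sigma,u}^{-n}(\psi_{\sigma,u})_P^{-1}$ and the depth-$n$ branch sum $\sum_{Q,\langle\ba\rangle} e^{uc(\ba)}|J_{\ba}|^\sigma(\,\cdot\,)$ recombine exactly into the \emph{normalised} positive operator $\Lt_{\sigma,u}^n$ acting on a non-negative function, whence $\|\Lt_{\sigma,u}^n|f|\|_0\le\|f\|_0$ (a consequence of $\Lt_{\sigma,u}\mathbf{1}=\mathbf{1}$) closes the estimate with a genuinely $n$-independent constant. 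Your plan is salvageable, but you must carry $\lambda_{\sigma,u}^{-n}$ through the derivative computation and recognise this recombination explicitly, rather than quoting Proposition~\ref{op:lasota} as a black box and dividing afterwards.
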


\begin{proof}
To prove (1), it is enough to show that for each $P$
$$
\| (\Lt_{s,w}^n f)_P\|_{(1)} \leq \widetilde{C}_U (|s| \| f \|_0+ \rho^{n} \| f \|_{(1)}).
$$
If $P\in\mathcal P[0]$, then the left hand side involves no derivatives and the inequality holds for all sufficiently large $\tilde C_U$. 
Assume that $P$ is positive dimensional. 
Recall that $c(\ba)=\sum_{j=1}^n c(\al_j)$.
Divide $|\partial_v (\Lt_{s,w}^n f)_P|$ into three terms (\rom{1}), (\rom{2}) and (\rom{3}): 
\[ \lambda_{\sigma,u}^{-n} \cdot \frac{\partial_v (\psi_{\sigma,u})_P}{(\psi_{\sigma,u})_P^2} \sum_{Q \in \Pc } \sum_{\langle \ba \rangle } 
e^{wc(\ba)}
|J_{\langle \ba \rangle}|^s \cdot (\psi_{\sigma,u} \rdot f)_Q \circ \langle \ba \rangle
\quad (\textrm{\rom{1}}), \]
\[ \frac{\lambda_{\sigma,u}^{-n}}{(\psi_{\sigma,u})_P} \sum_{Q \in \Pc } \sum_{\langle \ba \rangle} 
e^{w c(\ba)}
|s||J_{\langle \ba \rangle}|^{s-1}  |\partial_v J_{\langle \ba \rangle}| \cdot (\psi_{\sigma,u} \rdot f)_Q \circ \langle \ba \rangle \quad (\textrm{\rom{2}}) \]
and 
\[ \frac{\lambda_{\sigma,u}^{-n}}{(\psi_{\sigma,u})_P} \sum_{Q \in \Pc } \sum_{\langle \ba \rangle} 
e^{w c(\ba)}
|J_{\langle \ba \rangle}|^{s} \left( f \rdot \partial_v \psi_{\sigma,u}+ \psi_{\sigma,u}\rdot \partial_v f  \right)_Q \circ \langle \ba \rangle .    \quad (\textrm{\rom{3}}) \]
Here, the inner sum is taken over $\langle \ba \rangle \in \Hc^n(P,Q)$.

The term (\rom{1}) is equal to $\left|\frac{\partial_v (\psi_{\sigma,u})_P}{(\psi_{\sigma,u})_P} (\Lt_{s,w}^n f)_P \right|$, whence bounded by $A_{U} \| \Lt_{\sigma,u}^n |f| \|_0$ for some $A_{U}=\sup_U \| \psi_{\sigma,u} \|_1 \|\psi_{\sigma,u}^{-1} \|_0$, which depends only on $U$ by perturbation theory. This is bounded by $A_{U} \|f \|_0$. The term (\rom{2}) is bounded by $M |s| \| f\|_0$, where $M$ is the distortion constant in Proposition \ref{distortion1}.
The term (\rom{3}) is bounded by $A_{U}\rho^{n} \|f \|_0+ \rho^{n} \|f\|_1$, up to constant. Taking a suitable $\widetilde{C}_U>0$, we obtain (1).

To prove (2), assume that eigenfunction and measure are normalised, i.e., $\int_I  \psi_{\sigma,u} \nu_{\sigma,u}=1$. For $f \in \mathcal C^1(\Pc)$, we have for any $n \geq 1$
\[ \mathcal L_{\sigma,u}^n f = \lambda_{\sigma,u} \rdot \psi_{\sigma,u} c(f)+ \Nc^n_{\sigma,u} f \]
by the spectral decomposition \eqref{sp:decomp}. It follows that
\[ \lambda_{\sigma,u}^{-n} \mathcal L_{\sigma,u}^n f = \psi_{\sigma,u} c(f)  + \lambda_{\sigma,u}^{-n}  \Nc_{\sigma,u}^n f,
\] 
which yields the identity $c(f)=\int_I f d\nu_{\sigma,u}$ by integrating against $\nu_{\sigma,u}$ and taking the limit as $n$ tends to infinity.
Due to the normalisation \eqref{def:normalop}, we have
\begin{align*} 
\Lt_{\sigma,u}^n f &=\lambda_{\sigma,u}^n \psi_{\sigma,u}^{-1} \Lc_{\sigma,u}^n  (\psi_{\sigma,u} \rdot f)  \\
&=\lambda_{\sigma,u}^n \int_I f d\mu_{\sigma,u} +O(r_{\sigma,u}^n \| \psi_{\sigma,u}^{-1} \|_{(1)}  \| \psi_{\sigma,u} \rdot f \|_{(1)} )
\end{align*}
with $r_{\sigma,u}<1$, which gives (2).
\end{proof}

\hide{-----

Integrate against $\mu$ and obtain
\begin{align}
\int \lambda^{-n} \mathcal L_{s,w} ^n f \mu=\int f  \left((\lambda^{-1} \mathcal L_{s,w} )^* \right)^n  \mu     = c(f) + o(1).
\end{align}
Using $(\lambda^{-1}\mathbb H)^*\mu=\mu$ and $\mu = \psi_2\mu_L$, 
\begin{align}
\int f\mu = c(f)  + o(1).
\end{align}
As $n$ tends to infinity, we obtain
\begin{align}
c(f) = \int f_2 \psi_2 \mu_L,
\end{align}
where $f_2$ is the component of $f$ in $\mathcal B_2.$

------}

\subsection{Key relation of $(\sigma,u)$ and $(1,0)$} \label{sub:lebesgue}

We aim to relate $\Lt_{\sigma,u}$ to $\Lt_{1,0}$ in a suitable way, in order to utilise the properties of $\mu_{1,0}$ 
proved in Lemma \ref{est:normalised}.

\begin{lem} \label{perturb:id}
For $(s,w)$ with $(\sigma,u) \in U$, there are constants $B_U>0$ and $A_{\sigma, u}>0$ such that 
\[ \| \Lt_{\sigma,u}^n f \|_0^2 \leq B_U A_{\sigma,u}^n \| \Lt_{1,0}^n(|f|^2) \|_0.       \]
\end{lem}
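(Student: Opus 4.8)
The plan is to reduce the estimate to a Cauchy--Schwarz inequality in which the weights defining $\Lt_{1,0}^n$ serve as a reference, and then to recognise the resulting ``second moment'' weight as the transfer operator at the doubled parameter $(2\sigma-1,2u)$, which again lies near $(1,0)$ and is therefore controlled by the bounds of \S\ref{sec:acim}. First I would make the weights explicit: iterating \eqref{def:normalop} (and using $\lambda_{1,0}=1$) gives, for $P\in\Pc$ and $z\in P$,
\[
(\Lt_{\sigma,u}^n f)_P(z)=\sum_{Q\in\Pc}\ \sum_{\langle\ba\rangle\in\Hc^n(P,Q)} W_{\ba}(z)\, f_Q(\langle\ba\rangle z),\qquad W_{\ba}(z)=\frac{e^{u\,c(\ba)}\,J_{\ba}(z)^{\sigma}\,(\psi_{\sigma,u})_Q(\langle\ba\rangle z)}{\lambda_{\sigma,u}^{\,n}\,(\psi_{\sigma,u})_P(z)},
\]
and similarly $(\Lt_{1,0}^n(|f|^2))_P(z)=\sum_{Q}\sum_{\langle\ba\rangle}V_{\ba}(z)\,|f_Q(\langle\ba\rangle z)|^2$ with $V_{\ba}(z)=J_{\ba}(z)\,(\psi_{1,0})_Q(\langle\ba\rangle z)/(\psi_{1,0})_P(z)$. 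Here $W_\ba(z),V_\ba(z)>0$ since $J_\ba>0$ (Proposition~\ref{prop:2.1}) and $\psi_{\sigma,u},\psi_{1,0}>0$ (Theorem~\ref{thm:ruelle}(2)), the two series run over the same index set, and they converge by the moderate-growth condition. Factoring $W_\ba=(W_\ba/\sqrt{V_\ba})\sqrt{V_\ba}$ and applying Cauchy--Schwarz gives, for every $P$ and $z\in P$,
\[
|(\Lt_{\sigma,u}^n f)_P(z)|^2\ \le\ \Big(\sum_{Q}\sum_{\langle\ba\rangle\in\Hc^n(P,Q)}\frac{W_{\ba}(z)^2}{V_{\ba}(z)}\Big)\cdot(\Lt_{1,0}^n(|f|^2))_P(z),
\]
so it suffices to bound $S_P(z):=\sum_{Q,\langle\ba\rangle}W_\ba(z)^2/V_\ba(z)$ by $B_K A_{\sigma,u}^{\,n}$ uniformly in $P,z$.

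The next step is the computation
\[
\frac{W_{\ba}(z)^2}{V_{\ba}(z)}=\big(e^{2u\,c(\ba)}\,J_{\ba}(z)^{2\sigma-1}\big)\cdot\frac{(\psi_{\sigma,u})_Q(\langle\ba\rangle z)^2\,(\psi_{1,0})_P(z)}{\lambda_{\sigma,u}^{\,2n}\,(\psi_{\sigma,u})_P(z)^2\,(\psi_{1,0})_Q(\langle\ba\rangle z)}.
\]
The $\psi$-factor is bounded above by a constant $B_K$ depending only on $K$: indeed $\psi_{\sigma,u}$ and $\psi_{1,0}$ are positive and continuous on the finitely many closures $\overline P$ and $(\sigma,u)\mapsto\psi_{\sigma,u}$ varies analytically, so the suprema $\sup_K\|\psi_{\sigma,u}\|_0$, $\sup_K\|\psi_{\sigma,u}^{-1}\|_0$ and their analogues for $\psi_{1,0}$ are finite --- the perturbation-theoretic input already used for Lemma~\ref{est:normalised}. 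On the other hand $e^{2u\,c(\ba)}J_{\ba}(z)^{2\sigma-1}$ is precisely the weight that $\Lc_{2\sigma-1,\,2u}$ attaches to the depth-$n$ inverse branch $\langle\ba\rangle$, so summing over all $Q\in\Pc$ and all $\langle\ba\rangle\in\Hc^n(P,Q)$ yields
\[
S_P(z)\ \le\ B_K\,\lambda_{\sigma,u}^{-2n}\,\big(\Lc_{2\sigma-1,\,2u}^{\,n}\mathbf{1}\big)_P(z).
\]
To close the argument I would shrink $K$ once and for all so that $(2\sigma-1,2u)$ still lies in the moderate-growth neighbourhood whenever $(\sigma,u)\in K$ (possible since $(\sigma,u)\mapsto(2\sigma-1,2u)$ is affine and fixes $(1,0)$); then $\|\Lc_{2\sigma-1,2u}\|_0\le|\Pc|A_K$ directly from \eqref{def:modgrowth}, so $(\Lc_{2\sigma-1,2u}^{\,n}\mathbf{1})_P(z)\le(|\Pc|A_K)^{n}$. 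With $A_{\sigma,u}:=|\Pc|A_K/\lambda_{\sigma,u}^{2}$ (and $\lambda_{\sigma,u}>0$ by Theorem~\ref{thm:ruelle}(1)) this gives $S_P(z)\le B_K A_{\sigma,u}^{\,n}$, and taking $\sup_{P}\sup_{z\in P}$ in the Cauchy--Schwarz inequality yields $\|\Lt_{\sigma,u}^n f\|_0^2\le B_K A_{\sigma,u}^{\,n}\|\Lt_{1,0}^n(|f|^2)\|_0$.

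I expect the one genuinely delicate point to be the parameter bookkeeping: keeping $(2\sigma-1,2u)$ inside the region where the transfer operator --- and, for the sharper constant below, its leading eigendata --- is controlled, together with the uniform-in-$(\sigma,u)$ boundedness of the eigenfunctions $\psi_{\sigma,u}$; both rest on the perturbation theory already used in \S\ref{sec:nor:lasota}. With slightly more work one may replace $A_{\sigma,u}$ by $\lambda_{2\sigma-1,\,2u}/\lambda_{\sigma,u}^{2}$, using the spectral decomposition \eqref{sp:decomp} of $\Lc_{2\sigma-1,2u}$ in place of the crude operator-norm bound; this equals $1$ at $(\sigma,u)=(1,0)$ and hence is close to $1$ on $K$, which is the form in which the estimate feeds into the Dolgopyat-type bound of \S\ref{sec:dolgopyat}.
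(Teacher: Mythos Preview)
Your argument is correct and follows the same scheme as the paper: apply Cauchy--Schwarz to the sum over inverse branches so that one factor becomes $\Lt_{1,0}^n(|f|^2)$ and the other is a sum of weights $e^{2u\,c(\ba)}J_\ba^{2\sigma-1}$ which one recognises as the unweighted transfer operator at the doubled parameter $(2\sigma-1,2u)$. The paper organises the $\psi$-bookkeeping slightly differently --- it first splits $|J_\ba|^\sigma = |J_\ba|^{1/2}\cdot|J_\ba|^{\sigma-1/2}$ and keeps $\psi_{\sigma,u}$ attached to $f$, obtaining $\Lc_{1,0}^n(|\psi_{\sigma,u}f|^2)$, and then rewrites the first factor as $\lambda_{2\sigma-1,2u}^n(\psi_{2\sigma-1,2u})_P(\Lt_{2\sigma-1,2u}^n\psi_{2\sigma-1,2u}^{-1})_P$ --- but this is the same identity seen from the other side.

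One point worth flagging: the paper takes $A_{\sigma,u}=\lambda_{2\sigma-1,2u}/\lambda_{\sigma,u}^{2}$ rather than your crude $|\Pc|A_K/\lambda_{\sigma,u}^2$, and this is not cosmetic. The constant $A_{\sigma,u}$ is reused by name in \S\ref{sec:dolgopyat} (see Lemma~\ref{key:distortion} and the reduction step after \eqref{L2:sup}), where one needs $A_{1,0}=1$ so that $A_{\sigma,u}^{\,n-n_0}$ can be absorbed into a small power of $|t|$ by shrinking $K$. You anticipated this in your final paragraph, and the refinement you sketch --- using the spectral decomposition of $\Lc_{2\sigma-1,2u}$ in place of the operator-norm bound --- is exactly what the paper does.
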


\begin{proof}
For $P \in \Pc$, we have
\begin{align*}
|(\Lt_{\sigma,u}^n f)_P|^2 &\leq \frac{\lambda_{\sigma,u}^{-2n}}{(\psi_{\sigma,u})_P^{2}} \left( \sum_Q \sum_{\langle \ba \rangle} 
e^{u c(\ba)}
|J_{\ba}|^\sigma  |(\psi_{\sigma,u} \rdot f)_Q| \circ \langle \ba \rangle   \right)^2  \\
&\leq \frac{\lambda_{\sigma,u}^{-2n}}{(\psi_{\sigma,u})_P^{2}} \left( 
\sum_{Q, \langle \ba \rangle} 
e^{2 u c(\ba)}
|J_{\ba}|^{2\sigma-1}   \right) \left(  
\sum_{Q,\langle \ba \rangle}  
|J_{\ba}| |(\psi_{\sigma,u} \rdot f)_Q|^2 \circ \langle \ba \rangle   \right)
\end{align*}
by the Cauchy--Schwartz inequality. 

The second factor is equal to $(\Lc_{1,0}^n |(\psi_{\sigma,u} \rdot f)|^2)_P$, while the rest satisfies
\begin{align*}
\frac{\lambda_{\sigma,u}^{-2n}}{(\psi_{\sigma,u})_P^{2}} \left( 
\sum_{Q, \langle \ba \rangle} 
e^{2 u c(\ba)}
|J_{\ba}|^{2\sigma-1}   \right)
&=
\lambda_{2\sigma-1, 2u}^n (\psi_{2\sigma-1, 2u})_P ( \Lt_{2\sigma-1, 2u}^n \psi_{2\sigma-1, 2u}^{-1})_P 
\\
&\leq 
\sup_U \lambda_{2\sigma-1, 2u}^n \| \psi_{2\sigma-1, 2u} \|_0 \| \psi_{2\sigma-1, 2u}^{-1} \|_0 
\end{align*}
where the first equality follows from  normalisation~\eqref{def:normalop}. By setting $A_{\sigma, u}= \frac{\lambda_{2\sigma-1, 2u}}{\lambda_{\sigma,u}^2}$ and taking the supremum over $P$, we obtain the desired inequality.
\end{proof}

\hide{------

To this end, first note that for $\al \in \Oc$ with non-empty $O_\al$, we have $T^2(O_\al)=I$. This leads to:

\begin{prop} \label{mixing}
The complex Gauss map $(I,T)$ is topologically mixing.
\end{prop}

\begin{proof}

\end{proof}

-------------}


\hide{-------

\begin{thm}[Hennion] 
Let $E$ and $F$ be Banach spaces with a compact inclusion $F \subseteq E$ and $\Lc$ be a bounded operator on $F$ such that for all $n \geq 1$ and $\varphi \in F$
\begin{equation*} 
\| \Lc^n \varphi \|_F \leq  \rho^n \|\varphi\|_F+ C_n \|\varphi\|_E
\end{equation*}
for suitable real constants $C_n$ and $\rho$ with $\rho< \rho(\Lc)$, where $\rho(\Lc)$ denotes the spectral radius. Then $\Lc$ is quasi-compact. 
\end{thm}

This inequality is called the Lasota--Yorke estimate. Theorem \ref{hennion} suggests that to bound the essential spectral radius of transfer operator, it is crucial to take a Banach space endowed with two norms satisfying the compact embedding assumption and Lasota--Yorke inequality. Consider the norms (\ref{norm1}) and (\ref{norm2}) on $C^1(I;\mathcal P)$, then we have:

-------}

\hide{
\begin{prop}
For real $(\sigma,\nu) \in \Sigma_0 \times W_0$, $\mathcal L_{\sigma,\nu}$ has a unique eigenvalue $\lambda(\sigma,\nu)$ of maximal modulus, which is real and simple, the dominant eigenvalue. The associated eigenfunction $f_{\sigma,\nu}$ is strictly positive. 
\end{prop}
\begin{proof}
\end{proof}
}

\hide{
For a subset $Y \subset I$, define
$$
\mathcal H^n_Y(P,Q) = \{\langle \alpha \rangle \in \mathcal H^n(P,Q) \colon \mathrm{Im}\langle \alpha \rangle \subset Y\}.
$$
Consider a family $Y_r$ of open balls in $\mathbb C$, where $r>0$ is a positive real parameter tending to zero. 
}

\section{Dolgopyat--Baladi--Vall\'ee estimate} \label{sec:dolgopyat}

In this section, we show the Dolgopyat-type uniform polynomial decay of transfer operator with respect to the $(t)$-norm. The main steps of the proof are parallel to Baladi--Vall\'ee \cite[\S3]{bal:val} and include Local Uniform Non-Integrability (Local UNI) property for the complex Gauss system that is modified with respect to the finite Markov partition, a version of Van der Corput lemma in dimension 2, and the spectral properties we settled in \S\ref{sec:acim}-\ref{sec:nor:lasota}.
Despite of the parallelism, we note that the proofs are longer and different in details due to the presence of cells in multiple dimensions as well as inverse branches between them.

\subsection{Main estimate and reduction to $L^2$-norm} \label{sub:maindolgopyat}

Our goal is to prove the following polynomial decay property for a family of transfer operators which we call Dolgopyat--Baladi--Vallée estimate.

As before, let $U$ be a neighbourhood of $(1,0)$ in Definition~\ref{def:modgrowth}.
\begin{thm} \label{main:dolgopyat}

There exist $\widetilde{C}, \widetilde{\gamma}>0$ such that for $(s,w)$ with $(\sigma,u) \in U$, and for any $n = [\widetilde{C} \log |t|]$ with $|t| \geq 1/\rho^2$, we have
\[ \| \Lt_{s,w}^n \|_{(t)} \ll \frac{1}{|t|^{\widetilde{\gamma}}}. \]
Here, the implied constant depends only on the given neighbourhood $U$.

For $0<\xi<1/10$, we have
\begin{align} \label{6.1}
 \|(I-\Lc_{s,w})^{-1} \|_{(t)} \ll |t|^\xi   .  \end{align}
\end{thm}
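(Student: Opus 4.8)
The goal is the Dolgopyat-type bound $\|\tilde{\mathcal L}_{s,w}^n\|_{(t)} \ll |t|^{-\tilde\gamma}$ for $n = [\tilde C\log|t|]$ and its corollary on the resolvent. I would follow the Baladi--Vallée template adapted to the direct-sum function space $C^1(\mathcal P)$. First, I would reduce the $(t)$-norm estimate to an $L^2$-type estimate. The Lasota--Yorke inequality of Lemma~\ref{est:normalised}.(1), iterated, controls $\|\tilde{\mathcal L}_{s,w}^{n}f\|_1$ by $|t|$ times a sup-norm factor plus a contracting term; this is exactly what lets one trade the $\frac{1}{|t|}\|\cdot\|_1$ part of the $(t)$-norm against the $\|\cdot\|_0$ part, so it suffices to gain a power of $|t|$ in the sup-norm after $n \sim \log|t|$ steps. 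To bound $\|\tilde{\mathcal L}_{s,w}^n f\|_0$ I would, as in \cite{bal:val}, pass to $L^2(\mu_{1,0})$: Lemma~\ref{perturb:id} compares $\|\tilde{\mathcal L}_{\sigma,u}^nf\|_0^2$ to $\|\tilde{\mathcal L}_{1,0}^n(|f|^2)\|_0$ up to a factor $A_{\sigma,u}^n$ that is close to $1$ when $(\sigma,u)$ is near $(1,0)$, and Lemma~\ref{est:normalised}.(2) identifies $\tilde{\mathcal L}_{1,0}^n(|f|^2)$ with $\int |f|^2 d\mu_{1,0}$ up to an exponentially small error. So the whole problem becomes: show that iterating $\tilde{\mathcal L}_{s,w}$ (with the genuine oscillatory factor $e^{wc}|J|^{s}$, $s = \sigma + it$) contracts the $L^2(\mu_{1,0})$-norm by a definite factor $<1$ per block of $\sim\log|t|$ steps, i.e.\ a Dolgopyat contraction $\|\tilde{\mathcal L}_{s,w}^n f\|_{L^2} \le (1-\epsilon)\|f\|_{L^2}$-type bound.

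**Core oscillatory estimate.** The heart is a cancellation estimate for sums $\sum_{\langle\ba\rangle\in\mathcal H^n(P,Q)} e^{it\Phi_{\ba}}\,(\text{amplitudes})$, where $\Phi_{\ba} = \log|J_{\ba}|$ plays the role of the phase. Two ingredients are needed. The first is a \emph{uniform non-integrability} (UNI) statement: for two distinct inverse branches $\langle\ba\rangle, \langle\bb\rangle$ of the same depth landing in the same cell, the difference of phases $\Phi_{\ba} - \Phi_{\bb}$ has gradient bounded below on a definite portion of the cell. In the one-dimensional setting this is Baladi--Vallée's UNI; here the phases are functions on a planar cell, so ``non-integrability'' must be phrased as a lower bound on the norm of a $2$-dimensional gradient, and one must also track the cell structure $\mathcal P$ and the Markov combinatorics. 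I would establish this using the metric and distortion estimates already in hand --- Propositions~\ref{distortion1}, \ref{dual:metric}, \ref{distortion2} and Remark~\ref{rem:distortion} --- together with the structure of the dual inverse branches $h_{\ba^*}$ and the natural extension of Ei--Nakada--Natsui; the backward branches are precisely what let one separate the phases of two forward branches that agree except in the first digit. The second ingredient is a \emph{Van der Corput lemma in dimension $2$} (the one the acknowledgements attribute to H.\,H.\ Rugh): an oscillatory integral $\int e^{it\Phi}\chi$ over a planar region with $|\nabla\Phi|$ bounded below and $\Phi, \chi$ in $C^1$ (or $C^2$) decays like $|t|^{-\eta}$ for some $\eta>0$. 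Combining UNI with this lemma over the $\sim|t|$-many branches at depth $n\sim\log|t|$, and using the bounded-distortion control on amplitudes, yields an $L^2$-norm gain of the form $(1 - c|t|^{-\eta'})$ per single step, hence $(1-c|t|^{-\eta'})^{n} \le e^{-c'|t|^{-\eta'}\log|t|}$; choosing $\tilde C$ large enough in $n = [\tilde C\log|t|]$ makes this $\ll |t|^{-\tilde\gamma}$. (Alternatively, and more robustly, one builds an explicit ``Dolgopyat operator'' acting on cones of functions and shows the $L^2$-norm strictly decreases; the arithmetic is the same.)

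**Assembling the bound.** Feeding the $L^2$ contraction back through Lemma~\ref{perturb:id} and then through the Lasota--Yorke inequality gives $\|\tilde{\mathcal L}_{s,w}^n f\|_{(t)} \ll |t|^{-\tilde\gamma}\|f\|_{(t)}$ for $n=[\tilde C\log|t|]$ and $|t|\ge 1/\rho^2$ (the lower bound on $|t|$ ensures $\rho^n \ll |t|^{-\text{(something)}}$ so the contracting term in Lasota--Yorke is harmless). For the resolvent bound \eqref{6.1}, I would write a Neumann-type series: since $\lambda_{s,w}$ is close to $1$ for $(\sigma,u)$ near $(1,0)$, renormalise and split $(I-\mathcal L_{s,w})^{-1}$ using the spectral decomposition $\mathcal L_{s,w}=\lambda_{s,w}\mathcal P_{s,w}+\mathcal N_{s,w}$ from \eqref{sp:decomp}; on the complement of the dominant eigenspace the powers $\mathcal N_{s,w}^n$ are controlled by the $n=[\tilde C\log|t|]$ estimate just proved, so grouping the Neumann series into blocks of length $n$ gives a geometric series with ratio $\ll|t|^{-\tilde\gamma}$ and a total of $\ll n \ll \log|t|$ terms per block, yielding $\|(I-\mathcal L_{s,w})^{-1}\|_{(t)} \ll |t|^{\xi}$ with $\xi$ any exponent exceeding, say, the bookkeeping loss from the finitely many $\mathcal P$-components and the $\log|t|$ factor.

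**Main obstacle.** The genuinely hard step is the two-dimensional UNI estimate: proving a uniform lower bound on $|\nabla(\Phi_{\ba}-\Phi_{\bb})|$ over the relevant cells, uniformly in depth and in the (infinitely many) pairs of branches, in a setting where $T$ is \emph{not} a full-branch map and the cells $TO_\alpha$ are proper subsets of $I^\circ$. One must verify that the Markov/cell-compatibility of $\mathcal P$ (Proposition~\ref{part:markov}) does not destroy the separation of phases, and handle the boundary cells $\mathcal P[1],\mathcal P[0]$ where the phase has no meaningful gradient --- there the operator is genuinely contracting in $L^1$ by \eqref{eq:lii-l1}, so those components should be dispatched by the spectral-radius inequality \eqref{eq:rineq} rather than by oscillation, and one must organise the argument so the two mechanisms coexist. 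Getting the dependence of all constants to be uniform over the neighborhood $K$ of $(1,0)$, via the perturbation theory for $\lambda_{s,w},\psi_{s,w}$, is the remaining piece of bookkeeping.
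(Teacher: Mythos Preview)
Your overall architecture is correct and matches the paper: reduce the $(t)$-norm bound to an $L^2(\mu_{1,0})$ estimate via Lemma~\ref{perturb:id} and Lemma~\ref{est:normalised}, and then obtain the $L^2$ estimate from a UNI property plus a two-dimensional Van der Corput lemma. The resolvent bound by block-summing the Neumann series is also the right idea.

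There is, however, a genuine gap in the mechanism you describe for the core step. You write that UNI plus Van der Corput ``yields an $L^2$-norm gain of the form $(1 - c|t|^{-\eta'})$ per single step'', and then you iterate. But with $n=[\widetilde C\log|t|]$ this gives $(1-c|t|^{-\eta'})^{n}\approx\exp(-c\widetilde C|t|^{-\eta'}\log|t|)\to 1$ as $|t|\to\infty$ for any $\eta'>0$, so no power of $|t|$ is gained. The paper (following Baladi--Vall\'ee) does \emph{not} iterate a small per-step gain. Instead it performs a \emph{single} direct estimate at depth $n_0=[\widetilde B\log|t|]$: expand $\int|\widetilde{\mathcal L}_{s,w}^{n_0}f|^2\,d\mu_{1,0}$ as a double sum over pairs $(\langle\ba\rangle,\langle\bb\rangle)$ of branches, and split according to whether the distance $\Delta(\ba,\bb)=\inf|(\partial_z\phi_{\ba,\bb},\partial_{\bar z}\phi_{\ba,\bb})|_2$ is below or above a threshold $\varepsilon=\rho^{an_0/2}$. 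For close pairs, Local UNI (Proposition~\ref{uni}) is a \emph{measure} statement --- the total Lebesgue measure of $\bigcup_{\Delta(\ba,\bb)\le\varepsilon}h_{\bb}(P)$ is $\ll\rho^{an_0}$ --- not a uniform lower bound on $|\nabla(\Phi_{\ba}-\Phi_{\bb})|$ for all pairs as you phrase it; this, combined with bounded distortion (Lemma~\ref{key:distortion}), bounds $I_{P,1}$ by $\rho^{an_0/2}\|f\|_0^2$. For far pairs, the Van der Corput Lemma~\ref{vandercorput} is applied to each oscillatory integral, giving $I_{P,2}\ll\rho^{(1-4a)n_0/2}\|f\|_{(t)}^2$. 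Choosing $0<a<\frac14$ makes both pieces $\ll|t|^{-\widetilde\beta}$, and only \emph{then} does one iterate further (from $n_0$ to $n$) using the ordinary spectral gap of $\widetilde{\mathcal L}_{1,0}$ in Lemma~\ref{est:normalised}(2).

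Two smaller corrections. First, for the resolvent bound you invoke the decomposition $\mathcal L_{s,w}=\lambda_{s,w}\mathcal P_{s,w}+\mathcal N_{s,w}$ of \eqref{sp:decomp}, but that decomposition is only available for $(s,w)$ in a complex neighborhood of $(1,0)$, not for large $|t|$; the block-Neumann summation alone, using $\|\widetilde{\mathcal L}_{s,w}^{n}\|_{(t)}\ll|t|^{-\widetilde\gamma}$, already suffices. Second, your worry about the cells in $\mathcal P[1]$ and $\mathcal P[0]$ is a non-issue here: since $\mu_{1,0}$ is absolutely continuous with respect to two-dimensional Lebesgue measure, the $L^2(\mu_{1,0})$ integral sees only the top-dimensional cells in $\mathcal P[2]$, and the lower-dimensional pieces play no role in the oscillatory estimate.
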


As in \cite{Dolgopyat}, the proof of  Theorem \ref{main:dolgopyat} can be reduced to the following $L^2$-norm estimate through the key relation in \S\ref{sub:lebesgue}.

\begin{prop} \label{main:L2} 
There exist $\widetilde{B}, \widetilde{\beta}>0$ such that for $(s,w)$ with $(\sigma,u) \in U$, and for any $n_0 = [\widetilde{B} \log |t|]$ with $|t| \geq 1/\rho^2$, we have 
\begin{align}\label{eqn:6.2}
    \int_I | \Lt_{s,w}^{n_0} (f)|^2 d \mu_{1,0}  \ll \frac{\| f\|_{(t)}^2}{|t|^{\widetilde{\beta}}} .
    \end{align}
Here, the implied constant depends only on the given neighbourhood $U$.
\end{prop}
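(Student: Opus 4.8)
The strategy is the standard Dolgopyat--Baladi--Vall\'ee $L^2$-contraction argument, adapted to the piecewise $C^1$ setting and the finite Markov partition $\mathcal P$. The core idea is that for $z \in I$, the quantity $\Lt_{s,w}^{n_0}f(z)$ is a sum over inverse branches $\langle \ba \rangle \in \mathcal H^{n_0}(P,Q)$ of terms whose moduli, being controlled by $|J_\ba|^{\sigma}$, behave like those of the normalised operator $\Lt_{\sigma,u}^{n_0}$ evaluated on $|f|$, but whose phases $t \log |J_\ba(z)| + \tau c(\ba)$ oscillate. When two branches $\langle \ba \rangle$, $\langle \bb \rangle$ have phases that differ by a genuinely nonconstant function --- this is the Local Uniform Non-Integrability (UNI) property, which I will establish using Proposition~\ref{dual:metric} and Proposition~\ref{distortion2} on the dual inverse branches, following Ei--Nakada--Natsui's description of the natural extension --- the cross terms in $|\Lt_{s,w}^{n_0}f|^2$ cancel after integrating against $\mu_{1,0}$, at a rate polynomial in $|t|$ provided $n_0 \asymp \log|t|$. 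First I would set up the UNI statement: there exist constants so that for a positive proportion of pairs of branches of depth $n$, the difference of the associated ``distance functions'' $\Psi_{\ba,\bb}(z) = \log|h_\ba'(z)| - \log|h_\bb'(z)|$ has derivative bounded below on $P$ uniformly; the two-dimensional nature forces me to control a gradient rather than a single derivative, which is where the Van der Corput lemma in dimension $2$ (Lemma~\ref{vandercorput}) enters.

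The steps, in order: (i) partition the depth-$n_0$ branches into groups and, via UNI, pair up branches within each cell $P$ whose phase differences are non-degenerate; (ii) expand $\int_I |\Lt_{s,w}^{n_0}f|^2\,d\mu_{1,0}$ using \eqref{def:op:eachp}, writing it as a diagonal contribution (bounded by $\int |\Lt_{\sigma,u}^{n_0}|f|^2|\,d\mu_{1,0} + O(\rho^{n_0}\|f\|_{(t)}^2)$ via Lemma~\ref{est:normalised}.(2) and Lemma~\ref{perturb:id}) plus off-diagonal cross terms; (iii) for each cross term, apply a smoothing/mollification so that $f$ may be replaced by an average on scale $|t|^{-1}$ with error absorbed by the $\tfrac{1}{|t|}\|f\|_1$ part of $\|f\|_{(t)}$ --- here the use of $\|\cdot\|_{(t)}$ rather than $\|\cdot\|_{(1)}$ is essential; (iv) apply the two-dimensional Van der Corput estimate to each oscillatory integral $\int e^{it\Psi_{\ba,\bb}(z)} \varphi(z)\,d\mathrm{Leb}(z)$ to gain a factor $|t|^{-\eta}$ for some $\eta>0$, uniformly over the non-degenerate pairs; (v) sum the resulting bounds over all cells and branches, using bounded distortion (Proposition~\ref{distortion1}) and the moderate-growth bound $A_K$ to keep the combinatorial sum under control, and choose $\widetilde B$ so that the exponential growth in the number of branches is beaten by the decay, yielding the net gain $|t|^{-\widetilde\beta}$.

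The main obstacle will be step (i), establishing Local UNI in the non-full-branch, two-dimensional setting. In the classical Baladi--Vall\'ee argument the Gauss map is a full branch map and the distance functions are well-understood via continued-fraction convergents on an interval; here I must work cell-by-cell, track which composite branches actually land inside a given $P \in \mathcal P$ (this is exactly the content of the compatibility in Definition~\ref{def:compatibility} and Proposition~\ref{part:markov}), and produce a quantitative non-degeneracy of $\nabla \Psi_{\ba,\bb}$ on $P$ --- not merely its non-vanishing. I expect to derive this from the fact that $\Psi_{\ba,\bb}(z) = 2\log\bigl|\tfrac{Q_{n-1}^{\ba}}{Q_n^{\ba}}z + 1\bigr| - 2\log\bigl|\tfrac{Q_{n-1}^{\bb}}{Q_n^{\bb}}z+1\bigr|$ up to a $z$-independent constant (cf.\ the computation in Proposition~\ref{distortion2}), so that a lower bound on $|\nabla\Psi_{\ba,\bb}|$ reduces to a separation estimate $\bigl|\tfrac{Q_{n-1}^{\ba}}{Q_n^{\ba}} - \tfrac{Q_{n-1}^{\bb}}{Q_n^{\bb}}\bigr| \gg \rho^{2n}$ for a positive proportion of pairs, which in turn follows from Proposition~\ref{dual:metric} applied to the dual branches $h_{\ba^*}$ (these convergent-ratios are precisely the points $Q_{n-1}^*/Q_n^* = [0;\alpha_n,\dots,\alpha_1] \in I^*$, and the diameter/measure bounds there control how tightly they can cluster). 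Once the separation is in hand, the remaining analytic estimates are routine adaptations of \cite[\S3]{bal:val} combined with Lemma~\ref{vandercorput}.
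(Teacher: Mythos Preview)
Your overall architecture (Local UNI via dual branches, then a two-dimensional Van der Corput lemma) matches the paper's, and your derivation of UNI from a separation estimate for $Q_{n-1}/Q_n$ using Propositions~\ref{dual:metric} and~\ref{distortion2} is on target. However, step~(ii) contains a genuine gap. The bound you state for the ``diagonal'', $\int_I \Lt_{\sigma,u}^{n_0}(|f|^2)\,d\mu_{1,0}$, is of order $\|f\|_0^2$ (at $(\sigma,u)=(1,0)$ it equals $\int|f|^2\,d\mu_{1,0}$ by invariance of $\mu_{1,0}$) and does \emph{not} decay in $|t|$; citing Lemmas~\ref{est:normalised} and~\ref{perturb:id} here conflates the reduction of Theorem~\ref{main:dolgopyat} to Proposition~\ref{main:L2} with the proof of Proposition~\ref{main:L2} itself. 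More importantly, the correct decomposition is not diagonal versus off-diagonal but \emph{close versus far} in the distance $\Delta(\ba,\bb)=\inf_P|(\partial_z\phi_{\ba,\bb},\partial_{\bar z}\phi_{\ba,\bb})|_2$, with threshold $\varepsilon=\rho^{a n_0/2}$. The close pairs (those with $\Delta\le\varepsilon$) include the diagonal but also many off-diagonal pairs for which Van der Corput is useless because $m_1(\phi_{\ba,\bb})$ is too small. The point of Local UNI is not that ``a positive proportion of pairs'' are non-degenerate; it is the quantitative measure estimate of Proposition~\ref{uni}(1), namely that for each fixed $\ba$ the set $\bigcup_{\Delta(\ba,\bb)\le\varepsilon}h_\bb(P)$ has Lebesgue measure $\ll\varepsilon^2$. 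Combined with the bounded-distortion Lemma~\ref{key:distortion}, this yields $|I_{P,1}|\ll\rho^{an_0/2}\|f\|_0^2$ (Proposition~\ref{thm:I_1}) with no oscillation at all. Your plan never isolates this measure estimate as the mechanism for controlling the close pairs, so as written the argument does not close.

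Your step~(iii), mollification at scale $|t|^{-1}$, is also not how the paper proceeds and would be awkward in the piecewise-$C^1$ setting of $C^1(\Pc)$ (the mollified function need not respect the cell structure). The paper instead applies Lemma~\ref{vandercorput} directly to the far pairs with amplitude $R_{\ba,\bb}^\sigma$, whose $C^1$-norm obeys a Lasota--Yorke bound $\|R_{\ba,\bb}^\sigma\|_{(1)}\ll\|J_\ba\|_0^\sigma\|J_\bb\|_0^\sigma\,\|f\|_{(t)}^2(1+\rho^{n_0/2}|t|)$; this is precisely where the $\|\cdot\|_{(t)}$-norm enters, and the factor $(1+\rho^{n_0/2}|t|)$ is balanced against the gain $|t|^{-1}\varepsilon^{-2}$ from Van der Corput by choosing $n_0\asymp\log|t|$ (Proposition~\ref{prop:boundingI2}). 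No smoothing of $f$ is required.
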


Dolgopyat's estimate was first established for symbolic coding for Anosov flows, and Baladi--Vallée \cite{bal:val, bal:val2} adapted the argument to countable Markov shifts such as continued fractions. Avila--Gou\"ezel--Yoccoz \cite{Avila:G:Y} generalised Baladi--Vallée \cite{bal:val2} to the arbitrary dimension. We first prove Proposition \ref{main:L2} and  prove Theorem \ref{main:dolgopyat} at the end of this section.

We observe that 
\[  \int_I |\Lt_{s,w}^n f|^2 d\mu_{1,0}= \lambda_{\sigma,u}^{-2n}\sum_{P \in \Pc[2]} 
\int_P  \left(\psi_{\sigma,u}^{-2}\right)_P |(\Lc_{s,w}^n (\psi_{\sigma,u} \rdot f))_P|^2 dxdy  \]
since $\mu_{1,0}=\psi_{1,0}\nu_{1,0}$ by definition and $\nu_{1,0}$ is equal to the 2-dimensional Lebesgue measure by Thm.\,\ref{thm:ruelle}~(4). Put
\begin{align*}
    I_P := \int_P
\left(\psi_{\sigma,u}^{-2}\right)_P 
|(\Lc_{s,w}^n (\psi_{\sigma,u} \rdot f))_P|^2 dxdy 
\end{align*}

and expand it as
\begin{align}\label{L2:expansion}
    I_P = 
    \sum_{Q \in \Pc[2]} \sum_{\langle\ba\rangle,\langle \bb \rangle} 
        \int _P e^{w c(\ba)+\bar{w} c(\bb)} e^{it \phi_{\ba, \bb}}R^\sigma_{\ba,\bb} dxdy
\end{align}
where we let
\begin{align*}
    R_{\ba,\bb}^\sigma
    &:= \left(\psi_{\sigma,u}^{-2}\right)_P |J_{\ba}|^\sigma |J_{\bb}|^\sigma  \cdot (\psi_{\sigma,u} \rdot f)_Q \circ \langle \ba \rangle \cdot (\psi_{\sigma,u} \rdot \bar{f})_Q \circ \langle \bb \rangle
    \\
    \phi_{\ba, \bb}&:=\log|J_{\ba}|-\log|J_{\bb}|
\end{align*}
in order to simplify the notation. The inner sum in \eqref{L2:expansion} is taken over $\mathcal H^n(P,Q)^2$.

\hide{
    Each term in the summand on the right hand side is written as
    \begin{align*} 
    \int_P
    \left(\psi_{\sigma,u}^{-2}\right)_P 
    |(\Lc_{s,w}^n (\psi_{\sigma,u} \rdot f))_P|^2 dxdy &= \sum_{Q \in \Pc[2]} \sum_{(\ba , \bb )} \int_P e^{[w \sum_{j=1}^n c(\al_j)+\bar{w} \sum_{j=1}^n c(\beta_j)]} e^{it \log \frac{|J_{\ba}|}{|J_{\bb}|}} \\ 
    & \cdot \left(\psi_{\sigma,u}^{-2}\right)_P  |J_{\ba}|^\sigma |J_{\bb}|^\sigma (\psi_{\sigma,u} \rdot f)_Q \circ \langle \ba \rangle \cdot (\psi_{\sigma,u} \rdot \bar{f})_Q \circ \langle \bb \rangle dxdy \numberthis \label{L2:expansion} 
    \end{align*}
    where the sum is taken over admissible branches $(\langle \ba \rangle, \langle \bb \rangle) \in \Hc^n(P,Q) \times \Hc^n(P,Q)$.

    Set
    \[ \phi_{\ba, \bb}(x,y):=\log|J_{\ba}(x,y)|-\log|J_{\bb}(x,y)|   \]
    and 
    \[ R_{\ba,\bb}^\sigma:= \left(\psi_{\sigma,u}^{-2}\right)_P |J_{\ba}|^\sigma |J_{\bb}|^\sigma  \cdot (\psi_{\sigma,u} \rdot f)_Q \circ \langle \ba \rangle \cdot (\psi_{\sigma,u} \rdot \bar{f})_Q \circ \langle \bb \rangle . \]
}

To bound (\ref{L2:expansion}), we decompose it into two parts with respect to the following distance $\Delta$ on the set of inverse branches. For $\langle \ba \rangle, \langle \bb \rangle \in \Hc^n(P,Q)$, define the distance 
\begin{equation*}
\Delta(\ba,\bb) := \inf_{(x,y) \in P} 
\left| \left(\partial_z\phi_{\ba,\bb}(x,y),\partial_{\bar z} \phi_{\ba,\bb}(x,y)\right)\right|_2 
\end{equation*}
where $\partial_z$ and $\partial_{\bar z}$ respectively denote the derivative in $z=x+iy$ and $\bar z=x-iy$. Here $|\cdot|_2$ denotes the 2-norm of a vector.

Given $\varepsilon>0$, decompose $I_P$ as
$    I_P:
    = I_{P,1}+I_{P,2}
    $
where we define
\begin{align*}
    I_{P,1}& :=\sum_{Q \in \Pc[2]} \sum_{\Delta(\ba,\bb) \leq \varepsilon} \int_P 
    e^{wc(\ba)+\bar{w}c(\bb)}
    e^{it \phi_{\ba, \bb}}
    R_{\ba,\bb}^\sigma dxdy
\end{align*}
and
\begin{align*}
    I_{P,2}& :=\sum_{Q \in \Pc[2]} \sum_{\Delta(\ba,\bb) > \varepsilon} \int_P
    e^{wc(\ba)+\bar{w}c(\bb)} 
    e^{it \phi_{\ba, \bb}}
    R_{\ba,\bb}^\sigma dxdy.
\end{align*}

In the following subsections, we estimate $I_{P,1}$ by showing local UNI property, and $I_{P,2}$ by showing a 2-dimensional version of Van der Corput Lemma. Accordingly, we complete the proof of Theorem \ref{main:L2} and obtain the main estimate~\eqref{eqn:6.2}.

\subsection{Local Uniform Non-Integrability: Bounding $I_{P,1}$} \label{L2:1}

In order to bound $I_{P,1}$, we need technical Lebesgue measure properties of the complex Gauss system $(I,T)$. This is an analogue of Baladi--Vall\'ee \cite[\S3.2]{bal:val}, which is formulated algebraically as an adaptation of UNI condition of foliations in Dolgopyat \cite{Dolgopyat}. Since $T$ is not a full branch map, we modify the condition locally with respect to the finite Markov partition as follows.

\begin{prop}[Local UNI] \label{uni}
Let $P,Q \in \Pc[2]$ and $\langle \ba \rangle \in \Hc^n(P,Q)$. Then,
\begin{enumerate}
\item For any sufficiently small $a>0$, we have
\begin{align}\label{eq:finalbound}
\mathrm{Leb} \left( \bigcup_{\langle \bb \rangle \in \Hc^n(P,Q) \atop \Delta(\ba,\bb) \leq \rho^{an/2}} h_\bb (P) \right) \ll \rho^{an} .  
\end{align} 
\item There is a uniform constant $C>0$ such that for any direction $v$ and $w$, and for any $\langle \bb \rangle \in \Hc^n(P,Q),$
\[ \sup_{P \in \Pc} \sup_{(x,y) \in P} | \partial_w(\partial_v \phi_{\ba,\bb}(x,y))|_2 \leq C.\]
\end{enumerate}
\end{prop}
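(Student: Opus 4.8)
The plan is to prove part (2) first, since it is essentially a distortion estimate, and then to bootstrap it together with the disjointness and contraction properties of the inverse branches to obtain the measure bound in part (1).

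For part (2), recall from Proposition \ref{distortion2} that $h_\ba(z)=\frac{P_{n-1}z+P_n}{Q_{n-1}z+Q_n}$ corresponds to the matrix product $\twobytwo{0}{1}{1}{\al_1}\cdots\twobytwo{0}{1}{1}{\al_n}=\twobytwo{P_{n-1}}{P_n}{Q_{n-1}}{Q_n}$, so that $h_\ba'(z)=\frac{\pm 1}{(Q_{n-1}z+Q_n)^2}$ and hence $\log|J_\ba(z)|=2\log|h_\ba'(z)|=-4\log|Q_{n-1}z+Q_n|+\text{const}$. Therefore $\phi_{\ba,\bb}(z)=-4\log|Q_{n-1}z+Q_n|+4\log|Q'_{n-1}z+Q'_n|$ up to a constant, where $Q'_j$ are the denominators attached to $\bb$. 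Differentiating, $\partial_z\log|Q_{n-1}z+Q_n|=\tfrac12\frac{Q_{n-1}}{Q_{n-1}z+Q_n}$, and since for an admissible branch $|Q_{n-1}/(Q_{n-1}z+Q_n)|=|h_\ba'(z)|^{1/2}|Q_{n-1}/Q_n|\cdot|\text{(bounded factor)}|$, this is a bounded function of $z\in P$: indeed $|Q_{n-1}/Q_n|\le 1$ (as $Q_{n-1}/Q_n\in I\subset$ unit disc) and $|Q_{n-1}z+Q_n|=|Q_n||\tfrac{Q_{n-1}}{Q_n}z+1|\gg|Q_n|$ stays away from zero uniformly, exactly the argument used in Proposition \ref{distortion2}. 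The second derivative $\partial_w\partial_v\phi_{\ba,\bb}$ involves $\bigl(\tfrac{Q_{n-1}}{Q_{n-1}z+Q_n}\bigr)^2$ and its conjugate, which by the same bounds is $O(1)$ uniformly in $n$, $\ba$, $\bb$, $z\in P$. This yields the constant $C$ in part (2).

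For part (1), fix $P,Q\in\Pc[2]$ and $\langle\ba\rangle\in\Hc^n(P,Q)$. The key point is that distinct branches $h_\bb$ have disjoint images $h_\bb(P)$ inside $O_{\beta_1}\cap\cdots$, all contained in $Q$ (a subset of $I$), so $\sum_{\bb}\mathrm{Leb}(h_\bb(P))\le\mathrm{Leb}(I)=O(1)$; this handles the ``measure is small if few branches contribute'' side, but we need the stronger statement that the branches with $\Delta(\ba,\bb)\le\rho^{an/2}$ together cover only measure $O(\rho^{an})$. The strategy, following Baladi--Vallée \cite[\S3.2]{bal:val}, is: (i) by part (2), $\partial_z\phi_{\ba,\bb}$ varies by at most $O(\mathrm{Diam}(P))$ over $P$, so $\Delta(\ba,\bb)\le\rho^{an/2}$ forces $|\partial_z\phi_{\ba,\bb}(z)|\le\rho^{an/2}+O(1)\cdot\mathrm{Diam}(P)$ throughout $P$; (ii) translate this into a statement about the dual branches: using the identity $\partial_z\phi_{\ba,\bb}(z)=2\bigl(\tfrac{Q_{n-1}}{Q_{n-1}z+Q_n}-\tfrac{Q'_{n-1}}{Q'_{n-1}z+Q'_n}\bigr)$ (plus conjugate), smallness of this quantity says that the two dual trajectories $Q_{n-1}/Q_n$ and $Q'_{n-1}/Q'_n$ are close, quantitatively $|Q_{n-1}/Q_n-Q'_{n-1}/Q'_n|\ll\rho^{an/2}$ after clearing the (uniformly bounded, uniformly nonvanishing) denominators; (iii) recall from \S\ref{sec:nakada} and Proposition \ref{dual:metric} that the dual branch $h_{\bb^*}$ sends $Q^*$ to a piece of diameter $\le R^{2(n-1)}|1-R|^{-1}$ around $Q'_{n-1}/Q'_n$, and these dual pieces are disjoint for distinct $\bb$; hence the number of $\bb$ with $Q'_{n-1}/Q'_n$ in a ball of radius $O(\rho^{an/2})$ is at most $O\bigl((\rho^{an/2}/R^{2n})^2\bigr)=O(\rho^{an}R^{-4n})$ by a volume/packing count in the dual domain $I^*$; (iv) each such $h_\bb(P)$ has $\mathrm{Leb}(h_\bb(P))\le\sup_P|J_\bb|\cdot\mathrm{Leb}(P)\ll\rho^n\cdot$, more precisely $\ll R^{4n}$ by Proposition \ref{prop:2.1}; multiplying the count by the size gives $\mathrm{Leb}\bigl(\bigcup h_\bb(P)\bigr)\ll\rho^{an}R^{-4n}\cdot R^{4n}=\rho^{an}$, which is the desired bound \eqref{eq:finalbound} once $a$ is chosen small enough that the exponents line up (this is where ``sufficiently small $a$'' enters).

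The main obstacle I expect is step (iii)--(iv) of part (1): making the packing argument in the dual domain $I^*$ genuinely uniform. One must (a) relate the derivative difference $\partial_z\phi_{\ba,\bb}$ cleanly to the separation of the convergent ratios $Q_{n-1}/Q_n$, being careful that this is a difference of Möbius-type expressions and that smallness of the difference of values does not immediately give smallness of the difference of the parameters unless the denominators are controlled — which they are, but uniformly only because of the norm-Euclidean geometry of the five fields (the inversion images of the sides of $I$ all lie outside the unit circle, as used in Proposition \ref{distortion1}); and (b) control the geometry of the dual pieces $h_{\bb^*}(Q^*)$ — they are not balls, so the packing bound needs the two-sided distortion of Proposition \ref{distortion2} and Remark \ref{rem:distortion} to compare them to balls of comparable radius, and one must know $\mathrm{Leb}(Q^*)$ is bounded below uniformly over the finitely many cells (which follows from $I^*=\bigcup_P P^*$ having nonempty interior, again from \cite{Nakada:pre}). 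Once these uniformities are in place, the counting is elementary. I would also double-check that $\rho$ and $R^4$ can be used interchangeably in the exponents via Proposition \ref{prop:2.1} so that the final bound is stated cleanly in terms of $\rho^{an}$.
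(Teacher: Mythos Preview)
Your treatment of part~(2) is correct and essentially the paper's argument written in matrix coordinates rather than in terms of $h_\ba'', h_\ba'''$: the second derivatives of $\phi_{\ba,\bb}$ reduce to $(Q_{n-1}/(Q_{n-1}z+Q_n))^2$ and its conjugate, and these are bounded by the same mechanism as Proposition~\ref{distortion1}.

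For part~(1), your step~(ii) is the right bridge---smallness of $\Delta(\ba,\bb)$ forces $|h_{\ba^*}(0)-h_{\bb^*}(0)|=|Q_{n-1}/Q_n-Q'_{n-1}/Q'_n|\ll\rho^{an/2}$, exactly as in the paper's observation preceding the proof. (Your step~(i) is an unnecessary detour: passing from the infimum over $P$ to a pointwise bound via part~(2) only gives $O(1)$, which is useless; what you actually need, and what step~(ii) uses, is the value at a single point, which comes straight from the definition of $\Delta$.)

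The gap is in steps~(iii)--(iv). Your packing count asserts that at most $O(\rho^{an}R^{-4n})$ dual pieces $h_{\bb^*}(P^*)$ can meet a ball of radius $\rho^{an/2}$, which would require each dual piece to have area $\gg R^{4n}$. But the area of $h_{\bb^*}(P^*)$ is $\asymp |Q_n(\bb)|^{-4}$, and $|Q_n(\bb)|$ can be arbitrarily large (take $\bb$ with large digits), so there is no such uniform lower bound. The ``two-sided distortion'' you invoke in the obstacles paragraph does compare each dual piece to a ball, but to a ball of radius $\asymp|Q_n(\bb)|^{-2}$, which depends on $\bb$; it does not make the pieces uniform in size. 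Consequently the count in~(iii) is unjustified, and since the bound in~(iv) is a genuine overestimate for small pieces, the product count~$\times$~size does not close.

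The paper sidesteps counting altogether. The point is that Remark~\ref{rem:distortion} gives $\mathrm{Leb}(h_\bb(P))\asymp|Q_n(\bb)|^{-4}\asymp\mathrm{Leb}(h_{\bb^*}(P^*))$ branch by branch. Summing over the relevant $\bb$ and using disjointness of the dual pieces,
\[
\sum_{\Delta(\ba,\bb)\le\rho^{an/2}}\mathrm{Leb}(h_\bb(P))
\ll
\sum_{\Delta(\ba,\bb)\le\rho^{an/2}}\mathrm{Leb}(h_{\bb^*}(P^*))
=
\mathrm{Leb}\Bigl(\bigcup h_{\bb^*}(P^*)\Bigr),
\]
and this union sits in a disc of radius $O(\rho^{an/2})$ (each piece lies within $O(\rho^{an/2})$ of $h_{\ba^*}(0)$ by step~(ii), and has diameter $\ll\rho^{an/2}$ for small $a$ by Proposition~\ref{dual:metric}). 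Hence the right side is $\ll\rho^{an}$. Replace your (iii)--(iv) with this direct measure comparison and the proof goes through.
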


Before the proof, we first make the following observation. Recall from Proposition \ref{distortion2} that for $\langle \ba \rangle \in \Hc^n(P,Q)$, the linear fractional transformation $h_\ba$ corresponds to $\twobytwotiny{A_\ba}{B_\ba}{C_\ba}{D_\ba} \in \GL_2(\Oc)$, where the matrix is given by the identity
\begin{equation}\label{eq:gl2}
\twobytwo{A_\ba}{B_\ba}{C_\ba}{D_\ba}  =\twobytwo{0}{1}{1}{\al_1} \twobytwo{0}{1}{1}{\al_2} \cdots \twobytwo{0}{1}{1}{\al_n} 
\end{equation}
with determinant $\pm 1$ and $\ba=(\al_1, \cdots, \al_n) \in \Oc^n$. We have $\twobytwotiny{A_\ba}{C_\ba}{B_\ba}{D_\ba}$ for the corresponding dual branch $h_{\ba^*}$.

Recall that $ |J_\ba(x,y)|=|h_\ba'(z)|^2$. Proposition \ref{distortion1} allows us to see that for a fixed $\langle \ba \rangle \in \Hc^n(P,Q)$ and $\langle \bb \rangle$ of the same depth satisfying $\Delta(\ba,\bb)\leq \varepsilon$, we have 
\begin{align*}
\varepsilon &\geq \inf_{(x,y) \in P} \left| \left(\partial_z\phi_{\ba,\bb}(x,y),\partial_{\bar z} \phi_{\ba,\bb}(x,y)\right)\right|_2 
 \\
 &=\inf_{z \in P} \left|  \left( \frac{h_\ba''(z)}{h_\ba'(z)}-\frac{h_{\bb}''(z)}{h_{\bb}'(z)}, \frac{\overline{h_\ba''}(z)}{\overline{h_\ba'}(z)}-\frac{\overline{h_{\bb}''}(z)}{\overline{h_{\bb}'}(z)} \right) \right|  \\
 &=\inf_{z \in P} \left|  \frac{2\sqrt{2} (C_{\ba} D_{\bb}- C_{\bb} D_{\ba})}{(C_{\ba} z+D_{\ba})(C_{\bb} z+D_{\bb})} \right| .
\end{align*}

Observe that $|h_\ba'(z)|=\frac{1}{|C_\ba z+D_\ba|^2}$. Then we obtain 
\begin{align*}
 |(C_\ba z+D_\ba)^{-1}(C_\bb z+D_\bb)^{-1}| &= |h_\ba'(z)|^{1/2} |h_\bb'(z)|^{1/2}  \\
& \geq \frac{1}{L_1^{1/2}}  |h_\ba'(0)|^{1/2} |h_\bb'(0)|^{1/2}
\end{align*} 
by Proposition \ref{distortion2} (where $L_2=1/L_1$).
It follows that 
\[ \varepsilon \geq \frac{2\sqrt{2}}{L_1^{1/2}} \left| \frac{C_\ba}{D_\ba}-\frac{C_\bb}{D_\bb}  \right| = \frac{2\sqrt{2}}{L_1^{1/2}}|h_{\ba^*}(0) - h_{\bb^*}(0)| . \]

\hide{----

Remark that the expression is simply equal to $2L^{-1/2} |h_{\al^*}(0)-h_{\beta^*}(0)|_2$.
Note that for any $P \in \mathcal P$, we have a sequence of points in $P$ with arbitrary large last partial quotient, thus we have a sequence of points in $P^*$ with arbitrary large first partial quotient. It follows that any $P^*$ contains the point $(0,0)$.

\begin{lem}\label{lem:contraction}
There is a constant $L>0$ such that for each $P \in \mathcal P$ such that $P \subset TO_\al$,  there exist subsets  $Q_1, \cdots, Q_i  \in \mathcal P$ of $TO_\al$  that contains $0$ in its closure and 
$$\left |\frac{h_\alpha(P\cup Q_1 \cup \cdots \cup Q_i )}{h_\al'(0)} \right| < L.$$
$$\left |\frac{h_\alpha^*(P\cup Q_1 \cup \cdots \cup Q_i )}{h_\al^*(0)} \right| < L.$$
\end{lem}

------}

\begin{proof}[Proof of Proposition \ref{uni}]

(1) By the above observation, if the distance $\Delta(\ba, \bb) \leq \varepsilon$ then $|h_{\ba^*}(0)-h_{\bb^*}(0)| \leq 2\sqrt{2L_1} \varepsilon$. 
Recall Proposition \ref{dual:metric} that for $h_{\bb^*} \in \Hc^{*n}$ and $P \in \mathcal P$, 
$$\mathrm{Diam}(h_{\bb^*}(P^*)) \leq R_d^{2(n-1)} |1-R_d|^{-1}  \ll \rho^{a n/2}$$
with any sufficiently small $0<a<1$. Thus if we take $\varepsilon \leq \rho^{an/2}$, then
$$\mathrm{Diam}\left( \bigcup_{\langle \bb \rangle \in \Hc^n(P,Q) \atop \Delta(\ba,\bb) \leq \rho^{an}}  h_{\bb^*}(P^*)\right) \ll \rho^{an/2},$$
which implies that
\begin{equation} \label{leb:dual}
\mathrm{Leb}\left( \bigcup_{\langle \bb \rangle \in \Hc^n(P,Q) \atop \Delta(\ba,\bb) \leq \rho^{an}}  h_{\bb^*}(P^*)\right)  \ll \rho^{an}.
\end{equation}

Note that 
for any $h_\ba \in \Hc^n$ and $h_{\ba^*} \in \Hc^{*n}$, 
\[ \mathrm{Leb}(h_\ba(P))= \int_P |J_{\ba}(x,y)| dxdy \leq \sup_{z \in I} |h_\ba'(z)|^2  \]
and 
\[\mathrm{Leb}(h_{\ba^*}(P^*))=\int_{P^*} |J_{\ba^*}(x,y)| dxdy \geq \inf_{z^* \in I^*} |h_{\ba^*}'(z^*)|^2 . \]
By Remark \ref{rem:distortion}, we obtain $\sup_I |h_\ba'|^2 \leq L_2^2 \cdot \inf_{I^*} |h_{\ba^*}'|^2$, hence 
\[ \mathrm{Leb}(h_\ba(P)) \leq L_2^2 \cdot \mathrm{Leb}(h_{\ba^*}(P^*)). \]
Since the cells $h_{\bb^*}(P^*)$ are disjoint in the union \eqref{leb:dual}, we obtain \eqref{eq:finalbound}.

\hide{-----


Now if $\beta \neq \beta' \in \Hc^n(P,Q),$ the sets $h_{\beta^*}(P^*)$ and $h_{\beta'^*}(P^*)$ are disjoint. Indeed, if $h_{\beta^*}(z^*)=h_{\beta'^*}(w^*)$ for some $z,w \in P,$ then $z^* = T^{*n} h_{\beta^*}(z^*)=T^{*n} h_{\beta'^*}(w^*)=w^*,$ and thus $\beta = \beta'$.
It follows that
\[ \mathrm{Leb} \left( \bigcup_{\langle \beta \rangle \in \Hc^n(P,Q) \atop \Delta(\al,\beta) < \rho^{an}} \langle \beta \rangle(P) \right) \leq (LL^*) \mathrm{Leb} \left( \bigcup_{\langle \beta \rangle \in \Hc^n(P,Q) \atop \Delta(\al,\beta) < \rho^{an}} h_{\beta^*} (P^*) \right)
 \ll \rho^{an} . \]

---------}

(2) Observe that we have
\begin{align*}
\partial_w (\partial_v \phi_{\ba,\bb}) &= w_1 v_1 \left( \frac{h_\ba''' h_\ba' - h_\ba''^2}{h_\ba'^2}-\frac{h_\bb''' h_\bb' - h_\bb''^2}{h_\bb'^2} \right) \\
& \ \ \ \ \ + w_2 v_2 \left( \frac{\overline{h_\ba'''} \overline{h_\ba'}-\overline{h_\ba''^2}}{\overline{h_\ba'^2}}-\frac{\overline{h_\bb'''} \overline{h_\bb'}-\overline{h_\bb''^2}}{\overline{h_\bb'^2}} \right) . 
\end{align*}
Thus, to bound $|\partial_w (\partial_v \phi_{\ba,\bb})|_2$, it suffices to show that the right hand side of  
\[
\left| \frac{h_\ba''' h_\ba' - h_\ba''^2}{h_\ba'^2}  \right| = \left| \frac{h_\ba'''}{h_\ba'}- \frac{h_\ba''^2}{h_\ba'^2}  \right| \leq  \left| \frac{h_\ba'''}{h_\ba'} \right|+ \left| \frac{h_\ba''^2}{h_\ba'^2}  \right|  
\]
has a uniform upper bound on $P$. Recall from Proposition \ref{distortion1} that the second term is bounded by $M^2$. For the first term, if $|\al|=1$, we have $\left| \frac{h_\al'''(z)}{h_\al'(z)} \right| = \frac{6}{|z+\al|^2}$, which is uniformly bounded since $|z+\al|>1$. Hence for any $|\ba|=n \geq 1$, we obtain a constant $N>0$ such that $\left| \frac{h_\ba'''(z)}{h_\ba'(z)} \right| \leq N$ in the same way as in Proposition \ref{distortion1}. 
\end{proof}

Finally, we observe the following non-trivial consequence of bounded distortion, which plays a crucial role in the proof of Proposition \ref{thm:I_1}.

\begin{lem} \label{key:distortion}
For $(\sigma,u) \in U$, there are uniform constants $C_{U}^1>0$ and $C_{U}^2>0$ such that 
\begin{enumerate}
\item For any $\langle \ba \rangle \in \Hc^n(P,Q)$, we have 
\[ C_{U}^1 \frac{\| J_\ba \|_0^\sigma}{\lambda_{\sigma,u}^n} \leq \mu_{\sigma,u}(h_\ba (P)) \leq C_U^2 \frac{\| J_\ba \|_0^\sigma}{ \lambda_{\sigma,u}^n} .  \]
\item For any $\mathcal{E} \subseteq \Hc^n(P,Q)$ and $J=\bigcup_{\langle \ba \rangle \in \mathcal{E}} h_\ba (P)$, we have 
\[ \mu_{\sigma,u}(J) \ll A_{\sigma,u}^n  \Leb(J)^{1/2} ,  \]
where $A_{\sigma,u}$ is as in Lemma \ref{perturb:id}.
\end{enumerate}
\end{lem}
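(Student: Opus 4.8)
The plan is to prove (1) by combining the bounded distortion estimate with the eigenfunction equation, then deduce (2) via the Cauchy--Schwarz inequality together with the key relation of Lemma~\ref{perturb:id}. For part (1), I would start from the iterated eigenfunction identity. Applying $\Lc_{\sigma,u}^n$ to the indicator-type function and using $\Lc_{\sigma,u}\psi_{\sigma,u}=\lambda_{\sigma,u}\psi_{\sigma,u}$, one writes, for $z \in Q$,
\begin{align*}
\lambda_{\sigma,u}^n (\psi_{\sigma,u})_Q(z) = \sum_{P' \in \Pc} \sum_{\langle \ba \rangle \in \Hc^n(Q',P')} e^{uc(\ba)} |J_\ba(z)|^\sigma (\psi_{\sigma,u})_{P'} \circ \langle \ba \rangle(z),
\end{align*}
but what I actually want is to integrate the single term attached to a fixed $\langle \ba \rangle \in \Hc^n(P,Q)$ against $\mu_{\sigma,u}=\psi_{\sigma,u}\nu_{\sigma,u}$. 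By the change of variables formula,
\begin{align*}
\mu_{\sigma,u}(h_\ba(P)) = \int_{h_\ba(P)} \psi_{\sigma,u}\, d\nu_{\sigma,u} = \int_P (\psi_{\sigma,u} \circ h_\ba)(z) \, |J_\ba(z)|\, \frac{d\nu_{\sigma,u}}{d\Leb}(h_\ba(z))\, dxdy,
\end{align*}
and since $\Lc_{\sigma,u}^*\nu_{\sigma,u}=\lambda_{\sigma,u}\nu_{\sigma,u}$ relates the density $\frac{d\nu_{\sigma,u}}{d\Leb}$ along the orbit, one extracts a factor $\lambda_{\sigma,u}^{-n}$. The point is then that bounded distortion (Proposition~\ref{distortion1}, giving $|J_\ba|$ comparable to $\|J_\ba\|_0$ up to a uniform multiplicative constant across $P$ since $P$ has bounded diameter and $|\partial_v J_\ba| \le 2M|J_\ba|$ integrates to a comparison constant $e^{\pm 2M\,\mathrm{Diam}}$), together with the uniform positivity and boundedness of $\psi_{\sigma,u}$ and of $\frac{d\nu_{\sigma,u}}{d\Leb}$ on $K$ (from Theorem~\ref{thm:ruelle} and perturbation theory), converts this integral into $\|J_\ba\|_0^\sigma \lambda_{\sigma,u}^{-n}$ times a quantity pinched between two constants $C^1_K, C^2_K$ depending only on $K$. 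This gives (1).

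For part (2), I would first use disjointness of the sets $h_\ba(P)$ for distinct $\langle \ba \rangle \in \Hc^n(P,Q)$ (which holds because $T^n$ is injective on each $O_\ba$) to write $\mu_{\sigma,u}(J) = \sum_{\langle \ba \rangle \in \mathcal{E}} \mu_{\sigma,u}(h_\ba(P))$ and likewise $\Leb(J) = \sum_{\langle \ba \rangle \in \mathcal{E}} \Leb(h_\ba(P))$. By part (1), $\mu_{\sigma,u}(h_\ba(P)) \ll \|J_\ba\|_0^\sigma \lambda_{\sigma,u}^{-n}$; by \eqref{der:jac} and conformality, $\Leb(h_\ba(P)) = \int_P |J_\ba| \gg \|J_\ba\|_0$ up to bounded distortion, i.e. $\|J_\ba\|_0 \ll \Leb(h_\ba(P))$. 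The natural move is then Cauchy--Schwarz in the form
\begin{align*}
\sum_{\langle \ba \rangle \in \mathcal{E}} \|J_\ba\|_0^\sigma \lambda_{\sigma,u}^{-n} \le \Big( \sum_{\langle \ba \rangle \in \mathcal{E}} \|J_\ba\|_0^{2\sigma-1} \lambda_{\sigma,u}^{-2n} \Big)^{1/2} \Big( \sum_{\langle \ba \rangle \in \mathcal{E}} \|J_\ba\|_0 \Big)^{1/2}.
\end{align*}
The second factor is $\ll \Leb(J)^{1/2}$. The first factor is controlled exactly as in the proof of Lemma~\ref{perturb:id}: $\sum_{\langle \ba \rangle} \|J_\ba\|_0^{2\sigma-1}$ is, up to bounded distortion and up to the moderate-growth constant $A_K$, comparable to $(\Lc_{2\sigma-1,2u}^n \mathbf 1)$-type sums, which are bounded by $\lambda_{2\sigma-1,2u}^n$ times constants depending only on $K$; dividing by $\lambda_{\sigma,u}^{2n}$ produces exactly $A_{\sigma,u}^n = (\lambda_{2\sigma-1,2u}/\lambda_{\sigma,u}^2)^n$ (times a $K$-constant). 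Assembling the two factors gives $\mu_{\sigma,u}(J) \ll A_{\sigma,u}^n \Leb(J)^{1/2}$, which is (2).

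The main obstacle I anticipate is part (1): one must be careful that the comparison constants in the bounded-distortion step are genuinely uniform over all $n$, all $\langle \ba \rangle \in \Hc^n(P,Q)$, and all $(\sigma,u) \in K$ simultaneously — this is where Proposition~\ref{distortion1} (uniform in depth) is essential, and where one needs the joint continuity/positivity of $\psi_{\sigma,u}$ and its reciprocal on the compact parameter region $K$, as furnished by the analytic perturbation theory quoted after Theorem~\ref{thm:ruelle}. Once (1) is established with constants depending only on $K$, part (2) is a formal consequence of Cauchy--Schwarz and the computations already carried out in Lemma~\ref{perturb:id}, so no new ideas are needed there.
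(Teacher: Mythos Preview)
Your plan is correct and matches the paper's argument: part~(1) is obtained from the $\Lt_{\sigma,u}^*$-invariance of $\mu_{\sigma,u}$ (your density relation for $\nu_{\sigma,u}$ is an equivalent way to reach the same integral identity) together with bounded distortion, and part~(2) is Cauchy--Schwarz plus the $(2\sigma-1,2u)$ eigenvalue bound exactly as in Lemma~\ref{perturb:id}. One bookkeeping point: carry the weight $e^{u c(\ba)}$ through the identity in~(1) and into the Cauchy--Schwarz split in~(2), so that the first factor is genuinely controlled by $\lambda_{2\sigma-1,2u}^n$ and you recover $A_{\sigma,u}=\lambda_{2\sigma-1,2u}/\lambda_{\sigma,u}^2$ rather than a $u=0$ variant.
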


\begin{proof}
(1) Recall from \eqref{def:normalop} that 
\[ \sum_{P\in \Pc} \int_P \Lt_{\sigma, u}^n f d \mu_{\sigma,u}=  \sum_{P\in \Pc} \int_P f d\mu_{\sigma,u} \]
holds for all $f \in L^1(\Pc)$. Taking $f=\chi_{h_\ba(P)}$ gives the identity 
\begin{align*} 
\mu_{\sigma,u}(h_\ba(P)) &= \frac{e^{uc(\ba)}}{\lambda_{\sigma,u}^n} \int_{h_\al(P)} \psi_{\sigma,u}^{-1} |J_\ba|^\sigma \rdot \psi_{\sigma,u} \circ \langle \ba \rangle d \mu_{\sigma,u} .
\end{align*}
Thus by bounded distortion from Proposition \ref{distortion2} yields the bound (1).

(2) Recall that $\mu_{\sigma,u}=\psi_{\sigma,u} \nu_{\sigma,u}$ where $\mu_{1,0}$ is equivalent to Lebesgue, we observe
\begin{align*} 
\mu_{\sigma,u}(J) &\leq \sum_{\langle \ba \rangle \in \mathcal{E}} \mu_{\sigma,u}(h_\ba(P)) \\
&\ll \sum_{\langle \ba \rangle \in \mathcal{E}} \frac{e^{u c(\ba)}}{\lambda_{\sigma,u}^n} \cdot \Leb(h_\ba(P))^\sigma \\
&\ll \lambda_{\sigma,u}^{-n} \left( \sum_{\langle \ba \rangle \in \mathcal{E}} e^{2u c(\ba)} \cdot \Leb(h_\ba(P))^{2\sigma-1} \right)^{1/2} \left( \sum_{\langle \ba \rangle \in \mathcal{E}} \Leb(h_\ba(P)) \right)^{1/2}
\end{align*}
by Cauchy--Schwarz inequality. Then by Lemma \ref{perturb:id}, the first factor is bounded by $\lambda_{2\sigma-1,2u}^n$ (up to a uniform constant). Since all the cells $h_\ba(P)$ are disjoint, we obtain the statement.
\end{proof}

\hide{-----
again using bounded distortion, the pairwise ratios of three quantities $\nu_{\sigma,u}(\langle \alpha \rangle(P))$, $\lambda_{\sigma,u}^{-n} e^{u c(\al)} \rdot \Leb(\langle \alpha \rangle(P))^\sigma$ and 
\begin{align*} 
\frac{e^{u c(\al)}}{\lambda_{\sigma,u}^n} \int_P |J_\al(x,y)|^\sigma dxdy 
\end{align*}
admit the uniform upper and lower bounds when $(\sigma,u)$ varies in $K$. Hence for $\mathcal{E} \subseteq \Hc^n(P,Q)$ and $J$, we have 
-----------}

Now we are ready to present:

\begin{prop} \label{thm:I_1}
For any $0<a<1$ and $n \geq 1$, the integral $I_{P,1}$ of \eqref{L2:expansion} restricted to pairs $(\langle \ba \rangle, \langle \bb \rangle)$ of depth $n$ for which $\Delta(\ba,\bb) \leq \rho^{an/2}$ satisfies 
\[ |I_{P,1}| \ll \rho^{an/2} \| f \|_0^2 . \]

\end{prop}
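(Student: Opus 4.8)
The plan is to bound $I_{P,1}$ crudely, term by term, discarding the oscillatory factor $e^{it\phi_{\ba,\bb}}$, and then to feed the resulting sum into the two nontrivial inputs established above: the local UNI estimate of Proposition~\ref{uni}.(1) and the distortion/measure comparisons of Lemma~\ref{key:distortion}. First I would note that $|e^{it\phi_{\ba,\bb}}|=1$, that $|e^{wc(\ba)+\bar w c(\bb)}|=e^{u(c(\ba)+c(\bb))}$, and that $\psi_{\sigma,u}$ is bounded above and below on each cell uniformly for $(\sigma,u)\in K$ (Theorem~\ref{thm:ruelle}.(2) together with perturbation theory), so that $|R_{\ba,\bb}^\sigma|\ll |J_\ba|^\sigma|J_\bb|^\sigma\|f\|_0^2$. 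Bounded distortion (Propositions~\ref{distortion1} and~\ref{distortion2}) gives $|J_\ba(z)|^\sigma\asymp \|J_\ba\|_0^\sigma$ for $z\in P$, uniformly in $\ba$ and in $(\sigma,u)\in K$; since $\Leb(P)$ is a fixed constant, integrating over $P$ yields
\[
|I_{P,1}|\ll \|f\|_0^2\sum_{Q\in\Pc[2]}\sum_{\langle\ba\rangle\in\Hc^n(P,Q)} e^{uc(\ba)}\|J_\ba\|_0^\sigma\Bigl(\sum_{\substack{\langle\bb\rangle\in\Hc^n(P,Q)\\ \Delta(\ba,\bb)\le\rho^{an/2}}} e^{uc(\bb)}\|J_\bb\|_0^\sigma\Bigr).
\]

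Next I would estimate the inner sum for a fixed $\ba$. By Lemma~\ref{key:distortion}.(1) each term satisfies $e^{uc(\bb)}\|J_\bb\|_0^\sigma\asymp \lambda_{\sigma,u}^n\mu_{\sigma,u}(h_\bb(P))$, and the images $h_\bb(P)$ are pairwise disjoint, so the inner sum is $\asymp \lambda_{\sigma,u}^n\,\mu_{\sigma,u}\bigl(\bigcup_{\Delta(\ba,\bb)\le\rho^{an/2}}h_\bb(P)\bigr)$. Lemma~\ref{key:distortion}.(2) bounds this by $\ll \lambda_{\sigma,u}^nA_{\sigma,u}^n\,\Leb\bigl(\bigcup_{\Delta(\ba,\bb)\le\rho^{an/2}}h_\bb(P)\bigr)^{1/2}$, and Proposition~\ref{uni}.(1) bounds the Lebesgue measure here by $\ll\rho^{an}$; hence the inner sum is $\ll\lambda_{\sigma,u}^nA_{\sigma,u}^n\rho^{an/2}$, uniformly in $\ba$. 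For the outer sum, Lemma~\ref{key:distortion}.(1) and disjointness again give $\sum_Q\sum_\ba e^{uc(\ba)}\|J_\ba\|_0^\sigma\asymp \lambda_{\sigma,u}^n\sum_Q\sum_\ba\mu_{\sigma,u}(h_\ba(P))\le \lambda_{\sigma,u}^n$, since $\mu_{\sigma,u}$ is a probability measure. Combining, $|I_{P,1}|\ll \lambda_{\sigma,u}^{2n}A_{\sigma,u}^n\rho^{an/2}\|f\|_0^2=\lambda_{2\sigma-1,2u}^n\rho^{an/2}\|f\|_0^2$, using $\lambda_{\sigma,u}^2A_{\sigma,u}=\lambda_{2\sigma-1,2u}$ from the definition of $A_{\sigma,u}$ in Lemma~\ref{perturb:id}.

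Finally I would absorb the subexponential factor: since $\lambda_{1,0}=1$ and $\lambda_{s,w}$ is continuous, shrinking $K$ makes $\lambda_{2\sigma-1,2u}\le\rho^{-a/4}$ on $K$, so $\lambda_{2\sigma-1,2u}^n\rho^{an/2}\le\rho^{an/4}$; relabelling $a$ (still ``sufficiently small'') then gives the claimed $|I_{P,1}|\ll\rho^{an/2}\|f\|_0^2$ with implied constant depending only on $K$. (Alternatively the factor $\lambda_{\sigma,u}^{2n}$ is cancelled by the prefactor $\lambda_{\sigma,u}^{-2n}$ in $\int_I|\Lt_{s,w}^nf|^2\,d\mu_{1,0}=\lambda_{\sigma,u}^{-2n}\sum_P I_P$, and the leftover $A_{\sigma,u}^n$ is handled in \S\ref{sub:maindolgopyat} exactly as the factor $A_{\sigma,u}^{n-n_0}$ is.) I do not expect a real obstacle here: the two substantive facts, local UNI and the distortion/measure comparisons, are already in place, and the proof is essentially an assembly. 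The only thing to watch is the bookkeeping in the last step, making sure the factors $\lambda_{\sigma,u}^{2n}A_{\sigma,u}^n$ — which are $\le B_K^n$ for a constant $B_K\to1$ as $K$ shrinks to $\{(1,0)\}$ — never swallow the geometric decay $\rho^{an/2}$.
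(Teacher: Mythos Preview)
Your proposal is correct and follows essentially the same route as the paper: discard the oscillatory factor, use bounded distortion to replace $|J_\ba(z)|^\sigma$ by $\|J_\ba\|_0^\sigma$, convert the weighted sums to $\mu_{\sigma,u}$-measures via Lemma~\ref{key:distortion}.(1), apply Lemma~\ref{key:distortion}.(2) to the inner sum over $\bb$ with $\Delta(\ba,\bb)\le\rho^{an/2}$, and finish with the local UNI bound of Proposition~\ref{uni}.(1). Your treatment of the leftover factor $\lambda_{\sigma,u}^{2n}A_{\sigma,u}^n=\lambda_{2\sigma-1,2u}^n$ is in fact more explicit than the paper's; the paper's displayed bound already carries an $A_{\sigma,u}^n$ which, as you note, is absorbed either by shrinking $K$ or in the global argument of \S\ref{sub:maindolgopyat}.
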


\begin{proof}
Notice that for some $M_U > 0$, we have 
\begin{align*} 
|I_{P,1}| &\leq  M_U \frac{\| f \|_0^2}{\lambda_{\sigma,u}^{2n}} \sum_{Q \in \Pc[2]} \sum_{\Delta(\ba, \bb)\leq \varepsilon} 
e^{w c(\ba)+\bar{w}c(\bb)}  
\int_P 
 |J_{\ba}|^\sigma |J_{\bb}|^\sigma dxdy .    
 \end{align*}
 
 Observe that 
 \begin{align*} 
  \int_P  |J_{\ba}|^\sigma |J_{\bb}|^\sigma dxdy &\leq \sup_{I} |J_\ba|^\sigma \sup_{I} |J_\bb|^\sigma \\
 &\leq  (L_2^2 \cdot \inf_{P} |J_\ba|^\sigma) (L_2^2 \cdot \inf_{P} |J_\bb|^\sigma) \\
 &\leq \left( \int_P |J_\ba|^\sigma dxdy \right)\left( \int_P |J_\bb|^\sigma dxdy \right)    
 \end{align*}
by Proposition \ref{distortion2} and the mean value theorem for integrals in dim 2.

Then by Lemma \ref{key:distortion}, up to a positive constant (depending only on $U$), we have
\begin{align*} 
|I_{P,1}| &\ll \| f \|_0^2  \sum_{\Delta(\ba, \bb)\leq \varepsilon} \mu_{\sigma,u}(h_\ba(P)) \mu_{\sigma,u}(h_\bb(P)) \\ &\ll \| f \|_0^2 \sum_{\ba}\mu_{\sigma,u}(h_\ba(P)) \left( \sum_{\Delta(\ba,\bb) \leq \varepsilon} \mu_{\sigma,u}(h_\bb(P))  \right)  \\
&\ll \| f \|_0^2 A_{\sigma,u}^n \mathrm{Leb}(h_\ba(P))^{1/2} \mathrm{Leb}(\cup_{\Delta(\ba,\bb) \leq \varepsilon} h_\bb(P))^{1/2} .
\end{align*}
Finally, UNI property from Proposition \ref{uni}.(1) completes the proof by taking $\varepsilon$ in the scale $\rho^{an/2}$.
\end{proof}

\hide{------
&\leq M_K \frac{\| f \|_0^2}{\lambda_{\sigma,u}^{2n}} \sum_{Q \in \Pc[2]} \sum_{\Delta(\ba, \bb)\leq \varepsilon} e^{[(*)]} \sup_{P^+} |J_\ba|^\sigma \sup_{P^+} |J_\bb|^\sigma \\
 &\leq M_K \frac{\| f \|_0^2}{\lambda_{\sigma,u}^{2n}} \sum_{Q \in \Pc[2]} \sum_{\Delta(\ba, \bb)\leq \varepsilon} e^{[(*)]} (L_2^2 \cdot \inf_{P^-} |J_\ba|^\sigma) (L_2^2 \cdot \inf_{P^-} |J_\bb|^\sigma) \\
 &\leq M_K \frac{\| f \|_0^2}{\lambda_{\sigma,u}^{2n}} \cdot L_2^4 \sum_{Q \in \Pc[2]} \sum_{\Delta(\ba, \bb)\leq \varepsilon} e^{[(*)]} \left( \int_P |J_\ba|^\sigma d(x,y) \right)\left( \int_P |J_\bb|^\sigma d(x,y) \right)
 
 --------}

\subsection{Van der Corput in dimension two: Bounding $I_{P,2}$} \label{L2:2}

Now it remains to bound the sum $I_{P,2}$ of \eqref{L2:expansion}. 
The strategy is to bound each term of $I_{P,2}$ by taking advantage of the oscillation in the integrand.
We begin by having a form of Van der Corput lemma in dimension two. 


Let $\Omega \subset \R^2$ be a domain having a piecewise smooth boundary. 
For $\phi \in C^2(\Omega)$, set $M_0(\phi):=\sup_\Omega |\phi|$ and $M_1(\phi):=\sup_\Omega |\nabla \phi|_2$ where $|\cdot |_2$ denotes the 2-norm.
Also we set $M_2(\phi)=\sup_{D^2}\sup_{\Omega}  |D^2 \phi|$ where the outer supremum is taken over $D^2 \in \{\partial_x^2 , \partial _x \partial _y, \partial _y^2\}$. Put $m_1(\phi)=\inf_\Omega |\nabla \phi |_2$. Finally, write $\textrm{Vol}_2(\Omega)$ for the area of $\Omega$ and $\mathrm{Vol}_1(\partial \Omega)$ for its circumference. 


\begin{lem}
\label{vandercorput}
Suppose $\phi \in C^2(\Omega)$ and $\rho \in C^1(\Omega)$.
For $\lambda \in \R$, define the integral 
\[ I(\lambda)= \int \int_\Omega e^{i\lambda \phi(x,y)} \rho(x,y) dxdy .\]
Then we have a bound: 
\begin{align}\label{vdcineq}
|\lambda I(\lambda)| \leq \frac{M_0(\rho)}{m_1(\phi)} \mathrm{Vol}_1(\partial \Omega)+ \left( \frac{M_1(\rho)}{m_1(\phi)}+\frac{5 M_0(\rho)M_2(\phi)}{m_1(\phi)^2}   \right) \mathrm{Vol}_2(\Omega).
\end{align}

\end{lem}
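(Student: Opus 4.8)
The plan is to integrate by parts via the divergence theorem, exploiting that $\nabla\phi$ does not vanish on $\Omega$. We may assume $m_1(\phi)>0$, since otherwise the right-hand side of \eqref{vdcineq} is infinite and there is nothing to prove, and likewise that $M_0(\rho)$, $M_1(\rho)$, $M_2(\phi)$ and $\mathrm{Vol}_2(\Omega)$ are finite; in our application $\phi=\phi_{\ba,\bb}$ and $\rho$ are real-analytic on a neighbourhood of $\overline\Omega$, so no regularity subtlety arises and the vector field $V$ introduced below is of class $C^1$ up to the boundary. First I would set
\[
V:=\frac{\rho\,\nabla\phi}{|\nabla\phi|_2^{2}},
\]
which satisfies $\nabla\phi\cdot V=\rho$, so that
\[
\nabla\cdot\!\big(e^{i\lambda\phi}V\big)=i\lambda\,e^{i\lambda\phi}(\nabla\phi\cdot V)+e^{i\lambda\phi}(\nabla\cdot V)=i\lambda\,e^{i\lambda\phi}\rho+e^{i\lambda\phi}(\nabla\cdot V).
\]

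Next I would integrate this identity over $\Omega$ and apply the divergence theorem to the left-hand side, arc by arc along the piecewise smooth boundary, obtaining
\[
i\lambda\,I(\lambda)=\int_{\partial\Omega}e^{i\lambda\phi}\,(V\cdot n)\,d\sigma-\iint_\Omega e^{i\lambda\phi}\,(\nabla\cdot V)\,dxdy,
\]
where $n$ is the outward unit normal and $d\sigma$ the arclength element on $\partial\Omega$. Taking absolute values and using $|e^{i\lambda\phi}|=1$ gives
\[
|\lambda\,I(\lambda)|\le \int_{\partial\Omega}|V|_2\,d\sigma+\iint_\Omega |\nabla\cdot V|\,dxdy.
\]
On $\Omega$ we have $|V|_2=|\rho|/|\nabla\phi|_2\le M_0(\rho)/m_1(\phi)$, so the boundary integral is at most $M_0(\rho)\,\mathrm{Vol}_1(\partial\Omega)/m_1(\phi)$, which is the first term in \eqref{vdcineq}.

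It then remains to bound $\nabla\cdot V$ pointwise. By the product rule,
\[
\nabla\cdot V=\nabla\rho\cdot\frac{\nabla\phi}{|\nabla\phi|_2^{2}}+\rho\,\nabla\cdot\!\Big(\frac{\nabla\phi}{|\nabla\phi|_2^{2}}\Big),
\]
and the first summand has modulus at most $M_1(\rho)/m_1(\phi)$ by Cauchy--Schwarz. For the second, writing $g:=|\nabla\phi|_2^{2}=\phi_x^2+\phi_y^2$, a short computation yields
\[
\nabla\cdot\!\Big(\frac{\nabla\phi}{g}\Big)=\frac{g\,\Delta\phi-\nabla\phi\cdot\nabla g}{g^{2}}=\frac{(\phi_y^2-\phi_x^2)(\phi_{xx}-\phi_{yy})-4\phi_x\phi_y\phi_{xy}}{g^{2}}.
\]
Since $|\phi_y^2-\phi_x^2|\le g$, $4|\phi_x\phi_y|\le 2g$, and $|\phi_{xx}|,|\phi_{xy}|,|\phi_{yy}|\le M_2(\phi)$, the numerator has modulus at most $4g\,M_2(\phi)$, hence $\big|\nabla\cdot(\nabla\phi/g)\big|\le 4M_2(\phi)/g\le 5M_2(\phi)/m_1(\phi)^2$. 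Multiplying by $|\rho|\le M_0(\rho)$, adding the first summand and integrating over $\Omega$ then bounds the second integral above by $\big(M_1(\rho)/m_1(\phi)+5M_0(\rho)M_2(\phi)/m_1(\phi)^{2}\big)\,\mathrm{Vol}_2(\Omega)$, which together with the boundary estimate yields \eqref{vdcineq}.

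The argument is essentially computational, and I anticipate no substantive obstacle. The points requiring care are administrative: checking that $V$ and $\nabla\cdot V$ remain well behaved up to the boundary so that the divergence theorem applies on the merely piecewise smooth domain $\Omega$ (guaranteed by $m_1(\phi)>0$ and the stated regularity), and carrying out the elementary but mildly fiddly pointwise estimate of $\nabla\cdot(\nabla\phi/|\nabla\phi|_2^{2})$, which is the source of the constant $5$ (in fact $4$ would already suffice).
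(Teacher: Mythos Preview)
Your proof is correct and follows essentially the same approach as the paper's: both introduce the vector field $V=\rho\nabla\phi/|\nabla\phi|_2^2$ (the paper writes it in differential-form language as $e^{i\lambda\phi}\frac{\rho}{|\nabla\phi|_2^2}\iota_{\nabla\phi}\omega$) and integrate by parts via Green's/divergence theorem to split $i\lambda I(\lambda)$ into a boundary term and a bulk divergence term. Your explicit simplification of $g\Delta\phi-\nabla\phi\cdot\nabla g$ to $(\phi_y^2-\phi_x^2)(\phi_{xx}-\phi_{yy})-4\phi_x\phi_y\phi_{xy}$ is a nice touch that makes the constant $4$ (and hence $5$) transparent, and is arguably cleaner than the paper's three-term split.
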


\begin{proof}
Let $\omega = dx \wedge dy$ be the standard volume form on $\mathbb R^2$.
Put 
$$
\alpha = e^{i\lambda \phi} \frac{\rho}{|\nabla \phi|_2^2} \iota_{\nabla \phi} \omega
$$
where $\iota_{\nabla \phi}$ denotes the contraction by $\nabla \phi$.
Differentiating, we obtain
\begin{align}
    d\alpha = i \lambda e^{i \lambda \phi}  \rho \omega  + e^{i\lambda \phi} d \left(\frac{\rho}{|\nabla \phi|_2^2} \iota_{\nabla \phi} \omega\right)
\end{align}
by using $d\phi \wedge i_{\nabla\phi} \omega = \omega$.
The second term can be rewritten using
$$
d \left(\frac{\rho}{|\nabla \phi|_2^2} \iota_{\nabla \phi} \omega\right) = \nabla \cdot \left( \frac{\rho}{|\nabla \phi|_2^2} \nabla\phi \right)\omega
$$
which holds because for any $f$ we have an identity $d(f\iota_{\nabla\phi}\omega) = \nabla \cdot ( f \nabla \phi ) \omega$.
By Green's theorem, we have $\int_{\Omega} d\alpha = \int_{\partial \Omega} \alpha$, which yields
$$
i \lambda \int_{\Omega} e^{i \lambda \phi} \rho \omega = \int_{\partial \Omega} \alpha 
- \int_{\Omega} \nabla \cdot \left( \frac{\rho}{|\nabla \phi|_2^2} \nabla\phi \right)\omega.
$$
The first integral is bounded by $m_1(\phi)^{-2}M_0(\rho) \operatorname{Vol}_1\left(\partial \Omega\right)$.
To bound the second integral, we use
$$
\nabla \cdot \left( \frac{\rho}{|\nabla \phi|_2^2} \nabla\phi\right)
=\frac{(\nabla\rho) \cdot ( \nabla \phi)}{|\nabla \phi|_2^2}  + 
\frac{\rho\nabla \phi}{|\nabla \phi|_2^2}  
+(\rho\nabla \phi) \cdot (\nabla |\nabla \phi|_2^{-2})
$$
whose first and second summands have absolute values bounded by $M_1(\rho)m_1(\phi)^{-1}$ and $M_0(\rho)M_2(\rho)m_1(\phi)^{-2}$, respectively.
For the last summand, a direct computation shows
$$
|(\rho\nabla \phi) \cdot (\nabla |\nabla \phi|_2^{-2})|= \frac{M_0(\rho)}{|\nabla\phi|^4}(\nabla \phi)\cdot(\nabla|\nabla\phi|_2^2)  \le \frac{4 M_0(\rho)M_2(\phi)}{m_1(\phi)^2}.
$$
Summing up, we obtain \eqref{vdcineq}.
\end{proof}
\hide{----
\begin{lem}
Let $J \subseteq \R^2$ with $J=J_1 \times J_2$. Suppose $\psi$ is a real-valued function such that 
\[\inf_{i=1,2} \left| \frac{\partial}{\partial x_i} \psi(x_1,x_2) \right| \geq \Delta \ \mbox{and } \sup_{i=1,2} \left| \frac{\partial^2}{\partial {x_i}^2} \psi(x_1,x_2) \right| \leq Q \]
for some uniform constants $\Delta, Q >0$ and all $(x_1,x_2) \in J$. Then for all $t \neq 0 \in \R$ and smooth function $\phi$, we have 
\[ \left| \int_J e^{it \psi(x,y)} \phi(x,y) d(x,y)   \right| \leq \frac{\|\phi \|_1 |J_1|}{|t|} \left(\frac{2}{\Delta}+\frac{|J_2|}{\Delta}+ \frac{Q |J_2|}{\Delta^2}\right) .  \]

\end{lem}

\begin{proof}
The argument is basically due to the use of Fubini theorem and integration parts for partial derivatives. By Fubini, we have
\[ I(t):=\int_J e^{it \psi(x,y)} \phi(x,y) d(x,y) = \int_{J_1} \left( \int_{J_2} \frac{ \phi(x,y) }{it \partial_y \psi(x,y)} \rdot \partial_y e^{it \psi(x,y)}dy \right) dx  \]
where $\partial_y$ is short for $\partial/ \partial y$.
Then integration by parts gives
\begin{align*}
I(t)&= \int_{J_1} \left( \frac{ \phi(x,y) }{it \partial_y \psi(x,y)} \rdot e^{it \psi(x,y)} \Big\vert_{\partial J_2} - \int_{J_2} \partial_y \left( \frac{ \phi(x,y) }{it \partial_y \psi(x,y)}  \right) \rdot e^{it \psi(x,y)} dy  \right) dx \\
& \leq \int_{J_1}  \frac{2 \phi(x,b) }{it \partial_y \psi(x,b)} \rdot e^{it \psi(x,b)} dx  \\
&\ \ \ \ \ \ \ - \int_{J_1} \int_{J_2} \frac{1}{it} \left( \frac{ \partial_y \phi(x,y) \partial_y \psi(x,y)- \phi(x,y) \partial^2_y \psi(x,y) }{ (\partial_y \psi(x,y))^2}  \right) \rdot e^{it \psi(x,y)} dy  dx 
\end{align*}
where $b$ is one boundary point of the interval $J_2$. Thus we have
\begin{align*}
|I(t)|& \leq \int_{J_1}  \frac{2 \| \phi \|_0 }{|t| \Delta} dx + \int_{J_1} \int_{J_2} \frac{1}{|t|} \left( \frac{\| \partial_y \phi \|_0}{\Delta}+ \frac{\|\phi\|_0 Q}{\Delta^2}\right) dy dx \\
&\leq \frac{\|\phi\|_1 |J_1|}{|t|} \left(\frac{2}{\Delta}+ \frac{|J_2|}{\Delta}+\frac{Q |J_2|}{\Delta^2}  \right).
\end{align*}
\end{proof}

------}


\begin{prop}\label{prop:boundingI2}
For all $a$ with $0 < a < \frac 1 4$, there is $n_0$ such that the integral $I_{P,2}$ of \eqref{L2:expansion} for the depth $n=n_0$ with $\Delta(\ba,\bb) \geq \rho^{an_0}$ and for any $|t| \geq 1/\rho^2$ satisfies 
\[ |I_{P,2}| \ll \rho^{(1-4a)\frac{n_0}{2}} \| f\|_{(t)}^2  . 
\]
\end{prop}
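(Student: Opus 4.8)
The plan is to apply the two–dimensional Van der Corput estimate of Lemma~\ref{vandercorput} to each summand of \eqref{L2:expansion}, taking $\Omega=P$, $\lambda=t$, oscillating phase $\phi=\phi_{\ba,\bb}$, and amplitude $\varrho_{\ba,\bb}:=e^{wc(\ba)+\bar wc(\bb)}R^\sigma_{\ba,\bb}$, where $\langle\ba\rangle,\langle\bb\rangle\in\Hc^{n_0}(P,Q)$ now range over the pairs with $\Delta(\ba,\bb)\geq\rho^{an_0}$ and all implied constants depend only on the neighborhood $K$. First I would check the hypotheses of Lemma~\ref{vandercorput} uniformly in $\ba,\bb$: the branches $h_\ba,h_\bb$ extend conformally to a neighborhood of $\overline P$ on which $J_\ba,J_\bb>0$, so $\phi_{\ba,\bb}=\log|J_\ba|-\log|J_\bb|$ is real-analytic (in particular $C^2$) near $\overline P$, while $\varrho_{\ba,\bb}\in C^1(\overline P)$ since $(\psi_{\sigma,u}f)_Q\circ\langle\ba\rangle$ is a $C^1(\overline Q)$-function composed with a real-analytic map; moreover each $2$-cell $P$ is cut out by finitely many lines and circles, hence has piecewise-smooth boundary, and since $\Pc$ is finite, $\mathrm{Vol}_1(\partial P)$ and $\mathrm{Vol}_2(P)$ are $O(1)$. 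As $\phi_{\ba,\bb}$ is real-valued, $\partial_{\bar z}\phi_{\ba,\bb}=\overline{\partial_z\phi_{\ba,\bb}}$, whence $|\nabla\phi_{\ba,\bb}|_2=\sqrt2\,\big|(\partial_z\phi_{\ba,\bb},\partial_{\bar z}\phi_{\ba,\bb})\big|_2$, so that $m_1(\phi_{\ba,\bb})=\sqrt2\,\Delta(\ba,\bb)\geq\sqrt2\,\rho^{an_0}$ on the summation range, while $M_2(\phi_{\ba,\bb})$ is bounded by a uniform constant by Proposition~\ref{uni}.(2).

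Next I would estimate the amplitude. Using that $\psi_{\sigma,u}^{\pm1}$ are bounded on $K$ gives $M_0(\varrho_{\ba,\bb})\ll\|J_\ba\|_0^\sigma\|J_\bb\|_0^\sigma e^{u(c(\ba)+c(\bb))}\|f\|_0^2$. For $M_1(\varrho_{\ba,\bb})$ I would apply the product and chain rules: the contribution of $\nabla(\psi_{\sigma,u}^{-2})_P$ is harmless; differentiating $|J_\ba|^\sigma$ costs the factor $|\nabla J_\ba|/|J_\ba|\ll M$ by the bounded distortion of Proposition~\ref{distortion1}; and differentiating $(\psi_{\sigma,u}f)_Q\circ\langle\ba\rangle$ produces the factor $|h_\ba'|\ll\rho^{n_0/2}$ (from $|J_\ba|=|h_\ba'|^2\ll\rho^{n_0}$, see \eqref{contrac}) together with $\|(\psi_{\sigma,u}f)_Q\|_1\ll\|f\|_0+\|f\|_1\leq\|f\|_0+|t|\,\|f\|_{(t)}$, and symmetrically for the $\bb$-factor. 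Collecting terms yields $M_1(\varrho_{\ba,\bb})\ll\|J_\ba\|_0^\sigma\|J_\bb\|_0^\sigma e^{u(c(\ba)+c(\bb))}(\|f\|_0^2+\rho^{n_0/2}|t|\,\|f\|_{(t)}^2)$. Feeding $m_1(\phi_{\ba,\bb})\geq\sqrt2\rho^{an_0}$, $M_2(\phi_{\ba,\bb})\ll1$ and these bounds into \eqref{vdcineq}, dividing by $|t|$ and absorbing the subdominant terms, I expect each summand of $I_{P,2}$ to satisfy
\[
\left|\int_P\varrho_{\ba,\bb}\,e^{it\phi_{\ba,\bb}}\,dxdy\right|\ \ll\ \|J_\ba\|_0^\sigma\|J_\bb\|_0^\sigma e^{u(c(\ba)+c(\bb))}\,\|f\|_{(t)}^2\left(\frac{\rho^{-2an_0}}{|t|}+\rho^{(1/2-a)n_0}\right).
\]

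Finally I would sum over $Q\in\Pc[2]$ and over $\langle\ba\rangle,\langle\bb\rangle\in\Hc^{n_0}(P,Q)$. Bounded distortion (Proposition~\ref{distortion2}, together with the trivial inequality $\inf_P|J_\ba|^\sigma\leq\mathrm{Vol}_2(P)^{-1}\int_P|J_\ba|^\sigma\,dxdy$) lets me replace $\|J_\ba\|_0^\sigma$ by $|J_\ba(z)|^\sigma$ up to a constant, so that $\sum_{Q}\sum_{\langle\ba\rangle\in\Hc^{n_0}(P,Q)}e^{uc(\ba)}\|J_\ba\|_0^\sigma\ll\|\Lc_{\sigma,u}^{n_0}\mathbf1\|_0\ll\lambda_{\sigma,u}^{n_0}$ by the Ruelle--Perron--Frobenius Theorem~\ref{thm:ruelle} and the spectral decomposition \eqref{sp:decomp} (equivalently, this is Lemma~\ref{key:distortion}.(1) combined with the disjointness of the cells $h_\ba(P)$ and $\mu_{\sigma,u}(I)=1$); the same holds for the $\bb$-sum, and $|\Pc[2]|=O(1)$. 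Hence
\[
|I_{P,2}|\ \ll\ \lambda_{\sigma,u}^{2n_0}\,\|f\|_{(t)}^2\left(\frac{\rho^{-2an_0}}{|t|}+\rho^{(1/2-a)n_0}\right).
\]
Now taking $n_0=[\widetilde B\log|t|]$, so $|t|\asymp e^{n_0/\widetilde B}$, the right-hand side becomes, up to a constant, $\|f\|_{(t)}^2\,[\,(\lambda_{\sigma,u}^2\rho^{-2a}e^{-1/\widetilde B})^{n_0}+(\lambda_{\sigma,u}^2\rho^{1/2-a})^{n_0}\,]$. I would then shrink $K$ so that $\lambda_{\sigma,u}$ stays close to $\lambda_{1,0}=1$, and choose $\widetilde B>0$ small enough that $\lambda_{\sigma,u}^2 e^{-1/\widetilde B}<\rho^{1/2}$ and $\lambda_{\sigma,u}^2<\rho^{-a}$ for all $(\sigma,u)\in K$; then both brackets are $\ll\rho^{(1/2-2a)n_0}=\rho^{(1-4a)n_0/2}$, which is the asserted bound, the decay being genuine precisely because $0<a<\frac14$. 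Observe that this only constrains $\widetilde B$ from above, which is consistent with the reduction of \S\ref{sub:maindolgopyat}, where $\widetilde C$ is taken large relative to $\widetilde B$.

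The step I expect to be the main obstacle is the amplitude estimate: the differentiated amplitude inevitably carries the growing factor $\rho^{n_0/2}|t|$, and one must verify that the oscillatory gain $|t|^{-1}$ from Lemma~\ref{vandercorput}, together with the freedom to take $\widetilde B$ small, strictly beats both this factor and the loss $\rho^{-2an_0}$ coming from $m_1(\phi_{\ba,\bb})^{-2}$; this is exactly what pins down the admissible range of $\widetilde B$ and the exponent $(1-4a)/2$. A secondary technical point is the uniform second-derivative bound $M_2(\phi_{\ba,\bb})\ll1$, for which one leans on Proposition~\ref{uni}.(2) and hence on the distortion estimates of \S\ref{sec:dynamics}.
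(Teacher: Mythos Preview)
Your proposal is correct and follows essentially the same route as the paper: apply Lemma~\ref{vandercorput} termwise with $m_1(\phi_{\ba,\bb})\gg\rho^{an_0}$ and $M_2(\phi_{\ba,\bb})\ll 1$ from Proposition~\ref{uni}(2), bound the amplitude via distortion and contraction to obtain the factor $(1+\rho^{n_0/2}|t|)$, sum the branch weights using the spectral decomposition, and tune $n_0=[\widetilde B\log|t|]$. Your write-up is in fact more explicit than the paper's (you separate $M_0$ and $M_1$ and carry out the branch summation in detail); the only cosmetic discrepancy is that you retain the factor $\lambda_{\sigma,u}^{2n_0}$ and absorb it by shrinking $K$, whereas in the paper's bookkeeping this factor is cancelled by the $\lambda_{\sigma,u}^{-2n}$ prefactor already present in the normalised expansion \eqref{dolg:I2}, so no shrinking of $K$ is needed at this step.
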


\begin{proof}

Recall that 
\begin{equation} \label{dolg:I2} 
I_{P,2}= \lambda_{\sigma, u}^{-2n} \sum_{Q \in \Pc[2]} \sum_{\Delta(\ba,\bb)\geq \varepsilon} 
e^{w c(\ba)+\bar{w} c(\bb)} 
\int_P e^{it \phi_{\ba,\bb}(x,y)} R_{\ba,\bb}^\sigma(x,y) dxdy \end{equation}
and by Lasota--Yorke arguments used in Lemma \ref{est:normalised}, we obtain
\[ \| R_{\ba,\bb}^\sigma \|_{(1)} \ll \|J_\ba \|_0^\sigma \|J_\bb \|_0^\sigma \| f \|_{(t)}^2 (1+\rho^{n_0/2}|t|) .  \]

Since $P$ is a bounded domain with piecewise smooth boundary, by applying Lemma \ref{vandercorput} to the oscillatory integral for each $P$ in \eqref{dolg:I2}, we have 
\[ |I_{P,2}| \leq M_U \| f\|_{(t)}^2  \frac{(1+\rho^{n_0/2}|t|)}{|t|} \left( \frac{\mathrm{Vol}_1(\partial P)+\mathrm{Vol}_2(P)}{\varepsilon/\sqrt{2}} +\frac{C}{(\varepsilon/\sqrt{2})^2} \mathrm{Vol}_2(P)  \right)    \]
for some $M_U>0$, where $C$ is the UNI constant  from Proposition \ref{uni}.(2).  
Here we used the identity $\sqrt {2} |\nabla\phi_{\ba,\bb}|_2=|(\partial_z\phi_{\ba,\bb},\partial_{\bar z} \phi_{\ba,\bb})|_2$.

It remains to take $\varepsilon=\rho^{a n_0}$ and $n_0$ in a suitable scale. 
Setting $n_0:=[ m \log|t| ]$ with $m$ small enough to have $
{(1+\rho^{n_0/2}|t|)} 
\left(\frac{\mathrm{Vol}_1(\partial P)+\mathrm{Vol}_2(P)}{|t| \rho^{an_0}} +\frac{C}{|t| \rho^{2a n_0}} \mathrm{Vol}_2(P)\right)$ decaying polynomially in $|t|$, we conclude the proof.
\end{proof}


\begin{proof}[End of Theorem \ref{main:dolgopyat}] \label{rem:xi}

We conclude by showing that Proposition~\ref{main:L2} implies Theorem \ref{main:dolgopyat}. 

For the first assertion, set $n_0=n_0(t)=[\widetilde{B} \log |t|] \geq 1$. For $n=n(t) = [\widetilde{C} \log |t|]$, we have 
\begin{align*}
\| \Lt_{s,w}^n f \|_0^2 &\leq \| \Lt_{\sigma, u}^{n-n_0} (| \Lt_{s,w}^{n_0} f  |) \|_0^2 \\
&\leq B_U A_{\sigma,u}^{n-n_0} \| \Lt_{1,0}^{n-n_0} (| \Lt_{s,w}^{n_0} f  |^2)   \|_0 
\end{align*}
by Lemma \ref{perturb:id}. Recall from Lemma \ref{est:normalised}.(2) that there is a gap in the spectrum of $\Lt_{1,0}$, which yields 
\begin{align*} 
\| \Lt_{s,w}^n f \|_0^2 &\leq \widetilde{B}_U A_{\sigma,u}^{n-n_0} \left( \int_I | \Lt_{s,w}^{n_0} f |^2 d\mu_{1,0} + r_{1,0}^{n-n_0} |t| \| f \|_{(t)}^2  \right) \\
& \leq \widetilde{B}_U A_{\sigma,u}^{n-n_0} \left( \frac{1}{|t|^{\widetilde{\beta}}}+ r_{1,0}^{n-n_0} |t|  \right) \| f \|_{(t)}^2 \numberthis \label{L2:sup}
\end{align*}
by Proposition \ref{main:L2}. Choose $\widetilde C>0$ large enough so that $r_{1,0}^{n-n_0}|t| <|t|^{-\widetilde \beta}$ and a sufficiently small neighbourhood $U$ so that 
$A_{\sigma,u}^{n-n_0}< |t|^{\widetilde{\beta}/2}.$
Then \eqref{L2:sup} becomes 
\begin{equation} \label{L2:sup:fin}
\| \Lt_{s,w}^n f \|_0 \ll   \frac{\| f\|_{(t)}}{|t|^{\widetilde{\beta}/4}} . 
\end{equation}

By using Lemma \ref{est:normalised}.(1) twice and \eqref{L2:sup:fin}, for $n \geq 2n_0$, we obtain
\begin{equation} \label{L2:c1:fin}
\| \Lt_{s,w}^n f \|_{(t)} \ll   \frac{\| f\|_{(t)}}{|t|^{\widetilde{\gamma}}} 
\end{equation} 
for some $\widetilde{\gamma}>0$, which implies the first bound for normalised family in Theorem \ref{main:dolgopyat}. Returning to the operator $\Lc_{s,w}$, we obtain the final bound with a suitable choice of implicit constants.

For the second statement, choose $n_0$ from Proposition~\ref{prop:boundingI2} and some $a$ with $\frac{1}{5}<a<\frac{1}{4}$, then we can take a real neighborhood of $(\sigma,u)$ small enough to ensure $A_{\sigma,u}\rho^{a/2} \leq \rho^{(1-4a)/2}$ since $a/2>(1-4a)/2>0$. Together with Proposition~\ref{thm:I_1}, this gives 
\[ I_P \ll \rho^{(1-4a)\frac{n_0}{2}} \|f\|_{(t)}^2. \] 

Accordingly by writing any integer $n=kn_0+r$ with $r<n_0$, using \eqref{L2:sup:fin} and \eqref{L2:c1:fin}, we have $\| \Lt_{s,w}^n \|_{(t)} \ll |t|^\xi$ for some $\xi$ with $0<\xi<(1-4a-\varepsilon)/2$. Thus $\xi$ can be any value between $0$ and $1/10$. 
\end{proof}

\hide{----
\begin{rem} \label{rem:xi}
More detailed computations in Baladi--Vallée \cite[\S3.3]{bal:val} show that the constant $\xi$ in Theorem \ref{main:dolgopyat} can be taken between $0$ and $\frac 1 9$  by choosing $a$ in Proposition~\ref{thm:I_1} and Proposition~\ref{prop:boundingI2} with $\frac 2 9 < a < \frac 1 4 $.
\end{rem}
-----}


\section{Gaussian \rom{1}} \label{sec:clt1}

In this section, we observe the central limit theorem for continuous trajectories of $(I,T)$. For $z \in I \cap (\C \backslash K)$, recall that we defined
\[ 
C_n(z)= \sum_{j=1}^n c(\al_j) 
\]
where $z=[0; \al_1, \al_2, \ldots ]$ with $\al_j=[ \frac{1}{T^{j-1}(z)}]$. We show that $C_n$, where $z$ is distributed with law $\mu_{1,0}$ from Theorem \ref{thm:ruelle}, follows the asymptotic normal distribution as $n$ goes to infinity.

First we state the following criterion due to Hwang, used in Baladi--Vall\'ee \cite[Theorem 0]{bal:val}. This says that the Quasi-power estimate of the moment generating function implies the Gaussian behavior.

\begin{thm}[Hwang's Quasi-Power Theorem]  \label{thm:hwang} 

Assume that the moment generating functions for a sequence of functions $X_N$ on probability space $(\Xi_N, \Pbb_N)$ are analytic in a neighbourhood $W$ of zero, and 
\[ \Eb_N[\exp(w X_N)\,|\,\Xi_N]= \exp(\beta_N U(w)+ V(w))(1+O(\kappa_N^{-1})) \] with $\beta_N, \kappa_N \rightarrow \infty$ as $N \rightarrow \infty$, $U(w), V(w)$ analytic on $W$, and $U''(0) \neq 0$.

\begin{enumerate}
\item The distribution of $X_N$  is asymptotically Gaussian with the speed of convergence $O(\kappa_N^{-1}+\beta_N^{-1/2})$, i.e.
\[\Pbb_N\left[\frac{X_N-\beta_N U'(0)}{\sqrt{\beta_N}}\leq u\,\bigg|\,\Xi_N\right]=\frac{1}{\sqrt{2\pi}}\int_{-\infty}^u e^{-\frac{t^2}{2}}dt+O\left(\frac{1}{\kappa_N}+\frac{1}{\beta_N^{1/2}}\right) \]
where the implicit constant is independent of $u$.

\item The expectation and variance of $X_N$ satisfy
 \begin{align*}
       \Eb[X_N\,|\,\Xi_N] &= \beta_N U'(0)+V'(0)+O(\kappa_N^{-1}), \\ \Vb[X_N\,|\,\Xi_N]  &= \beta_N U''(0)+V''(0)+O(\kappa_N^{-1}) .
\end{align*}    
\end{enumerate}
\hide{-------
 \begin{align*}
       \Eb[X_N\,|\,\Xi_N] &= \beta_N U'(0)+V'(0)+O(\kappa_N^{-1}), \\ \Vb[X_N\,|\,\Xi_N]  &= \beta_N U''(0)+V''(0)+O(\kappa_N^{-1}) .
\end{align*}    
Furthermore, the distribution of $X_N$ on $\Xi_N$ is asymptotically Gaussian with speed of convergence $O(\kappa_N^{-1}+\beta_N^{-1/2})$, i.e.,
\begin{align*}
\Pbb_N\left[\frac{X_N-\beta_N U'(0)}{\sqrt{\beta_N}}\leq u\,\bigg|\,\Xi_N\right]=\frac{1}{\sqrt{2\pi}}\int_{-\infty}^u \exp\left(-\frac{t^2}{2}\right)dt+O\left(\frac{1}{\kappa_N+\beta_N^{1/2}}\right)
\end{align*}
---------}
\end{thm}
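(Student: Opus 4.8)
The plan is to prove Theorem~\ref{thm:hwang} in two essentially independent pieces: the mean/variance asymptotics come from differentiating the quasi-power relation, using only analyticity and Cauchy's estimates, while the Gaussian limit with its rate comes from the characteristic-function method (L\'evy's continuity theorem for the qualitative statement, Esseen's smoothing inequality for the speed). For the first piece, write $M_N(w):=\Eb_N[\exp(wX_N)\mid\Xi_N]$, analytic on the fixed complex neighbourhood $W$ of $0$ by hypothesis, so that $X_N$ has finite exponential moments and $M_N(w)=\exp(\beta_N U(w)+V(w))(1+R_N(w))$ with $\sup_{w\in W}|R_N(w)|=O(\kappa_N^{-1})$. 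Evaluating at $w=0$ and using $M_N(0)=1$ with $\beta_N\to\infty$ forces $U(0)=V(0)=0$ and $R_N(0)=0$; since in addition $\Eb_N[X_N]=(\log M_N)'(0)$ and $\Vb_N[X_N]=(\log M_N)''(0)$ are real with the latter nonnegative, letting $\beta_N\to\infty$ forces $U'(0),U''(0)\in\R$ and, combined with the hypothesis $U''(0)\ne0$, that $U''(0)>0$. Shrinking $W$ to $W'\Subset W$, Cauchy's integral formula gives $R_N',R_N''=O(\kappa_N^{-1})$ on $W'$, and differentiating the cumulant generating function $\log M_N(w)=\beta_N U(w)+V(w)+\log(1+R_N(w))$ at $w=0$ yields the asserted formulas
\[
\Eb[X_N\mid\Xi_N]=\beta_N U'(0)+V'(0)+O(\kappa_N^{-1}),\qquad \Vb[X_N\mid\Xi_N]=\beta_N U''(0)+V''(0)+O(\kappa_N^{-1}),
\]
which is part~(2).

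For part~(1), I would standardise $X_N$ by setting $X_N^{*}:=(X_N-\beta_N U'(0))/\sqrt{\beta_N U''(0)}$ (so that the displayed statement follows after the harmless rescaling by $\sqrt{U''(0)}$) and analyse its characteristic function $\widehat\phi_N(\theta)=\Eb_N[e^{i\theta X_N^{*}}]$. Substituting $w=i\theta/\sqrt{\beta_N U''(0)}$ into the quasi-power relation—legitimate provided $|\theta|\le\varepsilon\sqrt{\beta_N}$, so that $w\in W$—and Taylor-expanding $\beta_N U$ and $V$ about $0$, the centring cancels the linear term and one obtains, on the range $|\theta|\le\varepsilon\beta_N^{1/6}$,
\[
\widehat\phi_N(\theta)=e^{-\theta^2/2}\bigl(1+O(\kappa_N^{-1})+O(|\theta|^3\beta_N^{-1/2})\bigr),
\]
whereas on the full window $|\theta|\le\varepsilon\sqrt{\beta_N}$ the inequality $\mathrm{Re}\,U(i\xi)\le-\tfrac14 U''(0)\xi^2$ for small $\xi$—valid after shrinking $\varepsilon$, since $U(0)=0$ and $U'(0),U''(0)\in\R$—gives the crude bound $|\widehat\phi_N(\theta)|\ll e^{-c\theta^2}$ for a fixed $c>0$. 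In particular $\widehat\phi_N(\theta)\to e^{-\theta^2/2}$ pointwise, so L\'evy's continuity theorem already yields asymptotic normality; the refined estimates are what produce the rate.

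To get the rate I would invoke Esseen's smoothing (Berry--Esseen) inequality,
\[
\sup_{u\in\R}\Bigl|\Pbb_N[X_N^{*}\le u\mid\Xi_N]-\tfrac1{\sqrt{2\pi}}\int_{-\infty}^{u}e^{-t^2/2}\,dt\Bigr|\ll\int_{-T_N}^{T_N}\frac{|\widehat\phi_N(\theta)-e^{-\theta^2/2}|}{|\theta|}\,d\theta+\frac1{T_N},
\]
with cutoff $T_N=\varepsilon\sqrt{\beta_N}$, so $1/T_N=O(\beta_N^{-1/2})$. In the integral I would split at $|\theta|=1$ and $|\theta|=\beta_N^{1/6}$. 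On $|\theta|\le1$ the numerator vanishes to second order at $0$ (as $\widehat\phi_N(0)=1$, $\widehat\phi_N'(0)=0$, and $R_N(w)\ll\kappa_N^{-1}|w|$ near $0$ because $R_N(0)=0$), so with the Gaussian weight that contribution is $O(\beta_N^{-1/2})$. On $1\le|\theta|\le\beta_N^{1/6}$ I would bound $|\widehat\phi_N(\theta)-e^{-\theta^2/2}|$ by $|e^{\beta_N U(i\xi)+V(i\xi)}-e^{-\theta^2/2}|+e^{-c\theta^2}|R_N(i\xi)|$ with $\xi=\theta/\sqrt{\beta_N U''(0)}$: the first piece contributes $O(\beta_N^{-1/2})$ via the Taylor expansion, and the second contributes $\kappa_N^{-1}\int_1^{\infty}e^{-c\theta^2}\,d\theta/|\theta|=O(\kappa_N^{-1})$, which is where the $\kappa_N^{-1}$ in the rate originates. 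Finally, on $\beta_N^{1/6}\le|\theta|\le T_N$ both $|\widehat\phi_N(\theta)|$ and $e^{-\theta^2/2}$ are $\ll e^{-c\theta^2}$, so that range is super-polynomially small. Summing the three ranges gives the speed $O(\kappa_N^{-1}+\beta_N^{-1/2})$, completing part~(1).

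I expect the one genuinely delicate step to be the decay of $|\widehat\phi_N(\theta)|$ for $\theta$ bounded away from $0$: in the abstract statement it is bought cheaply from $U''(0)>0$ once one stays in the window $|\theta|\le\varepsilon\sqrt{\beta_N}$, but in the intended application to the complex Gauss system $(I,T)$—where $M_N$ will be built from the resolvent of $\Lc_{s,w}$—this is exactly the content of the Dolgopyat--Baladi--Vall\'ee estimate of \S\ref{sec:dolgopyat} (Theorem~\ref{main:dolgopyat}), which supplies the corresponding uniform decay of the transfer operator along vertical lines.
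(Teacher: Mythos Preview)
The paper does not prove Theorem~\ref{thm:hwang} at all: it is quoted as a black-box criterion due to Hwang, in the form restated as \cite[Theorem~0]{bal:val}, and is simply applied in Theorems~\ref{clt:cont} and~\ref{clt:discrete} once the quasi-power hypothesis has been verified. So there is no ``paper's own proof'' to compare against.

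That said, your sketch is correct and is essentially the standard proof of Hwang's theorem: extract the cumulants by differentiating $\log M_N$ and controlling the analytic error $R_N$ via Cauchy estimates (this gives part~(2)), and obtain the Berry--Esseen rate by plugging $w=i\theta/\sqrt{\beta_N U''(0)}$ into the quasi-power relation and feeding the resulting bounds on $\widehat\phi_N(\theta)-e^{-\theta^2/2}$ into Esseen's smoothing inequality with cutoff $T_N\asymp\sqrt{\beta_N}$. The range-splitting and the use of Schwarz's lemma to upgrade $R_N=O(\kappa_N^{-1})$ to $R_N(w)=O(\kappa_N^{-1}|w|)$ near $0$ are the right moves.

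One clarification on your final paragraph: the Dolgopyat--Baladi--Vall\'ee estimate of \S\ref{sec:dolgopyat} plays no role in the proof of the abstract Theorem~\ref{thm:hwang}. In the paper it is used \emph{upstream}, to establish the analytic properties of the Dirichlet series $L(2s,w)$ (Proposition~\ref{dirichlet:pr}) that, via Perron's formula, yield the quasi-power hypothesis itself (Proposition~\ref{mgf:normal}). Once that hypothesis is in hand, the decay of $|\widehat\phi_N(\theta)|$ on $|\theta|\le\varepsilon\sqrt{\beta_N}$ comes purely from $U''(0)>0$, exactly as you say; Dolgopyat is not invoked again.
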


Recall the moment generating function of a random variable $C_n$ on the probability space $(I, \mu_{1,0})$: Let $\psi=\psi_{1,0}$ and $\mu=\mu_{1,0}$. Then we have
\begin{align*} 
\Eb[\exp(w C_n)] &= \int_I \exp(w C_n(x,y)) \rdot \psi(x,y) d\mu(x,y) \\
&=\sum_{\langle \ba \rangle \in \Hc^n} 
e^{w c(\ba)}  
\sum_{P \in \Pc}  \int_{h_\ba (P)} \psi(x,y) dxdy \numberthis \label{def:mgf}
\end{align*}



where $\langle \ba \rangle=\langle \alpha_n \rangle_Q^{R_{n-1}} \circ \cdots \circ \langle \alpha_1 \rangle^P_{R_{1}}$ for some $P, R_1, \cdots, R_{n-1}, Q$ in the set of all admissible length $n$-sequences of inverse branch, which is given by
 \[ \Hc^n= \bigcup_{P,Q \in \Pc} \Hc^n(P,Q).    \]

We further observe that (\ref{def:mgf}) can be written in terms of the weighted transfer operator. By the change of variable $(x,y)=h_\ba(X,Y)$, we obtain
\begin{align*} 
\Eb[\exp(w C_n)] &= \sum_{\langle \ba \rangle \in \Hc^n} 
e^{w c(\ba)} 
\sum_{P \in \Pc}  \int_{P} |J_\ba(X,Y)| \cdot \psi \circ h_\ba(X,Y) dXdY \\ 
&= \int_I  \Lc_{1,w}^n \psi(X,Y) dXdY \numberthis \label{exp:mgf} .
\end{align*}
Then by \eqref{sp:decomp}, $\Lc_{1,w}^n$ splits as $\lambda_{1,w}^n \Pc_{1,w}+\Nc_{1,w}^n$ and \eqref{exp:mgf} becomes 
\begin{align*} 
\Eb[\exp(w C_n)] &= \left( \lambda_{1,w}^n \int_I  \Pc_{1,w} \psi(X,Y) dXdY \right) (1+O(\theta^n))  \numberthis \label{exp:mgf2} .
\end{align*}
where the error term is uniform with $\theta<1$ satisfying $r(\Nc_{1,w}) \leq \theta |\lambda_{1,w}|$.

Hence by applying Theorem \ref{thm:hwang}, we conclude the following limit Gaussian distribution result for the complex Gauss system $(I,T)$.

\begin{thm} \label{clt:cont}
Let $c$ be the digit cost with moderate growth assumption that is not of the form $g - g \circ T$ for some $g \in C^1(\Pc)$. Then there exist positive constants $\widehat{\mu}(c)$ and $\widehat{\delta}(c)$ such that for any $n \geq 1$ and $u \in \R$, 
\begin{enumerate}
\item the distribution of $C_n$ is asymptotically Gaussian,
\[\Pbb \left[\frac{C_n - \widehat{\mu}(c) n}{\widehat{\delta}(c) \sqrt{n}}\leq u \right]=\frac{1}{\sqrt{2\pi}}\int_{-\infty}^u e^{-\frac{t^2}{2}}dt+O\left(\frac{1}{\sqrt n}\right)  . \]

\item the expectation and variance satisfy
\begin{align*}
\Eb[C_n] &= \widehat{\mu}(c) n+\widehat{\mu}_1(c)+O(\theta^n) \\
\Vb[C_n] &= \widehat{\delta}(c) n+\widehat{\delta}_1(c)+O(\theta^n)
\end{align*}
for some constants $\widehat{\mu}_1(c)$ and $\widehat{\delta}_1(c)$, where $\theta<1$ is as given in \eqref{exp:mgf2}.
\end{enumerate}
\end{thm}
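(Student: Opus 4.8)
The plan is to deduce the statement from Hwang's Quasi-Power Theorem (Theorem~\ref{thm:hwang}) applied to the expression \eqref{exp:mgf2} for the moment generating function. First I would set $\beta_n := n$ and $\kappa_n := \theta^{-n}$, with $\theta<1$ as in \eqref{exp:mgf2}, and put
\[
U(w) := \log \lambda_{1,w}, \qquad V(w) := \log\!\left( \int_I \mathcal P_{1,w}\,\psi \, dx\,dy \right),
\]
so that \eqref{exp:mgf2} reads $\Eb[\exp(wC_n)] = \exp(\beta_n U(w) + V(w))\,(1+O(\kappa_n^{-1}))$. I then define the constants in the statement by $\widehat\mu(c) := U'(0)$, $\widehat\delta(c) := U''(0)$, $\widehat\mu_1(c) := V'(0)$ and $\widehat\delta_1(c) := V''(0)$; with these choices, parts~(1) and~(2) become, respectively, the two conclusions of Theorem~\ref{thm:hwang}, once one notes that $O(\kappa_n^{-1}+\beta_n^{-1/2}) = O(\theta^n+n^{-1/2}) = O(1/\sqrt n)$ and $O(\kappa_n^{-1}) = O(\theta^n)$.

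Before invoking Theorem~\ref{thm:hwang} one must check its hypotheses. The analyticity of $w \mapsto \Eb[\exp(wC_n)]$ on a neighborhood of $0$ independent of $n$ follows from \eqref{exp:mgf}: it equals $\int_I \Lc_{1,w}^n \psi\,dx\,dy$, and under the moderate growth assumption \eqref{def:modgrowth} the operators $\Lc_{1,w}$ are bounded on $C^1(\Pc)$ and depend analytically on $w$ throughout the neighborhood on which \eqref{def:modgrowth} converges. The analyticity of $U$ and $V$ near $0$ follows from the perturbation theory recalled after Theorem~\ref{thm:ruelle}, by which $\lambda_{s,w}$ and $\mathcal P_{s,w}$ are analytic in $(s,w)$ near $(1,0)$; since $\lambda_{1,0}=1>0$ and, specializing \eqref{exp:mgf2} to $w=0$, $\int_I \mathcal P_{1,0}\,\psi\,dx\,dy = 1 > 0$, both logarithms are well-defined and analytic on a complex neighborhood $W$ of $0$ (in particular $V(0)=0$).

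The substantive point is the non-degeneracy $U''(0)=\widehat\delta(c)\ne 0$, which I would establish by the computation already used in the proof of Lemma~\ref{comp:pressure}(2), now with the spectral parameter frozen at $s=1$ rather than following $s_0(w)$. Differentiating $\Lc_{1,w}^n \psi_{1,w} = \Lc_{1,0}^n\bigl( e^{w\sum_{j=1}^n c\circ T^{j-1}} \psi_{1,w} \bigr) = \lambda_{1,w}^n \psi_{1,w}$ once and twice in $w$, setting $w=0$, and integrating against $\nu_{1,0}$ as there, one obtains $U'(0)=\int_I c\,d\mu_{1,0} =: \widehat\mu(c)$ and
\[
U''(0) = \lim_{n\to\infty} \frac1n \int_I \Bigl( \sum_{j=1}^n c\circ T^{j-1} - n\,\widehat\mu(c) \Bigr)^2 d\mu_{1,0}.
\]
Exactly as for \eqref{press:id2} in Lemma~\ref{comp:pressure}(2), the right-hand side equals $\int_I \widetilde{c}^{\,2}\,d\mu_{1,0}$ with $\widetilde{c}=c+g\circ T-g$ for a suitable $g\in C^1(\Pc)$, and is therefore strictly positive precisely because $c$ is not of the form $g-g\circ T$. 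The same hypothesis forces $c\not\equiv 0$, so $\widehat\mu(c)=\int_I c\,d\mu_{1,0}>0$ since $c\ge 0$; this yields the positivity of both constants asserted in the statement.

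I expect this non-degeneracy step to be the main obstacle: making the pressure-convexity computation rigorous in the present setting — in particular justifying the interchange of limit and differentiation used to handle $\Psi'(0)$, and the identification of the asymptotic variance with $\int_I \widetilde{c}^{\,2}\,d\mu_{1,0}$ — rests on the spectral gap and the a priori bounds of Section~\ref{sec:acim}, and it is the only place where the hypothesis on $c$ enters. The remaining steps are a routine transcription of Theorem~\ref{thm:hwang}.
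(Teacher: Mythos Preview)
The proposal is correct and follows essentially the same approach as the paper: identify $U(w)=\log\lambda_{1,w}$, $V(w)=\log\int_I\mathcal P_{1,w}\psi$, $\beta_n=n$, $\kappa_n=\theta^{-n}$, obtain $U''(0)\neq 0$ from the pressure-convexity argument of Lemma~\ref{comp:pressure}(2), and apply Hwang's Theorem~\ref{thm:hwang}. The only difference is cosmetic: the paper cites Lemma~\ref{comp:pressure} directly, whereas you spell out how its proof adapts with the spectral parameter frozen at $s=1$ and add the analyticity verifications.
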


\begin{proof}
From the expression \eqref{exp:mgf2}, the function $U$ is given by $w \mapsto \log \lambda_{1,w}$ and $V$ is given by $w \mapsto \log (\int_I \Pc_{1,w} \psi)$ with $\beta_n=n$ and $\kappa_n=\theta^{-n}$. 
Take $\widehat \mu(c) = U'(0)$, $\widehat \delta(c) = U''(0)$, $\widehat \mu_1(c)=V'(0)$, and $\widehat \delta_1(c) = V''(0)$.
We have $U''(0)\neq 0$ by Lemma \ref{comp:pressure}, in turn conclude the proof by Theorem \ref{thm:hwang}.\end{proof}

\section{Gaussian \rom{2}} \label{sec:clt2}

In this section, we obtain the central limit theorem for $K$-rational trajectories of $(I,T)$. 

Let us first introduce a height function. 
Any $z \in K^\times$ can be written in the reduced form as $z= \al / \beta$ with relatively prime $\al, \beta \in \Oc$. Define $\mathrm{ht} \colon K \to \mathbb Z_{\ge 0}$ by 
\begin{equation} \label{def:ht}
\mathrm{ht}: z \longmapsto  \max  \{|\al|,|\beta|\},    
\end{equation}
where $| \cdot |$ denotes the usual absolute value on $\C$. The height is well-defined since $\Oc^\times$ consists of roots of unity. By convention, write $\mathrm{ht}(0)=0$.

Let $N \geq 1$ be a positive integer. Set 
\[ \Sigma_N:= \{ z \in I \cap K: \mathrm{ht}(z)^2 =N      \}              \]
and 
\[ \Omega_N:=\cup_{n \leq N} \Sigma_n= \{ z \in I \cap K: \mathrm{ht}(z)^2 \leq N  \}  .  \]

Recall that the total cost is defined by \[ C(z)=\sum_{j=1}^{\ell(z)} c(\al_j) \] for $z=[0;\al_1, \ldots, \al_{\ell(z)}] \in I \cap K$. From now on, we impose a technical assumption that $c$ is bounded. See Remark \ref{rmk:growth}. 

Now $C$ can be viewed as a random variable on $\Sigma_N$ and $\Omega_N$ with the uniform probability $\Pbb_N$. Studying its distribution on the set $\Sigma_N$ of $K$-rational points with the fixed height is extremely difficult in general, and there is no single result as far as the literature shows. Instead, we observe the asymptotic Gaussian distribution of $C$ on the averaging space $\Omega_N$ by adapting the established framework (cf. Baladi--Vallée \cite{bal:val}, Lee--Sun \cite{lee:sun}, Bettin--Drappeau \cite{bet:dra}), along with spectral properties settled in \S\ref{sec:acim}-\ref{sec:dolgopyat} as follows.

\subsection{Resolvent as a Dirichlet series}\label{subsec:resolvent}
In this subsection, we recall our previous results to express the resolvent as Dirichlet series.
We also establish its analytic properties, which will be used later. 

The proofs parallel \cite{bal:val}, differing only in certain absolute constants; we include them for completeness.

Let $ \mathbf{1} \in C^1(\mathcal P)$ be the characteristic function on $I$. We obtain an expression for $\mathcal L_{s,w}^n \mathbf 1(0)$ as a Dirichlet series.  

Let $O \in \mathcal P[0]$ be the zero-dimensional cell consisting of the origin. Then
\begin{align} \label{ev:at0}
\mathcal L_{s,w}^n \mathbf 1(0)
= \sum_{Q \in \mathcal P}  \sum_{\langle \ba \rangle \in \mathcal H^n(O,Q)} 
\exp\left(wc(\ba)\right)
|J_\ba(0)|^{s}.
\end{align}
To proceed, we make the following observation.
\begin{lem}
Let $\langle \ba \rangle \in \mathcal H^n(O,Q)$. If $z=h_\ba(0)$, then $|J_\ba(0)|=\mathrm{ht}(z)^{-4}$. 
\end{lem}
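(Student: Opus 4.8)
The plan is to exploit the $\GL_2(\Oc)$-representation of inverse branches recorded in Proposition~\ref{distortion2}. Writing $\ba=(\al_1,\dots,\al_n)$, the branch $h_\ba$ is the M\"obius transformation attached to
\[
\twobytwo{P_{n-1}}{P_n}{Q_{n-1}}{Q_n}=\twobytwo{0}{1}{1}{\al_1}\cdots\twobytwo{0}{1}{1}{\al_n}\in\GL_2(\Oc),
\]
which has determinant $\pm 1$; explicitly $h_\ba(w)=(P_{n-1}w+P_n)/(Q_{n-1}w+Q_n)$, and so $z=h_\ba(0)=P_n/Q_n$. Since $\langle\ba\rangle\in\Hc^n(O,Q)$ with $O=\{0\}$, we have $0\in T^nO_\ba$ and hence $z=h_\ba(0)\in O_\ba\subseteq O_{\al_1}$; in particular $z\neq 0$, because $1/z$ must be defined for $z$ to lie in $O_{\al_1}$, so both $P_n$ and $Q_n$ are nonzero.

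First I would observe that $P_n/Q_n$ is already in lowest terms: any common divisor of $P_n$ and $Q_n$ in $\Oc$ divides $P_{n-1}Q_n-P_nQ_{n-1}=\pm 1$, hence is a unit, so $\gcd(P_n,Q_n)=1$. Therefore $\mathrm{ht}(z)=\max\{|P_n|,|Q_n|\}$ by \eqref{def:ht}. Next, since $z\in O_\ba\subseteq I$ and $I$ is contained in the open unit disc (see the proof of Proposition~\ref{prop:2.1}), we have $|z|<1$, i.e. $|P_n|<|Q_n|$, so $\mathrm{ht}(z)=|Q_n|$.

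Finally I would compute the Jacobian directly: differentiating the M\"obius map gives $h_\ba'(w)=(P_{n-1}Q_n-P_nQ_{n-1})(Q_{n-1}w+Q_n)^{-2}=\pm(Q_{n-1}w+Q_n)^{-2}$, so $|h_\ba'(0)|=|Q_n|^{-2}$, and combining with \eqref{der:jac} and the chain rule, $|J_\ba(0)|=|h_\ba'(0)|^2=|Q_n|^{-4}=\mathrm{ht}(z)^{-4}$, as claimed. There is essentially no real obstacle here; the only points needing a moment's care are that the $\GL_2(\Oc)$-relation forces $P_n/Q_n$ into reduced form and that membership of $z$ in $I$ forces $|Q_n|$ to be the larger modulus, so that the arithmetic height coincides with $|Q_n|$.
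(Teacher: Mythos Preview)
Your proof is correct and follows essentially the same approach as the paper: both use the $\GL_2(\Oc)$-representation of $h_\ba$ to identify $|J_\ba(0)|=|h_\ba'(0)|^2=|Q_n|^{-4}$ and equate this with $\mathrm{ht}(z)^{-4}$. The paper's proof is simply terser, leaving implicit the two points you spell out carefully---that the determinant $\pm 1$ forces $\gcd(P_n,Q_n)=1$, and that $z\in I$ forces $|P_n|<|Q_n|$ so that $\mathrm{ht}(z)=|Q_n|$.
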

\begin{proof}
Recall that $h_\ba$ corresponds to $\twobytwotiny{A}{B}{C}{D}=\twobytwotiny{0}{1}{1}{\al_1} \cdots \twobytwotiny{0}{1}{1}{\al_n} \in \GL_2(\Oc)$. Then a simple calculation shows $|J_\ba(0)|=|h_\ba'(0)|^2=|D|^{-4}=\mathrm{ht}(z)^{-4}$. 
\end{proof}

Set $\Omega_N^{(n)}= \left\{ z \in \Omega_N \colon T^n(z)=0\right\}$, i.e. elements whose length of continued fraction expansion is given by $n$.  
Then \eqref{ev:at0} becomes
\begin{align*}
\mathcal L_{s,w}^n \mathbf 1 (0)
=
\lim_{N\to \infty}
\sum_{z \in \Omega_N^{(n)}} \exp\left(w C(z)\right){\mathrm{ht}(z)}^{-4s}.
\end{align*}
Summing over $n$, we obtain
\begin{align*}
\sum_{n=0}^\infty \mathcal L_{s,w}^n \mathbf 1(0)
=\lim_{N\to \infty} \sum_{z \in \Omega_N} \exp\left(w C(z)\right){\mathrm{ht}(z)}^{-4s}.
\end{align*}

Recall that $\Omega_N=\bigcup_{n \le N} \Sigma_n$. By putting
$$
d_n(w) = \sum_{z \in \Sigma_n} \exp(w C(z)) ,
$$
we have the expression for the resolvent of the operator as a Dirichlet series 
\begin{align} \label{ex:dirichlet}
L(2s,w):=  \sum_{n=1}^\infty \frac{d_n(w)}{n^{2s}}= (I-\Lc_{s,w})^{-1} \mathbf 1 (0).
\end{align}

In the next proposition, we deduce the crucial analytic properties of Dirichlet series as a direct consequence of spectral properties of $\Lc_{s,w}$.
Recall from Lemma~\ref{comp:pressure} that there is an analytic map $s_0:W \ra \C$ such that for all $w \in W$, we have $\lambda_{s_0(w),w}=1.$ 
Recall that $t$ denotes the imaginary part of $s$.
\begin{prop} \label{dirichlet:pr}
For any $0<\xi<\frac{1}{10}$, we can find $0<\al_0, \al_1 \leq \frac{1}{2}$ with the following properties: 

For any $\widehat \al_0$ with $0<\widehat \al_0<\al_0$ and $w \in W$,
\begin{enumerate}
\item $\Re s_0(w)>1-(\al_0-\widehat \al_0)$.
\item $L(2s,w)$ has a unique simple pole at $s=s_0(w)$ in the strip $|\Re s-1 | \leq \al_0,$.
\item $|L(2s,w)| \ll |t|^\xi$ for sufficiently large $|t|$ in the strip $|\Re s-1 | \leq \al_0$.
\item $| L(2s,w)| \ll \max (1,|t|^\xi)$ on the vertical line $\Re s=1 \pm \al_0$.
\end{enumerate}
Furthermore, for all $\tau \in \R$ with $0<|\tau|<\pi$,
\begin{enumerate}
\item[(5)] $L(2s,i \tau)$ is analytic in the strip $|\Re s -1 | \leq \al_1$.
\item[(6)] $|L(2s,i \tau)| \ll |t|^\xi$ for sufficiently large $|t|$ in the strip $|\Re s-1 | \leq \al_1$.
\item[(7)] $| L(2s,i \tau)| \ll \max (1,|t|^\xi)$ on the vertical line $\Re s=1 \pm \al_1$.
\end{enumerate}
\end{prop}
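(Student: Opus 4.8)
The plan is to derive all seven assertions from the spectral decomposition of $\Lc_{s,w}$ established in \S\ref{sec:acim} together with the Dolgopyat-type estimate of Theorem \ref{main:dolgopyat}, applied to the identity $L(2s,w) = (I-\Lc_{s,w})^{-1}\mathbf 1(0)$ from \eqref{ex:dirichlet}. First I would fix a real neighborhood $K$ of $(1,0)$ and the analytic function $s_0$ from Lemma \ref{comp:pressure}, and let $\al_0>0$ be small enough that the strip $|\Re s - 1|\le \al_0$ and the relevant range of $w$ keep $(\sigma,u)$ inside $K$. For (1), since $s_0(0)=1$ and $s_0$ is analytic, shrinking $W$ and $\al_0$ makes $\Re s_0(w) > 1-(\al_0-\widehat\al_0)$ for all $w\in W$; this is just continuity. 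For (2), in the region $|\Re s-1|\le \al_0$ with $|t|$ bounded I would use the spectral decomposition \eqref{sp:decomp}, $\Lc_{s,w} = \lambda_{s,w}\Pc_{s,w} + \Nc_{s,w}$ with $r(\Nc_{s,w}) < |\lambda_{s,w}|$. Then
\[
(I-\Lc_{s,w})^{-1} = \frac{1}{1-\lambda_{s,w}}\Pc_{s,w} + (I - \Nc_{s,w})^{-1}(I - \Pc_{s,w}),
\]
where the second summand is analytic wherever $r(\Nc_{s,w})<1$, which holds on a neighborhood of the strip by perturbation theory (shrinking $K$). The first summand is meromorphic with poles exactly where $\lambda_{s,w}=1$; by Lemma \ref{comp:pressure}(1), $\partial_s\lambda_{s,w}\ne 0$ near $s=s_0(w)$, so in the strip the only zero of $1-\lambda_{s,w}$ is the simple zero at $s=s_0(w)$, giving a unique simple pole of $L(2s,w)$ there (one must also check $\Pc_{s_0(w),w}\mathbf 1(0)\ne 0$, which follows from positivity of the eigenfunction in Theorem \ref{thm:ruelle}(2)).

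For (3) and (4), the regime of large $|t|$ is where the Dolgopyat estimate enters. Theorem \ref{main:dolgopyat} gives $\|(I-\Lc_{s,w})^{-1}\|_{(t)} \ll |t|^\xi$ for $|t|$ large and $(\sigma,u)\in K$; evaluating at $0$ and using that pointwise evaluation is bounded with respect to $\|\cdot\|_{(t)}$ (indeed $\|\cdot\|_0 \le \|\cdot\|_{(t)}$), we get $|L(2s,w)| = |(I-\Lc_{s,w})^{-1}\mathbf 1(0)| \ll |t|^\xi\,\|\mathbf 1\|_{(t)} \ll |t|^\xi$ throughout the strip for $|t|$ large. Combining with the bounded-$|t|$ analysis of (2) (where $L(2s,w)$ is bounded away from its unique pole, e.g. on the vertical lines $\Re s = 1\pm\al_0$ which avoid $s_0(w)$ by part (1)) yields (3) and (4): on the vertical line we take the max with $1$ to absorb the bounded-$|t|$ contribution.

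For (5)--(7) with $w = i\tau$, $0<|\tau|<\pi$, the point is that now there is \emph{no} pole: one must show $\lambda_{s,i\tau}\ne 1$ in a strip $|\Re s-1|\le\al_1$. I would argue this via the aperiodicity/non-lattice property of the cost $c$ — taking values in $\Z_{\ge0}$ with $0<|\tau|<\pi$ means $e^{i\tau c}$ is not cohomologous to a constant in a way that would force $|\lambda_{\sigma,i\tau}| = \lambda_{\sigma,0}$; more precisely, for real $\sigma$ near $1$ one has the strict inequality $|\lambda_{\sigma,i\tau}| < \lambda_{\sigma,0}$ (this is the standard consequence of the Ruelle--Perron--Frobenius theorem applied to $\Lc_{\sigma,i\tau}$ being a "twist" of $\Lc_{\sigma,0}$ whose spectral radius drops unless the twist is a coboundary times a unit constant, excluded here since $\tau\notin 2\pi\Z$ — note the relevant range is $|\tau|<\pi$). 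Hence $\lambda_{s,i\tau}\ne 1$ on the real segment, and by continuity on a strip $|\Re s-1|\le\al_1$, so $(I-\Lc_{s,i\tau})^{-1}$ exists and is analytic there; this gives (5). Then (6) and (7) follow exactly as (3) and (4) did, using Theorem \ref{main:dolgopyat} for large $|t|$ and compactness for bounded $|t|$.

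The main obstacle I anticipate is establishing the spectral-radius drop $|\lambda_{\sigma,i\tau}|<\lambda_{\sigma,0}$ underlying (5) — i.e. ruling out that $e^{i\tau c}$ is a unit-modulus coboundary when $c$ is $\Z_{\ge0}$-valued and $0<|\tau|<\pi$. This requires an aperiodicity argument for the complex Gauss system: one shows that the set of values $\{c(\ba) : \langle\ba\rangle\in\Hc^n, n\ge 1\}$ together with the relevant return-time structure generates a group not contained in $\frac{2\pi}{\tau}\Z$, for instance by exhibiting two admissible cycles whose costs differ by $1$ (e.g. using digits $\al$ and $\al'$ with $c(\al)-c(\al')=1$ realized along closed orbits, which the topological mixing of $(I,T)$ and the Markov structure of \S\ref{sec:markov} make available). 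All the other steps are routine perturbation theory plus direct invocation of Theorems \ref{thm:ruelle} and \ref{main:dolgopyat}.
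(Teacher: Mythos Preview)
Your approach is essentially the same as the paper's, which also derives everything from Theorem~\ref{thm:ruelle}, Theorem~\ref{main:dolgopyat}, and the identity~\eqref{ex:dirichlet}, citing Baladi--Vall\'ee \cite[Lemmas 7--9]{bal:val} for the details. One point worth tightening: for (2)--(4) you invoke the decomposition~\eqref{sp:decomp} for \emph{all} bounded $|t|$, but that decomposition is only furnished by perturbation theory for $(s,w)$ genuinely close to $(1,0)$; in the intermediate region (say $|t|<1/\rho^2$ but not small) you still need to show $1 \notin \mathrm{Sp}(\Lc_{s,w})$ directly. This is exactly the content of \cite[Lemma 7]{bal:val} and is the same aperiodicity/coboundary argument you correctly isolate as the main obstacle for~(5). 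The paper handles both regimes at once by invoking that lemma uniformly for all $(t,\tau)\neq(0,0)$, rather than treating the $\tau\neq0$ case separately.
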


\hide{------
\begin{prop} \label{dirichlet:pr2}
For some $\xi>0$, we can find $0<\al_1 \leq \frac{1}{2}$ such that for all $\tau \in \R$ with $0<|\tau|<\pi$, we have
\begin{enumerate}
\item $L(2s,i \tau)$ is analytic. 
\item $| L(2s,i \tau)| \ll |t|^\xi$ for sufficiently large $|t|$.
\end{enumerate}
in the vertical strip $|\Re(s)-1 | \leq \al_1$.
\end{prop}
-------}

\begin{proof}

This is an immediate consequence of Theorem \ref{thm:ruelle} and \eqref{6.1} of Theorem~\ref{main:dolgopyat}, through the identity \eqref{ex:dirichlet} as in Baladi--Vallée \cite[Lemma 8,9]{bal:val}. Each vertical line $\Re(s)=\sigma$ splits into three parts: Near the real axis, spectral gap for $(s,w)$ close to $(1,0)$ gives (1), the location of simple pole at $s=s_0(w)$. 
For the domain with $|t| \geq 1/\rho^2$, Dolgopyat estimate yields the uniform bound.

To finish, it remains to argue (3) that there are no other poles in the compact region $|t|<1/\rho^2$, which comes from the fact that $1 \not\in \mathrm{Sp}(\Lc_{1+it, i\tau})$ if $(t,\tau) \neq (0,0)$. This is shown following the lines in Baladi--Vallée \cite[Lemma 7]{bal:val}. 
\end{proof}

\subsection{Quasi-power estimate: applying Tauberian theorem}

In this subsection, we carry out the Tauberian argument, following \cite{bal:val}, including details to pin down a few absolute constants depending on 
$(I,T).$

We remark that the coefficients $d_n(w)$ of the Dirichlet series $L(2s,w)$ in \eqref{ex:dirichlet} determines the moment generating function of $C$ on $\Omega_N$. 
That is, we have 
\[ \Eb_N[\exp(w C)| \Omega_N]= \frac{1}{|\Omega_N|} \sum_{n \leq N}  d_n (w).    \]

Thus, we obtain the explicit estimate of the moment generating function by studying the average of the coefficients $d_n(w)$. This can be done by applying a Tauberian argument. We will use the following version of truncated Perron's formula (cf. Titchmarsh \cite[Lemma 3.19]{titch}, Lee--Sun \cite[\S3]{lee:sun0}).

\begin{thm}[Perron's Formula] \label{perron}
Suppose that $a_n$ is a sequence and $A(x)$ is a non-decreasing function such that $|a_n|=O(A(n))$.
Let $F(s)=\sum_{n \geq 1} \frac{a_n}{n^s}$ for $\Re s:=\sigma>\sigma_a$, the abscissa of absolute convergence of $F(s)$. Then for all $D>\sigma_a$ and $T>0$, one has
\begin{align*}
\sum_{n \leq x} a_n= \frac{1}{2\pi i} \int_{D-iT}^{D+iT} F(s) \frac{x^s}{s}ds &+ O\left(\frac{x^D |F|(D)}{T} \right) +O \left(\frac{A(2x)x\log x}{T} \right) \\ &+ O \left( A(x) \mathrm{min} \left\{ \frac{x}{T|x-M |},1 \right\} \right)
\end{align*}
as $T$ tends to infinity, where \[ |F|(\sigma):=\sum_{n \geq 1} \frac{|a_n|}{n^\sigma} \] for $\sigma > \sigma_a$ and $M$ is the nearest integer to $x$.

\end{thm}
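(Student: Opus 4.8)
The proof is the classical argument of Perron (see Titchmarsh \cite{titch}), which I outline here. The starting point is the standard evaluation of the truncated vertical integral: for $y>0$, $D>0$ and $T>0$,
\[
\frac{1}{2\pi i}\int_{D-iT}^{D+iT}\frac{y^s}{s}\,ds = \delta(y)+E(y,T),
\]
where $\delta(y)=1$ for $y>1$, $\delta(y)=0$ for $0<y<1$, $\delta(1)=\tfrac12$, and
\[
|E(y,T)|\ll y^D\min\Bigl(1,\tfrac{1}{T|\log y|}\Bigr)\ \ (y\ne 1),\qquad |E(1,T)|\ll \tfrac{D}{T}.
\]
I would prove this by contour deformation: for $y>1$ one pushes the segment to the line $\Re s=-U$ with $U\to\infty$, collecting the residue $1$ of $y^s/s$ at $s=0$ and using $y^{-U}\to 0$ on the left vertical side; for $0<y<1$ one pushes to $\Re s=+U$, where there is no pole and $y^{U}\to 0$. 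In both cases the horizontal sides at height $\pm T$ are bounded by $T^{-1}\int_{-\infty}^{D}y^\sigma\,d\sigma = y^D/(T|\log y|)$, giving the first bound on $E$; the alternative bound $O(y^D)$ (needed when $y$ is near $1$) and the bound at $y=1$ follow by estimating the integral directly.

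Next, since $D>\sigma_a$, the series $\sum_n a_n n^{-s}$ converges absolutely and uniformly on the compact segment $\{D+it:|t|\le T\}$, so one may integrate term by term:
\[
\frac{1}{2\pi i}\int_{D-iT}^{D+iT}F(s)\frac{x^s}{s}\,ds
=\sum_{n=1}^{\infty}a_n\cdot\frac{1}{2\pi i}\int_{D-iT}^{D+iT}\frac{(x/n)^s}{s}\,ds
=\sum_{n=1}^{\infty}a_n\,\delta(x/n)+\sum_{n=1}^{\infty}a_n\,E(x/n,T).
\]
The main term $\sum_n a_n\delta(x/n)$ equals $\sum_{n\le x}a_n$ unless $x$ is an integer $M$, in which case it equals $\sum_{n\le x}a_n-\tfrac12 a_M$; the discrepancy $\tfrac12|a_M|\ll A(x)$ is absorbed into the last error term of the statement (in this case $M$ is the nearest integer to $x$ and the factor $\min\{\cdot,1\}$ equals $1$). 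It therefore remains to bound $R:=\sum_n|a_n|\,|E(x/n,T)|$, which I would split according to the size of $n$ relative to $x$. For $n\le x/2$ or $n\ge 2x$ one has $|\log(x/n)|\gg 1$, hence $|E(x/n,T)|\ll (x/n)^D/T$, and the contribution is $\ll T^{-1}\sum_n|a_n|(x/n)^D=x^D|F|(D)/T$, the first error term. For $x/2<n<2x$ with $n\ne M$ one has $(x/n)^D\asymp 1$ and $|\log(x/n)|\gg |x-n|/x$ with $|x-n|\ge \tfrac12$, so $|E(x/n,T)|\ll x/(T|x-n|)$; using $|a_n|\ll A(n)\ll A(2x)$ and $\sum_{x/2<n<2x}|x-n|^{-1}\ll\log x$, this contributes $\ll A(2x)\,x\log x/T$, the third error term. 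Finally the single term $n=M$ contributes $\ll|a_M|\min\bigl(1,x/(T|x-M|)\bigr)\ll A(x)\min\{x/(T|x-M|),1\}$, the last error term. Summing the three contributions yields the claimed formula, uniformly as $T\to\infty$.

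The argument is entirely standard, and there is no genuine obstacle. The only point requiring a little care is the refined estimate for $E(y,T)$ when $y$ is near $1$ (the bound $\min(1,\cdot)$ rather than $1/(T|\log y|)$ alone), since this is precisely what produces the delicate last error term measuring the distance from $x$ to the nearest integer; the rest is bookkeeping of the dyadic ranges of $n$. The statement is recorded here for completeness because the explicit shape of all three error terms is what is invoked in \S\ref{sec:clt2}.
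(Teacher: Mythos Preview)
Your proof is correct and follows the classical argument from Titchmarsh \cite{titch} essentially verbatim. Note that the paper does not actually supply a proof of this theorem: it is quoted as a known result with references to Titchmarsh \cite[Lemma 3.19]{titch} and Lee--Sun \cite[\S3]{lee:sun0}, so there is no ``paper's own proof'' to compare against beyond those citations, and your outline is precisely the standard derivation one finds there.
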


Proposition \ref{dirichlet:pr} enables us to obtain a Quasi-power estimate of $\Eb_N[\exp(w C)| \Omega_N]$ by applying Theorem \ref{perron} to $L(2s,w)$. We first check the conditions of Perron's formula.

\begin{lem} \label{cond:complexity}
For $z \in \Omega_N$, we have $\ell(z)=O(\log N)$.
\end{lem}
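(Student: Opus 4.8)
The plan is to exploit the uniform contraction of the inverse branches together with the identity, established in the lemma just above, that relates the Jacobian of a depth-$n$ inverse branch at the origin to the height of its image. Given $z \in \Omega_N$ with $z \neq 0$ (the case $z = 0$ being trivial, since $\ell(0) = 0$), I would set $n = \ell(z)$, write the terminating expansion $z = [0;\al_1,\ldots,\al_n]$, and note that $z = h_{\ba}(0)$ for $\ba = (\al_1,\ldots,\al_n)$; by compatibility of $\mathcal P$ with $T$ (Proposition~\ref{part:markov}), the corresponding restricted branch lies in $\Hc^n(O,Q)$, where $O \in \mathcal P[0]$ is the zero-cell at the origin and $Q \in \mathcal P$ is the cell containing $z$.

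Next I would combine two facts. The lemma just above gives $|J_{\ba}(0)| = \mathrm{ht}(z)^{-4}$. On the other hand, the contraction estimate recorded just after \eqref{contrac} furnishes a constant $C>0$, independent of the branch, with
\[
|J_{\ba}(0)| \;\le\; \sup_{w \in T^{n}O_{\ba}} |J_{\ba}(w)| \;\le\; C\rho^{\,n},
\]
where $\rho < 1$ is the contraction ratio; here one uses that $0 = T^n(z)$ lies in $T^nO_{\ba}$. Combining the two and invoking the defining inequality $\mathrm{ht}(z)^2 \le N$ of $\Omega_N$ yields $\mathrm{ht}(z)^{-4} \le C\rho^{\,n}$, hence $\rho^{\,n} \ge C^{-1}\mathrm{ht}(z)^{-4} \ge C^{-1}N^{-2}$. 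Taking logarithms and dividing by $\log\rho < 0$ gives $n \le (2\log N + \log C)/\log(1/\rho)$, which is the desired bound $\ell(z) = O(\log N)$, with implied constant depending only on $(I,T)$.

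There is no real obstacle here; the only points worth a sentence of care are that the contraction bound, stated as a supremum over $T^nO_{\ba}$, genuinely applies at the point $0$ — which is clear since $0 \in T^nO_{\ba}$ whenever the expansion of $z$ terminates after exactly $n$ steps, and $J_{\ba}$ is a rational map regular there — and that $\mathrm{ht}(z) \ge 1$ for every nonzero $z \in I \cap K$, so that the logarithm causes no difficulty at small heights, since the reduced denominator of $z$ is a nonzero element of $\mathcal O$ and hence has absolute value at least $1$.
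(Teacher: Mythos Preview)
Your proof is correct, but it takes a different route from the paper's. The paper argues directly and elementarily: writing $z=u/v$ in lowest terms, one step of $T$ sends $u/v$ to $(v-\alpha u)/u$, so the new denominator is $u$ and satisfies $|u|\le R|v|$ simply because $|z|=|u/v|\le R<1$ for all $z\in I$; iterating gives $|v_n|\le R^n|v|$, and since a nonzero element of $\Oc$ has absolute value at least $1$, the process must terminate within $O(\log|v|)=O(\log N)$ steps. Your argument instead combines the Jacobian--height identity $|J_{\ba}(0)|=\mathrm{ht}(z)^{-4}$ (proved earlier in \S8.1, though not literally ``just above'' --- Proposition~\ref{dirichlet:pr} and Theorem~\ref{perron} intervene) with the contraction bound $|J_{\ba}|\ll\rho^{|\ba|}$ from~\eqref{contrac}. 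Both approaches encode the same underlying fact, namely the geometric growth of the denominators $Q_n$; the paper's version is more self-contained and yields an explicit constant $R$, while yours is a clean reuse of machinery already in place.
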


\begin{proof}
Recall that there is $R<1$ such that for all $z \in I$ we have $|z|\le R$. Explicitly, we may take $R = \sqrt{15/16}$. 

Let $z \in \Omega_N$. Write $z$ in the form $z = u/v$ with $u,v \in \mathcal O$, which we assume to be relatively prime. Write $T(u/v) = u_1/v_1$ with relatively prime $u_1,v_1 \in \mathcal O$. We claim that $|v_1| \le R|v|$. Indeed, by the definition of $T$,
$T(u/v)= v/u - [v/u]$. Put $\alpha = [v/u]$.
Then, $T(u/v) = u_1/v_1$ with $v_1=u$ and $u_1=v-\alpha u$.
This proves the claim. 

Inductively, if we put $T(u_j/v_j) = u_{j+1}/v_{j+1}$, then we have $|v_{j+1}|\le R|v_{j}|$ for all $j \geq 1$. This yields the desired bound $\ell(z)=O(\log N)$.
%
\end{proof}

\begin{lem} \label{cond:A(n)}
Suppose $k>0$ satisfies $\ell(z) \leq k \log n$ for all $n$ and $z \in \Omega_n$, and $M>0$ satisfies $c(\alpha)\le M$ for all $\alpha \in \mathcal A$. For any $\varepsilon>0$, we have 
\[ 
|d_n(w)|
\ll n^{1+\varepsilon +kM\Re w} \]
for all sufficiently large $n$. The implied constant only depends on $\varepsilon$.
\end{lem}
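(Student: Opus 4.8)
The plan is to split $|d_n(w)|=\bigl|\sum_{z\in\Sigma_n}\exp(wC(z))\bigr|$ into a bound on the number of terms and a pointwise bound on each term, and to reduce everything to a cardinality estimate for $\Sigma_n$. For the term bound: since $C(z)=\sum_{j=1}^{\ell(z)}c(\al_j)\ge 0$ and $c(\al)\le M$, the hypothesis $\ell(z)\le k\log n$ (which applies to $z\in\Sigma_n\subseteq\Omega_n$, cf.\ Lemma~\ref{cond:complexity}) gives $0\le C(z)\le kM\log n$, hence $|\exp(wC(z))|=\exp\bigl((\Re w)\,C(z)\bigr)\le n^{kM\Re w}$ whenever $\Re w\ge 0$. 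It therefore suffices to prove the cardinality bound $\#\Sigma_n\ll_\varepsilon n^{1+\varepsilon}$; granting it, $|d_n(w)|\le \#\Sigma_n\cdot n^{kM\Re w}\ll_\varepsilon n^{1+\varepsilon+kM\Re w}$. (When $\Re w<0$ the trivial bound $|\exp(wC(z))|\le 1$ already gives $|d_n(w)|\ll_\varepsilon n^{1+\varepsilon}$, which is stronger.)

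To estimate $\#\Sigma_n$, I would write each $z\in\Sigma_n$ in lowest terms $z=\al/\beta$ with coprime $\al,\beta\in\Oc$. Because $z\in I$ and $I$ lies in the ball of radius $R<1$ about the origin (Proposition~\ref{prop:2.1}), we have $|\al|<|\beta|$, so $\mathrm{ht}(z)=|\beta|$ and the condition $\mathrm{ht}(z)^2=n$ becomes $|\beta|^2=N_{K/\Q}(\beta)=n$. Hence $\beta$ ranges over the set of elements of $\Oc$ of norm $n$, whose cardinality is $|\Oc^\times|$ times the number of integral ideals of norm $n$, and is therefore $O_\varepsilon(n^\varepsilon)$ by the classical divisor bound $\tau(n)=O_\varepsilon(n^\varepsilon)$ (the count being at most $|\Oc^\times|\,\tau(n)$). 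For each such $\beta$, the assignment $\al\mapsto\al/\beta$ is injective and its image lies in the lattice $\Lambda_\beta:=\tfrac1\beta\Oc\subset\C$, so the number of admissible numerators $\al$ is at most $\#(\Lambda_\beta\cap I)$.

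It remains to bound $\#(\Lambda_\beta\cap I)$ uniformly in $\beta$, which I expect to be the only delicate point. The lattice $\Lambda_\beta$ has covolume $\operatorname{covol}(\Oc)/|\beta|^2=\operatorname{covol}(\Oc)/n$ and shortest nonzero vector of length $\ge 1/\sqrt n$ (since $N(\gamma)\ge 1$ for $0\ne\gamma\in\Oc$), while $I$ is a fixed bounded region; the standard estimate bounding the number of lattice points of $\Lambda_\beta$ in a bounded set by $\operatorname{area}(I)/\operatorname{covol}(\Lambda_\beta)+O\!\left(\operatorname{perimeter}(I)/\lambda_{\min}(\Lambda_\beta)\right)$ then gives $\#(\Lambda_\beta\cap I)\ll \operatorname{area}(I)\,n+\operatorname{perimeter}(I)\sqrt n\ll n$, with implied constant depending only on the fixed region $I$ (indeed $\operatorname{area}(I)=\operatorname{covol}(\Oc)$ since $I$ is a fundamental domain, so the leading term is exactly $n$). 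The coprimality requirement and the distinction between $I$ and $I^\circ$ only shrink the admissible set and may be discarded for an upper bound. Multiplying the $O_\varepsilon(n^\varepsilon)$ choices of $\beta$ by the $O(n)$ choices of $\al$ yields $\#\Sigma_n\ll_\varepsilon n^{1+\varepsilon}$, and the bound on $|d_n(w)|$ follows as in the first paragraph.
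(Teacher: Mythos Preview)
Your proof is correct and follows the same overall structure as the paper's: bound $|d_n(w)|$ by $\#\Sigma_n\cdot \sup_{z\in\Sigma_n}|\exp(wC(z))|$, obtain the exponential factor from $C(z)\le kM\log n$, and establish $\#\Sigma_n\ll_\varepsilon n^{1+\varepsilon}$ by separately counting denominators $\beta$ with $|\beta|^2=n$ and numerators $\alpha$.

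The one genuine difference is in how the denominator count is obtained. The paper invokes the theory of theta series: the generating function $\sum_{m\ge0}\#\{\beta\in\Oc:|\beta|^2=m\}\,q^m$ is a modular form of weight one, and general coefficient bounds for such forms give $O(\sigma_0(n))$. Your argument is more elementary and more direct for these particular fields: since $\Oc$ is a PID, elements of norm $n$ up to units are in bijection with integral ideals of norm $n$, and the latter are at most $\tau(n)$ in a quadratic field. Both routes land on the divisor bound $\tau(n)=O_\varepsilon(n^\varepsilon)$. Your treatment of the numerator count via the lattice $\Lambda_\beta=\beta^{-1}\Oc$ is equivalent to the paper's direct count of $u\in\Oc$ with $|u|^2<n$; and your explicit handling of the case $\Re w<0$ is a small refinement the paper leaves implicit.
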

\begin{proof}
To begin with, we claim that $|\Sigma_n| \ll n^{1+\varepsilon}$ for any $\varepsilon>0$, where the implied constant depends on $\varepsilon$. To prove the claim, if $z \in \Sigma_n$, we write it as $z = u/v$ for some $u,v \in \mathcal O$ satisfying $|v|^2=n$ and $|u|^2<n$ and we will enumerate $u$ and $v$ separately. 

We first count the number of $v$'s satisfying $|v|^2=n$, which we temporarily denote by $a_n$. Using the fact that $\alpha \mapsto |\alpha|^2$ is a quadratic form on $\mathcal O$, one can identify the formal power series $\sum_{n \ge 0} a_nq^n$ with the theta series associated with the quadratic form. By a general theory of theta series, treated in \cite[\textsection\,2.3.4]{cohen} and \cite[\textsection\,3.2]{1-2-3} for example, it is a modular form of weight one. Using a general asymptotic for such forms, given in \cite[Remarks\,9.2.2.\,(c)]{cohen} for example, we conclude that $a_n = O(\sigma_0(n))$ where $\sigma_0(n)$ denotes the number of positive divisors of $n$. A well-known bound \cite[\textsection\,13.10]{Apostol} is $\sigma_0(n)=o(n^\varepsilon)$ for any $\varepsilon>0$. 

Now we turn to $v$. Since the condition $|v|^2 <n $ cuts out the lattice points in a disc of area $2 \pi n$, the number of $v$'s is $O(n)$. Adding up, we obtain $|\Sigma_n| \ll n^{1 + \varepsilon}$. To proceed, notice that the assumptions imply $C(z) \le k M \log n$. Combining it with the earlier bound for $|\Sigma_n|$ to conclude $|d_n(w)|\ll n^{1+\varepsilon+k M \Re w}$.
\end{proof}



Together with a suitable choice of $T$, we obtain:

\begin{prop} \label{mgf:normal}
For a non-vanishing $D(w)$ and $\gamma>0$, we have 
\[ \sum_{n \leq N} d_n(w) =D(w) N^{ 2 s_0(w)}(1+O(N^{-\gamma})).\]
\end{prop}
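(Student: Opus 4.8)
The plan is to recover the partial sums $\sum_{n\le N}d_n(w)$ from the Dirichlet series $L(2s,w)=\sum_{n\ge1}d_n(w)n^{-2s}=(I-\Lc_{s,w})^{-1}\mathbf 1(0)$ of \eqref{ex:dirichlet} by contour integration, feeding the analytic continuation and polynomial growth bounds of \propref{dirichlet:pr} into the truncated Perron formula of \thmref{perron}. First I would put $F(s')=L(s',w)=\sum_{n\ge1}d_n(w)n^{-s'}$; by \lemref{cond:A(n)} — which in turn rests on the length bound $\ell(z)=O(\log n)$ of \lemref{cond:complexity} — the function $F$ converges absolutely for $\Re s'>\sigma_a$ with $\sigma_a\le 2+\varepsilon+kM\max(\Re w,0)$ (any fixed $\varepsilon>0$), and $A(x)=C_w\,x^{1+\varepsilon+kM\max(\Re w,0)}$ is a non-decreasing majorant of $|d_n(w)|$. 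Fix small positive $\varepsilon<\delta$ and $\gamma$, set $D=2\Re s_0(w)+\delta$ and $T=N^{\beta}$ with $\beta>0$ still to be chosen; shrinking the complex neighbourhood $W$ of $0$ and using $s_0(0)=1$, we may assume $\sigma_a<D$ and $D/2<1+\alpha_0$ throughout $W$. Applying \thmref{perron} with $x=N$ writes $\sum_{n\le N}d_n(w)$ as $\frac{1}{2\pi i}\int_{D-iT}^{D+iT}F(s')\,N^{s'}s'^{-1}\,ds'$ plus errors of size $O\!\big(N^{D}T^{-1}\big)$, $O\!\big(A(2N)\,N\,T^{-1}\log N\big)$ and $O\!\big(A(N)\big)$, the last because $x=N$ is an integer so the nearest-integer term degenerates to $1$.

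Next I would shift the line of integration from $\Re s'=D$ to $\Re s'=2(1-\alpha_0)$. By \propref{dirichlet:pr}(1)--(2) the only singularity crossed is the simple pole of $F$ at $s'=2s_0(w)$, which lies strictly to the right of $2(1-\alpha_0)$ by part (1), and whose residue produces the main term $D(w)N^{2s_0(w)}$ with $D(w)=(2s_0(w))^{-1}\operatorname{Res}_{s'=2s_0(w)}F(s')$. On the shifted vertical segment and on the two horizontal segments at height $T$, the bounds $|L(2s,w)|\ll\max(1,|t|^{\xi})$ of \propref{dirichlet:pr}(3)--(4), together with $\xi<1$ (cf.\ \remref{rem:xi}) and $\int_{-T}^{T}\max(1,|y|^{\xi})(1+|y|)^{-1}\,dy\ll T^{\xi}$, give contributions $O\!\big(N^{2(1-\alpha_0)}T^{\xi}\big)$ and $O\!\big(N^{D}T^{\xi-1}\big)$ respectively. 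Every contribution other than the residue is then a power $N^{e}$ (up to a $\log N$) whose exponent, evaluated at $w=0$, is bounded above by $2-\gamma$ provided $\beta$ lies in a window of the shape $\delta+\gamma<\beta<(2\alpha_0-\gamma)/\xi$ (with the left endpoint slightly enlarged by a factor $(1-\xi)^{-1}$); this window is nonempty exactly because $\xi$ is small relative to $\alpha_0$, which is the regime delivered by the Dolgopyat estimate of \thmref{main:dolgopyat}. After one more shrinking of $W$ and using continuity of $s_0$, the same inequalities hold with $2\Re s_0(w)$ in place of $2$ for all $w\in W$; hence all non-residue terms are $O(N^{2\Re s_0(w)-\gamma})$ and the claimed identity $\sum_{n\le N}d_n(w)=D(w)N^{2s_0(w)}(1+O(N^{-\gamma}))$ follows.

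Finally I would check that $D(w)\neq0$. Near $s=s_0(w)$ the spectral decomposition $\Lc_{s,w}=\lambda_{s,w}\Pc_{s,w}+\Nc_{s,w}$ of \thmref{thm:ruelle} and \eqref{sp:decomp} gives $(I-\Lc_{s,w})^{-1}=(1-\lambda_{s,w})^{-1}\Pc_{s,w}+(\text{holomorphic})$, so by \eqref{ex:dirichlet} the residue of $L(2s,w)$ at $s=s_0(w)$ equals $-\big(\partial_s\lambda_{s,w}|_{s=s_0(w)}\big)^{-1}(\Pc_{s_0(w),w}\mathbf 1)(0)$; passing from $s'=2s$ to $s$ merely rescales this by a nonzero constant, so it suffices to see this quantity is nonzero. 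Indeed $\partial_s\lambda_{s,w}|_{s=s_0(w)}\neq0$ by continuity from $\partial_s\lambda_{s,0}|_{s=1}<0$ in \lemref{comp:pressure}(1), and $(\Pc_{s_0(w),w}\mathbf 1)(0)>0$ because $\Pc_{s_0(w),w}$ projects onto the span of the strictly positive eigenfunction $\psi_{s_0(w),w}$ with a positive coefficient. I expect the genuine difficulty to be precisely this uniform bookkeeping of error terms: one must pick the truncation exponent $\beta$ so that the Perron remainders, the horizontal-segment term (which forces $\beta$ large) and the shifted-line term (which forces $\beta$ small through the growth exponent $\xi$) are \emph{all} beaten by $N^{2\Re s_0(w)}$ uniformly for $w$ in a neighbourhood of $0$ — feasible only because $\xi$ is small compared with the half-width $\alpha_0$ of the pole-free strip and $\Re s_0(w)$ stays close to $1$ on a small enough $W$.
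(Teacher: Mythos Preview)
Your proof is correct and follows essentially the same route as the paper: apply the truncated Perron formula of \thmref{perron} to $L(2s,w)$, shift the contour across the simple pole at $s=s_0(w)$ using \propref{dirichlet:pr}(1)--(2), and bound the vertical and horizontal segments via the Dolgopyat-type estimates of \propref{dirichlet:pr}(3)--(4). The paper differs only cosmetically: it fixes the initial abscissa at $2(1+\alpha_0)$ rather than $2\Re s_0(w)+\delta$, and it commits to the explicit truncation $T=N^{2\alpha_0+4\widehat\alpha_0}$ (with $\tfrac{2}{5}\alpha_0<\widehat\alpha_0<\alpha_0$), then checks five error terms one by one, whereas you argue more abstractly that a window for $\beta$ exists because $\xi<1/9$ is small relative to $\alpha_0$. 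Your explicit verification that $D(w)\neq 0$ via the spectral projection $\Pc_{s_0(w),w}$ and \lemref{comp:pressure}(1) is a useful addition that the paper leaves implicit.
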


\begin{proof}
Recall that Proposition \ref{dirichlet:pr} (2) allows us to apply Cauchy's residue theorem to obtain: 
\[ \frac{1}{2\pi i} \int_{\mathcal{U}_T(w)} L(2s,w) \frac{N^{2s}}{2s} d(2s)=\frac{E(w)}{ s_0(w)} N^{2s_0(w)}.\]
Here, $E(w)$ is the residue of $L(2s,w)$ at the simple pole $s=s_0(w)$ and $\mathcal{U}_T(w)$ is the contour with the positive orientation, which is a rectangle with the vertices $1+\al_0+iT$, $1-\al_0+iT$, $1-\al_0-iT$, and $1+\al_0-iT$. Together with Perron's formula in Theorem \ref{perron}, we have  
\begin{align*}
\sum_{n \leq N} d_n(w)&= \frac{E(w)}{s_0(w)} N^{2 s_0(w)}+ O\left(\frac{N^{2(1+\al_0)}}{T} \right) +O \left(\frac{A(2N)N\log N}{T} \right) + O(A(N)) \\ &+ O \left( \int_{1-\al_0-iT}^{1-\al_0+iT} |L(2s,w)| \frac{N^{2(1-\al_0)}}{|s|} ds    \right) \\
&+ O \left( \int_{1-\al_0 \pm iT}^{1+\al_0 \pm iT} |L(2s,w)| \frac{N^{2\Re s}}{T} ds    \right) .
\end{align*} 

Note that the last two error terms are from the contour integral, each of which corresponds to the left vertical line and horizontal lines of the rectangle $\mathcal{U}_T(w)$. Let us write the right hand side of the last expression as 
\[ \sum_{n \leq N} d_n(w)= \frac{E(w)}{s_0(w)} N^{2 s_0(w)} \left(1+ \mathrm{\rom{1}+ \rom{2}+\rom{3}+\rom{4}+\rom{5}}\right). \]

By Proposition~\ref{dirichlet:pr}, we have $0 < \alpha_0 \le \frac 1 2$. 
Choose $\widehat{\al}_0$ with \[  \frac{11}{28}\al_0 < \widehat{\al}_0 < \al_0 \] and set \[ T= N^{2\al_0+4 \widehat{\al}_0}.\] 
Notice that $\frac{E(w)}{s_0(w)}$ is bounded in the neighbourhood $W$ since $s_0(0)=1$. Note also from Proposition \ref{dirichlet:pr} that $\Re s_0(w)>1-(\al_0-\widehat \al_0)$. Below, we explain how to obtain upper bounds for the error terms in order.

($\mathrm{\rom{1}}$) The error term $\mathrm{\rom{1}}$ is equal to $O(N^{2(1-2\widehat{\al}_0-\Re s_0(w))})$. Observe that the exponent satisfies \[ 2(1-2\widehat{\al}_0-\Re s_0(w))<2(\al_0-3\widehat \al_0)<0.\]

($\mathrm{\rom{2}}$) 
By Lemma \ref{cond:A(n)}, for any $\varepsilon$ with $0<\varepsilon<\frac{\widehat{\al}_0}{4}$, we can take $W$ from Lemma \ref{comp:pressure} small enough to have $ k \Re w < \varepsilon$ so that $A(N)=O(N^{1+2\varepsilon})$ and $\log N \ll N^{\varepsilon}$. Then the exponent of $N$ in the error term $\mathrm{\rom{2}}$ is equal to 
\[ 2+3\varepsilon-2( \al_0+  2\widehat{\al}_0 - \Re s_0(w))\leq  - \frac{21}{4} \widehat{\al}_0 < 0. \]

($\mathrm{\rom{3}}$)  Similarly, the error term $\mathrm{\rom{3}}$ is equal to $O(N^{1+2\varepsilon-2\Re s_0(w)})$. The exponent satisfies \begin{align*}  1+ 2\varepsilon -2\Re s_0(w)< - 1+ 2\al_0 -\frac{3}{2} \widehat{\al}_0< -\frac{3}{2} \widehat{\al}_0<0.  \end{align*}
Here, recall that $0< \al_0 \leq \frac{1}{2}$.

($\mathrm{\rom{4}}$)  For $0<\xi<\frac{1}{10}$, we have $|L(2s,w)| \ll |t|^{\xi}$ by Proposition \ref{dirichlet:pr} where $t=\Im s$. The error term $\mathrm{\rom{4}}$ is $O(N^{2(1-\al_0-\Re s_0(w))} T^\xi)$ and the exponent of $N$ is equal to 
\begin{align*} & 2(1-\al_0-\Re s_0(w))+ (2\al_0+4 \widehat{\al}_0)\xi < \frac{1}{5} \al_0- \frac{8}{5} \widehat \al_0<0
\end{align*}

($\mathrm{\rom{5}}$)  The last term $\mathrm{\rom{5}}$ is $O(T^{\xi-1}  N^{2(1+\al_0-\Re s_0(w))} (\log N)^{-1})$. Hence, the exponent satisfies 
\begin{align*} & (2\al_0+4 \widehat{\al}_0)(\xi-1)+2(1+\al_0-\Re s_0(w))
\\ &< \frac{11}{5} \al_0 - \frac{28}{5} \widehat \al_0<0.
\end{align*}
By taking \[ \gamma= \mathrm{max} \left( 2(3\widehat \al_0 -\al_0),\frac{8}{5} \widehat \al_0- \frac{1}{5} \al_0,  \frac{28}{5} \widehat \al_0-\frac{11}{5} \al_0 \right) ,\]
we obtain the theorem. 
\end{proof}

Finally by applying Theorem \ref{thm:hwang}, we conclude the following limit Gaussian distribution for $K$-rational trajectories.

\begin{thm} \label{clt:discrete}
Take $c$ as in Theorem \ref{clt:cont} and further assume that it is bounded. For suitable positive constants $\mu(c)$ and $\delta(c)$, and for any $u \in \R$,
\begin{enumerate}
\item the distribution of $C$ on $\Omega_N$ is asymptotically Gaussian, 
\[\Pbb_N \left[\frac{C - \mu(c) \log N}{\delta(c) \sqrt{\log N}}\leq u  \Big\lvert\, \Omega_N \right]=\frac{1}{\sqrt{2\pi}}\int_{-\infty}^u e^{-\frac{t^2}{2}}dt+O\left(\frac{1}{\sqrt{\log N}}\right) . \]

\item the expectation and variance satisfy
\begin{align*}
\Eb_N[C|\Omega_N] &= \mu(c)\log N+\mu_1(c)+O(N^{-\gamma}) \\
\Vb_N[C|\Omega_N] &= \delta(c) \log N+\delta_1(c)+O(N^{-\gamma})
\end{align*}
for some $\gamma>0$, constants $\mu_1(c)$ and $\delta_1(c)$.
\end{enumerate}
\end{thm}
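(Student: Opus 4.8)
The plan is to deduce Theorem~\ref{clt:discrete} from the Quasi-power estimate of Proposition~\ref{mgf:normal} by way of Hwang's Quasi-Power Theorem~\ref{thm:hwang}; once Sections~\ref{sec:acim}--\ref{sec:clt2} are in place this is short. First I would note that, since $C$ takes only finitely many values on $\Omega_N$, the moment generating function
\[
\Eb_N[\exp(wC)\mid \Omega_N]=\frac{1}{|\Omega_N|}\sum_{n\le N}d_n(w)
\]
is entire in $w$, so the only thing to establish is a uniform asymptotic on a fixed complex neighborhood $W$ of $0$. Applying Proposition~\ref{mgf:normal} at a general $w\in W$ and at $w=0$, and using $s_0(0)=1$ so that $|\Omega_N|=\sum_{n\le N}d_n(0)=D(0)N^{2}(1+O(N^{-\gamma}))$ with $D(0)>0$, I divide the two asymptotics to obtain
\[
\Eb_N[\exp(wC)\mid \Omega_N]
=\frac{D(w)}{D(0)}\,N^{2(s_0(w)-1)}\bigl(1+O(N^{-\gamma})\bigr)
=\exp\bigl((\log N)\,U(w)+V(w)\bigr)\bigl(1+O(N^{-\gamma})\bigr),
\]
where $U(w):=2\bigl(s_0(w)-1\bigr)$ and $V(w):=\log\bigl(D(w)/D(0)\bigr)$ are analytic on $W$ (shrinking $W$ if necessary), since $s_0$ is analytic by Lemma~\ref{comp:pressure} and $D$ is analytic and non-vanishing near $0$.

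This is exactly the shape demanded by Theorem~\ref{thm:hwang}, with $\beta_N=\log N\to\infty$ and $\kappa_N=N^{\gamma}\to\infty$. The only non-formal hypothesis to verify is $U''(0)\neq0$, equivalently $s_0''(0)\neq0$. I would obtain it by differentiating the defining identity $\lambda_{s_0(w),w}=1$ twice at $w=0$: combined with $\partial_s\lambda_{s,0}\big|_{s=1}<0$ from Lemma~\ref{comp:pressure}~(1), this expresses $s_0''(0)$ as a non-zero multiple of $\tfrac{d^2}{dw^2}\lambda_{1+s_0'(w)w,w}\big|_{w=0}$, which by Lemma~\ref{comp:pressure}~(2) is non-zero precisely when $c$ is not of the form $g-g\circ T$; since $c$ is assumed not to be such a coboundary, $U''(0)\neq0$. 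The strict positivity $\delta(c)=U''(0)>0$ then follows because $(\sigma,u)\mapsto\log\lambda_{\sigma,u}$ is convex — being a topological pressure depending affinely on $(\sigma,u)$ through the weight $\log g_{\sigma,u}=uc+\sigma\log J_{[\,\cdot\,^{-1}]}$ — so its $0$-level function $s_0$ is convex; likewise $\mu(c)=U'(0)=2s_0'(0)>0$, since $s_0'(0)=-\partial_w\lambda_{1,0}/\partial_s\lambda_{1,0}$ with $\partial_w\lambda_{1,0}=\int_I c\,d\mu_{1,0}>0$ and $\partial_s\lambda_{1,0}<0$.

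It then remains to apply Theorem~\ref{thm:hwang} verbatim: part~(1) gives the asymptotic normality of $C$ on $\Omega_N$ with speed $O(\kappa_N^{-1}+\beta_N^{-1/2})=O\bigl(N^{-\gamma}+(\log N)^{-1/2}\bigr)=O\bigl((\log N)^{-1/2}\bigr)$, and part~(2) gives
\[
\Eb_N[C\mid\Omega_N]=\mu(c)\log N+\mu_1(c)+O(N^{-\gamma}),\qquad
\Vb_N[C\mid\Omega_N]=\delta(c)\log N+\delta_1(c)+O(N^{-\gamma}),
\]
with $\mu(c)=U'(0)$, $\delta(c)=U''(0)$, $\mu_1(c)=V'(0)$, and $\delta_1(c)=V''(0)$. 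I expect essentially no obstacle at this final stage: all the genuine difficulty sits upstream in Proposition~\ref{mgf:normal}, whose proof via Perron's formula (Theorem~\ref{perron}) needs the pole-free strip, the bound $\Re s_0(w)>1-(\alpha_0-\widehat\alpha_0)$, and the polynomial estimate $|L(2s,w)|\ll|t|^\xi$ furnished by the Dolgopyat--Baladi--Vallée estimate (Theorem~\ref{main:dolgopyat}) through Proposition~\ref{dirichlet:pr}. Granting those inputs, the passage to the Gaussian law is routine bookkeeping with Hwang's theorem, exactly parallel to Baladi--Vallée~\cite{bal:val} and the proof of Theorem~\ref{clt:cont}.
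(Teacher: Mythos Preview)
Your proposal is correct and follows essentially the same route as the paper: divide the asymptotic from Proposition~\ref{mgf:normal} at $w$ by that at $w=0$ to obtain the quasi-power form, set $U(w)=2(s_0(w)-1)$ and $V(w)=\log(D(w)/D(0))$, verify $U''(0)\neq 0$ via Lemma~\ref{comp:pressure} by differentiating $\lambda_{s_0(w),w}=1$, and invoke Theorem~\ref{thm:hwang}. Your argument is in fact slightly more complete than the paper's, since you supply the positivity of $\mu(c)$ and $\delta(c)$ (via convexity of the pressure and the sign of $\partial_s\lambda|_{(1,0)}$), which the paper asserts in the statement but does not justify in its proof.
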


\begin{proof}
Proposition \ref{mgf:normal} yields that with a suitable $0< \gamma<\al_0$, the moment generating function admits the quasi-power expression, i.e. for $w \in W$
\[ \Eb_N[\exp(w C)|\Omega_N]= \frac{D(w)}{D(0)} N^{2(s_0(w)-s_0(0))}(1+O(N^{-\gamma}))     \]
holds where $D(w)=\frac{E(w)}{s_0(w)}$ from Proposition \ref{mgf:normal} is analytic on $W$. 

Take $U(w)=2(s_0(w)-s_0(0))$, $V(w)=\log \frac{D(w)}{D(0)}$, $\beta_N=\log N$, and $\kappa_N=N^{-\gamma}$. 
We put $\mu(c) = U'(0)$, $\delta(c) = U''(0)$, $\mu_1(c)=V'(0)$, and $\delta_1(c) = V''(0)$.
Observe that we have $s_0'(0)=-\frac{\partial \lambda}{\partial w} (1,0)/ \frac{\partial \lambda}{\partial s} (1,0)$ since $\lambda_{s_0(w),w}=1$ for $w \in W$. Further, the derivatives of the identity $\log \lambda_{s_0(w),w}=0$ yield 
\[ \frac{\partial \lambda}{\partial s}(1,0) s_0''(0)= \frac{d^2}{dw^2} \lambda_{1+s_0'(w)w,w} \bigg|_{w=0}.     \]
Thus by Lemma \ref{comp:pressure}, we have $U''(0)=2 s_0''(0) \neq 0$ if $c$ is not a coboundary.  Applying Theorem \ref{thm:hwang}, we obtain the statement.
\end{proof}

\section{Equidistribution modulo $q$} \label{sec:equid}

In this section, we show that for any integer $q>1$ and a bounded digit cost $c:\Ac \ra \Z_{\geq 0}$, the values of $C$ on $\Omega_N$ are equidistributed modulo $q$. This follows from the following estimate for $\Eb[\exp(i \tau C)|\Omega_N]$ when $|\tau|$ is away from 0. Applying Theorem \ref{perron} to $L(2s,i\tau)$, we have:

\begin{prop} \label{mgf:eq}
Let  $0<|\tau| < \pi$. Then, there exists $0<\delta<2$ such that we have 
$$\sum_{n \leq N}  d_n(i \tau)=	O (N^{\delta}).$$
\end{prop}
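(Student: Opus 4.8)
The strategy mirrors the proof of Proposition~\ref{mgf:normal}, but now the relevant parameter $w=i\tau$ is \emph{away} from zero, so there is no pole to extract: the point is to exploit analyticity of $L(2s,i\tau)$ in a full vertical strip around $\Re s=1$, as guaranteed by Proposition~\ref{dirichlet:pr}~(5)--(7). First I would record the ingredients needed to run the truncated Perron formula (Theorem~\ref{perron}) with $a_n=d_n(i\tau)$: by Lemma~\ref{cond:A(n)} applied with $\Re w=0$ (using also Lemma~\ref{cond:complexity} to fix the constant $k$), we have $|d_n(i\tau)|\ll n^{1+\varepsilon}$, so one may take $A(x)=x^{1+\varepsilon}$, and the abscissa of absolute convergence of $L(2s,i\tau)=\sum d_n(i\tau)n^{-2s}$ is at most $1+\tfrac{\varepsilon}{2}<\tfrac{3}{2}$, say. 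I would set the Perron line at $D=1+\alpha_1$ (with $\alpha_1$ from Proposition~\ref{dirichlet:pr}) and a truncation height $T=N^{\eta}$ for a small exponent $\eta>0$ to be chosen at the end.

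\textbf{Contour shift.} Next I would move the contour from $\Re s = 1+\alpha_1$ to $\Re s = 1-\alpha_1$. By Proposition~\ref{dirichlet:pr}~(5), $L(2s,i\tau)$ is analytic on the whole strip $|\Re s-1|\le \alpha_1$, so Cauchy's theorem gives no residue term:
\begin{align*}
\frac{1}{2\pi i}\int_{D-iT}^{D+iT} L(2s,i\tau)\frac{N^{2s}}{2s}\,d(2s)
&= \frac{1}{2\pi i}\int_{(1-\alpha_1)-iT}^{(1-\alpha_1)+iT} L(2s,i\tau)\frac{N^{2s}}{2s}\,d(2s) \\
&\quad + O\!\left(\int_{1-\alpha_1\pm iT}^{1+\alpha_1\pm iT}|L(2s,i\tau)|\frac{N^{2\Re s}}{T}\,ds\right).
\end{align*}
The shifted vertical integral is bounded using Proposition~\ref{dirichlet:pr}~(7), namely $|L(2(1-\alpha_1+it'),i\tau)|\ll \max(1,|t'|^{\xi})$, which yields a contribution $\ll N^{2(1-\alpha_1)}T^{\xi}$ after integrating in $t'$ over $[-T,T]$ (the $T^{\xi}$ from $\int_1^T |t'|^{\xi-1}\,dt'$, absorbing the $1/|s|$ decay). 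The two horizontal pieces are bounded using Proposition~\ref{dirichlet:pr}~(6) (valid for large $|t|$) and boundedness on the compact part, giving $\ll T^{\xi-1}N^{2(1+\alpha_1)}$. Assembling with the Perron error terms $O(N^{2D}T^{-1})+O(A(2N)N\log N\,T^{-1})+O(A(N))$ from Theorem~\ref{perron}, one gets
\[
\sum_{n\le N} d_n(i\tau) \ll N^{2(1-\alpha_1)}T^{\xi} + T^{\xi-1}N^{2(1+\alpha_1)} + \frac{N^{2+2\alpha_1}}{T} + \frac{N^{2+3\varepsilon}}{T} + N^{1+2\varepsilon}.
\]

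\textbf{Choice of parameters.} Finally I would choose $\eta$ (hence $T=N^{\eta}$) and $\varepsilon$ small so that every exponent on the right is strictly below $2$. The term $N^{1+2\varepsilon}$ is harmless for small $\varepsilon$; the terms $N^{2+2\alpha_1-\eta}$ and $N^{2+3\varepsilon-\eta}$ require $\eta>2\alpha_1$ and $\eta>3\varepsilon$; the term $N^{2(1-\alpha_1)+\eta\xi}$ is below $2$ once $\eta\xi<2\alpha_1$, i.e. $\eta<2\alpha_1/\xi$; and $N^{2(1+\alpha_1)+\eta(\xi-1)}$ is below $2$ once $\eta(1-\xi)>2\alpha_1$, i.e. $\eta>2\alpha_1/(1-\xi)$. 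Since $\xi<1$ (indeed $\xi<\tfrac19$ by Remark~\ref{rem:xi}), the window $\tfrac{2\alpha_1}{1-\xi}<\eta<\tfrac{2\alpha_1}{\xi}$ is non-empty; picking $\eta$ in this window and $\varepsilon<\eta/3$ makes the maximum exponent equal to some $\delta<2$, and enlarging $\delta$ if necessary to dominate $N^{1+2\varepsilon}$ gives the claim. The main obstacle is purely bookkeeping: one must make sure the interacting constraints on $\eta$ coming from the shifted line ($\eta$ small, to keep $N^{\eta\xi}$ under control against the gain $N^{-2\alpha_1}$) and from the horizontal segments ($\eta$ large, to beat $N^{2\alpha_1}$ with the loss $T^{\xi-1}$) are simultaneously satisfiable, which is exactly where $\xi<1$ is used; everything else is a direct transcription of the Perron-formula argument already carried out in the proof of Proposition~\ref{mgf:normal}, with the residue term removed because $i\tau$ avoids the pole locus.
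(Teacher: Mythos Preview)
Your proposal is correct and follows essentially the same route as the paper: apply the truncated Perron formula on the line $\Re s=1+\alpha_1$, shift the contour to $\Re s=1-\alpha_1$ using the analyticity of $L(2s,i\tau)$ from Proposition~\ref{dirichlet:pr}~(5)--(7) (so no residue), and bound the resulting five error terms. The only cosmetic difference is that the paper commits to the specific choice $T=N^{5\alpha_1}$ (which lies in your window $\tfrac{2\alpha_1}{1-\xi}<\eta<\tfrac{2\alpha_1}{\xi}$ since $\xi<\tfrac19$), whereas you determine the admissible range for $\eta$ abstractly.
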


\begin{proof}
By Proposition \ref{dirichlet:pr}, $L(2s, i\tau)$ is analytic in the rectangle $\mathcal{U}_T$ with vertices $1+\alpha_1+iT$, $1-\alpha_1+iT$, $1-\alpha_1-iT$, and $1+\alpha_1-iT$. Cauchy's residue theorem yields 
\[ \frac{1}{2 \pi i} \int_{\mathcal{U}_T} L(2s,i \tau) \frac{N^{2s}}{2s} d(2s)=0 \]
and together with Perron's formula in Theorem \ref{perron}, we have 
\begin{align*}
\sum_{n \leq N} d_n(i\tau)&=  O\left(\frac{N^{2(1+\al_1)}}{T} \right) +O \left(\frac{A(2N)N\log N}{T} \right) + O(A(N)) \\ &+ O \left( \int_{1-\al_1-iT}^{1-\al_1+iT} |L(2s, i\tau)| \frac{N^{2(1-\al_1)}}{|s|} ds \right)\\
&+ O \left( \int_{1-\al_1 \pm iT}^{1+\al_1 \pm iT} |L(2s,i\tau)| \frac{N^{2\Re s}}{T} ds  \right) .
\end{align*} 

We briefly denote this by $\sum_{n \leq N} d_n(i\tau)= \mathrm{\rom{1}+ \rom{2}+\rom{3}+\rom{4}+\rom{5}} $. 
Taking \[ T= N^{5\al_1}, \] 
the error terms are estimated as follows. 


($\mathrm{\rom{1}}$) The error term \rom{1} is simply equal to $O(N^{2 -3\alpha_1})$. 

($\mathrm{\rom{2}}$) For any $0<\varepsilon<\frac{\alpha_1}{4}$, we can take $A(N)=O(N^{1+2\varepsilon})$ and $\log N \ll N^{\varepsilon}$. Then the exponent of $N$ in the error term \rom{2} is equal to 
\[ 2+ 3\varepsilon- 5 \al_1< 2- \frac{17}{4} \alpha_1 <2 .\]

($\mathrm{\rom{3}}$) The error term \rom{3} is equal to $O(N^{1+ \alpha_1 /2} )$. 

($\mathrm{\rom{4}}$) For $0<\xi<\frac{1}{10}$, we have  $|L(2s, i\tau)| \ll |t|^\xi $. Thus, the error term \rom{4} is $O (T^\xi  N^{2(1-\alpha_1)} )$ and the exponent of $N$ is equal to 
\[ 2(1-\alpha_1) + 5 \al_1 \xi < 2- \frac{3}{2}\alpha_1 <2 . \]

($\mathrm{\rom{5}}$) The last term \rom{5} is $O (T^{\xi-1} N^{2(1+\alpha_1)} (\log N)^{-1})$, whence the exponent of $N$ satisfies 
\[ 5 \al_1 (\xi-1)+2(1+ \alpha_1) < 2- \frac{5}{2} \alpha_1<2 .\]
By taking \[ \delta = \max \left( 2- 3\alpha_1, 2- \frac{17}{4} \alpha_1, 2- \frac{3}{2}\alpha_1, 2- \frac{5}{2} \al_1 \right) \]
which is strictly less than 2, we complete the proof. 
\end{proof}

Now we present an immediate consequence of Proposition \ref{mgf:eq}:

\begin{thm} \label{thm:equid}
Take $c$ as in Theorem \ref{intro:clt1}. Further assume that $c$ is bounded and takes values in $\Z_{\geq 0}$. For any $a \in \Z/q\Z$, we have
\[ \Pbb_N[C \equiv a \ (\mathrm{ mod } \ q) |\Omega_N]=q^{-1}+o(1) ,   \]
i.e., $C$ is equidistributed modulo $q$.
\end{thm}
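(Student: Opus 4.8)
The plan is to deduce equidistribution modulo $q$ from the standard orthogonality relations for characters of $\Z/q\Z$, combined with the two asymptotic inputs already at our disposal: Proposition~\ref{mgf:normal} (which controls $\sum_{n\le N} d_n(w)$ for $w$ near $0$) and Proposition~\ref{mgf:eq} (which controls $\sum_{n\le N} d_n(i\tau)$ for $\tau$ bounded away from $0$ modulo $2\pi$). First I would write, for $a\in\Z/q\Z$,
\[
\Pbb_N[C\equiv a \ (\mathrm{mod}\ q)\mid \Omega_N]
= \frac{1}{|\Omega_N|}\sum_{\substack{z\in\Omega_N\\ C(z)\equiv a}} 1
= \frac{1}{q\,|\Omega_N|}\sum_{j=0}^{q-1} e^{-2\pi i j a/q}\sum_{z\in\Omega_N} e^{2\pi i j C(z)/q},
\]
using $\frac1q\sum_{j=0}^{q-1} e^{2\pi i j (C(z)-a)/q}=\mathbf 1_{C(z)\equiv a}$. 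Since $C$ is $\Z_{\ge 0}$-valued by hypothesis, each inner sum is $\sum_{n\le N} d_n(2\pi i j/q)$, so the whole expression is a finite linear combination of the quantities $|\Omega_N|^{-1}\sum_{n\le N} d_n(i\tau_j)$ with $\tau_j = 2\pi j/q$ for $j=0,1,\dots,q-1$.

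Next I would treat the $j=0$ term and the $j\ne 0$ terms separately. For $j=0$ we have $\tau_0=0$, and $|\Omega_N|^{-1}\sum_{n\le N} d_n(0) = |\Omega_N|^{-1}|\Omega_N| = 1$; more precisely Proposition~\ref{mgf:normal} at $w=0$ gives $\sum_{n\le N} d_n(0) = D(0) N^{2}(1+O(N^{-\gamma}))$ (recall $s_0(0)=1$), and in particular $|\Omega_N| = \sum_{n\le N} d_n(0) \asymp N^2$. For $j\ne 0$ we have $0<|\tau_j|<2\pi$; replacing $\tau_j$ by $\tau_j - 2\pi\lfloor\tau_j/(2\pi)+\tfrac12\rfloor$ if necessary — which does not change $e^{2\pi i j C(z)/q}$ because $C(z)\in\Z$ — we may assume $0<|\tau_j|<\pi$, so Proposition~\ref{mgf:eq} applies and yields $\sum_{n\le N} d_n(i\tau_j) = O(N^{\delta})$ with some $\delta<2$, uniformly over the finitely many residues $j$. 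Dividing by $|\Omega_N|\asymp N^2$, each $j\ne 0$ term contributes $O(N^{\delta-2}) = o(1)$. Summing over the $q$ values of $j$ (a fixed finite number) and collecting, the $j=0$ term contributes $q^{-1}\cdot 1 = q^{-1}$ and all other terms are $o(1)$, giving $\Pbb_N[C\equiv a \ (\mathrm{mod}\ q)\mid\Omega_N]=q^{-1}+o(1)$ as claimed.

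The only mild subtlety — and the step I would be most careful about — is the passage from $\tau_j=2\pi j/q$ to a representative in $(-\pi,\pi)$ so that Proposition~\ref{mgf:eq} is literally applicable: this is exactly where the hypothesis that $c$, hence $C$, takes values in $\Z_{\ge 0}$ is used, since then $e^{i\tau C(z)}$ depends only on $\tau$ modulo $2\pi$. One should also note that $\tau_j\ne 0$ for $1\le j\le q-1$, which is clear since $0<2\pi j/q<2\pi$. Everything else is the routine character-orthogonality argument, and the two displayed asymptotics do all the analytic work. No new estimate is required beyond what has already been established in \S\ref{sec:clt2} and the present section.
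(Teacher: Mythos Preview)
Your proposal is correct and follows essentially the same approach as the paper: expand the probability via orthogonality of characters of $\Z/q\Z$, use Proposition~\ref{mgf:normal} at $w=0$ to get $|\Omega_N|\asymp N^2$, and use Proposition~\ref{mgf:eq} to bound the nonzero-character contributions by $O(N^{\delta-2})=o(1)$. Your treatment of the reduction of $\tau_j=2\pi j/q$ to a representative with $0<|\tau_j|<\pi$ via the integrality of $C$ is in fact more explicit than the paper's; note only the harmless edge case $j=q/2$ when $q$ is even, where the representative has $|\tau_j|=\pi$, which is covered by the same argument behind Proposition~\ref{mgf:eq} since all that is used there is $\tau\not\equiv 0\pmod{2\pi}$.
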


\begin{proof}
Observe from Proposition \ref{mgf:normal}, we have $\sum_{n \leq N} d_n(0) \gg N^2$. Then Proposition \ref{mgf:eq} yields that with  $\delta_0:=2-\delta >0$ and $\tau$ under the same condition, we have
\begin{equation} \label{mgfeq}
\Eb_N[\exp(i \tau C)|\Omega_N]=\frac{\sum_{n \leq N} d_n(i \tau)}{\sum_{n \leq N} d_n(0)} \ll O(N^{-\delta_0}) .   
\end{equation}
Then for $a \in \Z/ q\Z$, we have 
\begin{align*}
\Pbb_N[C \equiv a \ (\mathrm{ mod } \ q) |\Omega_N] &=\sum_{m \in \Z \atop m \equiv a (q)} \Pbb_N[C \equiv m |\Omega_N] \\
&=\sum_{m \in \Z} \left( \frac{1}{q} \sum_{k \in \Z / q\Z} \exp \left( \frac{2 \pi i}{q} k(m-a)  \right)  \right)\Pbb_N[C \equiv m |\Omega_N] \\
&=\frac{1}{q} \sum_{k \in \Z / q\Z} e^{-\frac{2 \pi i}{q} k a} \cdot \Eb_N \left[\exp\left(\frac{2\pi i}{q} ka \right) \bigg| \Omega_N \right] .
\end{align*}

We split the summation into two parts: $k=0$ and $k \neq 0$. The term corresponding to $k=0$ is the main term which equals to $q^{-1}$. For the sum over $k \neq 0$, taking $0<\tau<q^{-1}$ in \eqref{mgfeq}, we obtain the result.
\end{proof}

\hide{--------

\section{Old calculations}

In this section, we study the operator $\mathcal L_{s,w}$ when $(s,w)=(1,0)$. 
We put $\mathbb H := \mathcal L_{1,0}$.
Let $\mathcal B_i  = \bigoplus_{P \in \mathcal P[i]} C^1(\overline P)$. 
Then we have a decomposition
\begin{align}
    \mathcal B = \mathcal B_0 \oplus \mathcal B_1 \oplus \mathcal B_2
\end{align}
and the operator $\mathbb H$ can be written as
\begin{align}
    \mathbb H = \sum_{0 \le j \le i \le 2} \mathbb H^i_j
\end{align}
with $\mathbb H^i_j \colon \mathcal B_i \to \mathcal B_j$. 

\subsection{Spectral gap for $\mathbb H^2_2$}
In this subsection, we show that the operator $\mathbb H^2_2$ acting on $\mathcal B_2$ has a spectral gap.
During this subsection, we regard $\mathcal B_2$ as a Banach space with respect to the supremum norm.

We first prove the compactness.
\begin{prop}\label{prop:cpt}
The operator $\mathbb H^2_2$ acts compactly on $\mathcal B_2$.
\end{prop}
\begin{proof}
Note that $\mathbb H^2_2 \colon \mathcal B_2 \to \mathcal B_2$ can be written as
\begin{align*}
    \mathbb H^2_2 = \sum_{P,Q \in \mathcal P[2]} \mathbb H^Q_P
\end{align*}
with $\mathbb H^Q_P \colon C^1(\overline Q) \to C^1(\overline P)$. 
To show $\mathbb H^2_2$ is compact, it suffices to show that each $\mathbb H^Q_P$ is compact.
Now we show that $\mathbb H^Q_P$ is compact.
By definition, we have
\begin{align*}
    \mathbb H^Q_P(f) = \sum_{ \langle\alpha\rangle \in \mathcal H(P,Q)} |J_\alpha| \cdot \left( f \circ \langle\alpha\rangle \right) (x)
\end{align*}
where each term is compact since $|J_\alpha|$ is bounded from above.
Moreover, we have
\begin{align*}
    \sum_{ \langle\alpha\rangle \in \mathcal H(P,Q)} |J_\alpha(z)| \le \sum_{\substack{\alpha \in \mathcal O \\ |\alpha| \ge \sqrt 2}} |z+\alpha|^{-4} < \infty
\end{align*}
for all $z \in \overline P$.
By the Arzela-Ascoli theorem, it follows that each $\mathbb H^Q_P$ is compact.
\end{proof}

We continue to establish the spectral gap.
\begin{thm}
The operator $\mathbb H^2_2$ acting on $\mathcal B_2$ has a spectral gap. Its dominant eigenvalue is one. 
\end{thm}
\begin{proof}
Let $\mathcal B_2^+ \subset \mathcal B_2$ be the subspace consisting of tuples $(f_P)_{P\in \mathcal P[2]}$ such that each $f_P$ takes a non-negative value everywhere. 
Write $f \ge g $ if $f-g \in \mathcal B_2^+$. Let $v = (v_P)_P \in \mathcal B_2^+$ be the tuple with $v_P \equiv 1$ for all~$P$. 
To apply the criterion \cite{Krasnoselskii}, it suffices to show that the quadruple $(\mathcal B_2, \mathcal B_2^+, \mathbb H_2^2, v)$ satisfies the properties
\begin{enumerate}
    \item $\mathcal B_2^+$ is closed under addition and scaling by positive numbers,
    \item $\mathcal B_2^+$ is a closed subset of $\mathfrak B$ whose interior is non-empty,
    \item every $f \in \mathcal B_2^+$ can be written as $f = p_1-p_2$ with $p_1,p_2 \in \mathcal B_2^+$,
    \item $\mathbb H^2_2(\mathcal B_2^+) \subset \mathcal B_2^+$, and
    \item if $f \in \mathcal B_2^+$ is not zero, then there exist some positive integer $n$ and positive real numbers $c_1$ and $c_2$, such that $c_1 v \le \left(\mathbb H_2^2\right)^n \left(f\right) \le c_2 v$.
\end{enumerate}
Except for the last one, they are easy to verify. The first property follows directly from the definition. The second follows from observing that the interior of $\mathcal B_2^+$ contains tuples of functions with positive infimums. To verify the third property, note that any continuous function $g$ on a compact set can be rewritten as $g = \left(g+2M\right) - 2M$ where $M$ is the maximum of $g$. The fourth property follows from the positivity of $|J_\alpha|$. 

To show the last property, we need a lemma.
\begin{lem}
Let $P \in \mathcal P[2]$ and $J$ be an open subset of $P$. Then, there is some $n \ge 1$ such that $T^n(J)$ is dense in $I$.
\end{lem}
\begin{proof}[proof of lemma]
For each $m \ge 1$, the union of all $\mathcal O_{\ba}$'s with $|\ba | =m$ is dense in $I$.
Moreover, there is a sequence $\rho_m$ of positive reals with $\rho_m \to 0$ as $m \to \infty$ such that the diameter of $\mathcal O_{\ba}$ with $|\ba |=m$ is at most $\rho_m$.
Therefore, such $J$ contains $\mathcal O_{\ba}$ for some $m$ and $\ba$ with $|\ba | = m$. 
On the other hand, there is some $k \ge 1$ such that $T^k\mathcal O_\alpha$ is dense in $I$ for any $\alpha \in \mathcal O$ with nonempty $\mathcal O_\alpha$. 
Then, $T^{m+k}(\mathcal O_\alpha)$ is dense in $I$. 
\end{proof}

We proceed to verify the last condition.
Let $f \in \mathcal B_2^+$ is a non-zero element.
Then, $A_\epsilon = \{z \in I \colon f(z) >\epsilon\}$ is a non-empty open subset for some sufficiently small $\epsilon$.
Then, there is some $P \in \mathcal P[2]$ such that $P \cap A_\epsilon$ is non-empty. 
Take $J = P \cap A_f$.
By the lemma, $T^n(J)$ is dense in $I$ for some $n \ge 1$. Then, $c_1 v \le \left(\mathbb H^2_2\right)^n(f)$ for some $c_1 \ge 0$. 
Since $\left(\mathbb H^2_2\right)^n(f) \in \mathcal B_2$, it is clear that there is some $c_2 >0$ with $\left(\mathbb H^2_2\right)^n(f) \le c_2 v$.
\end{proof}

\subsection{Spectral radius of $\mathbb H^1_1$}
Let $r_1$ be the spectral radius of $\mathbb H^1_1 \colon \mathcal B_1 \to \mathcal B_1$. 
The aim of this subsection is to prove
\begin{align}\label{eq:radius1}
    r_1 <1.
\end{align}

\begin{prop}
The operator $\mathbb H^1_1$ acts compactly on $\mathcal B_1$.
\end{prop}
\begin{proof}
It is similar to the proof of Prop.\,\ref{prop:cpt}.
\end{proof}
Since any non-zero eigenvalue of a compact operator is an isolated point in the spectrum, we may assume that there is an eigenvalue $\lambda$ of $\mathbb H^1_1$ such that $|\lambda|=r_1$.
In particular, it suffices to show
\begin{align}\label{eq:radius2}
    |\lambda| <1.
\end{align}

To prove \eqref{eq:radius2}, we will use the following norm on $\mathcal B_1$.
Recall that each $f \in \mathcal B_1$ is a tuple $f= (f_P)_P$ labelled by $P \in \mathcal P[1]$.
Put
\begin{align*}
    \left|\!\left|f\right|\!\right|_1 = 
    \sum_{P \in \mathcal P[1]}
    \int_{z \in \overline P} \left|f_P(z)\right| ds_P 
\end{align*}
where $ds_P$ is the length element of the curve $\overline P$. 
Let $s_1=\left|\!\left|\mathbb H^1_1 \right| \! \right|_1$ the corresponding operator norm. 
By definition,
\begin{align*}
    s_1 = \sup_{f} \left|\!\left| \mathbb H^1_1 (f)\right|\!\right|_1
\end{align*}
where the supremum is taken over the set of all $f\in \mathcal B_1$ with $\left|\!\left|f\right|\!\right|_1=1$.
Such $f$ can be taken in the eigenspace of $\lambda$, which implies that
\begin{align*}
    |\lambda| \le s_1.
\end{align*}
Since $\lambda$ satisfied $|\lambda|=r_1$, it follows that $r_1 \le s_1$. 
In particular, \eqref{eq:radius1} is reduced to 
\begin{align}\label{eq:radius3}
    s_1 <1.
\end{align}

Put
\begin{align}
    R := \sup_{z \in I} |z|.
\end{align}
Note that $R<1$ in all of eight systems $(I,T)$.
The inequality \eqref{eq:radius3} would follow from the next proposition.
\begin{prop}\label{prop:operatornorm:h1}
Let $f \in \mathcal B_1$ be an element with $\left|\!\left|f\right|\!\right|_1=1$. Then, $\left|\!\left| \mathbb H^1_1f\right|\!\right|_1\le R^2$.
\end{prop}
\begin{proof}
By definition of $\mathbb H^1_1$, for an element $f \in \mathcal B_1$ with $\left|\!\left|f\right|\!\right|_1=1$, we have
\begin{align*}
    \left|\!\left| 
    \mathbb H^1_1f
    \right|\!\right|_1
    = \sum_{P \in \mathcal P[1]} 
    \int_{z \in \overline P}
    \left|
    \sum_{Q \in \mathcal P[1]}\sum_{\langle \alpha \rangle^P_Q} \left|z+\alpha\right|^{-4} \cdot f_Q \circ \langle\alpha\rangle^P_Q (z)
    \right|
    ds_P
\end{align*}
and it follows that
\begin{align*}
    \left|\!\left| 
    \mathbb H^1_1f
    \right|\!\right|_1
    \le
    \sum_{Q \in \mathcal P[1]}
    \left(
    \sum_{P \in \mathcal P[1]} 
    \sum_{\langle \alpha \rangle^P_Q} 
    \int_{z \in \overline P}
    \left|z+\alpha\right|^{-4} \cdot
        \left|
    f_Q \circ \langle\alpha\rangle^P_Q (z)
    \right|
    ds_P
    \right).
\end{align*}
The double sum in the parentheses satisfies the inequality:
\begin{align}\label{eq:changeofvariables}
    \sum_{P \in \mathcal P[1]} 
    \sum_{\langle \alpha \rangle^P_Q} 
    \int_{z \in \overline P}
    \left|z+\alpha\right|^{-4} \cdot
        \left|
    f_Q \circ \langle\alpha\rangle^P_Q (z)
    \right|
    ds_P
    \le 
    \int_{z \in \overline Q} \left|z\right|^2 \left|f_Q(z)\right| ds_Q
\end{align}
because for each $P$ and $\langle \alpha \rangle ^P_Q \in \mathcal H(P,Q)$ we have the change of variables formula 
\begin{align*}
\int_{z \in \overline P}
    \left|z+\alpha\right|^{-4} \cdot
        \left|
    f_Q \circ \langle\alpha\rangle^P_Q (z)
    \right|
    ds_P
=\int_{z \in h_\alpha(\overline P)} |z|^2 \left| f_Q(z) \right| ds_Q
\end{align*}
and $h_{\alpha_1}(P_1)\cap h_{\alpha_2}(P_2)$ is nonempty if and only if $(P_1,\alpha_1)=(P_2,\alpha_2)$.

Taking the sum of \eqref{eq:changeofvariables} over $Q \in \mathcal P[1]$ and using $|z| \le R$ for $z \in Q$, we conclude, as desired, $\left|\!\left| \mathbb H^1_1f\right|\!\right|_1\le R^2$. 
\end{proof}

\subsection{Spectral radius of $\mathbb H^0_0$}
Let $r_0$ be the spectral radius of $\mathbb H^0_0 \colon \mathcal B_0 \to \mathcal B_0$. In this subsection, we will prove
\begin{align}\label{eq:r0}
    r_0 <1.
\end{align}
The argument which we give below is similar to our proof of \eqref{eq:radius1}, but the details are simpler because $\mathcal B_0$ is finite-dimensional.

If $P \in \mathcal P[0]$ is a $0$-cell, denote its unique element by $z_P$. 
Each $f \in \mathcal B_0$ is a tuple $f=(f_P)_{P \in \mathcal P[0]}$ and we put

\begin{align*}
\left|\!\left|f\right|\!\right|_1 := 
            \sum_{P \in \mathcal P[0]}  \left|f_P(z_P)\right|.
\end{align*}
For $f \in \mathcal B_0$ and $P \in \mathcal P[0]$, the definition of $\mathbb H^0_0$ implies
\begin{align*}
    \left|\!\left|
    \mathbb H^0_0 f
    \right|\!\right|_1
    =
    \sum_{P \in \mathcal P[0]} 
    \left|
    \sum_{Q \in \mathcal P[0]}\sum_{\langle \alpha \rangle ^P_Q \in \mathcal H(P,Q)} \left|z_P+\alpha\right|^{-4}f_Q(z_Q)
    \right|
    \\
    \le
    \sum_{Q \in \mathcal P[0]}
    \left(
        \sum_{P \in \mathcal P[0]} 
        \sum_{\langle \alpha \rangle ^P_Q \in \mathcal H(P,Q)} \left|z_P+\alpha\right|^{-4}\left|f_Q(z_Q)\right|
    \right)
\end{align*}
in which the double sum in the parentheses is empty unless $T(z_Q)=z_P$ for some $P \in \mathcal P[0]$. 
Observe that such $P$ is unique if exists.
Since $|z_P+\alpha|^{-4} \le R^4$ whenever $\langle \alpha \rangle ^P_Q \in \mathcal H(P,Q)$, we have
\begin{align}
    \left|\!\left|
    \mathbb H^0_0 f
    \right|\!\right|_1
    \le
    R^4
    \left|\!\left|
    f
    \right|\!\right|_1,
\end{align}
which implies that the operator norm 
$\left|\!\left|
    \mathbb H^0_0
    \right|\!\right|_1
$
is at most $R^4$.
It implies that $r_0 \le R^4$.
Because $R<1$, it proves \eqref{eq:r0}.
\subsection{Spectral gap for $\mathbb H$}

For $f_P \in C^1( \overline P)$, we say $f_P>0$ if $f(z)>0$ for all $z \in \overline P$. For $\psi_i = (f_P)_{P \in \mathcal P[i]} \in \mathcal B$, we say $\psi_i >0$ if $f_P>0$ for all $P \in \mathcal P[i]$.

Recall that we have $\mathcal P$ is a cell structure on $I$ compatible with $T$. Let $\mathcal B = C^1(\mathcal P)$ be the space of functions defined in \textsection\,\ref{section:functionspace}.

\begin{prop}
Suppose that $\lambda_{1,0}=1$ is a simple eigenvalue of $\mathcal L_{1,0}$ and that any other eigenvalue $\lambda$ of $\mathcal L_{1,0}$ satisfies $|\lambda| < 1$ and that there is an eigenfunction $\psi_{1,0} = \left(\psi_{1,0,0},\psi_{1,0,1},\psi_{1,0,2}\right)$ with $\psi_{1,0,i}>0$ for $i=0,1,2$. 
\end{prop}

\begin{proof}

\end{proof}

Put $\Lc = \mathcal L_{1,0}$ and for $i \le j,$ let
\begin{align}
    \mathbb H_{i,j} \colon \mathcal B_j \to \mathcal B_i
\end{align}

\par
Let $\psi = \psi_0 + \psi_1 + \psi_2$ be a non-zero eigenvector for $\mathbb H$ with eigenvalue $\lambda$. \tcb{Positivity of each $\psi_i$ for $i=0,1$.}
\par
\begin{prop}\label{prop:PF}
The spectrum of $\mathbb H^*$ contains $\lambda$ and its eigenspace is one-dimensional. 
\end{prop}
\begin{proof}
\end{proof}

Let $\mu_L$ be a measure on $2$-cells obtained by restricting Lebesgue measure on $\mathbb C$. 

\begin{prop}\label{prop:density}
Define $\mu = (0,0,\psi_2 \mu_L)$. It satisfies $\mathbb H^* \mu = \lambda \mu$. 
\end{prop}
\begin{proof}
The eigenfunction $\psi$ satisfies, by definition, $\mathbb H\psi = \lambda \psi$. Since $\mathbb H$ decomposes as $\mathbb H = \sum_{0 \le i \le j \le 2} \mathbb H_{i,j}$, $\psi_2$ satisfies $\mathbb H_{2,2} \psi_2 = \lambda\psi_2$. Then,  the decomposition $\mathbb H^* = \sum_{0 \le i \le j \le 2} \mathbb H_{i,j}^*$ combined with $\mathbb H_{2,2}^*\mu_L=\mu_L$ implies that $\mathbb H^*\mu = \mathbb H^*_{2,2}\mu$.
\end{proof}
We conclude from Proposition~\ref{prop:PF} and Proposition~\ref{prop:density} that the $\psi_2\mu_L$ generates the $\lambda$-eigenspace of $\mathbb H^*$. 

\subsection{Ruelle-Perron-Frobenius}
The aim of this subsection is to establish the following spectral gap and other basic spectral properties of $\mathcal L_{\sigma,u}$ for real parameters $\sigma, u.$ 

\begin{thm}[Ruelle-Perron-Frobenius theorem for $\mathcal L_{\sigma,u}$]
For all $(\sigma,u)$ in an open neighborhood of $(1,0) \in \mathbb R^2$, the operator $\mathcal L_{\sigma,u}$ has a unique eigenvalue $\lambda_{\sigma,u}$ with the following properties.
\begin{enumerate}
	\item The eigenvalue $\lambda_{\sigma,u}$ is simple.
	\item If $\lambda$ is eigenvalue of $\mathcal L_{\sigma,u}$ other than $\lambda_{\sigma,u}$, then $|\lambda_{\sigma,u}|>|\lambda|$.
	\item One can choose an eigenfunction $\psi_{\sigma,u}=(\psi_{\sigma,u,0},\psi_{\sigma,u,1},\psi_{\sigma,u,2})$ for the eigenvalue $\lambda_{\sigma,u}$ so that $\psi_{\sigma,u,i}>0$ for all $i=0,1,2$.
	\item When $(\sigma,u)=(1,0)$, $\lambda_{1,0}=1$.
\end{enumerate}
\end{thm}

Now our goal is to show that $$ \| \mathcal L_{s, w} \|_{(t)} < |t|^{-\gamma}.$$

Let us first show $\int | \widetilde{\mathcal{L}}_{s,w} f |^2 d\mu \ll \frac{ \| f \|_{(t)} }{|t|^\beta} $ and then show that it implies our goal.

Note that
$$\int | \widetilde{\mathcal{L}}_{s,w} f |^2 d\mu = \sum_P \int  | \widetilde{\mathcal{L}}_{s,w} f |^2 d\mu_P \leq \left(\sum_P \int  | \widetilde{\mathcal{L}}_{s,w} f | d\mu_P\right)^2.$$

For each $P \in \mathcal P [1],$ and $i=1,2$, let$$I_i := \sum_{Q \in \mathcal P [i]} \int \sum_{\alpha \in \mathcal H(P,Q)} (f \circ h_\al) |z+ \al| ^{-4 \sigma} d\nu_P.$$ 
Then
\begin{align*}
I_1 &= \sum_{Q \in \mathcal P [1]} \int_P \sum_{\alpha \in \mathcal H(P,Q)} (f \circ h_\al)  |z+ \al| ^{-2} |z+ \al| ^{2-4 \sigma} d\nu_P \\
&= \sum_{Q \in \mathcal P [1]}  \sum_{\alpha \in \mathcal H(P,Q)}\int_{h_\al(P)}|f| |z| ^{-2+4 \sigma} d\nu_Q 
< \sum_{Q \in \mathcal P [1]}  \int_{Q}|f| |z| ^{-2+4 \sigma} d\nu_Q. \\
&<R _d^{-2+4 \sigma} \sum_{Q \in \mathcal P [1]}  \int_{Q}|f| d\nu_Q < R _d^{-2+4 \sigma}   \sup |f| \sum_{Q \in \mathcal P [1]}  \nu(Q). 
\end{align*}
However, 
\begin{align*}
I_2 &= \sum_{Q \in \mathcal P [2]} \int_P \sum_{\alpha \in \mathcal H(P,Q)} (f \circ h_\al)  |z+ \al| ^{-2} |z+ \al| ^{2-4 \sigma} d\nu_P \\
&\neq \sum_{Q \in \mathcal P [2]}  \sum_{\alpha \in \mathcal H(P,Q)}\int_{h_\al(P)}|f| |z| ^{-2+4 \sigma} d\nu_Q \end{align*}
We still want to compare $d \nu_P$ and $d \nu_Q$. Green's theorem doesn't seem to work...

Each $P \in \mathcal P$ is called a cell, and we denote by $\bar P$ the closure of $P$ in $I_d$. If $f \colon I \to \mathbb C$ is a function, let $f|_P$ be the restriction of $f$ to $P$. Define $C^1(\mathcal P)$ 
to be the space of functions $f \colon I \to \mathbb C$ such that $f|_P$ belongs to $C^1(\bar P)$ for each $P \in \mathcal P$. We have a natural isomorphism
\begin{align}
C^1(\mathcal P) \xrightarrow{\sim} \bigoplus_{P \in \mathcal P} C^1(\bar P)
\end{align}
which maps $f$ to the tuple $(f_P)_P$, where $f_P$ denotes the unique $C^1$ function which extends $f_P$. Indeed, an inverse image of a tuple $(g_P)_P$ is the unique function $f$ whose value at some point $x \in I$ is given by $g_P(x)$ where $P$ is the unique cell containing $x$. Moreover, we extend $f \in C^1(\mathcal P)$ to a function $\tilde f$ on the Riemann sphere $\hat{\mathbb C}$ by letting $\tilde f(x)=0$ whenever $x \not \in I$. If we denote by $\tilde C^1(\mathcal P)$ the space of functions on $\hat{\mathbb C}$ whose restriction to $I$ belongs to $C^1(\mathcal P)$ and vanishes outside $I$, then the map $f \mapsto \tilde f$ provides an isomorphism $C^1(\mathcal P) \xrightarrow{\sim} \tilde C^1(\mathcal P)$. Throughout, we interchangeably use $f$, $(f_P)_P$, and $\tilde f$, and identify three corresponding spaces of functions.
--------}

\hide{---------

\section{Basic properties of the domains and the partition}
\subsection{The domain $I$ and the partition $\Pc$}

Next we prove the remark after Proposition \ref{prop:2.1}.
\begin{lem}
The domain $I$ is contained in the ball of radius $R<1$ centered at the origin.
\end{lem}
\begin{proof}
\begin{enumerate} 
\item $d=1,2$ : The domain $I$ is defined by $|x| \geq 1/2, |y| \leq \sqrt{d}/2$, thus $R \leq \sqrt{1+d}/2<1.$
\item $d+3,7,11$, hexagonal domain : the maximum occurs at $x=0$ and $y= \frac{d+1}{4 \sqrt{d}}.$ Thus $R \leq \frac{11+1}{4\sqrt{11}}<1.$
\item $d=3,7,11$, rectangular domain: the domain is defined by $|x| \geq 1/2, |y| \leq \sqrt{d}/4$, thus $R \leq \frac{\sqrt{4+d}}{4}<1.$
\end{enumerate}
\end{proof}

The following theorem is so-called the finite range theorem.
\begin{lem}[Thm 1. \cite{nakada:pre}] There is a finite number of connected subsets $P_i$ of the domain $I$ such that for any admissible sequence ${\al} = (\al_1, \cdots, \al_n)$,

$$(T^n(O_{ \alpha}))^o= P_i^o$$ for some $m$.
\end{lem}
Now define the partition $\mathcal{P}$ to be the join of $\{ TO_\alpha \}_{\alpha \in O_d}$ and their rotations and reflections (more explanations here) which is a finite set by the finite range structure.

In fact, in the proof of the finite range theorem, Ei, Ito, Nakada, and Natsui proved the following property.
\begin{lem}

\end{lem}
For any $\alpha$, there exists $beta$ such that $O_\al subset T O_\beta$

\begin{lem}
Each cell is mapped into a single cell by $h_\al$.
\end{lem}
\begin{proof}
Suppose that a 1-cell $P$ is mapped to more than one 2-cell. Denote $\cup S_i \subset h_\alpha (P)$, where the union is a disjoint union of at least two sets $S_i = P_i \cap h_\alpha(P)$ for distinct 2-cells $P_i$.
Then $$T(\cup S_i) = \cup T(S_i) \subset P= T (h_\alpha(P)).$$ Since $S_i$ is a subset of $O_\alpha$, the sets $T(S_i)= T|_{O_\alpha}(S_i)$ are disjoint sets contained in distinct 2-cells of the partition, which is a contradiction to the finite range structure property proved by Nakada et. al.

Thus we conclude that a 1-cell $P$ is mapped to a unique 2-cell union some 1-cells (and 0-cells).
Since $h_\al$ is a homography, $h_\al$ cannot intersect one 2-cell without intersecting another 2-cell.
Therefore, $P$ is mapped by $h_\al$ to a union of 1-cells.

Similarly, if a 1-cell $P$ is mapped to a union of 1-cells, then it is contained in the boundary of one 2-cell. Since $h_\alpha$ is a homography, smooth curves are mapped to smooth curves. We conclude that if a 1-cell is mapped to a union of 1-cells, it is mapped to at most one 1-cell.
The 0-cells are vertices, thus each 0-cell is mapped to at most one 0-cell.
\end{proof}

\subsection{Proof of Proposition \ref{prop:compatibility}} \label{proof:compatibility}

\begin{center}
\begin{table}\caption{Empty $O_\alpha$}\label{table:empty}
\begin{tabular}[htbc]{|c|c|}
\hline
$d$ &  $\alpha$ 
\\
\hline
$1$&$\pm 1, \pm \sqrt{-1}$
\\
\hline
$2$&$\pm 1
$
\\
\hline
$3$&$\pm 1, \pm w, \pm(w-1)$
\\
\hline
$7$&$\pm 1$
\\
\hline
$11$&$\pm 1$
\\
\hline
\end{tabular}
\end{table}
\end{center}

--------------}

\bibliography{bianchi}

\begin{thebibliography}{10}

\bibitem{Apostol}
T.~M. Apostol.
\newblock {\em Introduction to analytic number theory}.
\newblock Undergraduate Texts in Mathematics. Springer-Verlag, New
  York-Heidelberg, 1976.

\bibitem{Avila:G:Y}
A.~Avila, S.~Gou\"{e}zel, and J.-C. Yoccoz.
\newblock Exponential mixing for the {T}eichm\"{u}ller flow.
\newblock {\em Publ. Math. Inst. Hautes \'{E}tudes Sci.}, (104):143--211, 2006.

\bibitem{Baladi}
V.~Baladi.
\newblock {\em Positive transfer operators and decay of correlations},
  volume~16 of {\em Advanced Series in Nonlinear Dynamics}.
\newblock World Scientific Publishing Co., Inc., River Edge, NJ, 2000.

\bibitem{bal:val}
V.~Baladi and B.~Vall\'{e}e.
\newblock Euclidean algorithms are {G}aussian.
\newblock {\em J. Number Theory}, 110(2):331--386, 2005.

\bibitem{bal:val2}
V.~Baladi and B.~Vall\'{e}e.
\newblock Exponential decay of correlations for surface semi-flows without
  finite {M}arkov partitions.
\newblock {\em Proc. Amer. Math. Soc.}, 133(3):865--874, 2005.

\bibitem{bet:dra}
S.~Bettin and S.~Drappeau.
\newblock Limit laws for rational continued fractions and value distribution of
  quantum modular forms.
\newblock {\em Proc. Lond. Math. Soc. (3)}, 125(6):1377--1425, 2022.

\bibitem{brezis}
H.~Brezis.
\newblock {\em Functional analysis, {S}obolev spaces and partial differential
  equations}.
\newblock Universitext. Springer, New York, 2011.

\bibitem{broise}
A.~Broise.
\newblock Transformations dilatantes de l'intervalle et th\'{e}or\`emes
  limites.
\newblock Number 238, pages 1--109. 1996.
\newblock \'{E}tudes spectrales d'op\'{e}rateurs de transfert et applications.

\bibitem{1-2-3}
J.~H. Bruinier, G.~van~der Geer, G.~Harder, and D.~Zagier.
\newblock {\em The 1-2-3 of modular forms}.
\newblock Universitext. Springer-Verlag, Berlin, 2008.
\newblock Lectures from the Summer School on Modular Forms and their
  Applications held in Nordfjordeid, June 2004.

\bibitem{bugeaud}
Y.~Bugeaud, G.~G. Robert, and M.~Hussain.
\newblock Metrical properties of hurwitz continued fractions, 2023.

\bibitem{cohen}
H.~Cohen and F.~Str\"{o}mberg.
\newblock {\em Modular forms}, volume 179 of {\em Graduate Studies in
  Mathematics}.
\newblock American Mathematical Society, Providence, RI, 2017.
\newblock A classical approach.

\bibitem{con}
P.~Constantinescu.
\newblock Distribution of modular symbols in {$\Bbb{H}^3$}.
\newblock {\em Int. Math. Res. Not. IMRN}, (7):5425--5465, 2022.

\bibitem{con:nor}
P.~Constantinescu and A.~C. Nordentoft.
\newblock Residual equidistribution of modular symbols and cohomology classes
  for quotients of hyperbolic {$n$}-space.
\newblock {\em Trans. Amer. Math. Soc.}, 375(10):7001--7034, 2022.

\bibitem{Dolgopyat}
D.~Dolgopyat.
\newblock On decay of correlations in {A}nosov flows.
\newblock {\em Ann. of Math. (2)}, 147(2):357--390, 1998.

\bibitem{Nakada:etal}
H.~Ei, S.~Ito, H.~Nakada, and R.~Natsui.
\newblock On the construction of the natural extension of the {H}urwitz complex
  continued fraction map.
\newblock {\em Monatsh. Math.}, 188(1):37--86, 2019.

\bibitem{Nakada:etal2}
H.~Ei, H.~Nakada, and R.~Natsui.
\newblock On the existence of the {L}egendre constants for some complex
  continued fraction expansions over imaginary quadratic fields.
\newblock {\em J. Number Theory}, 238:106--132, 2022.

\bibitem{Nakada:pre}
H.~Ei, H.~Nakada, and R.~Natsui.
\newblock On the ergodic theory of maps associated with the nearest integer
  complex continued fractions over imaginary quadratic fields.
\newblock {\em Discrete and Continuous Dynamical Systems}, 43(11):3883--3924,
  2023.

\bibitem{Hennion}
H.~Hennion and L.~Herv\'{e}.
\newblock {\em Limit theorems for {M}arkov chains and stochastic properties of
  dynamical systems by quasi-compactness}, volume 1766 of {\em Lecture Notes in
  Mathematics}.
\newblock Springer-Verlag, Berlin, 2001.

\bibitem{hensley}
D.~Hensley.
\newblock Continued fractions, {C}antor sets, {H}ausdorff dimension, and
  transfer operators and their analytic extension.
\newblock {\em Discrete Contin. Dyn. Syst.}, 32(7):2417--2436, 2012.

\bibitem{hurwitz}
A.~Hurwitz.
\newblock \"{U}ber die {E}ntwicklung complexer {G}r\"{o}ssen in
  {K}ettenbr\"{u}che.
\newblock {\em Acta Math.}, 11(1-4):187--200, 1887.

\bibitem{Kato}
T.~Kato.
\newblock Perturbation theory for linear operators.
\newblock {\em Springer}, Berlin, 1980.

\bibitem{lakein}
R.~B. Lakein.
\newblock Approximation properties of some complex continued fractions.
\newblock {\em Monatsh. Math.}, 77:396--403, 1973.

\bibitem{lee:sun0}
J.~Lee and H.-S. Sun.
\newblock Another note on ``{E}uclidean algorithms are {G}aussian'' by {V}.
  {B}aladi and {B}. {V}all\'{e}e.
\newblock {\em Acta Arith.}, 188(3):241--251, 2019.

\bibitem{lee:sun}
J.~Lee and H.-S. Sun.
\newblock Dynamics of continued fractions and distribution of modular symbols.
\newblock {\em J. Eur. Math. Soc.}, 27, no. 9:3527--3582, 2025.

\bibitem{morris}
I.~D. Morris.
\newblock A short proof that the number of division steps in the euclidean
  algorithm is normally distributed, 2015.

\bibitem{Nakada}
H.~Nakada.
\newblock On the {K}uzmin's theorem for the complex continued fractions.
\newblock {\em Keio Engrg. Rep.}, 29(9):93--108, 1976.

\bibitem{par:pol}
W.~Parry and M.~Pollicott.
\newblock Zeta functions and the periodic orbit structure of hyperbolic
  dynamics.
\newblock {\em Ast\'{e}risque}, (187-188):268, 1990.

\bibitem{R1}
H.~Rolletschek.
\newblock On the number of divisions of the {E}uclidean algorithm applied to
  {G}aussian integers.
\newblock {\em J. Symbolic Comput.}, 2(3):261--291, 1986.

\bibitem{R2}
H.~Rolletschek.
\newblock Shortest division chains in imaginary quadratic number fields.
\newblock In {\em Symbolic and algebraic computation ({R}ome, 1988)}, volume
  358 of {\em Lecture Notes in Comput. Sci.}, pages 231--243. Springer, Berlin,
  1989.

\bibitem{titch}
E.~C. Titchmarsh.
\newblock {\em The theory of the {R}iemann zeta-function}.
\newblock The Clarendon Press, Oxford University Press, New York, second
  edition, 1986.
\newblock Edited and with a preface by D. R. Heath-Brown.

\end{thebibliography}

\bibliographystyle{abbrv}

\end{document}